\documentclass[a4paper,10pt]{article}
\usepackage[letterpaper,margin=0.80in,bottom=0.90in,top=0.85in]{geometry}
\usepackage[english]{babel}
\usepackage[utf8]{inputenc}
\usepackage{amsmath,mathrsfs}
\usepackage{indentfirst}
\usepackage{fancyhdr}

\usepackage{amssymb}
\usepackage{amsthm}
\usepackage[T1]{fontenc}
\usepackage{lmodern}
\usepackage[dvips]{graphicx}
\usepackage{color}
\usepackage{cancel}
\usepackage{subcaption}
\usepackage[dvipsnames]{xcolor}
\usepackage{eucal}
\usepackage{latexsym}
\usepackage{chngcntr}
\usepackage{faktor}
\usepackage{microtype}
\usepackage{enumitem}
\usepackage{hyperref,geometry,mathtools} 
\usepackage{todonotes}

\usepackage{amsmath, amsfonts, amsthm,amssymb,bbm}
%\mathtoolsset{showonlyrefs}
\usepackage{tikz-cd}
\usepackage{multicol}
\usepackage{cancel}
\usepackage{mathtools}
\usepackage{graphicx}

\newcommand{\kB}{\mathfrak{B}}
\newcommand{\Eta}{{\mathcal E}}
\newcommand{\tth}{\mathtt{h}}
\newcommand{\ttf}{\mathtt{f}}

\newcommand{\te}{\mathtt{e}}

\newcommand{\teWB}{\mathtt{e}_{\scriptscriptstyle{\textsc{WB}}}}
\newcommand{\etaWB}{\eta_{\scriptscriptstyle{\textsc{WB}}}}

\newcommand{\DeltaBF}{\Delta_{\scriptscriptstyle{\textsc{BF}}}}
\newcommand{\tthWB}{\mathtt{h}_{\scriptscriptstyle{\textsc{WB}}}}

\DeclareMathOperator*{\sgn}{sgn}

\DeclareMathOperator*{\eq}{=}

\newcommand{\ta}{{\mathtt a}}

\newcommand{\e}{\epsilon}
\newcommand{\im}{\mathrm{i}\,}

\newcommand{\molt}[2]{\left(#1\,,\,#2\right)}
\newcommand{\lie}[2]{\left[#1\,,\, #2\right]}

\newcommand{\tg}{{\mathtt g}}
\newcommand\restr[2]{{% we make the whole thing an ordinary symbol
  \left.\kern-\nulldelimiterspace % automatically resize the bar with \right
  #1 % the function
  \vphantom{\big|} % pretend it's a little taller at normal size
  \right|_{#2} % this is the delimiter
  }}

\allowdisplaybreaks
%\oddsidemargin=30pt \evensidemargin=20pt%impostano i margini
%\hyphenation{}                          %serve per la sillabazione
%                                        %   della pagina, vedi il manuale
%                                        %   della libreria fancyhdr
%                                        %   per ulteriori delucidazioni
%\pagestyle{fancy}\addtolength{\headwidth}{80pt}
%\renewcommand{\sectionmark}[1]{\markright{\thesection \ #1}{}}
%\rhead[\fancyplain{}{\bfseries\leftmark}]{\fancyplain{}{\bfseries\thepage}}
%\cfoot{}
%%%%%%%%%%%
\linespread{1.1}     

\theoremstyle{plain}
\newtheorem{lem}{Lemma}
\newtheorem{teo}[lem]{Theorem}
\newtheorem{prop}[lem]{Proposition}

\theoremstyle{definition}

\newtheorem{rmk}[lem]{Remark}

\renewcommand{\bar}{\overline}

\newcommand{\vet}[2]{\begin{bmatrix}#1 \\ #2 \end{bmatrix}}
\newcommand{\sepvet}[2]{\begin{bmatrix}#1 \\[4mm] #2 \end{bmatrix}}
\newcommand{\uno}{\mathrm{Id}}

\newcommand{\bR}{\mathbb{R}}
\newcommand{\bT}{\mathbb{T}}
\newcommand{\bZ}{\mathbb{Z}}
\newcommand{\bN}{\mathbb{N}}

\newcommand{\bC}{\mathbb{C}}

\newcommand{\tf}{\mathtt{f}}

\newcommand{\sL}{\mathscr{L}}
\newcommand{\cO}{\mathcal{O}}

\newcommand{\cJ}{\mathcal{J}}
\newcommand{\cB}{{\cal B}}
\newcommand{\cL}{\mathcal{L}}
\newcommand{\cV}{\mathcal{V}}
\newcommand{\tB}{\mathtt{B}}
\newcommand{\tJ}{\mathtt{J}}
\newcommand{\tL}{\mathtt{L}}
\newcommand{\de}{\mathrm{d}}
\newcommand{\pa}{\partial}

\newcommand{\cH}{\mathcal{H}}

\newcommand{\cF}{\mathcal{F}}

\newcommand{\cU}{\mathcal{U}}
\newcommand{\cW}{\mathcal{W}}

\newcommand{\off}{\varnothing}

\newcommand{\bro}{\bar\rho}
\newcommand{\Gc}{\mathfrak{c}}
\newcommand{\inv}{\rotatebox[origin=c]{90}{$\perp$}}

\newcommand{\tb}{{\mathtt{b}}}

\newcommand{\ch}{{\mathtt c}_{\mathtt h}}
\newcommand{\wt}[1]{\widetilde #1}

\newcommand{\ka}{\mathfrak{a}}
\newcommand{\kb}{\mathfrak{b}}

\newcommand{\kn}{\mathfrak{n}}
\newcommand{\km}{\mathfrak{m}}
\newcommand{\ku}{\mathfrak{u}}
\newcommand{\kh}{\mathfrak{h}}
\newcommand{\kp}{\mathfrak{p}}

\numberwithin{equation}{section}
%\counterwithin{teo}{section}
%\counterwithin{sia}{section}
%\counterwithin{cor}{section}
%\counterwithin{oss}{section}
\counterwithin{lem}{section}
%\counterwithin{prop}{section}

\title{\bf Stokes waves at the critical depth \\
are modulational unstable}

\begin{document}
\author{Massimiliano Berti, Alberto Maspero, Paolo Ventura\footnote{
International School for Advanced Studies (SISSA), Via Bonomea 265, 34136, Trieste, Italy. 
 \textit{Emails: } \texttt{berti@sissa.it},  \texttt{alberto.maspero@sissa.it}, \texttt{paolo.ventura@sissa.it}
 }}

\date{}

\maketitle

\begin{abstract}
This paper fully answers a long standing open question concerning the stability/instability 
of pure gravity periodic traveling %Stokes 
water  waves --called Stokes waves--
at the critical Whitham-Benjamin depth $ \tthWB = 1.363... $
and nearby values.  
We prove that 
Stokes waves of small amplitude  $ \cO ( \epsilon ) $
are, at the critical depth $ \tthWB $,  linearly unstable under long wave perturbations. 
This is also true for 
slightly smaller values of the depth $ \tth >  \tthWB - c \e^2 $, $ c > 0 $, depending on the amplitude of the wave. 
This problem 
was not rigorously solved   in previous literature 
because the  
expansions 
degenerate at the critical depth. 
In  order to resolve this degenerate case, and describe in a mathematically 
exhaustive % complete 
way  
how the 
eigenvalues change their stable-to-unstable nature 
along this shallow-to-deep water transient, 
we Taylor  expand the 
computations of \cite{BMV3} at a higher degree of accuracy, 
derived by the fourth order 
expansion of the Stokes waves. We prove that also in this transient regime 
a pair of unstable eigenvalues depict a closed figure 8, of smaller size than for 
$ \tth >  \tthWB $, as the Floquet exponent varies.
\end{abstract}

\tableofcontents

\section{Introduction  and main result}

In the last years substantial mathematical progresses have been obtained in the classical problem of determining the stability/instability of the Stokes waves, i.e. periodic traveling waves of the gravity water waves equations in any depth, 
subject to long wave perturbations.

%In short, 
Let us briefly summarize the state of the art. 
The existence of small amplitude
 Stokes waves, pioneered by the famous work of Stokes \cite{stokes} in 1847,  
 was first rigorously proved by Struik \cite{Struik}, Levi-Civita \cite{LC}, 
and Nekrasov \cite{Nek} one century ago, and then extended 
to  global branches containing extreme 
waves in \cite{KN,To,ML,CS,AFT,Plo}.
 In the sixties Benjamin and Feir \cite{BF,Benjamin}, Whitham \cite{Whitham},
 Lighthill   \cite{Li} and Zakharov \cite{Z0,ZK}
discovered, through experiments and formal arguments, that small amplitude 
Stokes waves in 
sufficiently deep water are  unstable, proposing a heuristic mechanism which  leads 
to the disintegration of wave trains.
More precisely, these works predicted the existence of a critical depth 
--that we shall call the  {\it Whitham-Benjamin depth}--
$$ 
\tthWB := 1.363... \, , 
$$
such that  $ 2\pi\kappa $-space periodic Stokes waves in an ocean of  depth
$   \mathtt h > \tthWB \kappa^{-1} $ are unstable: in correspondence with 
small Floquet exponents $ \mu $  % (equivalently to 
(i.e. long-wave perturbations) 
the linearized equations 
 at the Stokes wave possess 
a pair of 
eigenvalues with non-zero real part close to zero.
This phenomenon is nowadays called 
``Benjamin-Feir'' --or modulational-- instability, and it is supported by an enormous  amount of  physical observations and numerical simulations, see e.g. \cite{DO,KDZ,DDLS}. 
 We refer to \cite{ZO} for a historical survey of the modulational theory of wave packets
for several dispersive and fluid PDE models.
We  remark that modulational instability has indeed been observed 
also in a  variety of approximate water waves models, such as KdV, gKdV, NLS and the Whitham equation, see \cite{Wh,SHCH,GH,HK,BJ,J,HJ,BHJ,HP,LBJM}.

\smallskip
 
For the water waves equations, the first mathematically 
 rigorous proof of a local branch of unstable Benjamin-Feir eigenvalues close to zero 
for  $ \kappa \mathtt h >  
  \tthWB $
was obtained   by 
Bridges-Mielke \cite{BrM}  in finite depth (see also   Hur-Yang 
\cite{HY}) 
via 
a center manifold reduction,
and recently  by Nguyen-Strauss \cite{NS} 
via a Lyapunov-Schmidt decomposition. In deep water we mention the nonlinear modulational result by Chen-Su \cite{ChenSu}.

Very recently  Berti-Maspero-Ventura \cite{BMV1,BMV2} developed a 
completely different rigorous spectral approach,
based on a symplectic version of Kato's theory of similarity transformations
and a block diagonalization technique   inspired by KAM theory, 
which provided  the full topological 
splitting of all the four eigenvalues close to zero 
as the Floquet exponent $ \mu $ is turned on. More precisely the works 
\cite{BMV1,BMV2},  
that, with no loss of generality, are formulated for 
$2\pi$-periodic Stokes waves, i.e. with wave number $\kappa=1$, 
rigorously prove that: 

\begin{itemize}
\item{{\it Shallow water case:}} for any $ 0 < \mathtt h < \tthWB $  
the eigenvalues close to zero
are purely imaginary for Stokes waves
of sufficiently small amplitude $ \epsilon $, see Figure \ref{figure-eigth}-left; 
\item{\it Sufficiently deep water case:} 
for any $ \tthWB  < \mathtt h \leq \infty $, 
there exists a pair of 
eigenvalues with non-zero real part, which 
traces a complete closed figure ``8'' (as shown in  Figure \ref{figure-eigth}-right) parameterized by the Floquet exponent $ \mu $.
As $ {\mathtt h}  \to \tthWB^{\, +} $ the set of unstable Floquet exponents shrinks to zero and the Benjamin-Feir unstable eigenvalues 
collapse to the origin  
(we remark that 
the case 
$ \tth =  + \infty $ 
 is  not a consequence of the  finite depth case). 
The figure 8 had been numerically observed in Deconinck-Oliveras  \cite{DO}.
\end{itemize}
 \begin{figure}[h]
 \centering
 \subcaptionbox*{}[.4\textwidth]{\includegraphics[width=4.5cm]{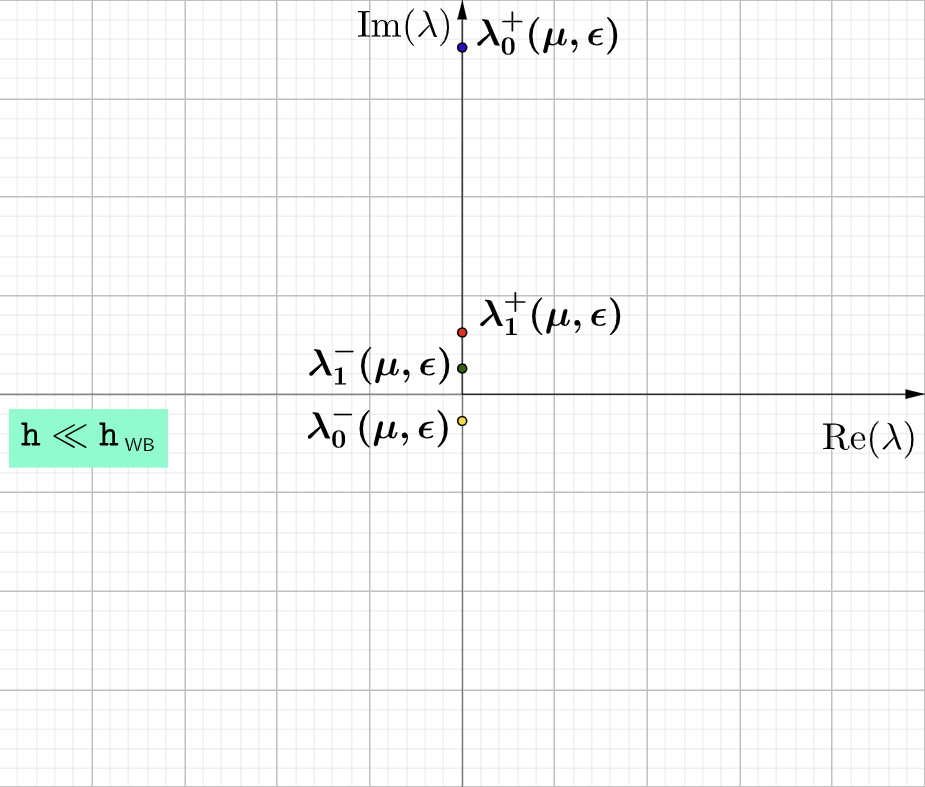}}\hspace{1cm}
\subcaptionbox*{}[.4\textwidth]{
\includegraphics[width=4.5cm]{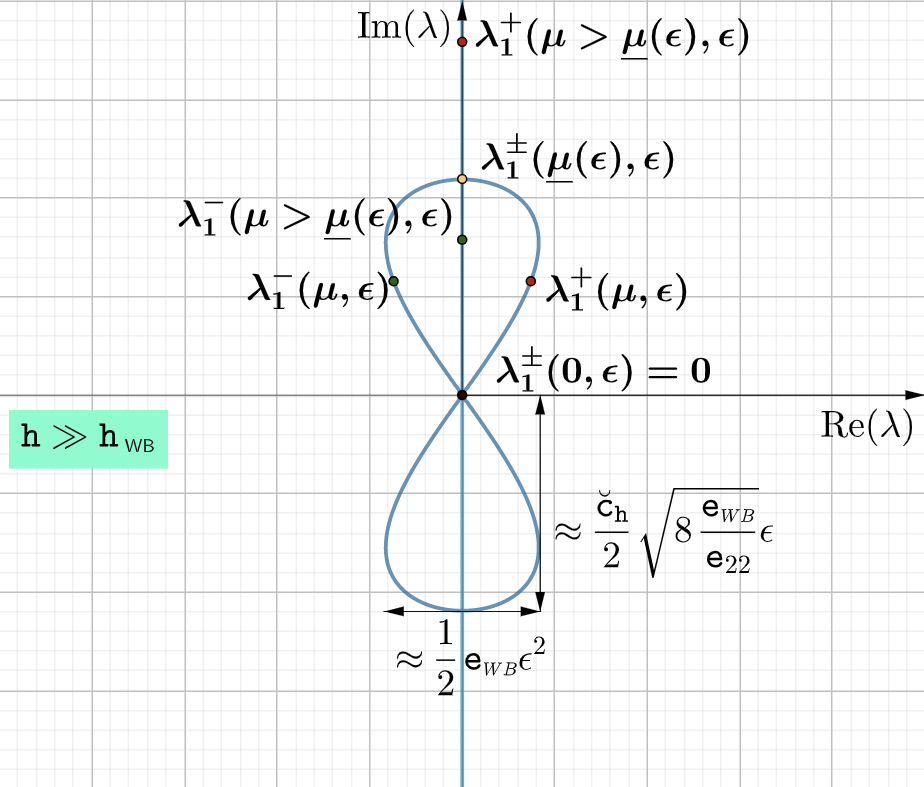} } \vspace{-0.8cm}
\caption{The left picture  shows that for $\tth< \tthWB $ the  eigenvalues $\lambda^\pm_{1} (\mu,\epsilon )$ and  $\lambda^\pm_{0} (\mu,\epsilon )$ are purely imaginary.
 The picture on the right shows that  for  $\tth> \tthWB $
  the  eigenvalues $\lambda^\pm_1 (\mu,\epsilon )$ % in the complex $ \lambda $-plane 
  at fixed $|\e| \ll 1 $ as $\mu$ varies. This figure ``8 '' depends on $\tth$ and shrinks to $0$ as $\tth\to \tthWB^+$. Some higher order formal expansions have been 
  computed in Creedon-Deconinck \cite{CD}.  \label{figure-eigth}}
\end{figure}

The question which remains open is to determine the stability or instability 
of the Stokes waves at the critical Whitham-Benjamin depth $ \tthWB $ and to analyze 
in detail the change of stable-vs-unstable behavior of the eigenvalues along this shallow-to-deep water transient.

Some formal answers have been given so far \cite{J, Kakutani, slunyaev, sedletsky}.  Water waves solutions  
in the modulational instability ansatz 
are formally approximated by an equation for the wave envelope which is, 
if  $ \tth  < \tthWB $, a defocusing cubic nonlinear Sch\"rodinger equation (NLS)   
 whereas,  
if $ \tth  > \tthWB $,  it is a focusing cubic NLS (this is in agreement with the 
rigorous stability/instability results in shallow/deep water stated above).
The transient behaviour at the critical depth $ \tth  = \tthWB $ corresponds to the vanishing of the cubic coefficients, and, in this case, it is required to determine a  
higher order effective NLS. 
 In the seventies,    formal computations by Johnson \cite{John} 
 suggested the stability of the Stokes waves for nearby 
larger values of $ \tth >  \tthWB $. 
Some years later Kakutani-Michihiro \cite{Kakutani} derived a different quintic NLS equation, actually claiming the modulational instability of Stokes waves. The  instability  was further confirmed 
by 
Slunyaev \cite{slunyaev} who computed how the coefficients of the quintic effective 
NLS depend on $\tth$. 

In this paper we  mathematically rigorously prove that 
 this last scenario is what happens:  Stokes waves  
 of the pure gravity water waves equations at the critical depth are linearly unstable under long wave perturbations.
Unlike all the previous mentioned works, we do not use any formal 
approximation argument with some quintic NLS equation, but prove directly the existence of unstable 
eigenvalues of  the linearized water waves equations at the Stokes wave.
 Informally speaking the main result 
we prove  is  the following:  
\begin{teo}\label{teoin} {\bf (Modulational instability of the Stokes wave at} 
$ \tth = \tthWB ${\bf )}
If $ \tth = \tthWB $ then  
small 
amplitude  Stokes waves of amplitude $ \cO (\e) $ 
are linearly unstable subject to long wave perturbations.  
Actually
 Stokes waves are modulational unstable also at nearby depths $ \tth < \tthWB $:  
there is an analytic function defined for $ \epsilon $ small,  
of the form %\in (0, \epsilon_1)$ small enough, 
$$
 \underline{ \tth}(\e)  = - c \e^2   + \cO(\e^3) \, , \quad c > 0 \, , 
$$
such that, for any  $(\tth, \epsilon) $ satisfying   
\begin{equation}\label{tunpo}
\tth >  \tthWB +    \underline{ \tth}(\e) \, , 
\end{equation}
then the linearized equations 
at the Stokes wave have two eigenvalues with nontrivial real part for any 
Floquet exponent $ \mu $  small enough, see Figure \ref{fig:zone}.
In particular for $ \tth = \tthWB $  the unstable eigenvalues depict a 
closed figure ``8'' as  $ \mu $ varies 
in an interval of size $[0, c_1 \e^2)$, 
see Figure \ref{fig:hWB}, whose height and width 
are much smaller 
than for $ \tth > \tthWB  $, see  Figure \ref{figure-eigth}. 
\end{teo}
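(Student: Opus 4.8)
The plan is to start from the finite-dimensional reduction already available in \cite{BMV3}. After the symplectic version of Kato's similarity transformation and the KAM-type block diagonalization, the four eigenvalues near zero of the linearized operator at the Stokes wave are the spectrum of an explicit $ 2 \times 2 $ matrix, Hamiltonian and reversible, whose entries are analytic in the Floquet exponent $ \mu $ and the amplitude $ \e $ and depend on the depth $ \tth $. The two Benjamin-Feir eigenvalues $ \lambda_1^\pm $ have the form (imaginary part) $ \pm \sqrt{\DeltaBF} $, and they leave the imaginary axis exactly when the Benjamin-Feir discriminant $ \DeltaBF(\mu,\e,\tth) $ becomes positive. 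The first step is therefore to isolate, inside $ \DeltaBF $, the scalar coefficient $ \teWB(\tth,\e) $ whose sign decides instability in the relevant long-wave regime $ \mu \sim \e $: from the structure of the reduced matrix one has, schematically,
$$
\DeltaBF(\mu,\e,\tth) = \mu^2\Big( \teWB(\tth,\e)\,\e^2 - \kB\,\mu^2 \Big) + \text{(higher order)}\,, \qquad \kB > 0\,,
$$
so that a nonempty interval of Floquet exponents with $ \DeltaBF > 0 $ (hence a figure 8) exists if and only if $ \teWB(\tth,\e) > 0 $.

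I would then exploit the known expansion $ \teWB(\tth,\e) = \te_0(\tth) + \te_2(\tth)\,\e^2 + \cO(\e^3) $. The leading coefficient $ \te_0 $ is precisely the Whitham-Benjamin coefficient that is positive for $ \tth > \tthWB $, negative for $ \tth < \tthWB $, and vanishes at $ \tth = \tthWB $, with $ \te_0'(\tthWB) > 0 $. At the critical depth the leading term drops out, and this is exactly where the expansions \emph{degenerate}: the decisive object becomes the next coefficient $ \te_2(\tthWB) $, to capture which one must push the accuracy of every computation one order further. Since $ \te_2 $ is the quintic-envelope coefficient, obtaining it requires the Stokes wave expanded up to fourth order in $ \e $. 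Hence the main computational task is to Taylor expand the entries of the \cite{BMV3}-matrix to the order that produces the $ \e^4 $-contributions to $ \DeltaBF $, feeding in the fourth-order Stokes expansion, and to certify that $ \te_2(\tthWB) > 0 $. This sign — confirming the Kakutani-Michihiro/Slunyaev prediction \cite{Kakutani,slunyaev} against Johnson's stability claim \cite{John} — is the heart of the statement.

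Granting $ \te_0'(\tthWB) > 0 $ and $ \te_2(\tthWB) > 0 $, the conclusions follow by a soft continuation argument. Near $ \tthWB $ one writes
$$
\teWB(\tth,\e) = \te_0'(\tthWB)\,(\tth-\tthWB) + \te_2(\tthWB)\,\e^2 + \cO\big((\tth-\tthWB)^2+\e^3\big)\,,
$$
so that $ \teWB > 0 $ — and hence instability for all small $ \mu $ — precisely when $ \tth - \tthWB > -c\,\e^2 + \cO(\e^3) $ with
$$
\underline{\tth}(\e) = -\,c\,\e^2 + \cO(\e^3)\,, \qquad c = \frac{\te_2(\tthWB)}{\te_0'(\tthWB)} > 0\,,
$$
which produces the analytic threshold and \eqref{tunpo}. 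At $ \tth = \tthWB $ itself one has $ \teWB(\tthWB,\e) = \te_2(\tthWB)\,\e^2 + \cO(\e^3) > 0 $, so $ \DeltaBF(\cdot,\e,\tthWB) $ is positive on an interval $ \mu \in [0, c_1\e^2) $ and the branch $ \lambda_1^\pm = (\text{imaginary part}) \pm \sqrt{\DeltaBF} $ traces a closed figure 8. Because the effective coefficient is now $ \te_2(\tthWB)\,\e^2 $ rather than the $ \cO(1) $ quantity $ \te_0(\tth) $ of the deeper regime, both the width (of order $ \e^2 $) and the height of this figure 8 are smaller by extra powers of $ \e $ than in the case $ \tth > \tthWB $ of Figure \ref{figure-eigth}.

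The main obstacle I anticipate is the accurate and rigorous evaluation of $ \te_2(\tthWB) $: one must propagate the fourth-order Stokes expansion through every step of the Kato reduction and of the block diagonalization of \cite{BMV3}, keeping track of the numerous $ \cO(\e^4) $ contributions to $ \DeltaBF $ that are invisible at the lower order used to treat $ \tth \neq \tthWB $. The delicate point is to guarantee that all genuinely higher-order remainders stay of higher order in the degenerate scaling $ \mu \sim \e^2 $, so that they cannot overturn the sign of $ \te_2 $ for $ \e $ small, and then to certify the sign of a single explicit — but intricate — algebraic expression evaluated at $ \tth = \tthWB $. Once this sign is secured, the instability at the critical depth, the extension to $ \tth > \tthWB + \underline{\tth}(\e) $, and the figure 8 all follow immediately from the discriminant analysis.
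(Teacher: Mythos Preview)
Your proposal is correct and follows essentially the same route as the paper: push the \cite{BMV3} block-diagonalization one order further using the fourth-order Stokes expansion, so that the Benjamin-Feir discriminant acquires the form $8\teWB(\tth)\e^2 + 8\etaWB(\tth)\e^4 + r_1(\e^5,\mu\e^3) - \te_{22}(\tth)\mu^2(1+\ldots)$, verify numerically that $\etaWB(\tthWB)>0$ (your $\te_2(\tthWB)$), and conclude via the analytic implicit function theorem. One small refinement: the absence of an $\e^3$-term in the discriminant is not automatic but a genuine cancellation forced by the symmetries (reversibility/parity) of the problem, and you will need to establish this explicitly, since otherwise an $\e^3$-contribution of either sign could dominate $\etaWB\e^4$ at the critical depth.
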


\begin{figure}
 \centering
\includegraphics[scale=0.37]{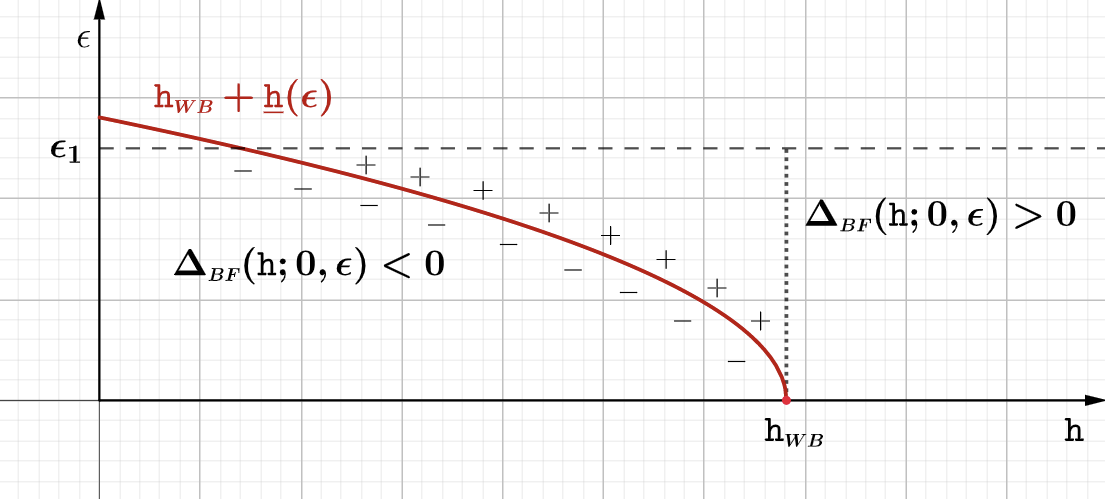}
\caption{
The values of ($\tth, \epsilon$) in $ (0,\infty) \times (0, \epsilon_1) $ 
for which there are Benjamin-Feir unstable eigenvalues fill the zone above the red curve, where $\DeltaBF(\tth;0,\e)>0$.
 }\label{fig:zone}
\end{figure}

\begin{figure}
 \centering
\includegraphics[scale=0.35]{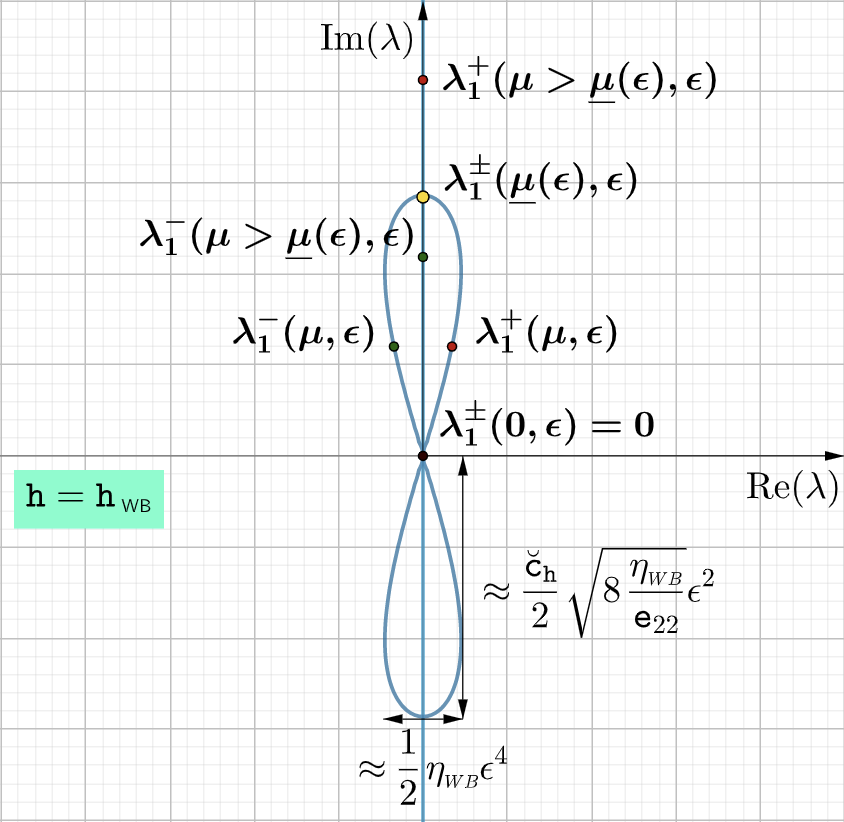}
\caption{
The figure "8" at the critical depth, which has a smaller size with respect to the one in Figure
\ref{figure-eigth}.
 }\label{fig:hWB}
\end{figure}

For a more rigorous statement we refer to Theorems \ref{abstractdec} and \ref{corollario}.
Actually Theorems \ref{abstractdec} and \ref{corollario} 
provide a necessary and sufficient condition for the existence of  unstable 
eigenvalues: the Benjamin-Feir discriminant 
function $\DeltaBF(\tth;\mu,\e)$ that appears in the  
matrix entry $ [\mathtt U]_{21} $   in \eqref{Udav}  has to be positive. 
We prove in  Theorem \ref{abstractdec} that 
the Benjamin-Feir discriminant 
function 
admits the expansion  
\begin{equation}\label{DWB}
\DeltaBF(\tth;\mu,\e) := 8\teWB (\tth) \e^2+ 8\etaWB (\tth) \e^4 + r_1(\e^5,\mu\e^3)-\te_{22} (\tth) \mu^2\big(1+r_1''(\e,\mu)\big) 
\end{equation}
where
\begin{equation}\label{funzioneWB}
\teWB  (\tth)  := \frac{1}{\ch}
\Big[ \frac{9\ch^8-10\ch^4+9}{8\ch^6} - 
\frac{1}{\tth- \frac14\te_{12}^2} \Big(1 + \frac{1-\ch^4}{2} + \frac34 \frac{(1-\ch^4)^2}{\ch^2}\tth \Big) \Big]  
\end{equation}
is called the Benjamin-Feir  function, 
 the coefficient $\te_{22} (\tth) > 0 $ is defined in \eqref{te22}, and the coefficient 
 $\etaWB (\tth) $ is computed in \eqref{etaWB.c}. %  \eqref{te2}. 
The graph of the Benjamin-Feir   function $ \teWB (\tth )$ is  as in Figure \ref{graficoe112}.
\begin{figure}[h]
\centering
\includegraphics[width=4.5cm]{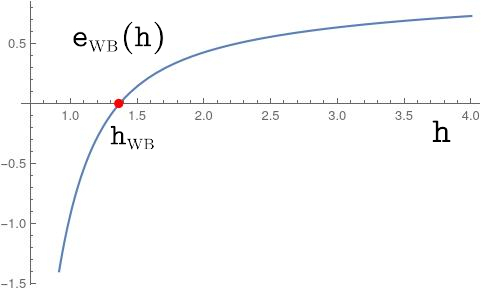}
\caption{The Whitham-Benjamin function  $ \teWB  (\tth) $
has a unique root $\tthWB=1.363\dots $.
 }
\label{graficoe112}
\end{figure}
Thus, for any $  \tth > \tthWB $, resp. $  \tth < \tthWB $, it results $ \teWB (\tth) > 0 $, 
resp. $  \teWB (\tth) < 0 $, 
and therefore for $ \mu $ and $ \epsilon $ 
small enough $ \DeltaBF(\tth;\mu,\e) > 0 $, resp. $ \DeltaBF(\tth;\mu,\e) < 0 $, proving the existence of
unstable, resp. stable, eigenvalues. 
This is the result proved in \cite{BMV3}.

On the other hand % the Benjamin-Feir   function 
at $\tth=\tthWB $ the coefficient $ \teWB ( \tthWB ) = 0 $ vanishes and the sign of the Benjamin-Feir  function  
$ \DeltaBF(\tth;\mu,\e) $ in \eqref{DWB} is determined by the sign of the coefficient 
$ \etaWB (\tthWB) $
(note that in \eqref{DWB} no pure term of order $ \e^3 $  appears; 
such a degeneracy is a consequence  of symmetries of the problem). 
The constant $ \etaWB (\tthWB) $ is computed in \eqref{etaWB.c} and it turns out to be strictly {\it positive}. 
\begin{figure}
 \centering
\includegraphics[width=5cm]{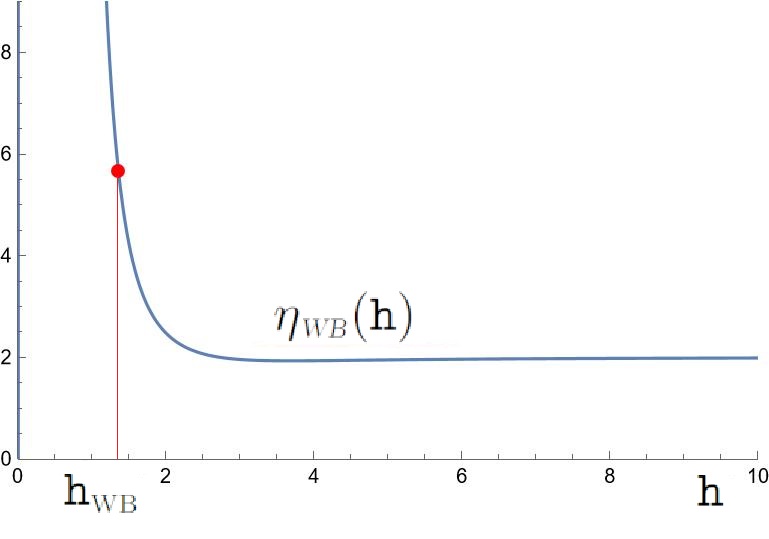}
\caption{
The  plot of the function $\etaWB (\tth) $  looks positive for every depth.
 }\label{fig:plotetaWB}
\end{figure}
This proves the linear instability of the Stokes wave at $ \tth = \tthWB $, stated in Theorem \ref{teoin}. 
In addition the regions 
where 
$\DeltaBF(\tth;0,\e)> 0 $, respectively  $\DeltaBF(\tth;0,\e) < 0 $,  are delimited by 
the graph of an analytic  curve of the form (obtained by the analytic implicit function theorem)
$$
\e \mapsto \tthWB+\underline{\tth}(\e)  \, , \quad 
 \underline{ \tth}(\e)  = - \frac{\etaWB(\tthWB)}{\teWB'(\tthWB)} \e^2   + \cO(\e^3) \, ,\qquad \text{solving}\quad\DeltaBF(\tthWB+\underline{\tth}(\e);0,\e)= 0\, ,
$$
see Figure \ref{fig:zone}. It turns out that 
 $ \DeltaBF(\tth;\mu,\e) > 0 $ % the eigenvalues are all elliptic 
if the condition \eqref{tunpo} holds. 

\smallskip

In order to prove \eqref{DWB} --which is the 
the major achievement of Sections \ref{sec:Bmue} and \ref{sec:block}--
we need to expand the Stokes waves up to order $ \e^4 $, as provided in
Appendix \ref{sec:App3}, and  to  explicitly compute the  
Taylor expansion of \cite{BMV3} at the fourth order of accuracy. 
% In order to perform 
We implement an effective algorithm to compute \eqref{DWB}
observing several analytical cancellations in the symplectic 
Kato reduction process. 
%Clearly the present expansion 
%also provides,  for any finite depth $ \tth > 0 $,  the  explicit knowledge of the four 
%eigenvalues close to zero at a higher degree of accuracy  
%with respect to \cite{BMV3}.
%Formal expansions have been given in  \cite{CD}. 
We now present rigorously the main results.

\paragraph{Main result.} 
We first shortly introduce  the pure  gravity water waves equations,  
their Hamiltonian formulation, and the linearized water waves equations at
 the Stokes waves. We 
refer to \cite{BMV3} for more details.   
\\[1mm]
{\bf The water waves equations.}
We consider the Euler equations for a 2-dimensional incompressible, irrotational fluid under the action of  gravity. The fluid fills the
region 
$$
{ \mathcal D}_\eta := \left\{ (x,y)\in \bT\times \bR\;:\; -\tth\leq  y< \eta(t,x)\right\} \, , 
\quad \bT :=\bR/2\pi\bZ \,, 
$$  
with finite depth
and  space periodic boundary conditions. 
The irrotational velocity field is the gradient  
of a harmonic scalar potential $\Phi=\Phi(t,x,y) $  
determined by its trace $ \psi(t,x)=\Phi(t,x,\eta(t,x)) $ at the free surface
$ y = \eta (t, x ) $.
Actually $\Phi$ is the unique solution of the elliptic equation
$    \Delta \Phi = 0 $ in $ {\mathcal D}_\eta $ with Dirichlet datum $
     \Phi(t,x,\eta(t,x)) = \psi(t,x)$ and $    \Phi_y(t,x,y)  =  0 $ at 
     $y =  - \tth $.

The time evolution of the fluid is determined by two boundary conditions at the free surface. 
The first is that the fluid particles  remain, along the evolution, on the free surface  
% (kinematic boundary condition), 
and the second one is that the pressure of the fluid  
is equal, at the free surface, to the constant atmospheric pressure. 
% (dynamic boundary condition). 
Then, as shown by Zakharov \cite{Zak1} and Craig-Sulem \cite{CS}, 
the time evolution of the fluid is determined by the 
following equations for the unknowns $ (\eta (t,x), \psi (t,x)) $,  
\begin{equation}\label{WWeq}
 \eta_t  = G(\eta)\psi \, , \quad 
  \psi_t  =  
- g \eta - \dfrac{\psi_x^2}{2} + \dfrac{1}{2(1+\eta_x^2)} \big( G(\eta) \psi + \eta_x \psi_x \big)^2 \, , 
\end{equation}
where $g > 0 $ is the gravity constant and $G(\eta):= G(\eta, \tth)$ denotes 
the Dirichlet-Neumann operator $
 [G(\eta)\psi](x) := \Phi_y(x,\eta(x)) -  \Phi_x(x,\eta(x)) \eta _x(x)$. 
 In the sequel, with no loss of generality, we set the gravity constant $ g = 1 $. 

The equations \eqref{WWeq} are the Hamiltonian system
\begin{equation}\label{PoissonTensor}
 \pa_t \vet{\eta}{\psi} = \cJ \vet{\nabla_\eta \mathcal{H}}{\nabla_\psi \mathcal{H}}, \quad \quad \cJ:= \begin{bmatrix} 0 & \uno \\ -\uno & 0 \end{bmatrix} ,
 \end{equation}
 where $ \nabla $ denote the $ L^2$-gradient, and the Hamiltonian
$  \mathcal{H}(\eta,\psi) :=  \frac12 \int_{\mathbb{T}} \left( \psi \,G(\eta)\psi +\eta^2 \right) \de x
$
is the sum of the kinetic and potential energy of the fluid. 
In addition of being Hamiltonian, the water waves 
system \eqref{WWeq} 
is reversible with respect to the involution 
\begin{equation}\label{revrho}
\rho\vet{\eta(x)}{\psi(x)} := \vet{\eta(-x)}{-\psi(-x)}, \quad \text{i.e. }
\mathcal{H} \circ \rho = \mathcal{H}  \, , 
\end{equation}
and it 
 is space invariant.
\\[1mm]{\bf Stokes waves.}
The Stokes waves are traveling
solutions of \eqref{WWeq}  of 
the form $\eta(t,x)=\breve \eta(x-ct)$ and $\psi(t,x)=\breve \psi(x-ct)$ for some real  $c$   and  $2\pi$-periodic functions  $(\breve \eta (x), \breve \psi (x)) $.
In a reference frame in translational motion with constant speed $c$,  the water waves equations \eqref{WWeq} become
\begin{equation}\label{travelingWW0}
\eta_t  = c\eta_x+G(\eta)\psi \, , \quad 
 \psi_t  = c\psi_x - \eta - \dfrac{\psi_x^2}{2} + \dfrac{1}{2(1+\eta_x^2)} \big( G(\eta) \psi + \eta_x \psi_x \big)^2  
\end{equation}
and the Stokes waves $(\breve \eta, \breve \psi)$ are  equilibrium 
steady solutions 
of \eqref{travelingWW0}. 

Small amplitude  Stokes waves were constructed % obtained  
by Struik \cite{Struik} in finite depth, and  Levi-Civita \cite{LC}, 
and Nekrasov \cite{Nek} in infinite depth.
 \begin{teo}\label{LeviCivita}
{\bf (Stokes waves)} For any $\tth \in (0,+\infty]$ there exist $\e_*=\e_*(\tth)  >0$ and a unique family  of real analytic solutions $(\eta_\e(x), \psi_\e(x), c_\e)$, parameterized by the amplitude $|\e|<\e_* $, of
\begin{equation}\label{travelingWW}
 c \, \eta_x+G(\eta)\psi = 0 \, , \quad
 c \, \psi_x -  \eta - \dfrac{\psi_x^2}{2} + \dfrac{1}{2(1+\eta_x^2)} \big( G(\eta) \psi + \eta_x \psi_x \big)^2  = 0 \, , 
\end{equation}
such that
 $\eta_\e (x) $ and $\psi_\e (x) $ are $2\pi$-periodic;  $\eta_\e (x) $ is even and $\psi_\e (x) $ is odd, of the form
   \begin{subequations}\label{exp:Sto}
   \begin{align}\label{etaexp}
 &\begin{aligned}
  & \eta_\e (x) =  \e \cos (x) + \e^2 \big(\eta_{2}^{[0]} + \eta_{2}^{[2]} \cos (2x)\big) +  \e^3 \big(\eta_{3}^{[1]} \cos(x) + \eta_{3}^{[3]} \cos (3x)\big)  \\
  & \qquad\qquad \qquad +\e^4 \big(\eta_{4}^{[0]} +\eta_{4}^{[2]} \cos(2x)+ \eta_{4}^{[4]} \cos (4x)\big) +
  \cO(\e^5)\, , 
  \end{aligned}\\
  &\begin{aligned}\label{psiexp}
  & \psi_\e (x)  = \e \ch^{-1} \sin (x) + \e^2 \psi_{2}^{[2]} \sin (2x)   + \e^3 \big(\psi_{3}^{[1]} \sin (x)+\psi_{3}^{[3]} \sin (3x)  \big) \\
  &\qquad\qquad \qquad + \e^4 \big(\psi_{4}^{[2]} \sin (2x)+\psi_{4}^{[4]} \sin (4x)\big)
  +\cO(\e^5) \, , 
 \end{aligned}   \\ \label{cexp}
  & c_\e = \ch + \e^2 c_2 +\e^4 c_4 + \cO(\e^5) \, , \qquad \ch = \sqrt{\tanh(\tth)} \, , 
   \end{align}
  \end{subequations}
with  coefficients given  in \eqref{allcoefStokes}. 
\end{teo}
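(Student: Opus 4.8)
The plan is to realize the traveling-wave system \eqref{travelingWW} as a zero-finding problem $F(\eta,\psi,c)=0$ for a real-analytic map $F$ between Sobolev spaces of $2\pi$-periodic functions, and to obtain the branch from the analytic implicit function theorem, bifurcating from the trivial solution $(\eta,\psi,c)=(0,0,\ch)$. First I would fix the functional framework: let $\eta$ range over the \emph{even} functions of $H^s(\bT)$ and $\psi$ over the \emph{odd} functions of $H^s(\bT)$, for $s$ large. This is natural because the reversibility \eqref{revrho}, together with the spatial structure of \eqref{travelingWW}, leaves these symmetry classes invariant and, crucially, breaks the $x$-translation invariance of the problem, so that the group of translations does not contribute a spurious kernel. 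The map $F$, given by the two left-hand sides of \eqref{travelingWW}, is real-analytic near $(0,0,\ch)$: its only non-polynomial ingredient is the Dirichlet--Neumann operator $G(\eta)$, whose analytic dependence on $\eta$, through the convergent homogeneity expansion $G(\eta)=\sum_{j\geq0}G_j(\eta)$ with $G_j$ homogeneous of degree $j$ in $\eta$, is classical.

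Second, I would linearize $F$ at the trivial solution. At $(\eta,\psi,c)=(0,0,\ch)$ all quadratic terms drop out and the linearized operator $L$ is $(\eta,\psi)\mapsto(\ch\eta_x+G(0)\psi,\ \ch\psi_x-\eta)$, with $G(0)=|D|\tanh(\tth|D|)$. Eliminating $\eta$ gives $\ch^2\psi_{xx}+G(0)\psi=0$, i.e. the dispersion relation $c^2k=\tanh(\tth k)$, whose value at wavenumber $k=1$ is precisely $c^2=\tanh\tth=\ch^2$; the kernel of $L$ in the even/odd class is then one-dimensional, spanned by $(\cos x,\ch^{-1}\sin x)$, which is the leading term of \eqref{exp:Sto}. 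To produce the branch I would use the amplitude $\e$ as bifurcation parameter: writing $\eta=\e\cos x+\tilde\eta$, $\psi=\e\ch^{-1}\sin x+\tilde\psi$ with $(\tilde\eta,\tilde\psi)$ in a fixed complement of the kernel, I solve $F=0$ for $(\tilde\eta,\tilde\psi,c)$. At $\e=0$ the derivative of this reduced system in $(\tilde\eta,\tilde\psi,c)$ is invertible, since $L$ is invertible on the complement and the transversality $\partial_c(c^2-\tanh\tth)|_{c=\ch}=2\ch\neq0$ covers the $c$-direction; the analytic implicit function theorem then yields a unique real-analytic branch $\e\mapsto(\eta_\e,\psi_\e,c_\e)$ with $(\eta_0,\psi_0,c_0)=(0,0,\ch)$, which is the content of the existence statement (cf. \cite{Struik,LC,Nek}).

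Third, I would compute the expansion \eqref{exp:Sto} by inserting the ansatz and matching powers of $\e$. At order $\e^n$ the system reads $L(\eta_n,\psi_n)=R_n$, where $R_n$ is an explicit polynomial in the lower-order data, assembled from the homogeneous pieces $G_j$ of the Dirichlet--Neumann operator and from the algebraic nonlinearities $\psi_x^2$, $(1+\eta_x^2)^{-1}$ and $(G(\eta)\psi+\eta_x\psi_x)^2$ in \eqref{travelingWW}. The Fredholm solvability condition, namely the orthogonality of $R_n$ to the one-dimensional cokernel of $L$, is exactly the equation that fixes the speed correction $c_n$, after which the component of $(\eta_n,\psi_n)$ transverse to the kernel is determined uniquely. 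The harmonic pattern in \eqref{etaexp}--\eqref{cexp} --- only frequencies $k\equiv n\ (\mathrm{mod}\ 2)$ at order $\e^n$, and in particular $c_1=c_3=0$ --- is forced by the discrete symmetry $(\e,x)\mapsto(-\e,\,x+\pi)$, under which the whole branch is invariant: this kills the odd-order speed corrections and every ``wrong-parity'' harmonic absent from \eqref{exp:Sto}. Carrying the matching up to order $\e^4$ produces the coefficients recorded in \eqref{allcoefStokes}.

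The step I expect to be the main obstacle is the fourth-order bookkeeping: one must expand $G(\eta)$ through $G_4$, evaluate each $G_j$ on the multi-harmonic lower-order profiles, and combine the result with the remaining nonlinear terms of \eqref{travelingWW}, carefully tracking the Fourier interactions that generate the mean and the $\cos2x$, $\cos4x$ corrections. Since existence, uniqueness and analyticity are essentially classical, the genuinely new content is this explicit fourth-order data; accordingly the proof is really an organized order-by-order computation whose only delicate points are the repeated use of the solvability condition to pin down $c_2$ and $c_4$ and the verification that the parity symmetry eliminates every term missing from \eqref{exp:Sto}.
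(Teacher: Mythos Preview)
Your outline is correct and matches the paper's approach: existence and analyticity are classical (the paper simply cites \cite{Struik,LC,Nek} and \cite{BMV3}), and the new content is the order-by-order computation in Appendix~\ref{sec:App31}. Two small points. First, to pin down $c_4$ you must go one order beyond what you wrote: $c_4$ first enters the equations at order $\e^5$ (through $c_4(\eta_1)_x$ and $c_4(\psi_1)_x$), so it is the solvability condition at order five, not four, that determines it --- the paper carries this out explicitly in \eqref{syslin5}--\eqref{c4conta}. Second, two minor operational differences from the paper: (i) before linearizing, the paper substitutes $G(\eta)\psi=-c\,\eta_x$ from the second equation of \eqref{travelingWW} into the first to obtain the equivalent system \eqref{Sts}, whose linearization $\cB_0$ in \eqref{defB0} is \emph{self-adjoint}, so kernel and cokernel coincide and the inversion formulas \eqref{cB0inv} are clean; (ii) the paper does not invoke the $(\e,x)\mapsto(-\e,x+\pi)$ symmetry you use to kill $c_1,c_3$ and the wrong-parity harmonics, but simply computes the relevant Fourier projections and checks they vanish (e.g.\ $c_3=0$ just below \eqref{f4g4exp}). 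Your symmetry argument is a perfectly valid shortcut for these structural claims.
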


The expansions \eqref{exp:Sto}
 are  derived in Proposition \ref{expstokes} 
 (they coincide with  \cite{Fenton} after some suitable rescaling, translation and choice of the moving frame, see Remark \ref{check-controllo}).
\\[1mm]
{\bf Remark.} More general time quasi-periodic traveling Stokes waves 
--which are nonlinear superpositions of multiple Stokes waves traveling with % different, 
rationally independent speeds-- have been recently proved 
for \eqref{WWeq} in \cite{BFM2} in finite depth, in \cite{FG} in infinite depth,
and in \cite{BFM} for capillary-gravity water waves in any depth. 
\\[1mm]
\noindent
{\bf Linearization at the Stokes waves.}\label{goodunknown}
In order to determine the stability/instability of the Stokes  waves given by Theorem \ref{LeviCivita}, 
we linearize  the water waves equations \eqref{travelingWW0} with $ c = c_\e $ at  $(\eta_\epsilon(x), \psi_\epsilon(x))$. 
In \cite{BMV3}
we obtain the autonomous real linear Hamiltonian and reversible system
\begin{equation}\label{linearWW}
\!\!\! \vet{\hat \eta_t}{\hat \psi_t}
 = \begin{bmatrix} -G(\eta_\e)B-\pa_x \circ (V-c_\e) & G(\eta_\e) \\ -1+B(V-c_\e)\pa_x - B \pa_x \circ (V-c_\e) - BG(\eta_\e) B  & - (V-c_\e)\pa_x + BG(\eta_\e) \end{bmatrix}\vet{\hat \eta}{\hat \psi}
 \end{equation}
 where
 the functions $(V(x),B(x))$  are the horizontal and vertical components of the velocity field
$ (\Phi_x, \Phi_y) $ at the free surface. % (and have the form \eqref{espVB}). 
The real system \eqref{linearWW} is Hamiltonian and reversible, 
i.e. of the form $ \cJ \mathcal A $
with  $ \mathcal A = \mathcal A^\top $,
where $\mathcal A^\top$ is the transposed operator with respect the  scalar product of $L^2(\bT, \bR^2) $, and $ \cJ \mathcal A $
anti-commutes with the involution $ \rho $ in \eqref{revrho}.

The linear system \eqref{linearWW} assumes a simpler form by performing 
the time-independent  symplectic and reversibility preserving 
 ``good unknown of Alinhac''  
and  ``Levi-Civita" conformal  change of variables. 
As proved in \cite{BMV3,BBHM} there exists a diffeomorphism of $\mathbb{T}$,
 $ x\mapsto x+\mathfrak{p}(x)$, with a small $2\pi$-periodic odd function $\mathfrak{p}(x)$, 
 such that,  defining the associated composition operator $ (\mathfrak{P}u)(x) := u(x+\mathfrak{p}(x))$, 
 system \eqref{linearWW} is conjugated
under the change of variable
\begin{equation}\label{LC}
h =  \mathcal{P} Z^{-1} \vet{\hat \eta}{\hat \psi} \, , \quad
 \mathcal{P} := \begin{bmatrix}(1+\mathfrak{p}_x)\mathfrak{P} & 0 \\ 0 & \mathfrak{P} \end{bmatrix} \circ \begin{bmatrix}  1 & 0 \\ -B & 1\end{bmatrix}  \, , 
\end{equation}
 into
the linear system $ h_t = \cL_\e h $
 where  $ \cL_\e $ is the Hamiltonian and reversible real operator
\begin{equation}
\label{cLepsilon}
\cL_\e   %\mathcal{P} \, \widetilde{\mathcal L}_\e \, \mathcal{P}^{-1} 
 :=  
\begin{bmatrix} \pa_x \circ (\ch+p_\e(x)) &  |D|\tanh((\tth+\mathtt{f}_\e) |D|) \\ - (1+a_\e(x)) &   (\ch+p_\e(x))\pa_x \end{bmatrix} 
 = \cJ \begin{bmatrix}   1+a_\e(x) &   -(\ch+p_\e(x)) \pa_x \\ 
\pa_x \circ (\ch+p_\e(x)) &  |D|\tanh((\tth+\mathtt{f}_\e) |D|)  \end{bmatrix} 
\end{equation}
where  $p_\e (x) $, $a_\e (x) $ are even real functions
and $\ttf_\e $ is small real constant.
 The functions 
 $p_\e$ and $a_\e$   are analytic in $\e$ as maps $B(\e_0)\to H^{s} (\mathbb T)$
 and admit  a Taylor expansion as in Proposition \ref{propaepe}.
 The function  $ \e \mapsto \ttf_\e $ is  analytic as well 
with a Taylor expansion as in \eqref{expfe0}-\eqref{expfe}.
\\[1mm] 
\noindent
{\bf Bloch-Floquet operator.} 
Since the operator $\cL_\e$ in \eqref{cLepsilon} has $2\pi$-periodic coefficients, Bloch-Floquet theory guarantees  that the spectrum 
$$
\sigma_{L^2(\bR)} (\cL_\e ) = \bigcup_{\mu\in [- \frac12, \frac12)} \sigma_{L^2(\bT)}  (\cL_{\mu, \e}) \qquad \text{where} \quad 
\qquad \cL_{\mu,\e}:= e^{- \im \mu x} \, \cL_\e \, e^{\im \mu x}  \, ,
$$
and, if $\lambda$ is an eigenvalue of $\cL_{\mu,\e}$ on $L^2(\bT, \bC^2)$
with eigenvector $v(x)$, then  $h (t,x) = e^{\lambda t} e^{\im \mu x} v(x)$ 
is a solution of $ h_t = \cL_{\e} h$.

The Floquet operator  associated with the real operator $\cL_\e$ in \eqref{cLepsilon} 
turns out to be  the complex  \emph{Hamiltonian} and \emph{reversible} pseudo-differential 
operator
\begin{align}\label{WW}
 \cL_{\mu,\e} :&= \begin{bmatrix} (\pa_x+\im\mu)\circ (\ch+p_\e(x)) & |D+\mu| \tanh\big((\tth + \ttf_\e) |D+\mu| \big) \\ -(1+a_\e(x)) & (\ch+p_\e(x))(\pa_x+\im \mu) \end{bmatrix} \\ 
 &= \underbrace{\begin{bmatrix} 0 & \uno\\ -\uno & 0 \end{bmatrix}}_{\displaystyle{=\cJ}} \underbrace{\begin{bmatrix} 1+a_\e(x) & -(\ch+p_\e(x))(\pa_x+\im \mu) \\ (\pa_x+\im\mu)\circ (\ch+p_\e(x)) & |D+\mu| \tanh\big((\tth + \ttf_\e) |D+\mu| \big) \end{bmatrix}}_{\displaystyle{=:\mathcal{B}_{\mu,\e}}} \, , \notag 
\end{align}
meaning that 
$ \mathcal{B}_{\mu,\e} = \mathcal{B}_{\mu,\e}^*  $ and 
$  \cL_{\mu,\e} \circ \bro =- \bro \circ \cL_{\mu,\e} $, 
where $ \bro $ is the complex involution (cfr. \eqref{revrho})
$$
 \bro \vet{\eta(x)}{\psi(x)} := \vet{\bar\eta(-x)}{-\bar\psi(-x)} \, .
$$
Equivalently %  since $\cJ \circ \bro = -\bro \circ \cJ$, 
the self-adjoint operator $\mathcal{B}_{\mu,\e}$ is \emph{reversibility-preserving},  i.e. 
$ \mathcal{B}_{\mu,\e} \circ \bro = \bro \circ \mathcal{B}_{\mu,\e} $.

We regard $  \cL_{\mu,\e} $ as an operator with 
domain $H^1(\bT):= H^1(\mathbb{T},\bC^2)$ 
and range $L^2(\bT):=L^2(\mathbb{T},\bC^2)$, equipped with  
the complex scalar product 
\begin{equation}\label{scalar}
(f,g) := \frac{1}{2\pi} \int_{0}^{2\pi} \left( f_1 \bar{g_1} + f_2 \bar{g_2} \right) \, \text{d} x  \, , 
\quad
\forall f= \vet{f_1}{f_2}, \ \  g= \vet{g_1}{g_2} \in  L^2(\bT, \bC^2) \, .
\end{equation} We also  denote $ \| f \|^2 = (f,f) $.

In addition $(\mu, \e) \to \cL_{\mu,\e} \in \cL(H^1(\bT), L^2(\bT))$ is analytic, 
since  the functions $\e \mapsto a_\e$, $p_\e$  defined in \eqref{SN1} are analytic as maps $B(\e_0) \to H^1(\bT)$ 
and  ${\mathcal L}_{\mu,\e}$ is analytic with respect to $\mu$. 
%We also note that  
%(see \cite[Section 5.1]{NS})
%$ |D+\mu| =  |D| + \mu (\sgn(D)+\Pi_0) $, $  \forall \mu > 0 $, 
%where $ \sgn(D) $ is the Fourier multiplier operator, 
%acting on $2\pi$-periodic functions,  
% with symbol 
%\begin{equation}\label{def:segno}
 %\sgn(k) := 1\  \forall k > 0 \, , \quad \sgn(0):=0 \, ,\quad \sgn(k) := -1 \ \forall k < 0 \, , 
%\end{equation}
%and $\Pi_0$ is the projector operator on the zero mode, 
%$\Pi_0f(x) := \frac{1}{2\pi} \int_\bT f(x)\de x. $
\\[1mm]
{\bf Remark.} The spectrum  
$ \sigma ({\mathcal L}_{-\mu,\e}) = \overline{  \sigma ({\mathcal L}_{\mu,\e}) } $ and 
we can restrict to  $ \mu > 0 $.
Furthermore $\sigma({\mathcal L}_{\mu,\e} )$ is a 1-periodic set with respect to $\mu$, so   one can restrict  to  $\mu \in [0, \frac12)$.  
\\[1mm]
%Our aim is to prove the existence of eigenvalues of $  \cL_{\mu,\e}  $ in  \eqref{WW}
%with non zero real part. 
\noindent 
{\bf Krein criterion}. 
In view of the Hamiltonian structure of $\cL_{\mu,\e}$, 
eigenvalues with non zero real part may arise only from multiple
  eigenvalues of $\cL_{\mu,0}$  
  because   if $\lambda$ is an eigenvalue of $\cL_{\mu,\e}$ then  also $-\bar \lambda$ is, and the total algebraic multiplicity of the eigenvalues is conserved under small perturbation. 
% Thus we describe the spectrum of $\cL_{\mu,0}$.
% \\[1mm]{\bf The spectrum of $\cL_{\mu,0}$.} \label{initialspectrum}
The % spectrum of the 
Fourier multiplier matrix real operator 
$$ 
\cL_{0,0}  = \begin{bmatrix} \ch  \pa_x & |D| \,  \tanh\big(\tth  |D| \big)
  \\ -1 & \ch \pa_x \end{bmatrix}
$$
%consists of the purely imaginary eigenvalues $\{\lambda_k^\pm(\mu)\;,\; k\in \bZ \} $, where
%$$
% \lambda_k^\pm(\mu):=
%\im \Big( \ch( \pm k+\mu) \mp  \sqrt{|k \pm \mu|\tanh(\tth|k \pm \mu|)} \Big) \, . 
%$$
% For $\mu=0$ the real operator $\cL_{0,0}$ 
possesses the  eigenvalue $0$ with algebraic multiplicity $4$, 
% $ \lambda_0^+(0) = \lambda_0^-(0) = $ $ \lambda_1^+(0) = \lambda_{1}^-(0)=0 $, 
 and geometric multiplicity $3$. 
 A real  basis of the corresponding generalized eigenspace  is 
\begin{equation}\label{funperturbed} 
 f_1^+ :=  \vet{ \ch^{1/2} \cos(x)}{\ch^{-1/2} \sin(x)}, 
 \quad 
 f_1^{-} :=  \vet{- \ch^{1/2} \sin(x)}{\ch^{-1/2}\cos(x)},  
 \quad f_0^+:=\vet{1}{0}, \quad f_0^-:=\vet{0}{1},
\end{equation}
where $ f_1^+,  f_1^{-}, f_0^- $ are eigenvectors of $\cL_{0,0}$ and 
 $f_0^+ $ is a generalized eigenvector,  namely $ \cL_{0,0}f_0^+ =-f_0^-  $. 
Furthermore $0$ is an isolated eigenvalue for $\cL_{0,0}$, namely the 
 spectrum   $\sigma\left(\cL_{0,0}\right)  $ decomposes in two separated parts
\begin{equation}
\label{spettrodiviso0}
\sigma\left(\cL_{0,0}\right) = \sigma'\left(\cL_{0,0}\right) \cup \sigma''\left(\cL_{0,0}\right)
\quad \text{where} \quad  \sigma'(\cL_{0,0}):=\{0\} 
\end{equation}
and 
$ \sigma''(\cL_{0,0}) $ is formed by non zero eigenvalues at a positive distance from $ 0 $. 
% := \big\{ \lambda_k^\sigma(0),\ k = 0,1 \, , \sigma = \pm   \big\} $.
 In this paper we study the spectrum of $ {\cal L}_{\mu, \e} $ near $ 0 $ (an interesting 
 problem concerns the splitting of the other non zero eigenvalues, see \cite{CDT}).  
 By Kato's perturbation theory (see  Lemma \ref{lem:Kato1} below)
for any $\mu, \e \neq 0$  sufficiently small, the perturbed spectrum
$\sigma\left(\cL_{\mu,\e}\right) $ admits a disjoint decomposition as 
$
\sigma\left(\cL_{\mu,\e}\right) = \sigma'\left(\cL_{\mu,\e}\right) \cup \sigma''\left(\cL_{\mu,\e}\right) $
where $ \sigma'\left(\cL_{\mu,\e}\right)$  consists of 4 eigenvalues close to 0. 
We denote by $\cV_{\mu, \e}$   the spectral subspace associated with  $\sigma'\left(\cL_{\mu,\e}\right) $, which   has  dimension 4 and it is  invariant by $\cL_{\mu, \e} 
: \mathcal{V}_{\mu,\e} \to  \mathcal{V}_{\mu,\e} $.
The next Theorem \ref{abstractdec} provides 
the complete splitting
of the eigenvalues of the $4\times 4$ matrix 
which represents
 the operator  $ \cL_{\mu,\e} : \mathcal{V}_{\mu,\e} \to  \mathcal{V}_{\mu,\e} $.
Before stating it, we first introduce a notation used through all the paper:
\\[1mm] {\bf Notation: }
we denote by  $\cO(\mu^{m_1}\e^{n_1},\dots,\mu^{m_p}\e^{n_p})$, $ m_j, n_j \in \bN  $ (for us $\bN:=\{1,2,\dots\} $),  
analytic functions of $(\mu,\e)$ with values in a Banach space $X$ which satisfy, for some $ C > 0 $ uniform for  $\tth$ in any compact set of $(0, + \infty)$, the bound
 $\|\cO(\mu^{m_j}\e^{n_j})\|_X \leq C \sum_{j = 1}^p |\mu|^{m_j}|\e|^{n_j}$ 
 for small values of $(\mu, \e)$. 
Similarly we denote $r_k (\mu^{m_1}\e^{n_1},\dots,\mu^{m_p}\e^{n_p}) $
scalar  functions  $\cO(\mu^{m_1}\e^{n_1},\dots,\mu^{m_p}\e^{n_p})$ which are  also {\it real} analytic.

\smallskip

Our complete  spectral result is the following: 

\begin{teo}\label{abstractdec} {\bf (Complete Benjamin-Feir spectrum)}
There exist $ \e_0, \mu_0>0 $, uniformly for the 
depth $ \tth  $ in any compact set of $ (0,+\infty )$, 
 such that, 
for any  $ 0\, <\, \mu < \mu_0 $ and $ 0\leq \e < \e_0 $, 
 the operator $ \cL_{\mu,\e} : \mathcal{V}_{\mu,\e} \to  \mathcal{V}_{\mu,\e} $ 
 can be represented by a $4\times 4$ matrix of the form 
 \begin{equation} \label{matricefinae}
  \begin{pmatrix} \mathtt{U}  & 0 \\  0 &  \mathtt{S} \end{pmatrix},
 \end{equation}
 where $ \mathtt{U} $ and $ \mathtt{S} $ are  $ 2 \times 2 $
matrices, with identical purely imaginary diagonal entries each, of the form 
 \begin{align}
\label{Udav}
& \mathtt{U} = {\begin{pmatrix} 
\im  \big((\ch- \tfrac12\te_{12})\mu+ r_2(\mu\e^2,\mu^2\e,\mu^3) \big) & -\te_{22}\frac{\mu}{8}(1+r_5(\e,\mu)) \\
-\frac{\mu}{8} \DeltaBF(\tth; \mu, \e)  & \im  \big( (\ch-\tfrac12\te_{12})\mu+ r_2(\mu\e^2,\mu^2\e,\mu^3) \big)  
 \end{pmatrix}}\, ,   \\
 & \label{S} \mathtt{S} = 
 \begin{pmatrix} \im\ch\mu+ \im { r_9(\mu\e^2, \mu^2\e)}  & \tanh(\tth \mu)+ {r_{10}(\mu\e)} \\ 
-\mu + {r_8(\mu\e^2, \mu^3 \e)} &    \im\ch\mu+\im {r_9(\mu\e^2,\mu^2\e) }
 \end{pmatrix}\, .
\end{align}
The Benjamin-Feir discriminant function $\DeltaBF(\tth;\mu,\e)$ in \eqref{Udav} 
has the form \eqref{DWB}, 
%\begin{equation}\label{primosegno}
%\DeltaBF(\tth;\mu,\e) = 8\teWB (\tth) \e^2+ 8\etaWB (\tth) 
%\e^4 + r_1(\e^5,\mu\e^3)-\te_{22} (\tth) \mu^2\big(1+r_1''(\e,\mu)\big) 
%\end{equation}
where $\teWB (\tth) $ is the Whitham-Benjamin function 
in \eqref{funzioneWB},  the coefficient 
$\te_{22} (\tth) > 0 $ is  in \eqref{te22}, and  
\begin{equation}
\label{etaWB.c}
\begin{aligned}
\etaWB(\tth)& := {\scriptsize \begin{matrix}
\frac1{256} \ch^{-19} (\ch^2+1)^{-1} \big(\ch^4-2  (\ch^4+1) \tth \ch^2+ (\ch^4-1)^2 \tth^2\big)^{-4} \end{matrix} } \cdot \\
&\cdot {\scriptsize \begin{matrix} \Big[\big(476 \ch^{26}+532 \ch^{24}-3973 \ch^{22}-4361 \ch^{20}+17173 \ch^{18}+17557 \ch^{16}-37778 \ch^{14}-37754 \ch^{12}\end{matrix} } \\
&  {\scriptsize \begin{matrix}-8898 \ch^{10}-8442 \ch^8+855 \ch^6+963 \ch^4+81 \ch^2+81\big) \ch^{16}-8 \big(432 \ch^{30}+480 \ch^{28}-3057 \ch^{26}\end{matrix} }\\
&  {\scriptsize \begin{matrix}-3361 \ch^{24}+11452 \ch^{22}+11544 \ch^{20}-25989 \ch^{18}-25749 \ch^{16}+3928 \ch^{14}+4384 \ch^{12}-555 \ch^{10}\end{matrix} }\\
&  {\scriptsize \begin{matrix}-171 \ch^8+396 \ch^6+504 \ch^4+81 \ch^2+81\big) \tth \ch^{14}
+ 4 \big(2612 \ch^{34}+2876 \ch^{32}-15531 \ch^{30}-17239 \ch^{28}\end{matrix} }\\
&  {\scriptsize \begin{matrix}+44053 \ch^{26}+44277 \ch^{24}-82191 \ch^{22}  -81283 \ch^{20}+5921 \ch^{18}\end{matrix} }\\
&  {\scriptsize \begin{matrix}+9353 \ch^{16}+6831 \ch^{14}+10203 \ch^{12}+23007 \ch^{10}+24975 \ch^8-117 \ch^6+639 \ch^4+567 \ch^2+567\big) \tth^2 \ch^{12}\end{matrix} }\\
&  {\scriptsize \begin{matrix}-8 \big(2128 \ch^{38}+2304 \ch^{36}-11055 \ch^{34}-12463 \ch^{32}+19370 \ch^{30}+20126 \ch^{28}\end{matrix} } \\
&  {\scriptsize \begin{matrix}-5794 \ch^{26}-5594 \ch^{24}-51646 \ch^{22}-49154 \ch^{20}+57448 \ch^{18}
 +59416 \ch^{16}-28802 \ch^{14}-26582 \ch^{12}+32754 \ch^{10}\end{matrix} }\\
 & {\scriptsize \begin{matrix} +33786 \ch^8-2682 \ch^6-1926 \ch^4+567 \ch^2+567\big) \tth^3 \ch^{10}\end{matrix} } \\
&  {\scriptsize \begin{matrix}+2 \big(8020 \ch^{42}+8380 \ch^{40}-41279 \ch^{38}-46795 \ch^{36}+49331 \ch^{34}+57267 \ch^{32}+86052 \ch^{30}+84516 \ch^{28}\end{matrix} } \\
&  {\scriptsize \begin{matrix}-274180 \ch^{26}-267924 \ch^{24}+176654 \ch^{22}+178806 \ch^{20}+104434 \ch^{18}+101746 \ch^{16}-211660 \ch^{14}-205420 \ch^{12}\end{matrix} }\\
&  {\scriptsize \begin{matrix}+181752 \ch^{10}+181152 \ch^8-24615 \ch^6-20835 \ch^4+2835 \ch^2+2835\big) \tth^4 \ch^8\end{matrix} } \\
&  {\scriptsize \begin{matrix}-8 \big(\ch^4-1\big)^2 \big(1072 \ch^{38}+1024 \ch^{36}-4711 \ch^{34}-5399 \ch^{32}+4546 \ch^{30}+5302 \ch^{28}-4162 \ch^{26}-3850 \ch^{24}\end{matrix} }\\
&  {\scriptsize \begin{matrix}+13442 \ch^{22}+15070 \ch^{20}-11088 \ch^{18}-10992 \ch^{16}-9066 \ch^{14}-7806 \ch^{12}+29442 \ch^{10}+29466 \ch^8\end{matrix} } \\
&  {\scriptsize \begin{matrix}-5706 \ch^6-4950 \ch^4+567 \ch^2+567\big)\ch^6 \tth^5\end{matrix} } \\
&  {\scriptsize \begin{matrix}+  4\left(\ch^4-1\right)^4 \big(564 \ch^{34}+380 \ch^{32}-3755 \ch^{30}-4087 \ch^{28}+8917 \ch^{26}+9557 \ch^{24}-15215 \ch^{22}\end{matrix} } \\
&  {\scriptsize \begin{matrix}-14499 \ch^{20}+9953 \ch^{18}+10313 \ch^{16}-8273 \ch^{14}-7109 \ch^{12}+24159 \ch^{10}+24111 \ch^8-6165 \ch^6 \end{matrix} }\\
&  {\scriptsize \begin{matrix}-5409 \ch^4+567 \ch^2+567\big) \tth^6 \ch^4\end{matrix} }  {\scriptsize \begin{matrix}-8 \left(\ch^4-1\right)^6 \big(16 \ch^{30}-32 \ch^{28}-521 \ch^{26}-489 \ch^{24}+1252 \ch^{22}+1344 \ch^{20}\end{matrix} } \\
& {\scriptsize \begin{matrix} -1853 \ch^{18}-1901 \ch^{16}-512 \ch^{14}-344 \ch^{12}+3333 \ch^{10}+3285 \ch^8-900 \ch^6-792 \ch^4+81 \ch^2+81\big) \tth^7 \ch^2\end{matrix} }\\
&  {\scriptsize \begin{matrix}-\left(\ch^4-1\right)^8 
\big(
       36 \ch^{26}+ 108 \ch^{24} + 261 \ch^{22}  +73 \ch^{20}-1429 \ch^{18}-1237 \ch^{16} +3666\ch^{14}\end{matrix} }\\
&  {\scriptsize \begin{matrix}       +3450 \ch^{12}-3774 \ch^{10}-3654 \ch^{8}+873 \ch^6
       +765 \ch^4-81 \ch^2-81\big) \tth^8 \Big]\end{matrix} } \, . 
\end{aligned}
\end{equation}
\end{teo}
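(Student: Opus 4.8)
The plan is to follow the symplectic Kato reduction scheme of \cite{BMV3}, pushing every expansion from order $\e^2$ to order $\e^4$. First I would invoke Kato's perturbation theory (Lemma \ref{lem:Kato1}) to produce, for small $(\mu,\e)$, the spectral projector onto the four-dimensional invariant subspace $\mathcal{V}_{\mu,\e}$ via the Riesz integral around $0$, together with a transformation operator $U_{\mu,\e}$, close to the identity, intertwining the unperturbed and perturbed projectors. Transporting the unperturbed basis $f_1^\pm, f_0^\pm$ of \eqref{funperturbed} through $U_{\mu,\e}$ yields a basis of $\mathcal{V}_{\mu,\e}$, and $\cL_{\mu,\e}$ restricted to $\mathcal{V}_{\mu,\e}$ is represented by a $4\times4$ matrix analytic in $(\mu,\e)$. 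The crucial structural point, established as in \cite{BMV1,BMV2}, is that $U_{\mu,\e}$ can be chosen \emph{symplectic} and \emph{reversibility preserving}: then the representing matrix takes the form $\mathtt{J}\,\mathtt{B}_{\mu,\e}$ with $\mathtt{B}_{\mu,\e}$ self-adjoint for the scalar product \eqref{scalar} and subject to the parity constraints coming from the involution $\bro$. These facts alone force the purely imaginary diagonal entries, the real antidiagonal entries, and---through the Krein signature of the $0$-cluster---confine any escape from the imaginary axis to a single $2\times2$ block.

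Second, I would carry out the block diagonalization decoupling the $\mathtt{U}$ block (carried essentially by the $f_1^\pm$ directions) from the $\mathtt{S}$ block (the $f_0^\pm$ directions). Since at $\e=0$ the two pairs have distinct leading eigenvalues---$\mathtt{S}$ sits near $\im\ch\mu$ with the genuine $\tanh(\tth\mu)$ entry, while $\mathtt{U}$ carries the slow Benjamin-Feir pair---a further near-identity symplectic and reversibility-preserving conjugation (the KAM-type step of \cite{BMV2}) removes the off-diagonal coupling, producing the block form \eqref{matricefinae}. This conjugation must retain the Hamiltonian and reversible structure, so that the reduced block keeps the shape \eqref{Udav} with purely imaginary trace and a single nontrivial real entry $[\mathtt{U}]_{21}=-\tfrac{\mu}{8}\DeltaBF(\tth;\mu,\e)$.

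Third, the heart of the matter is to compute the entries of $\mathtt{B}_{\mu,\e}$, and hence of $\mathtt{U}$, to fourth order in $\e$. I would insert the fourth-order Stokes expansion of Appendix \ref{sec:App3} into the coefficients $a_\e(x)$, $p_\e(x)$ and the constant $\ttf_\e$ in \eqref{cLepsilon}, expand $\mathcal{B}_{\mu,\e}$ and the Kato basis vectors in powers of $(\mu,\e)$, and evaluate the scalar products of $\mathcal{B}_{\mu,\e}$ against the basis term by term up to order $\e^4$ and $\mu^2$. Reversibility together with the parity $x\mapsto-x$ kills every pure odd power of $\e$, which explains why no $\e^3$ term survives in \eqref{DWB}. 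At $\tth=\tthWB$ the quadratic coefficient $\teWB(\tthWB)$ vanishes by \eqref{funzioneWB}, so the sign of $\DeltaBF$ is governed by the quartic coefficient $\etaWB$; one must therefore compute $\etaWB(\tth)$ exactly, arriving at \eqref{etaWB.c}, and check $\etaWB(\tthWB)>0$.

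The main obstacle is precisely this last computation. Producing \eqref{etaWB.c} requires tracking an enormous number of fourth-order contributions through the two nested conjugations, and the only way to keep it tractable is to organize the calculation as an effective symbolic algorithm and to exploit systematically the analytical cancellations in the symplectic Kato reduction, as announced in the introduction. The delicate bookkeeping---ensuring that every $\e^4$ contribution from the Stokes profile, from the transformation operators, and from the block decoupling is accounted for, and that the structural constraints are used to discard whole families of terms a priori---is where essentially all the difficulty resides; the final verification that $\etaWB(\tthWB)$ is strictly positive then yields the claimed instability.
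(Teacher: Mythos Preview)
Your outline is correct and follows essentially the same route as the paper: Kato projectors with symplectic, reversibility-preserving transformation operators; representation of $\cL_{\mu,\e}|_{\mathcal V_{\mu,\e}}$ as $\tJ_4\tB_{\mu,\e}$; block decoupling into the $\mathtt U$ and $\mathtt S$ blocks; and then the fourth-order computation of the entries using the Stokes expansion of Appendix~\ref{sec:App3}, with the parity/reversibility structure killing the odd-$\e$ contributions.

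One point deserves sharpening. You describe the decoupling as ``a further near-identity symplectic and reversibility-preserving conjugation'', but in the paper this is preceded by a \emph{singular} symplectic rescaling $Y=\mathrm{diag}(\mu^{1/2},\mu^{-1/2},\mu^{1/2},\mu^{-1/2})$ (Lemma~\ref{decoupling1prep}). Without it the off-diagonal block $F$ contains the entry $\tf_{11}\e$, which is not small relative to the $\cO(\mu)$ gap between the $E$ and $G$ blocks, so a genuinely near-identity conjugation cannot remove it. After rescaling, all entries of $F^{(1)}$ acquire an extra factor of $\mu$, the Sylvester equation $D_1 X - X D_0 = -\tJ_2 F^{(1)}$ becomes solvable with $X=\cO(\e)$, and the conjugation by $\exp(S^{(1)})$ is then truly near-identity. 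The new fourth-order content---the coefficients $x_{ij}^{(1)},x_{21}^{(3)},x_{22}^{(3)}$ of $X$ and the contributions $\tilde\eta_{11}^{(a)},\tilde\eta_{11}^{(b)}$ from the second and third Lie brackets---is exactly what assembles into the formula \eqref{etaWB} for $\etaWB$, and hence \eqref{etaWB.c}. Apart from this omitted rescaling step, your plan matches the paper's proof.
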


A 
numerical calculus performed  by Mathematica reveals that 
\begin{equation}\label{etaWBfinale}
\etaWB (\tthWB) \approx 5.65555 > 0  
\end{equation}
and we deduce Theorem \ref{teoin} since   
eigenvalues with nonzero real part appear whenever the  Benjamin-Feir discriminant 
$ \DeltaBF(\tth;\mu,\e)  > 0 $.  In the 
following corollary of Theorem \ref{abstractdec}
we describe the unstable eigenvalues of $ {\cal L}_{\mu,\epsilon} $ 
at the critical  depth $ \tth = \tthWB $ 
 (for simplicity we avoid to state the result for {\it any}  
$ (\tth, \e) $ satisfying \eqref{tunpo}).

\begin{teo}\label{corollario}
 {\bf (Benjamin-Feir unstable eigenvalues at} $ \tth = \tthWB .$$	 \bf )$
There exist  $ \e_1, \mu_0 > 0 $  and 
an analytic function $\underline \mu(\cdot): [0,\e_1)\to [0,\mu_0)$ 
of the form 
 \begin{equation}\label{barmuepbis}
 \underline  \mu(\e)  = 
   \underline{c} \, \e^2 \big(1+r(\e)\big)   \, , 
   \quad \underline{c} := 	
   \sqrt{ \frac{8 \etaWB (\tthWB)}{\te_{22}(\tthWB) }} \, , 
 \end{equation}
such that, 
 for any  $  \e \in [0, \e_1)  $, the 
 operator  $\cL_{\mu,\e}$ has two eigenvalues
  $\lambda^\pm_1 (\mu,\e)$
 \begin{equation} \label{eigelemubis}
 \begin{cases} 
     \im \frac12 \breve{\mathtt c}_\tth \mu+\im r_2(\mu\e^2,\mu^2\e,\mu^3)  \pm \tfrac18 \mu \sqrt{\te_{22}(\tthWB)}  (1+r(\e,\mu))  \sqrt{\DeltaBF(\tthWB;\mu,\e) }
& 
0<\mu<\underline\mu(\e)\!\!\! \\[1.5mm]
    \im \frac12 \breve{\mathtt c}_\tth \mu+\im r_2(\mu\e^2,\mu^2\e,\mu^3)  \pm \im \tfrac18 \mu \sqrt{\te_{22}(\tthWB)}  (1+r(\e,\mu))  \sqrt{|\DeltaBF(\tthWB;\mu,\e)|} & 
  \underline\mu(\e)\leq \mu< \mu_0\, ,\!\!\!
\end{cases}\!\!\!
\end{equation}
with 
$\breve{\mathtt c}_\tth:=2 \ch- \te_{12}(\tth) >0$ 
and $ \DeltaBF(\tthWB;\mu,\e) = 
8\etaWB (\tthWB) \e^4 + r_1(\e^5,\mu\e^3)-\te_{22} (\tthWB) \mu^2\big(1+r_1''(\e,\mu)\big)  $. 
\end{teo}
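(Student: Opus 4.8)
The plan is to read the Benjamin--Feir pair $\lambda_1^\pm(\mu,\e)$ off the upper block $\mathtt U$ in \eqref{Udav}, which by Theorem \ref{abstractdec} carries exactly the two eigenvalues of $\cL_{\mu,\e}$ responsible for instability. Since $\mathtt U$ has the special form $\left(\begin{smallmatrix}\im\alpha & b\\ c & \im\alpha\end{smallmatrix}\right)$ with equal diagonal entries $\im\alpha=\im\big((\ch-\tfrac12\te_{12})\mu+r_2(\mu\e^2,\mu^2\e,\mu^3)\big)$, off-diagonal entries $b=-\te_{22}\tfrac{\mu}{8}(1+r_5(\e,\mu))$ and $c=-\tfrac{\mu}{8}\DeltaBF(\tth;\mu,\e)$, its eigenvalues are simply $\im\alpha\pm\sqrt{bc}$. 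First I would compute $bc=\tfrac{\mu^2}{64}\te_{22}(1+r_5)\DeltaBF$ and factor the square root as $\sqrt{bc}=\tfrac{\mu}{8}\sqrt{\te_{22}}\,(1+r(\e,\mu))\sqrt{\DeltaBF}$ by setting $1+r(\e,\mu):=\sqrt{1+r_5(\e,\mu)}$, a well-defined real analytic factor for $(\e,\mu)$ small. Using $\ch-\tfrac12\te_{12}=\tfrac12\breve{\mathtt c}_\tth$ this already exhibits the two candidate branches of \eqref{eigelemubis}, with the sign of $\DeltaBF$ deciding whether the perturbation $\pm\sqrt{bc}$ of the common value $\im\alpha$ is real (instability) or imaginary (neutral stability).

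Next I would locate, at $\tth=\tthWB$, the threshold $\underline\mu(\e)$ where $\DeltaBF$ changes sign. Here I would use $\teWB(\tthWB)=0$ (Figure \ref{graficoe112}), so that by \eqref{DWB}
\[
\DeltaBF(\tthWB;\mu,\e)=8\etaWB(\tthWB)\e^4+r_1(\e^5,\mu\e^3)-\te_{22}(\tthWB)\mu^2\big(1+r_1''(\e,\mu)\big),
\]
with $\te_{22}(\tthWB)>0$ and, by the Mathematica evaluation \eqref{etaWBfinale}, $\etaWB(\tthWB)>0$. The leading balance $8\etaWB\e^4-\te_{22}\mu^2$ dictates the rescaling $\mu=\e^2\nu$: dividing by $\e^4$ and noting that each remainder supplies the required extra powers ($r_1(\e^5,\mu\e^3)=r_1(\e^5,\nu\e^5)=\cO(\e^5)$ and $\te_{22}\mu^2 r_1''=\cO(\e^5)$), the equation $\DeltaBF=0$ turns into $G(\nu,\e):=8\etaWB(\tthWB)-\te_{22}(\tthWB)\nu^2+\cO(\e)=0$, a real analytic function with $G(\underline c,0)=0$ and $\partial_\nu G(\underline c,0)=-2\te_{22}(\tthWB)\,\underline c\neq0$, where $\underline c=\sqrt{8\etaWB(\tthWB)/\te_{22}(\tthWB)}$. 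The analytic implicit function theorem then yields a unique analytic solution $\nu(\e)=\underline c\,(1+r(\e))$, hence $\underline\mu(\e)=\e^2\nu(\e)=\underline c\,\e^2(1+r(\e))$ as in \eqref{barmuepbis}.

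Finally I would upgrade the transversality at this root to the global sign of $\DeltaBF$ on $(0,\mu_0)$. Since $\partial_\mu\DeltaBF(\tthWB;\underline\mu(\e),\e)=\e^2\partial_\nu G(\nu(\e),\e)<0$, the discriminant crosses zero decreasingly at $\underline\mu(\e)$; combining this with the fact that for $\mu$ of order one the dominant term $-\te_{22}\mu^2$ renders $\DeltaBF<0$, I would conclude $\DeltaBF(\tthWB;\mu,\e)>0$ on $[0,\underline\mu(\e))$ and $\DeltaBF(\tthWB;\mu,\e)<0$ on $(\underline\mu(\e),\mu_0)$. Inserting these two cases into $\im\alpha\pm\tfrac{\mu}{8}\sqrt{\te_{22}}\,(1+r)\sqrt{\DeltaBF}$ produces precisely the dichotomy \eqref{eigelemubis}: a pair of eigenvalues with nonzero real part (the shrunken figure ``8'' of Figure \ref{fig:hWB}) for $0<\mu<\underline\mu(\e)$, and purely imaginary eigenvalues for $\underline\mu(\e)\le\mu<\mu_0$. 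I expect the one genuinely delicate step, once Theorem \ref{abstractdec} is granted, to be this rescaled implicit function argument: because both $\DeltaBF$ and $\partial_\mu\DeltaBF$ vanish at $(\mu,\e)=(0,0)$, the threshold cannot be obtained by a direct implicit function theorem, and one must first extract the correct $\e^2$ scaling and verify that the remainders $r_1,r_1''$ are genuinely subordinate after division by $\e^4$.
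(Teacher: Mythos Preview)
Your proposal is correct and follows essentially the same route as the paper: read the eigenvalues of $\cL_{\mu,\e}$ off the $2\times2$ block $\mathtt U$ in \eqref{Udav} as $\im\alpha\pm\sqrt{bc}$, then locate the zero of $\DeltaBF(\tthWB;\mu,\e)$ by the rescaling $\mu=c\,\e^2$ and the analytic implicit function theorem applied to $\e^{-4}\DeltaBF(\tthWB;c\e^2,\e)$. Your treatment is in fact a bit more explicit than the paper's (you spell out $\sqrt{1+r_5}$ for the factor $1+r(\e,\mu)$ and argue the global sign of $\DeltaBF$ on $(0,\mu_0)$ via transversality at $\underline\mu(\e)$), but the argument is the same.
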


\noindent{\it Proof of Theorem \ref{corollario}}. % from Theorem \ref{abstractdec}}.
Since 
$ \DeltaBF(\tthWB;0,\e) = 
8\etaWB (\tthWB) \e^4 (1+r(\e)) $, it results that
$ \DeltaBF(\tthWB;\mu,\e) > 0 $, for any $ \mu \in (0, \underline\mu(\e)) $ as in \eqref{barmuepbis} and $ \e $ small enough. 
The unstable eigenvalues $\lambda^\pm_1 (\mu,\e)$ in \eqref{barmuepbis}
are  those of the matrix $\mathtt{U}$ in \eqref{Udav}.
In order to determine 
the value $ \mu =  \underline\mu(\e) $ such that % the eigenvalues 
$\lambda^\pm_1 (\mu,\e)$ touches the imaginary axis far from the origin, we set
$ \mu = c \e^2 $ so that 
 $ \DeltaBF(\tthWB;\mu,\e)  = 0 $ if and only if 
$$ 
0 = \e^{-4} \DeltaBF(\tthWB;c \e^2,\e)  = 
8\etaWB (\tthWB)  (1+r(\e))+ 
r_1(c \e )-\te_{22} (\tthWB) c^2 \big(1+r_1(\e)\big)   \, .
$$
This equation is 
solved by an analytic  function $ \e \mapsto c_\e = \underline{c} (1+r(\e)) $
with $ \underline{c} $ defined in \eqref{barmuepbis}. 
 \qed
\smallskip

\medskip

We conclude this section describing the main steps of the proof and 
the organization of the paper. 
\\[1mm]
{\bf Ideas and scheme of proof.} In Section \ref{Katoapp} we shortly report the results of 
\cite{BMV3} which reduce the problem to determine the eigenvalues of 
the $ 4 \times 4 $ Hamiltonian and reversible matrix 
$ \tL_{\mu,\e} =\tJ\tB_{\mu,\e} $ in \eqref{Lmuepsi}.
Then in Section \ref{sec:Bmue}  we provide the Taylor 
expansion of  the matrix $\tB_{\mu,\e} $  in \eqref{tocomputematrix0}
at an order of accuracy higher  than 
 in \cite[Proposition 4.3]{BMV3}.  
 In particular in Proposition 
\ref{BexpG} we compute the coefficients of the Taylor expansion up to order $ 4 $
in 
 the matrix entries \eqref{BinG1}-\eqref{BinG3} 
 which enter in the constant $\etaWB (\tth) $ (cfr. \eqref{etaWB}) 
appearing in the Benjamin-Feir discriminant function \eqref{DWB}. 
This explicit computation requires the knowledge of the 
Taylor expansions of the Kato spectral projectors $ P_{\mu, \epsilon} $ up to cubic order,
that we provide in Section \ref{secPmue} and prove in Appendix \ref{conticinoleggeroleggero}, relying on complex analysis. In order to perform effective computations 
we observe several analytical cancellations in Sections \ref{secPgotmue} and
\ref{secProof31}, which reduce considerably the number of explicit scalar products 
to compute. 
 The proof of Proposition 
\ref{BexpG} % se delicate  computations 
requires ultimately the 
knowledge of the Taylor expansion up to order four 
of the Levi-Civita and Alinach good unknown transformations \eqref{LC} 
 and of the functions $ a_\epsilon (x), p_\epsilon (x) $ in the operator $ {\cal L}_{\mu,\e} $
 in \eqref{WW}, which are derived in Appendix \ref{sec:exape}. 
 In turn, such expansions follow by those %  expansions of
  of the Stokes waves 
that we prove 
in Appendix \ref{sec:App31}. Finally in Section \ref{sec:block} we 
implement the block-diagonalization procedure of \cite[Section 5]{BMV3} which provides the 
block-diagonal matrix \eqref{Udav}
%analytical 
and we analytically compute the   expansion  of the 
Benjamin-Feir discriminant function $\DeltaBF(\tth;\mu,\e)$, in particular of the 
constant $ \etaWB (\tth) $ in \eqref{etaWB} and thus \eqref{etaWB.c}.

We point out that  the constant 
$ \etaWB $ in  \eqref{etaWB} is {\it analytically} computed in terms of the
coefficients \eqref{expXentries} which, in turn,
 are expressed 
%with coefficients in   \eqref{mengascoeffs} 
in  terms of the  coefficients
% $\te_{12}$, $ \te_{22} $, $\te_{11} $, 
$ \phi_{21}, \phi_{22}, \gamma_{12}, \eta_{12}, \gamma_{11}, \phi_{11}, \gamma_{22}, \phi_{12}, \tf_{11} $, and ultimately 
$\eta_{2}^{[0]}, \ldots, \eta_{4}^{[4]}, \psi_{2}^{[2]}, \dots,  \psi_{4}^{[4]}  $, 
$ c_2, c_4 $ 
of the Stokes wave provided in Appendix \ref{sec:App31}. Then we 
used Mathematica to compute how the  coefficients of the Stokes wave, of the functions $a_\e(x)$, $p_\e(x)$ in \eqref{WW},  and  $\etaWB(\tth)$ in \eqref{etaWB.c} depend on $\tth$, starting from their algebraic formulas. 
The Mathematica code employed can be found at 
\url{https://git-scm.sissa.it/amaspero/benjamin-feir-instability}.

%Along the paper we have computed analytically used Mathematica to compute how the  coefficients of the Stokes wave, of the functions $a_\e(x)$, $p_\e(x)$,  and  $\etaWB(\tth)$ in \eqref{etaWB.c} depend on $\tth$, starting from their algebraic formula. 
%The Mathematica code that we developed can be found at 
%\url{https://git-scm.sissa.it/amaspero/benjamin-feir-instability}. 

\bigskip

\noindent{\bf Acknowledgments.}
Research supported by PRIN 2020 (2020XB3EFL001) 
``Hamiltonian and dispersive PDEs''.
\\[1mm]
{\bf Data availability statement:} the Mathematica code used to perform some computations is available  at https://git-scm.sissa.it/amaspero/benjamin-feir-instability.

\section{Perturbative approach to  separated eigenvalues}\label{Katoapp}

In this section we shortly report  the spectral  procedure developed in \cite{BMV3,BMV1}
to study the splitting of the eigenvalues of 
$ \cL_{\mu,\e} $ close to $ 0 $ for small values of $ \mu $ and $ \e $. 
First of all we decompose the operator $ \cL_{\mu,\e}$ in \eqref{WW} as 
  \begin{equation}\label{calL}
 \cL_{\mu,\e}  = \im  \ch \mu + \sL_{\mu,\e} \, , \qquad \mu > 0 \, ,  
\end{equation}
where 
$\sL_{\mu,\e}$ is the Hamiltonian and reversible operator
\begin{equation}\label{calL.ham}
\sL_{\mu,\e} = 
 \cJ \, {\cal B}_{\mu, \e} \, , \quad
{\cal B}_{\mu, \e}
:= \begin{bmatrix} 1+a_\e(x) & -(\ch+p_\e(x))\pa_x-\im \mu\,  p_\e(x)  \\
 \pa_x\circ (\ch+p_\e(x)) + \im \mu \,  p_\e(x) & |D+ \mu| \, \tanh\big((\tth+\ttf_\e)|D+\mu|\big) \end{bmatrix}
\end{equation}
with ${\cal B}_{\mu, \e}$ selfadjoint. 
The operator
 $\sL_{\mu,\e}$ is analytic with respect to $ (\mu, \e) $ as $ \cL_{\mu,\e} $ is. 
The operator $ \sL_{\mu,\e}  : Y \subset X \to X $   
has domain $Y:=H^1(\mathbb{T}):=H^1(\mathbb{T},\bC^2)$ and range $X:=L^2(\mathbb{T}):=L^2(\mathbb{T},\bC^2)$.

In view of \eqref{calL}, the spectrum  
$$ 
\sigma ({\mathcal L}_{\mu,\e}) = \im \ch \mu + \sigma (\sL_{\mu,\e}) 
$$ 
and  we focus on studying the spectrum of  $ \sL_{\mu,\e}$.
The unperturbed  $\sL_{0,0} = \cL_{0,0} $ has zero as isolated eigenvalue with algebraic multiplicity 4, geometric multiplicity 3 and generalized kernel spanned by the vectors  $\{f^+_1, f^-_1, f^+_0, f^-_0\}$ in \eqref{funperturbed}.
The following lemma is  \cite[Lemmata 3.1  and 3.2]{BMV3}. 

\begin{lem}\label{lem:Kato1}
{\bf (Kato theory for separated eigenvalues of Hamiltonian operators)} 
 Let $\Gamma$ be a closed, counterclockwise-oriented curve around $0$ in the complex plane separating $\sigma'\left(\sL_{0,0}\right)=\{0\}$
  and the other part of the spectrum $\sigma''\left(\sL_{0,0}\right)$ in \eqref{spettrodiviso0}.
There exist $\e_0, \mu_0>0$  such that for any $(\mu, \e) \in B(\mu_0)\times B(\e_0)$  the following statements hold:
\\[1mm] 
1. The curve $\Gamma$ belongs to the resolvent set of 
the operator $\sL_{\mu,\e} : Y \subset X \to X $ defined in \eqref{calL.ham}.
\\[1mm] 
2.
The operators
\begin{equation}\label{Pproj}
 P_{\mu,\e} := -\frac{1}{2\pi\im}\oint_\Gamma (\sL_{\mu,\e}-\lambda)^{-1} \de\lambda : X \to Y 
\end{equation}  
are well defined projectors commuting  with $\sL_{\mu,\e}$,  i.e. 
$ P_{\mu,\e}^2 = P_{\mu,\e} $ and 
$ P_{\mu,\e}\sL_{\mu,\e} = \sL_{\mu,\e} P_{\mu,\e} $. 
The map $(\mu, \epsilon)\mapsto P_{\mu,\epsilon}$ is  analytic from 
$B({\mu_0})\times B({\epsilon_0})$
 to $ \cL(X, Y)$. 
 The projectors $P_{\mu,\e}$ are 
 skew-Hamiltonian and reversibility preserving, i.e. 
 \begin{equation}\label{propPU}
  \cJ P_{\mu,\e}=P_{\mu,\e}^*\cJ \, , \quad 
\bro P_{\mu,\e} = P_{\mu,\e}  \bro \, .
\end{equation}
 Finally 
$P_{0,\e}$ is a  real operator, i.e. $\bar{P_{0,\e}}=P_{0,\e} $. 
\\[1mm] 
3.
The domain $Y$  of the operator $\sL_{\mu,\e}$ decomposes as  the direct sum
$$
Y= \mathcal{V}_{\mu,\e} \oplus \text{Ker}(P_{\mu,\e}) \, , \quad \mathcal{V}_{\mu,\e}:=\text{Rg}(P_{\mu,\e})=\text{Ker}(\uno-P_{\mu,\e}) \, ,
$$
of   closed invariant  subspaces, namely 
$ \sL_{\mu,\e} : \mathcal{V}_{\mu,\e} \to \mathcal{V}_{\mu,\e} $, $
\sL_{\mu,\e} : \text{Ker}(P_{\mu,\e}) \to \text{Ker}(P_{\mu,\e}) $.  
Moreover 
$$
\begin{aligned}
&\sigma(\sL_{\mu,\e})\cap \{ z \in \bC \mbox{ inside } \Gamma \} = \sigma(\sL_{\mu,\e}\vert_{{\mathcal V}_{\mu,\e}} )  = \sigma'(\sL_{\mu, \e}) , \\
&\sigma(\sL_{\mu,\e})\cap \{ z \in \bC \mbox{ outside } \Gamma \} = \sigma(\sL_{\mu,\e}\vert_{Ker(P_{\mu,\e})} )  = \sigma''( \sL_{\mu, \e}) \, .
\end{aligned}
$$
4.  The projectors $P_{\mu,\e}$ 
are similar one to each other: the  transformation operators
%\footnote{
% The operator $(\uno-R)^{-\frac12} $ is defined, for any 
%operator $ R $ satisfying $\|R\|_{{\cL}(Y)}<1 $,  by the power series
%\begin{align}\label{rootexp}
% (\uno - R)^{-\frac12} :=  \sum_{k=0}^\infty {-1/2 \choose k}(-R)^k = \uno + \frac{1}{2}R + \frac{3}{8}R^2+\cO(R^3) \, .
%\end{align}
%}
\begin{equation} \label{OperatorU} 
U_{\mu,\e}   := 
\big( \uno-(P_{\mu,\e}-P_{0,0})^2 \big)^{-1/2} \big[ 
P_{\mu,\e}P_{0,0} + (\uno - P_{\mu,\e})(\uno-P_{0,0}) \big] 
\end{equation}
are bounded and  invertible in $ Y $ and in $ X $, with inverse
$$
U_{\mu,\e}^{-1}  = 
 \big[ 
P_{0,0} P_{\mu,\e}+(\uno-P_{0,0}) (\uno - P_{\mu,\e}) \big] \big( \uno-(P_{\mu,\e}-P_{0,0})^2 \big)^{-1/2} \, , 
$$
 and 
$ U_{\mu,\e} P_{0,0}U_{\mu,\e}^{-1} =  P_{\mu,\e}  
$ as well as $ U_{\mu,\e}^{-1} P_{\mu,\e}  U_{\mu,\e} = P_{0,0} $. 
The map $(\mu, \epsilon)\mapsto  U_{\mu,\e}$ is analytic from  $B(\mu_0)\times B(\epsilon_0)$ to $\cL(Y)$.
The transformation operators $U_{\mu,\e}$  are symplectic and reversibility preserving, namely
\begin{equation}\label{Usr}
   U_{\mu,\e}^* \cJ U_{\mu,\e}= \cJ \, , \qquad 
\bro U_{\mu,\e} = U_{\mu,\e}  \bro \, .
\end{equation}
 Finally $U_{0,\e}$ is a real operator, i.e. $\bar{U_{0,\e}}=U_{0,\e}$.
\\[1mm] 
5. The subspaces $\mathcal{V}_{\mu,\e}=\text{Rg}(P_{\mu,\e})$ are isomorphic one to each other: 
$
\mathcal{V}_{\mu,\e}=  U_{\mu,\e}\mathcal{V}_{0,0}.
$
 In particular $\dim \mathcal{V}_{\mu,\e} = \dim \mathcal{V}_{0,0}=4 $, for any 
 $(\mu, \e) \in B(\mu_0)\times B(\e_0)$.
\end{lem}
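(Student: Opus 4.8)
The plan is to read this statement as an instance of Kato's analytic perturbation theory for an isolated eigenvalue of finite algebraic multiplicity, upgraded so that the Hamiltonian and reversible structure of $\sL_{\mu,\e}=\cJ\cB_{\mu,\e}$ is transported to the Riesz projectors and to the transformation operators. Throughout I would fix the circle $\Gamma$ centered at $0$ to be symmetric under $\lambda\mapsto-\lambda$ and $\lambda\mapsto\bar\lambda$, a choice that will be used repeatedly. On the compact curve $\Gamma$, contained in the resolvent set of $\sL_{0,0}$, the resolvent $(\sL_{0,0}-\lambda)^{-1}\colon X\to Y$ is uniformly bounded, uniformly also for $\tth$ in a compact subset of $(0,+\infty)$. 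Writing
\[
\sL_{\mu,\e}-\lambda=(\sL_{0,0}-\lambda)\big(\uno+(\sL_{0,0}-\lambda)^{-1}(\sL_{\mu,\e}-\sL_{0,0})\big)
\]
and using that $(\mu,\e)\mapsto\sL_{\mu,\e}\in\cL(Y,X)$ is analytic with $\|\sL_{\mu,\e}-\sL_{0,0}\|_{\cL(Y,X)}\to0$, a Neumann series inverts the right factor for $(\mu,\e)$ small, giving item 1 and the joint analyticity of the resolvent. Defining $P_{\mu,\e}$ by \eqref{Pproj}, the idempotence $P_{\mu,\e}^2=P_{\mu,\e}$, the commutation with $\sL_{\mu,\e}$, the invariant splitting of item 3 and the analyticity of $(\mu,\e)\mapsto P_{\mu,\e}$ are the standard output of the holomorphic functional calculus; $\dim\cV_{\mu,\e}=4$ follows since the trace of $P_{\mu,\e}$ is integer-valued and continuous.

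For the structural identities \eqref{propPU} I would transport through \eqref{Pproj} the relations $\cJ\sL_{\mu,\e}=-\sL_{\mu,\e}^*\cJ$ and $\sL_{\mu,\e}\bro=-\bro\sL_{\mu,\e}$, which hold because $\cB_{\mu,\e}=\cB_{\mu,\e}^*$ and $\cL_{\mu,\e}$ is reversible. The Hamiltonian relation yields $\cJ(\sL_{\mu,\e}-\lambda)^{-1}=-(\sL_{\mu,\e}^*+\lambda)^{-1}\cJ$; the substitution $\lambda\mapsto-\lambda$, legitimate by the symmetry of $\Gamma$, then gives $\cJ P_{\mu,\e}=P_{\mu,\e}^*\cJ$. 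Since $\bro$ is antilinear, reversibility yields $\bro(\sL_{\mu,\e}-\lambda)^{-1}\bro^{-1}=-(\sL_{\mu,\e}+\bar\lambda)^{-1}$, and the substitution $\lambda\mapsto-\bar\lambda$ gives $\bro P_{\mu,\e}=P_{\mu,\e}\bro$. At $\mu=0$ the coefficients of $\sL_{0,\e}$ are real, so $\overline{(\sL_{0,\e}-\lambda)^{-1}}=(\sL_{0,\e}-\bar\lambda)^{-1}$ and the $\bar\lambda$-symmetry of $\Gamma$ gives $\overline{P_{0,\e}}=P_{0,\e}$.

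For items 4--5 I would invoke Kato's transformation function. Because $\|P_{\mu,\e}-P_{0,0}\|\to0$, the operator $\uno-(P_{\mu,\e}-P_{0,0})^2$ is invertible for small $(\mu,\e)$ and its inverse square root $R:=(\uno-(P_{\mu,\e}-P_{0,0})^2)^{-1/2}$ is analytic by functional calculus; the intertwining $U_{\mu,\e}P_{0,0}U_{\mu,\e}^{-1}=P_{\mu,\e}$, the stated formula for $U_{\mu,\e}^{-1}$, and item 5, $\cV_{\mu,\e}=U_{\mu,\e}\cV_{0,0}$, are then Kato's classical computation with $W:=P_{\mu,\e}P_{0,0}+(\uno-P_{\mu,\e})(\uno-P_{0,0})$. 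For the symplecticity $U_{\mu,\e}^*\cJ U_{\mu,\e}=\cJ$ I would use that $\cJ P=P^*\cJ$ for $P\in\{P_{\mu,\e},P_{0,0}\}$ propagates to $A:=(P_{\mu,\e}-P_{0,0})^2$ and $R$, namely $\cJ A=A^*\cJ$ and $\cJ R=R^*\cJ$, together with the purely algebraic identity $\widetilde W W=\uno-A$, where $\widetilde W$ is $W$ with the two projectors exchanged. These give $W^*\cJ W=\cJ(\uno-A)$, and since $A$ commutes with $W$ while $R^2=(\uno-A)^{-1}$, one computes $U_{\mu,\e}^*\cJ U_{\mu,\e}=W^*R^*\cJ RW=W^*\cJ R^2W=W^*\cJ W(\uno-A)^{-1}=\cJ$. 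The reversibility $\bro U_{\mu,\e}=U_{\mu,\e}\bro$ and the reality of $U_{0,\e}$ follow in the same way from $\bro P=P\bro$ and from $\overline{P_{0,\e}}=P_{0,\e}$, $\overline{P_{0,0}}=P_{0,0}$.

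The classical Kato steps are routine; the substance is the structure preservation, and the delicate point is the symplecticity of $U_{\mu,\e}$, which does not follow from generic Kato theory but requires combining the skew-Hamiltonian relations of \emph{both} projectors with the specific rational form of \eqref{OperatorU}. The computation is nonetheless clean once one isolates the identity $\widetilde W W=\uno-A$ and checks that the functional-calculus square root $R$ intertwines $\cJ$ correctly; the remaining care is the bookkeeping of the antilinear involution $\bro$ against the orientation of $\Gamma$, and the verification that all bounds are uniform for $\tth$ in a compact subset of $(0,+\infty)$.
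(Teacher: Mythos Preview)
Your proof is correct and complete. The paper itself does not prove this lemma: it merely states that it is \cite[Lemmata 3.1 and 3.2]{BMV3}, so there is no proof in the present paper to compare against. Your argument is precisely the one given in that reference: standard Kato--Riesz perturbation theory for the analytic family $\sL_{\mu,\e}$ via a Neumann series on the symmetric contour $\Gamma$, followed by the structure-transport computations $\cJ(\sL_{\mu,\e}-\lambda)^{-1}=-(\sL_{\mu,\e}^*+\lambda)^{-1}\cJ$ and $\bro(\sL_{\mu,\e}-\lambda)^{-1}=-(\sL_{\mu,\e}+\bar\lambda)^{-1}\bro$ to obtain \eqref{propPU}, and the Kato transformation-function identities $\widetilde W W=W\widetilde W=\uno-(P_{\mu,\e}-P_{0,0})^2$ combined with $W^*\cJ=\cJ\widetilde W$ and $R^*\cJ=\cJ R$ to obtain the symplecticity $U_{\mu,\e}^*\cJ U_{\mu,\e}=\cJ$. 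Your bookkeeping of the antilinear involution $\bro$ against the orientation of $\Gamma$ and of the commutation of $A$ with $W$ is accurate.
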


We consider the basis  of the subspace $ {\mathcal V}_{\mu,\e} = 
\mathrm{Rg} (P_{\mu,\e}) $, 
\begin{equation}\label{basisF}
\cF := \big\{ 
f_{1}^+(\mu,\e), \  f_{1}^- (\mu,\e), \   f_{0}^+(\mu,\e),\   f_{0}^-(\mu,\e) \big\} \, , \quad 
f_{k}^\sigma(\mu,\e) := U_{\mu,\e} f_{k}^\sigma \, , \ \sigma=\pm \, , \,k=0,1 \, , 
\end{equation}
obtained by applying  the transformation operators $ U_{\mu,\e} $ in \eqref{OperatorU} 
to the eigenvectors $ f_1^+, f_1^-, f_0^+, f_0^- $ in \eqref{funperturbed}
which form a basis of 
$ \mathcal{V}_{0,0} =\mathrm{Rg} (P_{0,0}) $.  
Exploiting the property \eqref{Usr}  that the transformation operators $ U_{\mu,\e}  $ 
are symplectic and reversibility preserving, it is proved in  \cite[Section 3]{BMV3}
 the following lemma.
  
\begin{lem} {\bf (Matrix representation of $ {\cal L}_{\mu,\e}$ on $ \mathcal{V}_{\mu,\e}$)}
The operator $ 	\sL_{\mu,\e}: \mathcal{V}_{\mu,\e}\to\mathcal{V}_{\mu,\e} $ in 
\eqref{calL.ham} defined for $(\mu, \e) \in B_{\mu_0}(0)\times B_{\e_0}(0)$ 
 is  represented on the basis $ \cF $ in \eqref{basisF} by the $4\times 4$ Hamiltonian and reversible matrix
\begin{equation}\label{Lmuepsi}
 \tL_{\mu,\e} =\tJ\tB_{\mu,\e} 
 \quad
\text{where} \quad 
\tJ := \tJ_4 := 
 \begin{pmatrix} 
 \tJ_2&  0 \\
0  & \tJ_2
\end{pmatrix}, \quad 
{\small \tJ_2 := \begin{pmatrix} 
 0 & 1 \\
-1  & 0
\end{pmatrix}} \, , 
\end{equation}
  and 
$  \tB_{\mu,\e}    = \tB_{\mu,\e}^*  $ 
is the $ 4 \times 4 $ self-adjoint matrix
\begin{equation} \label{tocomputematrix0}
\begin{aligned}
\tB_{\mu,\e}  & := 
\begin{pmatrix}
\big(\mathfrak{B}_{\mu,\e} f_1^+, f_1^+ \big) 
& \big(\mathfrak{B}_{\mu,\e} f_1^-, f_1^+ \big)  &
\big(\mathfrak{B}_{\mu,\e} f_0^+, f_1^+ \big)  & 
\big(\mathfrak{B}_{\mu,\e} f_0^-, f_1^+ \big)  \\
\big(\mathfrak{B}_{\mu,\e} f_1^+, f_1^- \big) & 
\big(\mathfrak{B}_{\mu,\e} f_1^-, f_1^- \big) & 
\big(\mathfrak{B}_{\mu,\e} f_0^+, f_1^- \big) &
\big(\mathfrak{B}_{\mu,\e} f_0^-, f_1^- \big)  \\
\big(\mathfrak{B}_{\mu,\e} f_1^+, f_0^+ \big)  & 
\big(\mathfrak{B}_{\mu,\e} f_1^-, f_0^+ \big) & 
\big(\mathfrak{B}_{\mu,\e} f_0^+, f_0^+ \big) & 
\big(\mathfrak{B}_{\mu,\e} f_0^-, f_0^+ \big)  \\
\big(\mathfrak{B}_{\mu,\e} f_1^+, f_0^- \big)  & 
\big(\mathfrak{B}_{\mu,\e} f_1^-, f_0^- \big)  & 
\big(\mathfrak{B}_{\mu,\e} f_0^+, f_0^- \big) &
\big(\mathfrak{B}_{\mu,\e} f_0^-, f_0^- \big) \\
	\end{pmatrix}
\end{aligned}
\end{equation}
where 
\begin{equation}\label{Bgotico}
\mathfrak{B}_{\mu,\e} := P_{0,0}^* \, U_{\mu,\e}^* \, {\cal B}_{\mu,\e} \, U_{\mu,\e} \, P_{0,0}
\, . 
\end{equation} 
The entries of the matrix $ \tB_{\mu,\e} $ 
are alternatively  real or purely imaginary: for any $ \sigma = \pm $, $ k = 0, 1 $, 
the scalar product  $  \molt{ \mathfrak{B}_{\mu,\e}  \, f^{\sigma}_{k}}{f^{\sigma}_{k'}}  $ is real 
and $  \molt{ \mathfrak{B}_{\mu,\e}  \, f^{\sigma}_{k}}{f^{-\sigma}_{k'}} $ 
is purely imaginary. 
The matrix $ \tB_{\mu,\e} $ is analytic in $(\mu,\e)$ close to $ (0,0) $.
\end{lem}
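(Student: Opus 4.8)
The plan is to transport the restricted operator $\sL_{\mu,\e}\vert_{\cV_{\mu,\e}}$ back to the fixed unperturbed subspace $\cV_{0,0}$ by the Kato transformation operator $U_{\mu,\e}$, and there read off its matrix in the basis $\{f_k^\sigma\}$ of \eqref{funperturbed}. Since by Lemma \ref{lem:Kato1} the operator $U_{\mu,\e}$ is an isomorphism $\cV_{0,0}\to\cV_{\mu,\e}$, the restriction $\sL_{\mu,\e}\vert_{\cV_{\mu,\e}}$ is similar to $\wh L_{\mu,\e}:=U_{\mu,\e}^{-1}\sL_{\mu,\e}U_{\mu,\e}\vert_{\cV_{0,0}}$. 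Writing $\sL_{\mu,\e}=\cJ\cB_{\mu,\e}$, inserting $(U_{\mu,\e}^*)^{-1}U_{\mu,\e}^*=\uno$, and using the symplectic identity $U_{\mu,\e}^*\cJ U_{\mu,\e}=\cJ$ from \eqref{Usr} — which upon taking inverses gives $U_{\mu,\e}^{-1}\cJ (U_{\mu,\e}^*)^{-1}=\cJ$ — I would compute
$$
\wh L_{\mu,\e}=U_{\mu,\e}^{-1}\cJ(U_{\mu,\e}^*)^{-1}\,\big(U_{\mu,\e}^*\cB_{\mu,\e}U_{\mu,\e}\big)=\cJ\,\cC_{\mu,\e},\qquad \cC_{\mu,\e}:=U_{\mu,\e}^*\cB_{\mu,\e}U_{\mu,\e},
$$
where $\cC_{\mu,\e}=\cC_{\mu,\e}^*$ is self-adjoint because $\cB_{\mu,\e}$ is. Thus the transported operator is again of the Hamiltonian form $\cJ\times(\text{self-adjoint})$.

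Next I would record the symplectic normalization of the unperturbed basis. A direct computation from \eqref{funperturbed} with the scalar product \eqref{scalar} gives $\molt{\cJ f_i}{f_j}=-(\tJ_4)_{ij}$ for the ordering $(f_1^+,f_1^-,f_0^+,f_0^-)$ (the cross blocks vanish and $\molt{\cJ f_1^+}{f_1^-}=\molt{\cJ f_0^+}{f_0^-}=-1$). To extract the matrix $\tL_{\mu,\e}$ defined by $\wh L_{\mu,\e}f_j=\sum_i (\tL_{\mu,\e})_{ij}f_i$, I would pair with $\cJ(\cdot)$ in the first (linear) slot: linearity and the normalization give $\molt{\cJ \wh L_{\mu,\e}f_j}{f_k}=(\tJ_4\tL_{\mu,\e})_{kj}$, while on the other side $\molt{\cJ\cJ\cC_{\mu,\e}f_j}{f_k}=-\molt{\cC_{\mu,\e}f_j}{f_k}$ since $\cJ^2=-\uno$. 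Since $P_{0,0}f_k^\sigma=f_k^\sigma$, the entry $\molt{\kB_{\mu,\e}f_k^\sigma}{f_{k'}^{\sigma'}}$ in \eqref{tocomputematrix0} equals $\molt{\cC_{\mu,\e}f_k^\sigma}{f_{k'}^{\sigma'}}$, i.e. $(\tB_{\mu,\e})_{kj}=\molt{\cC_{\mu,\e}f_j}{f_k}$. Comparing the two sides yields $\tJ_4\tL_{\mu,\e}=-\tB_{\mu,\e}$, whence $\tL_{\mu,\e}=\tJ_4\tB_{\mu,\e}$ as claimed (using $\tJ_4^{-1}=-\tJ_4$). Self-adjointness $\tB_{\mu,\e}=\tB_{\mu,\e}^*$ is immediate, since $\kB_{\mu,\e}=P_{0,0}^*U_{\mu,\e}^*\cB_{\mu,\e}U_{\mu,\e}P_{0,0}$ is self-adjoint and the Gram matrix of a self-adjoint operator is Hermitian.

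For the reality/imaginarity of the entries I would exploit reversibility. From \eqref{funperturbed} one checks $\bro f_k^\sigma=\sigma f_k^\sigma$, and since $U_{\mu,\e}$ is reversibility preserving by \eqref{Usr} the transported vectors $f_k^\sigma(\mu,\e)=U_{\mu,\e}f_k^\sigma$ satisfy the same relation. Using that $\cB_{\mu,\e}$ is reversibility preserving, together with the antiunitary identity $\molt{\bro u}{\bro v}=\overline{\molt{u}{v}}$ (verified directly from \eqref{scalar}), I would obtain for $z:=\molt{\cB_{\mu,\e}f_k^\sigma(\mu,\e)}{f_{k'}^{\sigma'}(\mu,\e)}$ the identity $z=\sigma\sigma'\,\bar z$; hence $z$ is real when $\sigma=\sigma'$ and purely imaginary when $\sigma=-\sigma'$. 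Analyticity of $\tB_{\mu,\e}$ in $(\mu,\e)$ follows from the analyticity of $(\mu,\e)\mapsto U_{\mu,\e}$ and of $(\mu,\e)\mapsto\cB_{\mu,\e}$, each entry being a fixed scalar product of analytic operator-valued maps applied to the constant vectors $f_k^\sigma$; finally $\tL_{\mu,\e}=\tJ_4\tB_{\mu,\e}$ with $\tB_{\mu,\e}$ Hermitian is a Hamiltonian matrix, and the $\bro$-eigenstructure of the basis turns the reversibility of $\sL_{\mu,\e}$ into that of the matrix.

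The computations here are essentially structural linear algebra built on the Kato package of Lemma \ref{lem:Kato1}; there is no genuine analytic difficulty. The main obstacle is therefore bookkeeping: one must fix the matrix-index and symplectic-pairing conventions so that the signs produce exactly $\tJ_4\tB_{\mu,\e}$ rather than its negative or transpose, and one must track the antilinear involution $\bro$ and the complex conjugations in \eqref{scalar} carefully — pairing in the \emph{linear} slot of $\molt{\cJ\cdot}{\cdot}$, as above, is precisely what makes the conjugations cancel and keeps the argument clean.
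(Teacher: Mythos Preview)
Your proof is correct and follows exactly the approach the paper indicates (the paper itself does not give a proof here but cites \cite[Section 3]{BMV3}, stating only that the result follows from the symplecticity and reversibility-preserving properties \eqref{Usr} of $U_{\mu,\e}$). Your argument makes precisely this explicit: conjugate by $U_{\mu,\e}$ to reduce to $\cV_{0,0}$, use $U_{\mu,\e}^{-1}\cJ(U_{\mu,\e}^*)^{-1}=\cJ$ to factor $\wh L_{\mu,\e}=\cJ\cC_{\mu,\e}$ with $\cC_{\mu,\e}$ self-adjoint, read off the matrix via the symplectic pairing $(\cJ f_i,f_j)=-(\tJ_4)_{ij}$, and use the $\bro$-eigenvector property $\bro f_k^\sigma=\sigma f_k^\sigma$ together with the antiunitarity identity $(\bro u,\bro v)=\overline{(u,v)}$ to obtain the real/imaginary pattern of the entries.
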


We conclude this section recalling some notation. 
A $ 2n \times 2n $, $ n = 1,2, $ matrix of the form 
$\tL=\tJ_{2n} \tB$ is  \emph{Hamiltonian} if  $ \tB $ is a self-adjoint matrix, i.e.   $\tB=\tB^*$.
It is \emph{reversible} if $\tB$ is reversibility-preserving,   i.e. $\rho_{2n}\circ \tB = \tB \circ \rho_{2n} $, where 
$ %\begin{equation}\label{involutionrep}
 \rho_4 := \begin{pmatrix}\rho_2 & 0 \\ 0 & \rho_2\end{pmatrix} $, 
$\rho_2 := \begin{pmatrix} \mathfrak{c}  & 0 \\ 0 & - \mathfrak{c} \end{pmatrix} $
and $\Gc : z \mapsto \bar z $ is the conjugation of the complex plane.
Equivalently, $\rho_{2n} \circ \tL  = -  \tL \circ \rho_{2n}$.

%In the sequel we shall mainly deal with $ 4 \times 4 $ Hamiltonian and reversible matrices.
The transformations 
preserving  the Hamiltonian structure  are called
  \emph{symplectic}, and  satisfy
$ Y^* \tJ_4 Y = \tJ_4 $. 
 If $Y$ is symplectic then $Y^*$ and $Y^{-1}$  are symplectic as well. A Hamiltonian matrix $\tL=\tJ_4 \tB$, with $\tB=\tB^*$, is conjugated through a symplectic matrix 
 $Y$ in a new Hamiltonian matrix.
We finally mention 
that the flow of a Hamiltonian 
reversibility-preserving matrix  is symplectic
and reversibility-preserving (see Lemma 3.8 in \cite{BMV1}).

\section{Expansion of $\tB_{\mu,\e} $}\label{sec:Bmue}

In this section we provide the Taylor 
expansion of  the matrix $\tB_{\mu,\e} $  in \eqref{tocomputematrix0}, i.e. 
\eqref{tocomputematrix}, at an order 
of accuracy higher  than 
 in \cite[Proposition 4.3]{BMV3}.  
 In particular we compute the 
 quadratic terms $\gamma_{11} \e^2$, $\phi_{21} \mu\e$, the cubic ones 
$\eta_{12}\mu\e^2$,  $\gamma_{12}\mu \e^2$,$\phi_{11} \e^3$, $\phi_{22} \mu^2 \e$, 
 and the quartic terms $\eta_{11} \e^4$, 
 $\gamma_{22} \mu^2 \e^2$, $\phi_{12} \mu \e^3$ in 
 the matrices \eqref{BinG1}-\eqref{BinG3} below. These are the coefficients 
 which enter in the constant $\etaWB$ (cfr. \eqref{etaWB}) 
of the Benjamin-Feir discriminant function \eqref{DWB}. 

For convenience  we decompose the $ 4 \times 4 $ matrix  $\tB_{\mu,\e} $ 
in \eqref{tocomputematrix0} as
\begin{equation} \label{tocomputematrix}
\begin{aligned}
\tB_{\mu,\e} 
	& =
\begin{pmatrix}
E(\mu,\e) & F(\mu,\e) \\ F^*(\mu,\e) & G(\mu,\e) \end{pmatrix}  
\end{aligned}
\end{equation}
where $ E  $, $ G  $ are the $ 2 \times 2$ self-adjoint matrices 
\begin{subequations}\label{tocomputeentries}
\begin{align}\label{tocomputeentries1}
&E(\mu,\e) % E^*(\mu,\e)
:=\begin{pmatrix} E_{11}(\mu,\e) & \im  E_{12}(\mu,\e)  \\ -\im  E_{12}(\mu,\e) &  E_{22}(\mu,\e) \end{pmatrix} := \begin{pmatrix} \big(\mathfrak{B}_{\mu,\e} f_1^+, f_1^+ \big) &  \big(\mathfrak{B}_{\mu,\e} f_1^-, f_1^+ \big) \\  \big(\mathfrak{B}_{\mu,\e} f_1^+, f_1^- \big) &  \big(\mathfrak{B}_{\mu,\e} f_1^-, f_1^- \big) \end{pmatrix} \, ,\\  \label{tocomputeentries2}
&G(\mu,\e) % =G^*(\mu,\e)
:=\begin{pmatrix} G_{11}(\mu,\e) & \im  G_{12}(\mu,\e)  \\ -\im  G_{12}(\mu,\e) &  G_{22}(\mu,\e) \end{pmatrix}:= \begin{pmatrix} \big(\mathfrak{B}_{\mu,\e} f_0^+, f_0^+ \big) &  \big(\mathfrak{B}_{\mu,\e} f_0^-, f_0^+ \big) \\  \big(\mathfrak{B}_{\mu,\e} f_0^+, f_0^- \big) &  \big(\mathfrak{B}_{\mu,\e} f_0^-, f_0^- \big) \end{pmatrix} \, , 
\end{align}
and 
\begin{align}
\label{tocomputeentries3}
 &F(\mu,\e):=\begin{pmatrix} F_{11}(\mu,\e) & \im  F_{12}(\mu,\e)  \\ \im  F_{21}(\mu,\e) &  F_{22}(\mu,\e) \end{pmatrix}:= \begin{pmatrix} \big(\mathfrak{B}_{\mu,\e} f_0^+, f_1^+ \big) &  \big(\mathfrak{B}_{\mu,\e} f_0^-, f_1^+ \big) \\  \big(\mathfrak{B}_{\mu,\e} f_0^+, f_1^- \big) &  \big(\mathfrak{B}_{\mu,\e} f_0^-, f_1^- \big) \end{pmatrix} \, .
\end{align}
\end{subequations}
The main result of this section is the following proposition.

\begin{prop}\label{BexpG}
  The $ 2 \times 2 $  matrices $E:=E(\mu,\e)$, $F:=F(\mu,\e)$, $G:=G(\mu,\e)$ defined in  \eqref{tocomputeentries} admit the  expansion 
  \begin{subequations}\label{BinG}
\begin{align}\label{BinG1}
& E = 
\begin{pmatrix} 
  \te_{11} \e^2(1+ r_1'(\e^3,\mu\e)) + \eta_{11} \e^4 - \te_{22}\frac{\mu^2}{8}(1+r_1''(\e,\mu))  
  & 
  \im \big( \frac12\te_{12}\mu {+\eta_{12}\mu\e^2}+ r_2(\mu\e^3,\mu^2\e,\mu^3) \big)  \\
- \im \big( \frac12\te_{12} \mu {+\eta_{12}\mu\e^2}+ r_2(\mu\e^3,\mu^2\e,\mu^3) \big) & -\te_{22}\frac{\mu^2}{8}(1+r_5(\e^{{2}},\mu))
 \end{pmatrix} \\
 & \label{BinG2} G = 
\begin{pmatrix} 
1 {+\gamma_{11} \e^2}+r_8(\e^3, \mu\e^2,\mu^2\e 
) &  {-\im \gamma_{12} \mu\e^2} - \im r_9(\mu\e^3,\mu^2\e) 
 \\
 {\im \gamma_{12} \mu\e^2}+ \im  r_9(\mu\e^3, \mu^2\e 
  )  &\mu\tanh(\tth\mu) {+\gamma_{22} \mu^2\e^2} +r_{10}(\mu^2\e^3,\mu^3\e)
 \end{pmatrix} \\
 & \label{BinG3}
 F = 
\begin{pmatrix} 
\tf_{11}\e {+\phi_{11} \e^3}+ r_3(\e^4,\mu\e^2,\mu^2\e %\mu^3
) & \im 
 \mu\e \ch^{-\frac12}  {+\im\phi_{12} \mu\e^3} +\im  r_4({\mu\e^{4}}, \mu^2 \e^2, \mu^3\e
 )  \\
 \im  {\phi_{21}\mu\e}+ \im   r_6(\mu\e^{{3}}, \mu^2\e
  )    &   {\phi_{22}\mu^2\e}+  r_7(\mu^2\e^{{3}}, \mu^3\e) 
 \end{pmatrix} \, , 
 \end{align}
 \end{subequations}
 where the coefficients  
 \begin{subequations}\label{mengascoeffs}
  \begin{align}
&\te_{11} 
  := 
 \dfrac{9\ch^8-10\ch^4+9}{8\ch^7} = \dfrac{9 (1-\ch^4)^2 +8 \ch^4 }{8\ch^7} > 
 0 \, , \qquad
 \tf_{11} :=    \tfrac12 \ch^{-\frac32}(1-\ch^4) \, , \\
&\te_{12}:= \ch+\ch^{-1}(1-\ch^4)\tth > 0  \, , \label{te12} \\
 &\te_{22}:= \dfrac{(1-\ch^4)(1+3\ch^4) \tth^2+2 \ch^2(\ch^4-1) \tth+\ch^4}{\ch^3} > 0  \, , \label{te22} 
%   \label{eta11}
  \end{align}
  were computed in \cite[Proposition 4.3]{BMV3}, whereas 
  \begin{align}\label{te11}
&\eta_{11}:=\frac1{256 \ch^{19} (\ch^2+1)}\big(-36 \ch^{26}-108 \ch^{24}-261 \ch^{22}-73 \ch^{20}+1429 \ch^{18}+1237 \ch^{16}\\ \notag
&\qquad -3666 \ch^{14}-3450 \ch^{12}+3774 \ch^{10}+3654 \ch^8-873 \ch^6-765 \ch^4+81 \ch^2+81\big)\, ,  \\ 
 \label{eta12}
&\eta_{12}:= \frac{\ch^2 \big(3 \ch^{12}-8 \ch^8+3 \ch^4+18\big)-\big(\ch^{16}-2 \ch^{12}+12 \ch^8-38 \ch^4+27\big) \tth}{16 \ch^9} \, , \\ \label{gammas}
&\gamma_{11} :=  \frac{-\ch^8+6 \ch^4-5}{8 \ch^4}  \, , \quad 
\gamma_{12} := \frac{2 \ch^{12}-\ch^8-9}{16 \ch^7}\, , \quad
\gamma_{22} := \frac{\ch^4-5}{4 \ch^2} \, , \\ \label{phi11} 
&\phi_{11} :=\frac{10 \ch^{20}+4 \ch^{18}-7 \ch^{16}-6 \ch^{14}-99 \ch^{12}+257 \ch^8-6 \ch^6-171 \ch^4+18}{64 \ch^{27/2}} \, ,\\ \label{phi12}
&\phi_{12}:= \frac{2 \ch^{18}-2 \ch^{16}-33 \ch^{14}-27 \ch^{12}+34 \ch^{10}+34 \ch^8-33 \ch^6-27 \ch^4+18 \ch^2+18}{32 \ch^{25/2} (\ch^2+1)}  \\ \label{phi2s}
&\phi_{21} :=\frac{\ch^2 (\ch^4-5)-(\ch^8+2 \ch^4-3) \tth}{8 \ch^{7/2}} \, , \quad
\phi_{22}:= \frac{-\ch^4 \tth+\ch^2+\tth}{4 \ch^{5/2}}\, .
\end{align}
\end{subequations}
\end{prop}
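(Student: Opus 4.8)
The plan is to obtain every entry of $\tB_{\mu,\e}$ in \eqref{tocomputematrix0} by Taylor expanding the conjugated operator $\mathfrak{B}_{\mu,\e}=P_{0,0}^*\,U_{\mu,\e}^*\,\cB_{\mu,\e}\,U_{\mu,\e}\,P_{0,0}$ of \eqref{Bgotico} to fourth order in $(\mu,\e)$ and then testing it against the explicit trigonometric vectors $f_k^\sigma$ of \eqref{funperturbed}. The computation factors into three expansions — of the self-adjoint operator $\cB_{\mu,\e}$, of the Kato projectors $P_{\mu,\e}$, and of the transformation operators $U_{\mu,\e}$ — which are afterwards composed.

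First I would expand the symbol of $\cB_{\mu,\e}$ in \eqref{calL.ham}. This requires the Taylor coefficients of $a_\e(x)$, $p_\e(x)$ and of the constant $\ttf_\e$ up to order $\e^4$, furnished by Proposition \ref{propaepe} and \eqref{expfe0}-\eqref{expfe}, which in turn rest on the fourth-order Stokes wave expansion of Theorem \ref{LeviCivita} (the coefficients of \eqref{exp:Sto} are computed in Appendix \ref{sec:App31}, and the change of variables \eqref{LC} is expanded in Appendix \ref{sec:exape}); in parallel the Fourier multiplier $|D+\mu|\tanh((\tth+\ttf_\e)|D+\mu|)$ is expanded in both $\mu$ and $\ttf_\e$ on each relevant mode. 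For the projectors I would use the resolvent representation \eqref{Pproj}: writing $\sL_{\mu,\e}=\sL_{0,0}+(\sL_{\mu,\e}-\sL_{0,0})$, I expand $(\sL_{\mu,\e}-\lambda)^{-1}$ in a Neumann series around $(\sL_{0,0}-\lambda)^{-1}$ and integrate term by term along $\Gamma$; the resulting contour integrals are evaluated by residues and yield $P_{\mu,\e}$ up to cubic order (Section \ref{secPmue}, Appendix \ref{conticinoleggeroleggero}). The operators $U_{\mu,\e}$ then follow from \eqref{OperatorU} via the binomial expansion of $(\uno-(P_{\mu,\e}-P_{0,0})^2)^{-1/2}$ in powers of $P_{\mu,\e}-P_{0,0}$.

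Composing the three expansions gives $\mathfrak{B}_{\mu,\e}$ to order four. Because $P_{0,0}$ projects onto the span of the four vectors \eqref{funperturbed}, each entry $\molt{\mathfrak{B}_{\mu,\e}f_k^\sigma}{f_{k'}^{\sigma'}}$ collapses to a finite sum of explicit Fourier integrals over $\bT$ against the cosines and sines of \eqref{funperturbed}. I would organize the bookkeeping using the structural constraints already available: the self-adjointness $\cB_{\mu,\e}=\cB_{\mu,\e}^*$ together with the skew-Hamiltonian and reversibility-preserving identities \eqref{propPU} and \eqref{Usr} force each entry to be either real or purely imaginary in the alternating pattern recorded in \eqref{tocomputeentries}, roughly halving the number of independent scalar products, while $\cB_{\mu,\e}\bro=\bro\cB_{\mu,\e}$ fixes the parity of the contributing Fourier modes. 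Collecting the monomials $\mu^m\e^n$ with $m+n\le 4$ then lets me read off $\eta_{11},\eta_{12},\gamma_{11},\gamma_{12},\gamma_{22},\phi_{11},\phi_{12},\phi_{21},\phi_{22}$, the leading coefficients $\te_{11},\tf_{11},\te_{12},\te_{22}$ being already fixed in \cite{BMV3}.

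The hard part will be the sheer algebraic bulk: reaching quartic accuracy demands the cubic-order expansion of $P_{\mu,\e}$ together with the full fourth-order Stokes data, and naively each entry would require many scalar products of long trigonometric expressions. The decisive step is therefore to expose the analytical cancellations in the symplectic Kato reduction (Sections \ref{secPgotmue} and \ref{secProof31}) that eliminate most intermediate terms before the Fourier integrals are evaluated; without them the computation is intractable by hand. The final assembly of the coefficients, in particular the high-degree polynomial in $\ch$ defining $\eta_{11}$ in \eqref{te11}, is then carried out symbolically starting from the closed algebraic formulas for the Stokes wave and for $a_\e,p_\e$.
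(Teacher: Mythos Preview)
Your proposal is correct and follows essentially the same route as the paper: expand $\cB_{\mu,\e}$, the Kato projectors $P_{\mu,\e}$, and the transformation operators $U_{\mu,\e}$ to the required order, compose them into $\mathfrak{B}_{\mu,\e}$, and then evaluate the sixteen scalar products against the basis \eqref{funperturbed}, exploiting the Hamiltonian/reversibility structure and the identities coming from $P_{\mu,\e}^*\cB_{\mu,\e}=\cB_{\mu,\e}P_{\mu,\e}$ and $P_{\mu,\e}^2=P_{\mu,\e}$ to kill most intermediate terms before the final symbolic assembly. The paper organizes exactly these cancellations in Lemma \ref{lem:expBgot} and Lemmata \ref{lem:05051}--\ref{lem:05052}, and then computes each coefficient in Section \ref{secProof31} from the explicit jets of $\cB_{i,j}$ and $P_{i,j}$.
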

The rest of the section is devoted to the proof of this proposition.

In \cite[Proposition 4.3]{BMV3} we showed that
the matrices $ E,  G, F $ in \eqref{BinG1}, \eqref{BinG2}, \eqref{BinG3}  admit the following expansions
\begin{subequations}
\begin{align}\label{BinG1old}
 &E(\mu,\e)= \begin{pmatrix} 
  \te_{11} \e^2 - \te_{22}\frac{\mu^2}{8}
  & 
  \im\frac12\te_{12}\mu  \\
- \im  \frac12\te_{12} \mu & -\te_{22}\frac{\mu^2}{8} 
 \end{pmatrix} + \underbrace{ \begin{pmatrix} 
  r_1(\e^3,  \mu^2 \e,\mu^3) 
  & 
  \im r_2(\mu\e^2,\mu^2\e,\mu^3)   \\
- \im  r_2(\mu\e^2,\mu^2\e,\mu^3)  & r_5(\mu^2\e,\mu^3)
 \end{pmatrix}}_{=:\Eta(\mu,\e)}\, , \\
  \label{BinG2old} 
&  G(\mu,\e) = 
\begin{pmatrix} 
1 &   0
 \\
 0  &\mu\tanh(\tth\mu)
 \end{pmatrix} + \underbrace{ \begin{pmatrix} 
r_8(\e^2,\mu^2\e) &   - \im r_9(\mu\e^2,\mu^2\e) 
 \\
  \im  r_9(\mu\e^2, \mu^2\e  )  &r_{10}(\mu^2\e ) \, ,
 \end{pmatrix}}_{=:\Gamma(\mu,\e)} \\
 \label{BinG3old}
 &F(\mu,\e) := \begin{pmatrix} 
\tf_{11}\e & \im 
 \mu\e \ch^{-\frac12}    \\
   0  & 0
 \end{pmatrix} +
 \underbrace{\begin{pmatrix} 
 r_3(\e^3,\mu\e^2,\mu^2\e ) &\im  r_4({\mu\e^2}, \mu^2 \e )  \\
  \im   r_6(\mu\e )
     & r_7(\mu^2\e  ) 
 \end{pmatrix}}_{=:  \Phi(\mu,\e)} \, .
 \end{align}
 \end{subequations}
In order to get the expansion of 
  $\Eta(\mu,\e)$, $\Gamma(\mu,\e)$ and $\Phi(\mu,\e)$ 
  in Proposition \ref{BexpG} we first 
  expand the operators ${\cal B}_{\mu,\e}$ in \eqref{calL.ham} (Section \ref{secBmue}), 
  the projector $P_{\mu,\e}$ in \eqref{Pproj} (Section \ref{secPmue})
  and  the  operator $\mathfrak{B}_{\mu,\e}$ in \eqref{Bgotico} (Section \ref{secPgotmue}). 
  Finally we prove Proposition \ref{BexpG} in Section \ref{secProof31}. 
 
 \smallskip

\noindent{\bf Notation.} 
For an operator $A=A(\mu,\e)$ we denote its Taylor coefficients as
\begin{equation}\label{notazione}
A_{i,j} := \frac{1}{i!j!} \big(\pa^i_\mu\pa^j_\e A \big)(0,0)\, ,\qquad  A_k := A_k (\mu,\e) :=  \sum_{\substack{i+j=k\\ i,j\geq 0}} A_{i,j} \mu^i \e^j\, .
\end{equation}
Moreover we shall occasionally split $A_{i,j}=A_{i,j}^{[\mathtt{ ev}]} + A_{i,j}^{[\mathtt{odd}]}$, where $A_{i,j}^{[\mathtt{ ev}]}$ is the part of the operator $A_{i,j}$ having only even harmonics, whereas $A_{i,j}^{[\mathtt{odd}]}$ is the part having only odd ones. \smallskip

\subsection{Expansion of ${\cal B}_{\mu,\e}$}\label{secBmue}

In the sequel 
$\cO_5$ means an operator which maps $H^1(\bT,\bC^2)$ into 
$L^2(\bT,\bC^2)$-functions with size $\e^5$, $\mu \e^4$, $\mu^2\e^3$, $\mu^3\e^2$, $\mu^4\e$ or $\mu^5$.
 
 \begin{lem}
 The operator ${\cal B}_{\mu,\e}$ in \eqref{calL.ham}  has the Taylor expansion 
\begin{equation}\label{def.Bj}
{\cal B}_{\mu,\e} = {\cal B}_0 + {\cal B}_1 + {\cal B}_2+ {\cal B}_3+{\cal B}_4+\cO_5 \, ,  
\end{equation}
 where
\begin{subequations}\label{Bsani}
\begin{alignat}{2} \label{B0sano}
{\cal B}_0 &=
 &&\begingroup 
\setlength\arraycolsep{3pt}\begin{bmatrix} 1 & -\ch \pa_x \\ \ch \pa_x &  |D|\tanh\big(\tth |D|\big)\end{bmatrix}\endgroup \, , \\
  \label{B1sano} {\cal B}_{1} &=
 \e  &&\begingroup 
\setlength\arraycolsep{3pt} \begin{bmatrix} a_1(x) & -p_1(x)\pa_x\\ \pa_x\circ p_1(x) & 0 \end{bmatrix}\endgroup +\mu \ell_{1,0}(|D|) \Pi_{\mathtt{s}}  \, ,  \\
\label{B2sano}
 {\cal B}_{2} &= 
 \e^2 &&\begingroup 
\setlength\arraycolsep{3pt} \begin{bmatrix} a_2(x) & -p_2(x)\pa_x \\ \pa_x\circ p_2(x) & \ell_{0,2}(|D|) \end{bmatrix} \endgroup -\im \mu\e  p_1(x) \cJ +\mu^2 \ell_{2,0}(|D|) \Pi_{\mathtt{ ev}} \,  , \\
 \label{B3sano}
  {\cal B}_{3} &=  
 \e^3 &&\begingroup 
\setlength\arraycolsep{3pt} \begin{bmatrix} a_3(x) & -p_3(x)\pa_x \\ \pa_x\circ p_3(x) & 0 \end{bmatrix}\endgroup-\im \mu\e^2 p_2(x) \cJ +\mu^3\ell_{3,0}(|D|) \Pi_{\mathtt{s}} ,\\
  \label{B4sano}
  {\cal B}_{4} &=  
 \e^4 &&\begingroup 
\setlength\arraycolsep{3pt} \begin{bmatrix} a_4(x) & -p_4(x)\pa_x \\ \pa_x\circ p_4(x) & \ell_{0,4}(|D|)  \end{bmatrix}\endgroup 
-\im \mu\e^3 p_3(x) \cJ + \mu^2 \e^2 \ell_{2,2}(|D|) \Pi_{\mathtt{ ev}} + \mu^4 \ell_{4,0}(|D|) \Pi_{\mathtt{ ev}}\, ,
\end{alignat}
\end{subequations}
and  $p_i(x)$ and $a_i(x)$, $i=1,\dots,4$, are computed in \eqref{pino1fd}-\eqref{aino2fd}, 
$\cJ$ is the symplectic matrix in \eqref{PoissonTensor}, 
\begin{equation}\label{Pi_odd}
\Pi_{\mathtt{s}}:=\begin{bmatrix}  0 & 0 \\ 0 & \sgn(D) \end{bmatrix}\, , \qquad \Pi_{\mathtt{ev}}:=\begin{bmatrix}  0 & 0 \\ 0 & \uno \end{bmatrix}\, ,
\end{equation}
and
\begin{subequations}
\begin{align} \label{ell10}
\ell_{1,0}(|D|)&=\tanh(\tth |D|)+\tth|D| \big(1-\tanh^2(\tth |D|)\big)\, ,\\  \label{ell20}
\ell_{2,0}(|D|) &=\tth  \big(1-\tanh^2(\tth |D| )\big) \big( 1-\tth |D| \tanh(\tth |D|) \big) \, ,
%\ell_{3,0}(D+\umu):&=\frac{\tth^2}{3} \label{gammaDumu} \big(1-\tanh^2\big(\tth(D+\umu)\big)\big) \Big[ 2(D+\umu)\tth \tanh\big(\tth(D+\umu)\big)\\& - (D+\umu)\tth\big(1-\tanh\big(\tth(D+\umu)\big)\big) -3\tanh\big(\tth(D+\umu) \big) \Big]\, , 
\\  \label{ell02}
\ell_{0,2}(|D|) &= \tf_2 |D|^2 \big(1-\tanh^2(\tth |D|)\big)\, , \\ \label{ell22}
\ell_{2,2}(|D|) &={\footnotesize \begin{matrix}\tf_2 \big(1-\tanh^2(\tth |D|)\big)\left(-\tth^2 |D|^2+3 \tth^2 |D|^2 \tanh^2(\tth |D|)-4 \tth |D| \tanh (\tth |D|)+1 \right)  \end{matrix} }
\, ,\\ 
 \label{ell04}
\ell_{0,4}(|D|) &= \ttf_4|D|^2 \big(1-\tanh^2(\tth |D| ) \big) - \ttf_2^2 |D|^3 \tanh(\tth|D|)\big(1-\tanh^2(\tth |D| ) \big) \, , 
\end{align}
\end{subequations} 
 with $\ttf_2$ and $ \ttf_4 $  in \eqref{expfe}. 
\end{lem}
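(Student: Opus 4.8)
The plan is to read off the statement as a Taylor expansion in $(\mu,\e)$: I would substitute the analytic $\e$-expansions of the coefficients into the four entries of $\cB_{\mu,\e}$ in \eqref{calL.ham} and collect the resulting operators by total degree $k=i+j$ in the sense of \eqref{notazione}. Concretely I write $a_\e(x)=\sum_{i\geq1}\e^i a_i(x)$ and $p_\e(x)=\sum_{i\geq1}\e^i p_i(x)$ with $a_i,p_i$ as in \eqref{pino1fd}--\eqref{aino2fd}, and $\ttf_\e=\ttf_2\e^2+\ttf_4\e^4+\cO(\e^5)$ with $\ttf_2,\ttf_4$ as in \eqref{expfe}. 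The fact that $\ttf_\e$ carries only even powers of $\e$ is exactly what will kill any $\e^1$ term in the $(2,2)$ slot and any $\mu\e$ contribution at second degree. All entries except the $(2,2)$ one are then elementary, since there $\mu$ appears only polynomially.

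First I would dispatch the $(1,1)$, $(1,2)$ and $(2,1)$ entries. The entry $1+a_\e(x)$ gives the constant $1$ in $\cB_0$ and the diagonal term $\e^i a_i(x)$ in $\cB_i$. In the off-diagonal slots, $-\ch\pa_x$ and $\pa_x\circ\ch$ give the off-diagonal of $\cB_0$, the terms $-\e^i p_i(x)\pa_x$ and $\pa_x\circ\e^i p_i(x)$ give those of $\cB_i$, and the two pieces $\mp\,\im\mu\,p_\e(x)$ assemble into $-\im\mu\,p_\e(x)\,\cJ=-\sum_{i\geq1}\im\mu\e^i p_i(x)\,\cJ$ with $\cJ$ as in \eqref{PoissonTensor}; since $\im\mu\e^i p_i\cJ$ has total degree $i+1$, it lands in $\cB_{i+1}$, accounting for the terms $-\im\mu\e p_1\cJ$, $-\im\mu\e^2 p_2\cJ$, $-\im\mu\e^3 p_3\cJ$ appearing in \eqref{B2sano}--\eqref{B4sano}.

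The core is the Fourier multiplier $M(\mu,\e):=|D+\mu|\tanh\big((\tth+\ttf_\e)|D+\mu|\big)$ in the $(2,2)$ slot, which I would treat by separating the zero mode from the nonzero ones. On the zero mode $|0+\mu|=\mu$ (recall $\mu>0$), so $M$ acts as $\mu\tanh\big((\tth+\ttf_\e)\mu\big)=\cO(\mu^2)$, an even, $\Pi_{\mathtt{ev}}$-type symbol contributing only from degree $2$ on. On a nonzero mode $k$ with $0<\mu<1$ one has the \emph{exact} identity $|k+\mu|=|k|+\mu\,\sgn(k)$, so that $M$ acts there as the smooth function $u\tanh(\tau u)$ with $u=|k|+\mu\,\sgn(k)$, $\tau=\tth+\ttf_\e$. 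I would Taylor expand this jointly in $\mu$ and $\ttf$, using $\pa_\mu u=\sgn(k)$, then substitute $\ttf=\ttf_\e=\ttf_2\e^2+\ttf_4\e^4+\dots$ and collect by total degree. A $\mu$-derivative brings down a factor $\sgn(k)$, so contributions with an odd number of $\mu$-derivatives carry $\sgn(D)$ and are encoded by $\Pi_{\mathtt{s}}$ (producing in particular $\mu\,\ell_{1,0}$ and $\mu^3\ell_{3,0}$), while those with an even number of $\mu$-derivatives are even in $D$ and encoded by $\Pi_{\mathtt{ev}}$ (producing $\ell_{2,0}$, $\ell_{4,0}$ and the mixed $\ell_{2,2}$), the purely $\ttf$-driven terms giving $\ell_{0,2}$ and $\ell_{0,4}$. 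The closed forms \eqref{ell10}--\eqref{ell04} then follow by differentiating $s\mapsto s\tanh(\tau s)$ and evaluating at $u=|D|$, $\tau=\tth$; consistency with the zero-mode expansion is checked by matching at $|D|=0$ (e.g. $\ell_{2,0}(0)=\tth$, $\ell_{0,2}(0)=0$).

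The main obstacle is precisely this $(2,2)$ expansion. The symbol $|D+\mu|$ is not smooth in $\mu$ at the zero mode, and it is this failure of smoothness that forces the split into a $\Pi_{\mathtt{s}}$ part and a $\Pi_{\mathtt{ev}}$ part rather than a single analytic multiplier; getting the zero-mode and nonzero-mode contributions to combine consistently is the delicate point. The remaining difficulty is organizational: carrying out the double expansion in $\mu$ and $\ttf_\e$, re-expanding $\ttf_\e$ in $\e$, and tracking which cross terms survive at each total degree $k\leq4$ (notably the mixed $\mu^2\e^2$ term giving $\ell_{2,2}$ and the $\ttf_2^2$ contribution to $\ell_{0,4}$). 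Once the multipliers $\ell_{i,j}$ are identified, verifying their explicit form is a routine differentiation of $s\tanh(\tau s)$.
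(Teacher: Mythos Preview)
Your proposal is correct and follows exactly the approach the paper intends: the paper's entire proof is the single line ``By Taylor expanding \eqref{calL.ham}'', and what you have written is precisely a careful unpacking of that Taylor expansion, including the correct handling of the $(2,2)$ Fourier multiplier via the splitting $|k+\mu|=|k|+\mu\,\sgn(k)$ on nonzero modes. Your level of detail simply exceeds what the paper chose to record.
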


\begin{proof}
By Taylor expanding \eqref{calL.ham}. 
\end{proof}

We observe that, using the notation introduced in \eqref{notazione}, we have
\begin{equation}\label{cB evodd}
\cB_{i,j}^{[\mathtt{ ev}]} = \begin{cases} \cB_{i,j} &\textup{if }j\textup{ is even}\, , \\ 
0 &\textup{if }j\textup{ is odd}\, , 
\end{cases} \qquad \cB_{i,j}^{[\mathtt{odd}]} = \begin{cases} 0 &\textup{if }j\textup{ is even}\, , \\ 
\cB_{i,j}  &\textup{if }j\textup{ is odd}\, . 
\end{cases} 
\end{equation}

\subsection{Expansion of the projector $P_{\mu,\e}$}\label{secPmue}

The projectors $P_{\mu,\e}$ in  \eqref{Pproj} admit the expansion 
\begin{equation}\label{expP}
P_{\mu,\e} = P_0 + P_1 + P_2 + P_3 + \cO_4 \, ,
\end{equation}
%where $\cO_4$ means a continuous operator which maps $H^1(\bR,\bC^2)$ functions into linear combinations of $L^2(\bR,\bC^2)$ functions with size $\e^4$, $\mu \e^3 $, $\mu^2\e^2 $, $\mu^3 \e$ or $\mu^4$. 
where
\begin{equation}\label{Psani}
\begin{aligned}
 & P_0 := P_{0,0} \, , \quad P_1 := \mathcal{P} \big[\cB_1\big] \\
 & P_2 := \mathcal{P} \big[\cB_2\big] + \mathcal{P} \big[\cB_1,\cB_1 \big] \, ,\\
  &P_3 := \mathcal{P} \big[\cB_3\big] + \mathcal{P} \big[\cB_2,\cB_1 \big] + \mathcal{P} \big[\cB_1,\cB_2 \big]  + \mathcal{P} \big[\cB_1,\cB_1,\cB_1 \big] \, ,
 \end{aligned} 
 \end{equation}
and 
\begin{equation}\label{hP}
\begin{aligned}
&\mathcal{P} \big[A_1]: = \frac{1}{2\pi\im} \oint_\Gamma (\cL_{0,0}-\lambda)^{-1} \cJ A_1 (\cL_{0,0}-\lambda)^{-1} \de\lambda \, ,\quad \text{and for }k\geq 2 \\
&\mathcal{P} \big[A_1,\dots, A_k \big] := \frac{(-1)^{k+1} }{2\pi \im } \oint_\Gamma (\cL_{0,0}-\lambda)^{-1} \cJ A_1 (\cL_{0,0}-\lambda)^{-1} \dots \cJ A_k (\cL_{0,0}-\lambda)^{-1} \de\lambda \, .
 \end{aligned}
\end{equation}
In virtue of \eqref{notazione}, \eqref{Psani}-\eqref{hP} and \eqref{cB evodd} we obtain
\begin{equation}
\label{P evodd}
P_{i,j}^{[\mathtt{ ev}]} = \begin{cases} P_{i,j} &\textup{if }j\textup{ is even}\, , \\ 
0 &\textup{if }j\textup{ is odd}\, , 
\end{cases} \qquad P_{i,j}^{[\mathtt{odd}]} = \begin{cases} 0 &\textup{if }j\textup{ is even}\, , \\ 
P_{i,j}  &\textup{if }j\textup{ is odd}\, . 
\end{cases} 
\end{equation}

\paragraph{Action of $P_{\ell,j}$ on the unperturbed vectors.}
We now collect how the operators $P_{\ell,j}$ act on the vectors 
$ f_1^+,  f_1^- , f_0^+, f_0^-$ in \eqref{funperturbed}. 
We denote 
\begin{equation}
\label{fm1sigma} 
  f_{-1}^+ :=\vet{\ch^{1/2}\cos(x)}{-\ch^{-1/2}\sin(x)}\, ,\qquad 
f_{-1}^- :=\vet{\ch^{1/2}\sin(x)}{\ch^{-1/2}\cos(x)}  \  .
\end{equation}
We first consider the first order jets $ P_{0,1} $ and $P_{1,0} $ of $ P_1 $. 

\begin{lem} {\bf (First order jets)} The action of the jets $ P_{0,1} $ and $P_{1,0} $ of $ P_1 $
 in \eqref{Psani} on the basis  in \eqref{funperturbed} is 
\begin{equation}\label{tuttederivate}
\begin{aligned}
 & P_{0,1}f^+_1 ={ 
  \vet{\ka_{0,1}  \cos(2x)}{ \kb_{0,1} \sin(2x)}  } \, , \quad P_{0,1}f^-_1 = {
  \vet{-\ka_{0,1} \sin(2x)}{ \  \kb_{0,1}\cos(2x)}  } \, , \\
&   P_{0,1}f^+_0 =  \ku_{0,1} f^+_{-1} \, , \quad   P_{0,1}f^-_0 =0 \, , \quad    P_{1,0} f_0^+=0 \, ,\quad  
  P_{1,0} f_0^-=0 \, ,   \\
 & P_{1,0} f_1^+ = \im \ku_{1,0}  f^{-}_{-1} \, ,
 \quad
   P_{1,0} f_1^- = \im \ku_{1,0} f^+_{-1}\, ,
\end{aligned}
\end{equation}
where
\begin{equation}\label{u01}
\begin{aligned}
&\ka_{0,1} := \frac12 \ch^{-\frac{11}{2}} (3+\ch^4) \, ,\quad \kb_{0,1} := \frac14 \ch^{-\frac{13}{2}} (1+\ch^4)(3-\ch^4) \,, \\
& \ku_{0,1} :=  \tfrac14\ch^{-\frac52}(3+\ch^4) \, ,\quad \ku_{1,0}:= \tfrac14 \big(1+\ch^{-2}\tth (1-\ch^4)\big)\, .
\end{aligned}
\end{equation}
\end{lem}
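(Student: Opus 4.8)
The jets $P_{0,1}$ and $P_{1,0}$ are the coefficients of $\e$ and $\mu$ in $P_1=\mathcal{P}[\cB_1]$; since the map $\mathcal{P}[\,\cdot\,]$ in \eqref{hP} is linear and $\cB_1=\e\,\cB_{0,1}+\mu\,\cB_{1,0}$ with $\cB_{1,0}=\ell_{1,0}(|D|)\Pi_{\mathtt{s}}$ the multiplier part of \eqref{B1sano}, one has $P_{0,1}=\mathcal{P}[\cB_{0,1}]$ and $P_{1,0}=\mathcal{P}[\cB_{1,0}]$. The plan is to evaluate each $\mathcal{P}[A]f$, for $f$ a generalized eigenvector in \eqref{funperturbed}, as a residue at $\lambda=0$. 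First I would record the Laurent expansion of the resolvent of the Fourier multiplier $\cL_{0,0}$ about its isolated eigenvalue $0$,
\[
(\cL_{0,0}-\lambda)^{-1}=-\lambda^{-1}P_0-\lambda^{-2}D+S_0+\lambda\,S_0^2+\cO(\lambda^2)\, ,
\]
where $P_0=P_{0,0}$, the nilpotent $D:=\cL_{0,0}P_0$ satisfies $D^2=0$ (the sole Jordan block being $\cL_{0,0}f_0^+=-f_0^-$), and $S_0$ is the reduced resolvent, characterized by $S_0P_0=P_0S_0=0$ and $\cL_{0,0}S_0=S_0\cL_{0,0}=\uno-P_0$. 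Extracting the coefficient of $\lambda^{-1}$ in $(\cL_{0,0}-\lambda)^{-1}\cJ A(\cL_{0,0}-\lambda)^{-1}f$ then gives the residue formula
\[
\mathcal{P}[A]f=-P_0\,\cJ A\,S_0f-S_0\,\cJ A\,P_0f-D\,\cJ A\,S_0^2f-S_0^2\,\cJ A\,Df\, .
\]

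Each vector in \eqref{funperturbed} lies in $\mathrm{Rg}(P_0)$, so $P_0f=f$ and $S_0f=0$; moreover $Df=0$ for the eigenvectors $f_1^\pm,f_0^-$, while $Df_0^+=-f_0^-$. A one-line check gives $\cB_{0,1}f_0^-=\cB_{1,0}f_0^-=0$: the second column of $\cB_{0,1}$ annihilates the constant $f_0^-$, and $\Pi_{\mathtt{s}}$ together with $\ell_{1,0}(0)=0$ annihilates the zero mode. Hence the only surviving Jordan contribution, which is proportional to $\cJ A\,f_0^-$, vanishes as well, and for all four vectors and both $A\in\{\cB_{0,1},\cB_{1,0}\}$ the formula collapses to the single term $\mathcal{P}[A]f=-S_0\,\cJ A\,f$. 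In particular $P_{0,1}f_0^-=P_{1,0}f_0^\pm=0$, which already yields three of the identities in \eqref{tuttederivate}.

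It then remains to compute $\cJ A f$ explicitly and to apply $S_0$, which acts mode-by-mode: on the zero mode it vanishes (there $\cL_{0,0}$ is entirely nilpotent), on modes $n$ with $0\notin\sigma(\widehat{\cL_{0,0}}(n))$ it is the matrix inverse $\widehat{\cL_{0,0}}(n)^{-1}$, and on $n=\pm1$ it is the inverse restricted to the eigenspace of the nonzero eigenvalue $\pm2\im\ch$ (recall $\ch^2=\tanh\tth$), killing the kernel direction. For $P_{0,1}=\mathcal{P}[\cB_{0,1}]$, inserting $a_1,p_1$ from \eqref{pino1fd}-\eqref{aino2fd}, the vector $\cB_{0,1}f_1^+$ carries only the harmonics $0$ and $2$; the zero mode dies and inverting $\widehat{\cL_{0,0}}(\pm2)$ leaves a pure second harmonic, giving $P_{0,1}f_1^+=(\ka_{0,1}\cos(2x),\kb_{0,1}\sin(2x))^\top$ with $\ka_{0,1},\kb_{0,1}$ as in \eqref{u01} (the identity for $f_1^-$ follows identically), while $\cB_{0,1}f_0^+$ is a first harmonic whose $S_0$-image is a multiple of $f_{-1}^+$, producing $\ku_{0,1}$. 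For $P_{1,0}=\mathcal{P}[\cB_{1,0}]$ the essential input is $\sgn(D)\sin x=-\im\cos x$: one finds $\Pi_{\mathtt{s}}f_1^+=(0,-\im\,\ch^{-1/2}\cos x)^\top$, hence $\cJ\cB_{1,0}f_1^+=(-\im\,\ell_{1,0}(1)\ch^{-1/2}\cos x,0)^\top$, and applying $S_0$ on the modes $\pm1$ returns $\im\,\ku_{1,0}f_{-1}^-$ with $\ku_{1,0}=\tfrac14\big(1+\ch^{-2}\tth(1-\ch^4)\big)$, since $\ell_{1,0}(1)=\ch^2+\tth(1-\ch^4)$; the vector $f_1^-$ is symmetric.

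The only genuinely delicate point will be the reduced resolvent on the modes $n=\pm1$, which carry the kernel: one must split each such mode into the kernel eigendirection $(\im\ch,1)^\top$, annihilated by $S_0$, and the $2\im\ch$-eigendirection $(-\im\ch,1)^\top$, scaled by $1/(2\im\ch)$, and it is precisely this inversion that pins down $\ku_{0,1},\ku_{1,0}$ and supplies the factor $\im$ in the $P_{1,0}$-jets through the action of $\sgn(D)$ on odd harmonics. Everything else is the elementary $2\times2$ algebra of the Fourier blocks of $\cL_{0,0}$ together with the explicit first-order Stokes data $a_1,p_1$, so I anticipate no conceptual difficulty beyond careful bookkeeping of the harmonics and of the imaginary unit.
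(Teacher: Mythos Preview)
Your proposal is correct and follows the same approach as the paper: the paper simply cites \cite[formula (A.16)]{BMV3} for this lemma, but the method there (and throughout the present Appendix~\ref{conticinoleggeroleggero}) is exactly the residue computation you describe, namely applying the explicit resolvent formulas of Lemma~\ref{lem:VUW} on each Fourier block and reading off the $\lambda^{-1}$ coefficient, which in your language is precisely $-S_0\cJ A f$ once the Jordan contributions vanish. Your Laurent/reduced-resolvent presentation is a clean repackaging of the same calculation, and your treatment of the delicate mode $n=\pm1$ (projecting away the kernel direction of $\widehat{\cL_{0,0}}(\pm1)$ before inverting) matches the role of the space $\cU$ in \eqref{primainversione3}.
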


\begin{proof}
See \cite[formula (A.16)]{BMV3}.
\end{proof}

\begin{lem}\label{lem:secondjetsP}
{\bf (Second order jets)}
The  action of the jet $P_{0,2}$ of $P_2$ in \eqref{Psani} on the basis  in \eqref{funperturbed} is given by
\begin{subequations}\label{secondjetsP}
\begin{align} \label{secondjetsPbis}
&P_{0,2} f_1^+ =  \kn_{0,2} f_1^+  +\ku_{0,2}^+ f_{-1}^+ 
+ \vet{\ka_{0,2}\cos(3x) }{\kb_{0,2} \sin(3x)}\, , 
\quad P_0 P_{0,2} f_0^+   = 0 \, ,
 \\ 
& P_{0,2} f_1^- = \kn_{0,2} f_1^- +  \ku_{0,2}^-  f_{-1}^- + \vet{\widetilde \ka_{0,2}\sin(3x)}{\widetilde \kb_{0,2} \cos(3x)} 
\, , \quad P_{0,2} f_0^- = 0\, ,\label{secondjetsPeps}
\end{align}
where $ \widetilde \ka_{0,2},  \widetilde \kb_{0,2} \in \bR $ and   
\begin{align}
&\kn_{0,2} :=  \frac{\ch^{12}+\ch^8-9 \ch^4-9}{8 \ch^{12}}  \, , \quad \ku_{0,2}^+ := \frac{-2 \ch^{12}-7 \ch^8+8 \ch^4+9}{32 \ch^8}\,,  
\quad \ku_{0,2}^- := \frac{2 \ch^{12}-11 \ch^8+20 \ch^4-3}{32 \ch^8}\, , \notag  
\\
&\ka_{0,2} := \dfrac{3 (\ch^{12}+17 \ch^8+51 \ch^4+27)}{64 \ch^{23/2}}\, , \quad \kb_{0,2} := \dfrac{3 (3 \ch^{12}-5 \ch^8+25 \ch^4+9)}{64 \ch^{25/2}}\, . \label{u02-}
\end{align}
The  action of the jet $P_{2,0}$ on  the vector $f_0^-$  in \eqref{funperturbed}  is 
\begin{equation}\label{secondjetsPmu}
P_{2,0} f_0^- = 0\, .
\end{equation}
The  action of the jet $P_{1,1}$ on  the basis  in \eqref{funperturbed}  is 
\begin{align} \notag
&P_{1,1} f_1^+ = \im  \widetilde \km_{1,1}  f_0^- + \im 
 \vet{ \widetilde \ka_{1,1} \sin(2x)}{ \widetilde \kb_{1,1} \cos(2x) }   \, , \qquad 
P_{1,1} f_1^- = 
\im \vet{\ka_{1,1} \cos (2 x)}{\kb_{1,1} \sin (2 x)}
\, ,\\
&  P_{1,1} f_0^- = -\frac{\im}{2} \ch^{-3/2}  f_{-1}^+   
\, , \qquad 
P_{1,1} f_0^+ = \im \widetilde \kn_{1,1}  f_1^- + \im  \widetilde \ku_{1,1} f_{-1}^- 
 \, ,\label{secondjetsPmix}
\end{align}
\end{subequations}
where $ \widetilde \km_{1,1}, \widetilde \ka_{1,1}, \widetilde \kb_{1,1},
\widetilde \kn_{1,1}, \widetilde \ku_{1,1}  \in \bR $ and 
\begin{equation}
\label{n11a11} 
\ka_{1,1}:=-\dfrac{3 (\ch^8-6 \ch^4+5) \tth-3\ch^2 (\ch^4+3)}{8 \ch^{15/2}} \, ,\quad \kb_{1,1} :=\dfrac{(\ch^8+8 \ch^4-9) \tth+3 (\ch^6+\ch^2)}{8 \ch^{17/2}} \, .
\end{equation}
\end{lem}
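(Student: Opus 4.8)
The plan is to insert the explicit expansion \eqref{Bsani} of $\cB_{\mu,\e}$ into the contour formulas \eqref{hP} and to evaluate the resulting integrals by residues at $\lambda=0$, reducing everything to a finite computation on Fourier modes. Reading off the bidegrees in $P_2=\mathcal{P}[\cB_2]+\mathcal{P}[\cB_1,\cB_1]$ and using the multilinearity of $\mathcal{P}[\,\cdot\,]$, one has $P_{0,2}=\mathcal{P}[\cB_{0,2}]+\mathcal{P}[\cB_{0,1},\cB_{0,1}]$, $P_{2,0}=\mathcal{P}[\cB_{2,0}]+\mathcal{P}[\cB_{1,0},\cB_{1,0}]$ and $P_{1,1}=\mathcal{P}[\cB_{1,1}]+\mathcal{P}[\cB_{1,0},\cB_{0,1}]+\mathcal{P}[\cB_{0,1},\cB_{1,0}]$, so it remains to apply these operators to the four vectors in \eqref{funperturbed}.

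The main tool is the Laurent expansion of the unperturbed resolvent about the isolated eigenvalue $0$, \[(\cL_{0,0}-\lambda)^{-1}=-\tfrac{1}{\lambda}P_0-\tfrac{1}{\lambda^2}D_0+\textstyle\sum_{n\geq0}\lambda^n S^{n+1},\] where $P_0=P_{0,0}$, the eigennilpotent $D_0:=\cL_{0,0}P_0$ satisfies $D_0^2=0$ (the unique size-two Jordan block sits at Fourier mode $0$, with $D_0 f_0^+=-f_0^-$), and $S$ is the reduced resolvent, fixed by $SP_0=P_0S=0$ and $\cL_{0,0}S=S\cL_{0,0}=\uno-P_0$. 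Substituting into \eqref{hP} and collecting the coefficient of $\lambda^{-1}$ produces closed expressions of the form $\mathcal{P}[A]=-(P_0\cJ A\,S+S\,\cJ A\,P_0+D_0\,\cJ A\,S^2+S^2\,\cJ A\,D_0)$ and an analogous, longer, formula for $\mathcal{P}[A,B]$. Because $\cL_{0,0}$ is a Fourier multiplier matrix, $S$ acts on each mode $k\notin\{0,\pm1\}$ as the inverse of the (invertible) $2\times2$ symbol and on $\{0,\pm1\}$ within the range of $\uno-P_0$; since the functions $a_i,p_i$ in $\cB_{i,j}$ shift modes by a bounded amount, applying these operators to $f_1^\pm$ (mode $\pm1$) and $f_0^\pm$ (mode $0$) yields only finitely many harmonics. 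In \eqref{secondjetsP} the $P_0$ contributions produce the kernel components ($f_1^\pm,f_0^\pm$), while the $S$ contributions produce the off-kernel mode-one vectors $f_{-1}^\pm$ and the higher harmonics $\cos3x,\sin3x,\cos2x,\sin2x$; each scalar coefficient is then a rational function of $\ch,\tth$ obtained mode by mode.

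Before computing, I would use the structural symmetries to predict the shape of the answer and to cut the number of scalar products. The parity rule \eqref{P evodd} makes $P_{0,2},P_{2,0}$ parity-preserving (even-harmonic) operators and $P_{1,1}$ parity-reversing (odd-harmonic), which fixes exactly the harmonics displayed in \eqref{secondjetsP}. The conjugation symmetry $\overline{P_{\mu,\e}}=P_{-\mu,\e}$ for real $\e$ gives $\overline{P_{i,j}}=(-1)^i P_{i,j}$, so that $P_{0,2},P_{2,0}$ are real operators, whence $\kn_{0,2},\ku_{0,2}^\pm,\ka_{0,2},\kb_{0,2}$ and $\widetilde\ka_{0,2},\widetilde\kb_{0,2}$ are real, while $P_{1,1}$ is $\im$ times a real operator, explaining the $\im$ in front of every entry of \eqref{secondjetsPmix} and the reality of $\widetilde\km_{1,1},\widetilde\ka_{1,1},\widetilde\kb_{1,1},\widetilde\kn_{1,1},\widetilde\ku_{1,1}$. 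The vanishings $P_{0,2}f_0^-=0$, $P_{2,0}f_0^-=0$ and $P_0P_{0,2}f_0^+=0$ follow by combining these symmetries with the projection identity $P_{\mu,\e}^2=P_{\mu,\e}$ (which at second order reads $P_0P_{0,2}+P_{0,1}^2+P_{0,2}P_0=P_{0,2}$) and the first order actions \eqref{tuttederivate}; the remaining components are then read off from the residue formulas.

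The principal difficulty is bookkeeping rather than conceptual. The eigennilpotent $D_0$ creates double poles at mode $0$, so the residue of the triple resolvent product defining $\mathcal{P}[A,B]$ splits into many terms mixing $P_0,D_0,S,S^2$, and one must track the intermediate Fourier modes generated by the harmonics of $a_i,p_i$ before each reduced resolvent is applied. I would therefore organize the calculation by first recording $\cJ\cB_{i,j}f_k^\sigma$ for each basis vector, then the action of $S$ (or $P_0,D_0$) on the resulting harmonics, and finally assembling the residues; the closed rational expressions \eqref{u02-} and \eqref{n11a11} for the needed coefficients $\kn_{0,2},\ku_{0,2}^\pm,\ka_{0,2},\kb_{0,2},\ka_{1,1},\kb_{1,1}$ are then extracted symbolically, in line with the Mathematica implementation used throughout the paper.
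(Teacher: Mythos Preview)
Your approach is essentially the paper's own: both insert the Laurent expansion of $(\cL_{0,0}-\lambda)^{-1}$ into \eqref{hP} and extract residues mode by mode; the paper's Lemma~\ref{lem:VUW} is precisely the concrete realization of your abstract $(P_0,D_0,S)$ decomposition (keeping the $\cU$-block resolvent as $(\lambda^2+4\ch^2)^{-1}$ rather than expanding in powers of $\lambda$), and Lemmata~\ref{lem:action1}--\ref{lem:B012} carry out exactly the bookkeeping you describe. One small correction: the second-order projection identity together with $P_{0,1}f_0^-=0$ yields only $P_0P_{0,2}f_0^-=0$, not the full vanishing; the paper obtains $P_{0,2}f_0^-=P_{2,0}f_0^-=0$ from the identities $P_{0,\e}f_0^-=P_{\mu,0}f_0^-=f_0^-$ cited from \cite{BMV3}, or equivalently your residue scheme gives them at once upon recording $\cB_{0,1}f_0^-=\cB_{0,2}f_0^-=\cB_{1,0}f_0^-=0$.
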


\begin{proof}
In Appendix \ref{conticinoleggeroleggero}.
\end{proof}

\begin{lem}\label{lem:P03acts}
{\bf (Third order jets)}
The  action of the jets $P_{0,3}$, $P_{1,2} $ and $P_{2,1}$ of $P_3$ in \eqref{Psani} on  $f_1^+$, $f_0^-$ is
\begin{equation}\label{P03acts}
\begin{aligned}
&P_{0,3} f_1^+ = \vet{\ka_{0,3} \cos(2x)}{\kb_{0,3} \sin(2x)}
 +  \vet{\widetilde \ka_{0,3} \cos(4x)}{\widetilde \kb_{0,3} \sin(4x)} \, ,
 \quad  P_{0,3} f_0^- = 0  \, , \\
&P_{1,2} f_0^- = \im \vet{\ka_{1,2} \cos (2 x)}{\kb_{1,2} \sin (2 x)} \, ,  \quad
P_{2,1} f_0^- =  \widetilde \kn_{2,1} f_1^- + \widetilde \ku_{2,1} f_{-1}^- \,  ,
\end{aligned}
\end{equation}
where $ \widetilde \ka_{0,3}, \widetilde \kb_{0,3}, \widetilde \kn_{2,1}, \widetilde \ku_{2,1} \in \bR $ and 
\begin{equation}\label{n03}
\begin{aligned}
&\ka_{0,3} := \frac1{64 \ch^{35/2}(\ch^2+1)} \Big(6 \ch^{22}+2 \ch^{20}+27 \ch^{18}+21 \ch^{16}-379 \ch^{14} \\
&\qquad \qquad -361 \ch^{12}+575 \ch^{10}+581 \ch^8-243 \ch^6-225 \ch^4-162 \ch^2-162 \Big)\, ,\\ 
&\kb_{0,3}:= \frac1{128 \ch^{37/2} (\ch^2+1)} \Big(6 \ch^{26}+10 \ch^{24}+35 \ch^{22}+21 \ch^{20}-146 \ch^{18}-146 \ch^{16}\\ 
&\qquad \qquad-46 \ch^{14}-34 \ch^{12}+470 \ch^{10}+482 \ch^8-333 \ch^6-315 \ch^4-162 \ch^2-162\Big)\, ,\\ 
& \ka_{1,2} = -\frac{ \ch^4+3}{4 \ch^7} , \quad
\kb_{1,2} := \frac{\ch^4+1 }{4 \ch^4}\, .
%\quad \widetilde \kn_{2,1}:= \frac{(\ch^4-1) \tth-\ch^2}{8 \ch^{7/2}} , \quad 
% \widetilde \ku_{2,1}:= \frac{(\ch^4+3)\tth}{8 \ch^{7/2}} \, , 
 \end{aligned}
 \end{equation}
\end{lem}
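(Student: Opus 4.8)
\emph{Proof strategy.} The plan is to extract the three jets from the representation \eqref{Psani}--\eqref{hP} of $P_3$ and to evaluate the resulting contour integrals by residues, exactly as for the lower jets in \eqref{tuttederivate} and \eqref{secondjetsP}. First I would collect, for each target monomial, the pieces of $\mathcal{P}[\cB_3]$, $\mathcal{P}[\cB_2,\cB_1]$, $\mathcal{P}[\cB_1,\cB_2]$ and $\mathcal{P}[\cB_1,\cB_1,\cB_1]$ that carry it. With the notation \eqref{notazione} and the expansion \eqref{Bsani}, the jet $P_{0,3}$ retains only the pure-$\e$ contributions $\mathcal{P}[\cB_{0,3}]$, $\mathcal{P}[\cB_{0,2},\cB_{0,1}]+\mathcal{P}[\cB_{0,1},\cB_{0,2}]$ and $\mathcal{P}[\cB_{0,1},\cB_{0,1},\cB_{0,1}]$; the jet $P_{1,2}$ collects the terms linear in $\mu$, carried by $\cB_{1,0}=\ell_{1,0}\Pi_{\mathtt{s}}$, $\cB_{1,1}=-\im p_1\cJ$ and $\cB_{1,2}=-\im p_2\cJ$; and $P_{2,1}$ the terms quadratic in $\mu$, carried by $\cB_{2,0}=\ell_{2,0}\Pi_{\mathtt{ev}}$ and $\cB_{1,1}$. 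A first simplification is that $\cB_{2,1}=0$ in \eqref{B3sano}, so $\mathcal{P}[\cB_{2,1}]$ is absent from $P_{2,1}$.

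For each such integral I would insert the Laurent expansion of the unperturbed resolvent at its pole $\lambda=0$. Since $0$ is an eigenvalue of $\cL_{0,0}$ with a single Jordan block of length two --- recall $\cL_{0,0}f_0^+=-f_0^-$ while $f_1^\pm,f_0^-$ are eigenvectors --- the nilpotent part $N:=\cL_{0,0}P_{0,0}$ obeys $N^2=0$ and
\[
(\cL_{0,0}-\lambda)^{-1}=-\frac{P_{0,0}}{\lambda}-\frac{N}{\lambda^2}+S(\lambda)\, ,
\]
with $S(\lambda)$ the reduced resolvent, holomorphic near $0$. Because $\cL_{0,0}$ is a Fourier multiplier and the Stokes coefficients $a_i,p_i$ carry only harmonics of the parity of $i$, each chain $\cJ\cB_{i_1,j_1}(\cL_{0,0}-\lambda)^{-1}\cdots\cJ\cB_{i_k,j_k}$ sends a kernel vector, supported on Fourier modes $0,\pm1$, to finitely many modes, on which $S(\lambda)$ acts as a holomorphic Fourier multiplier; the integral $\oint_\Gamma$ then amounts to reading off the coefficient of $\lambda^{-1}$, with the double pole $-N/\lambda^2$ contributing through the Jordan block.

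Two structural facts cut down the work. First, the selection rule \eqref{P evodd}: $P_{0,3}$ and $P_{2,1}$ couple only Fourier modes differing by an odd integer, whereas $P_{1,2}$ couples modes differing by an even integer. Applied to $f_1^+$ (modes $\pm1$) this forces $P_{0,3}f_1^+$ onto even modes, and applied to $f_0^-$ (mode $0$) it forces $P_{2,1}f_0^-$ onto odd modes and $P_{1,2}f_0^-$ onto even modes, matching the shapes in \eqref{P03acts}. Second, the components landing back in the unperturbed generalized kernel $\cV_{0,0}$ are not free: they are pinned by the idempotency $P_{\mu,\e}^2=P_{\mu,\e}$, whose $\e^3$-jet reads $P_{0,0}P_{0,3}+P_{0,1}P_{0,2}+P_{0,2}P_{0,1}+P_{0,3}P_{0,0}=P_{0,3}$ and thus expresses $P_{0,0}P_{0,3}f_1^+$ through the already computed \eqref{tuttederivate}, \eqref{secondjetsP}; in the directions relevant here these cancel, removing the $f_0^\pm$ pieces and leaving only the higher harmonics recorded in \eqref{P03acts}. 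The alternation real / purely imaginary of the coefficients --- in particular the $\im$ prefactor of the odd-in-$\mu$ jet $P_{1,2}$ and its absence for the even-in-$\mu$ jet $P_{2,1}$ --- together with the reality of $\widetilde\ka_{0,3},\widetilde\kb_{0,3},\widetilde\kn_{2,1},\widetilde\ku_{2,1}$, follows from the reversibility-preserving and reality identities \eqref{propPU}, $\bro P_{\mu,\e}=P_{\mu,\e}\bro$ and $\overline{P_{0,\e}}=P_{0,\e}$, and needs no extra computation. A further saving is that each triple chain factors through the intermediate vectors produced by \eqref{tuttederivate}--\eqref{secondjetsP}, so the innermost resolvent action can be substituted from the lower jets rather than recomputed.

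The main obstacle will be the genuinely cubic contribution $\mathcal{P}[\cB_{0,1},\cB_{0,1},\cB_{0,1}]$, together with the mixed triple chains feeding $P_{1,2}$ and $P_{2,1}$ (such as $\mathcal{P}[\cB_{1,0},\cB_{0,1},\cB_{0,1}]$ and its permutations): three resolvent factors must be Laurent-expanded simultaneously, the double-pole term $-N/\lambda^2$ has to be propagated through the product, and the residue at $\lambda=0$ then involves several competing terms whose cancellations must be tracked exactly. Assembling the surviving residues into the explicit rational functions of $\ch$ and $\tth$ recorded in \eqref{n03}, and in particular extracting the $\cos(2x),\sin(2x)$ and $\cos(4x),\sin(4x)$ components of $P_{0,3}f_1^+$ and the $\cos(2x),\sin(2x)$ component of $P_{1,2}f_0^-$, is the heavy bookkeeping step, which I would organize and cross-check with the symbolic computations referenced in the introduction.
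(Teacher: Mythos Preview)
Your proposal is correct and follows essentially the same route as the paper: decompose the jets via \eqref{Psani}--\eqref{hP}, use the block structure of the resolvent on $\cV_{0,0}\oplus\cU\oplus\cW$ (the paper's Lemma~\ref{lem:VUW}, which is exactly your Laurent expansion $-P_{0,0}/\lambda-N/\lambda^2+S(\lambda)$), and read off the residue at $\lambda=0$. The paper organizes the bookkeeping by first tabulating, in a sequence of preparatory lemmata, the action of single and iterated chains $(\sL_{0,0}-\lambda)^{-1}\cJ\cB_{i,j}$ on the relevant vectors as explicit functions of $\lambda$ with named intermediate vectors ($\mathrm{A}_2^\pm,\mathrm{B}_2^\pm,\mathrm{L}_3^+,\ldots$); this is precisely your ``factoring through the intermediate vectors'' remark made concrete.

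Two minor points. First, the vanishing $P_{0,3}f_0^-=0$ is obtained in the paper not by computation but from the exact identity $P_{0,\e}f_0^-=f_0^-$ (valid for all $\e$), which also underlies Lemma~\ref{lem:05052}; you may want to use this shortcut. Second, your idempotency argument to pin the $\cV_{0,0}$-components of $P_{0,3}f_1^+$ is valid but unnecessary: in the paper's direct residue computation those pieces are automatically absorbed into the $\cO(\lambda^{-1}\!:\!\lambda)$ remainder and do not contribute to the $\lambda^{-1}$ coefficient, so no separate argument is needed.
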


\begin{proof}
In Appendix \ref{conticinoleggeroleggero}.
\end{proof}

\subsection{Expansion of $\mathfrak{B}_{\mu,\e} $}\label{secPgotmue}

    In this section we provide the expansion of the operator $\mathfrak{B}_{\mu,\e} $ defined in \eqref{Bgotico}.
We introduce the  notation $\textup{\textbf{Sym}}[A]:= \frac12 A+ \frac12 A^*$.

\begin{lem}[{\bf Expansion of $\mathfrak{B}_{\mu,\e}$}\null] \label{lem:expBgot}
The operator $\mathfrak{B}_{\mu,\e} $ in \eqref{Bgotico} has the Taylor expansion 
 \begin{equation}\label{B.Tay.ex}
\mathfrak{B}_{\mu,\e} = \sum_{j=0}^4 \mathfrak{B}_j +\cO_5 
\end{equation}
where
\begin{subequations}\label{upto4exp} 
\begin{align}\label{ordini012}
\mathfrak{B}_0 & :=P_0^*{\cal B}_0P_0, 
 \quad 
 \mathfrak{B}_1 :=P_0^* {\cal B}_1 P_0, 
 \quad 
 \mathfrak{B}_2 := P_0^* \mathbf{Sym}\big[{\cal B}_2+{\cal B}_1P_1\big]P_0 \, , 
 \\  \label{ordine3}
 \mathfrak{B}_3 & := P_0^*\mathbf{Sym}\big[{\cal B}_3+{\cal B}_2P_1 + {\cal B}_1(\uno-P_0)P_2\big]P_0\, , 
 \\ \label{ordine4}
  \mathfrak{B}_4  &:= P_0^*\mathbf{Sym}\big[{\cal B}_4+ {\cal B}_3P_1 + {\cal B}_2(\uno-P_0)P_2  + {\cal B}_1(\uno-P_0)P_3
    -  {\cal B}_1P_1 P_0 P_2  + \mathfrak{N}P_0P_2\big] P_0\, , 
\end{align}
\end{subequations}
the operators $P_0, \ldots, P_3 $ are defined in \eqref{Psani} and 
\begin{equation}\label{deffrakN}
\mathfrak{N}:=  \frac14 \big( P_2^* {\cal B}_0-{\cal B}_0 P_2 \big) = - \mathfrak{N}^*  \,  . 
\end{equation} 
It results 
 \begin{equation}\label{kN}
 \big(\mathfrak{N} f_k^\sigma, f_{k'}^{\sigma'} \big)= 0\, ,\quad \forall f_k^\sigma, f_{k'}^{\sigma'}\in\{f_1^+,f_1^-,f_0^- \}\, .
 \end{equation}
\end{lem}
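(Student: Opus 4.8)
The plan is to expand $U_{\mu,\e}P_0$ to fourth order, insert it into $\mathfrak{B}_{\mu,\e} = (U_{\mu,\e}P_0)^*\,\cB_{\mu,\e}\,(U_{\mu,\e}P_0)$ (recall \eqref{Bgotico}), collect the homogeneous bidegrees in $(\mu,\e)$, and reduce the resulting sums to \eqref{ordini012}--\eqref{ordine4} using the algebraic identities satisfied by the Kato projectors. First I would simplify $U_{\mu,\e}$ on $\mathrm{Rg}(P_0)$: writing $Q := P_{\mu,\e}-P_0 = P_1+P_2+P_3+\cO_4$ (which is $\cO_1$ by \eqref{expP}), the factor $(\uno-P_{\mu,\e})(\uno-P_0)$ in \eqref{OperatorU} kills $P_0$, so $U_{\mu,\e}P_0 = (\uno-Q^2)^{-1/2}P_{\mu,\e}P_0$. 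Expanding $(\uno-Q^2)^{-1/2} = \uno+\tfrac12 Q^2+\tfrac38 Q^4+\cO_{\geq 6}$ and $P_{\mu,\e}P_0 = P_0+QP_0$ yields the homogeneous pieces $W_j := (U_{\mu,\e}P_0)_j$, e.g. $W_0 = P_0$, $W_1 = P_1P_0$, $W_2 = (\tfrac12 P_1^2+P_2)P_0$, with explicit (longer) $W_3,W_4$. Three structural facts then drive every cancellation: \emph{(a)} the idempotency relations from $P_{\mu,\e}^2 = P_{\mu,\e}$, i.e. $P_0P_1+P_1P_0=P_1$, $P_0P_2+P_1^2+P_2P_0=P_2$, etc., which upon sandwiching by $P_0$ give $P_0P_1P_0=0$ and $P_0P_1^2P_0=-P_0P_2P_0$; \emph{(b)} the intertwining identity $P_0^*\cB_0 = \cB_0P_0$, a consequence of $\cJ P_0 = P_0^*\cJ$ from \eqref{propPU}, of $\cB_0 = -\cJ\cL_{0,0}$ (using $\cJ^2=-\uno$), and of $P_0\cL_{0,0}=\cL_{0,0}P_0$; and \emph{(c)} the commutation relations from $P_{\mu,\e}\cL_{\mu,\e}=\cL_{\mu,\e}P_{\mu,\e}$, which in bidegree $n$, after multiplying by $\cJ^{-1}$, read $\sum_{i+j=n}P_i^*\cB_j = \sum_{i+j=n}\cB_iP_j$, together with self-adjointness $\cB_j=\cB_j^*$.

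At orders $0,1,2,3$ these reductions yield the compact forms directly. At order $1$ the two $\cB_0$-terms $P_0^*\cB_0P_1P_0$ and its adjoint vanish by $P_0P_1P_0=0$ and \emph{(b)}, leaving $\mathfrak{B}_1=P_0^*\cB_1P_0$. At order $2$ the collected $\cB_0$-terms produce, after \emph{(a)}, \emph{(b)} and the order-$1$ relation of \emph{(c)}, an antisymmetric residue $P_0^*(P_2^*\cB_0-\cB_0P_2)P_0$; the order-$2$ relation of \emph{(c)} converts this into $P_0^*(\cB_1P_1-P_1^*\cB_1)P_0$, which exactly symmetrises the direct $\cB_1$-terms into $\tfrac12 P_0^*(\cB_1P_1+P_1^*\cB_1)P_0$ and gives \eqref{ordini012}. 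Order $3$ follows the same pattern, the commutation relations of bidegree $\leq 3$ converting every $\cB_0$-residue into symmetric $\cB_j$-combinations and producing \eqref{ordine3}.

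The genuine obstacle is order $4$. A priori $W_4$ contains the fourth projector $P_4$, but the order-$4$ idempotency relation $P_0P_4+P_1P_3+P_2^2+P_3P_1+P_4P_0=P_4$ expresses $P_0P_4P_0$ through lower projectors, so $P_4$ drops out and only $P_0,\dots,P_3$ of \eqref{expP} survive, consistently with \eqref{ordine4}. The $\cB_0$-heavy contributions $\sum_i W_i^*\cB_0W_{4-i}$ now involve $W_3,W_4$ (hence $P_3$ and quadratic and cubic products of the $P_j$), and repeated use of \emph{(a)}--\emph{(c)} reorganises the bulk into $P_0^*\mathbf{Sym}[\cB_4+\cB_3P_1+\cB_2(\uno-P_0)P_2+\cB_1(\uno-P_0)P_3]P_0$. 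However, in contrast to orders $\leq 3$, the antisymmetric residue no longer reduces to a $\mathbf{Sym}[\cB_j(\cdots)]$ form: it carries an interior factor $P_0P_2$ that the commutation conversion cannot eliminate, and what survives is exactly $(\mathfrak{N}-\cB_1P_1)P_0P_2$, with $\mathfrak{N}=\tfrac14(P_2^*\cB_0-\cB_0P_2)$ as in \eqref{deffrakN}. Isolating and correctly normalising this leftover is the crux of \eqref{ordine4}; the anti-self-adjointness $\mathfrak{N}^*=-\mathfrak{N}$ (immediate from $\cB_0=\cB_0^*$) serves as a consistency check, forcing $\mathfrak{N}P_0P_2$ to enter only through its $\mathbf{Sym}$.

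Finally, \eqref{kN} follows at once from the structure of $\cB_0$. Since $\cL_{0,0}=\cJ\cB_0$ with $\cJ$ invertible, every genuine null vector of $\cL_{0,0}$ is annihilated by $\cB_0$; by \eqref{funperturbed} this gives $\cB_0f_1^+=\cB_0f_1^-=\cB_0f_0^-=0$ (the generalized eigenvector $f_0^+$ is excluded precisely because $\cB_0f_0^+=f_0^+\neq 0$). Hence, for any $f,g\in\{f_1^+,f_1^-,f_0^-\}$, using $\cB_0=\cB_0^*$,
\[
(\mathfrak{N}f,g)=\tfrac14\big(P_2^*\cB_0f,g\big)-\tfrac14\big(\cB_0P_2f,g\big)=\tfrac14\big(P_2^*\cB_0f,g\big)-\tfrac14\big(P_2f,\cB_0g\big)=0\,,
\]
since $\cB_0f=0$ in the first term and $\cB_0g=0$ in the second.
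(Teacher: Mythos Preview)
Your proposal is correct and uses the same fundamental ingredients as the paper: the expansion of $U_{\mu,\e}P_0$ via $(\uno-Q^2)^{-1/2}$, the idempotency relations of $P_{\mu,\e}$, the intertwining $P_{\mu,\e}^*\cB_{\mu,\e}=\cB_{\mu,\e}P_{\mu,\e}$, and the kernel identity $\cB_0f=0$ for $f\in\{f_1^\pm,f_0^-\}$.

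The paper organises the computation slightly differently, in a way that shortens the order-by-order bookkeeping. Instead of expanding $W_j=(U_{\mu,\e}P_0)_j$ directly, it first commutes the scalar function $g((P_{\mu,\e}-P_0)^2)$ past $P_{\mu,\e}$ and uses $P_{\mu,\e}^*\cB_{\mu,\e}P_{\mu,\e}=\cB_{\mu,\e}P_{\mu,\e}$ to obtain the compact form
\[
\mathfrak{B}_{\mu,\e}=\mathbf{Sym}\big[P_0^*\big(\cB_{\mu,\e}P_{\mu,\e}+2\cB_{\mu,\e}P_{\mu,\e}g_2+g_2^*\cB_{\mu,\e}P_{\mu,\e}g_2+2\cB_{\mu,\e}P_{\mu,\e}g_4\big)P_0\big]+\cO_6,
\]
with $g_2=\tfrac12(P_{\mu,\e}-P_0)^2$, $g_4=\tfrac38(P_{\mu,\e}-P_0)^4$. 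The key simplification is then the identity $(P_{\mu,\e}-P_0)^{2k}P_0=P_0\big((\uno-P_{\mu,\e})P_0\big)^k$, which collapses $g_2P_0$ and $g_4P_0$ to sums of $P_0P_jP_0$-terms; this, together with $P_0^*\cB_0=\cB_0P_0=\Pi_0^+$, makes the $\cB_0$-contributions disappear (or reduce to $\Pi_0^+$-terms) almost mechanically at each order, and the residual $\mathfrak{N}P_0P_2-\cB_1P_1P_0P_2$ falls out at order~4 without the longer $W_j$-bookkeeping you outline. Your direct route would reach the same formulas but with more intermediate cancellations to track. Your argument for \eqref{kN} is exactly the paper's.
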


\begin{proof}
In order to expand 
$\mathfrak{B}_{\mu,\e} $ in \eqref{Bgotico} we first expand $U_{\mu,\e} P_0 $. 
In view of \eqref{OperatorU} we have, 
introducing the analytic  function 
$  g(x):= (1-x)^{-\frac12} $ for  % = 1+\frac12x+\frac38 x^2+ \cO(x^3) $, 
$  |x|<1 $, 
\begin{equation}\label{UP}
U_{\mu,\e} P_0 =    g((P_{\mu,\e}-P_0)^2)  \,  P_{\mu,\e}\, P_0  = P_{\mu,\e}\, g((P_{\mu,\e}-P_0)^2)  \,  P_0  \ , 
\end{equation}
using  that 
 $(P_{\mu,\e}-P_0)^2$ commutes with $P_{\mu,\e}$, and so  does 
$g((P_{\mu,\e}-P_0)^2)$.
The Taylor expansion  $ g(x) = 1+\frac12x+\frac38 x^2+ \cO(x^3) $  implies that 
\begin{equation}\label{geppoexp}
g((P_{\mu,\e}-P_0)^2) = % \uno + g_2+ \cO_4= 
\uno + \underbrace{\frac12 (P_{\mu,\e}-P_0)^2}_{=:g_2}  +\underbrace{\frac38 (P_{\mu,\e}-P_0)^4}_{=:g_4} + \,  \cO_6 \, ,
\end{equation}
where $\cO_6 = \cO((P_{\mu,\e}-P_0)^6) \in \cL(Y)$. 

Furthermore, since $P_{\mu, \e} \sL_{\mu,\e}  = 	\sL_{\mu,\e} P_{\mu, \e}$ (see Lemma \ref{lem:Kato1}- item 2), 
applying  $\cJ$ to both sides and using  \eqref{propPU}, yields 
\begin{equation}\label{PB}
P_{\mu, \e}^* {\cal B}_{\mu,\e}  = {\cal B}_{\mu,\e} P_{\mu, \e} \qquad \text{where}
\qquad P_{\mu, \e}^2 = P_{\mu, \e} \, . 
\end{equation} 
Therefore the  operator $\mathfrak{B}_{\mu, \e} $ in \eqref{Bgotico} has the expansion 
\begin{align}
\notag
\mathfrak{B}_{\mu, \e} &
 \eq^{\eqref{UP}} 
 P_0^*\,  g((P_{\mu,\e}-P_0)^2)^* \, P_{\mu,\e}^* \, {\cal B}_{\mu,\e} \, 
 P_{\mu,\e} \, g((P_{\mu,\e}-P_0)^2) \, P_0 \\
 \notag
 &\stackrel{\eqref{PB}}{=} 
  P_0^* \, g((P_{\mu,\e}-P_0)^2)^* \, {\cal B}_{\mu,\e} \, P_{\mu,\e} \, 
  g((P_{\mu,\e}-P_0)^2) \, P_0 \\
 \notag
& \stackrel{\eqref{geppoexp}} =  
  P_0^* \, (\uno + g_2^* + g_4^*+\cO_6) \, {\cal B}_{\mu,\e} \, 
P_{\mu,\e} \, \big(\uno+g_2+g_4+\cO_6\big) \,   P_0 \\
\label{expansionB}
&\;\;\;=\;\;\,  \mathbf{Sym}\big[ P_{0}^* \big({\cal B}_{\mu,\e}P_{\mu,\e} + 2{\cal B}_{\mu,\e}P_{\mu,\e} g_2 + g_2^* \cB_{\mu,\e} P_{\mu,\e} g_2 + 2{\cal B}_{\mu,\e}P_{\mu,\e} g_4  \big) P_{0} \big]
 \,  + \cO_6
\end{align}
using \eqref{PB} and that $ g_2 = \cO_2 $ and $ g_4 = \cO_4 $. 

A further analysis of the term \eqref{expansionB} relies on the following lemma.

\begin{lem}\label{lemmaboh}Let $\Pi_0^+$ be the orthogonal projector on $f_0^+$ in \eqref{funperturbed} and $\Pi^\angle:= \uno-\Pi_0^+ $. 
One has
\begin{align} \label{B.id}
&{\cal B}_0 \, P_0 = P_0^* \, {\cal B}_0  = \Pi_0^+ \, ,  \qquad 
{\cal B}_0 \, P_1 + {\cal B}_1 \, P_0 = P_1^* \, {\cal B}_0 +
P_0^* \, {\cal B}_1 \, ,  \\
& \label{P.id} P_0 \, P_1 \, P_0 = 0 \, , \qquad P_0 \, P_2 \, P_0 = - P_1^2 \, P_0  = - P_0 \, P_1^2 \, ,  \\
& \label{PP0eq} (P_{\mu, \e} - P_0)^2  P_{0} = 
P_{0} ( \uno - P_{\mu,\e} ) \, P_{0} \, , \qquad (P_{\mu, \e} - P_0)^4  P_{0} = 
P_{0} \big( ( \uno - P_{\mu,\e} )  P_{0} \big)^2 \, ,  \\
& \label{lastid} 
\cJ P_j = P_j^* \cJ  \, , \ \   \forall j \in \bN_0 \, , \qquad  
P_0^* \cB_0 P_j \Pi^\angle P_0 = \Pi_0^+ P_j\Pi^\angle P_0 \, .
\end{align}
\end{lem}

\begin{proof}
We deduce that $ {\cal B}_0 \, P_0  = \Pi_0^+ $ because 
 $ \cB_0 f_1^+ = \cB_0 f_1^- = \cB_0 f_0^- = 0 $, $  \cB_0 f_0^+=f_0^+ $
and the first identity in \eqref{B.id} follows also since   
$ P_0^* {\cal B}_0  = {\cal B}_0 P_0 $ by \eqref{PB}. 
The second one  follows by expanding the identity in  \eqref{PB} at order 1, using the expansions of  $P_{\mu,\e}$ and $B_{\mu,\e}$ in \eqref{expP} and \eqref{def.Bj}.
The identities in \eqref{P.id} follow by expanding
the identity $P_{\mu,\e}^2 = P_{\mu,\e}$ at order 1 and 2, getting
$ P_1 P_0 + P_0 P_1 = P_1 $ and 
$ P_2 P_0 + P_1^2 + P_0 P_2 = P_2 $, 
and applying $P_0$ to the right and the left of the identities above. The first
identity in \eqref{PP0eq} is verified using that $P_{\mu,\e}^2 = P_{\mu,\e}$ 
 and the second one follows by applying the first one twice. Finally the first identity in \eqref{lastid} follows by expanding the identity $\cJ P_{\mu,\e} = P_{\mu,\e}^*\cJ  $ in  \eqref{propPU} into homogeneous orders. The last identity in 
 \eqref{lastid}  descends from the first of \eqref{B.id}, since for any $g\in L^2(\bT,\bC^2)$ and $f_k^\sigma\in\{f_1^+,f_1^-,f_0^-\}$ one has
$$ \big(P_0^* \cB_0 P_j f_k^\sigma ,g \big)= \big( P_j f_k^\sigma ,  \cB_0 P_0 g \big) =   \big( P_j f_k^\sigma ,  \Pi_0^+ g \big)  = \big( \Pi_0^+ P_j  f_k^\sigma, g \big) \, . $$
This concludes the proof of the lemma.
\end{proof}
By \eqref{PP0eq} and \eqref{P.id} the Taylor expansions of $g_2P_0 $ and $g_4P_0$ in \eqref{geppoexp} are
\begin{subequations}
\begin{align}
\label{g2}
g_2 P_0 &= 
\frac12 P_0 (\textup{Id} - P_{\mu,\e}) P_0 = 
-\frac12 P_0 \, P_2  \, P_0
- \frac12   P_0 \,P_3 \, P_0 
 - \frac12  P_0 \,P_4 \, P_0 + \cO_5 \, , \\
\label{g4} g_4 P_0 &= 
\frac38 P_0  (\textup{Id} - P_{\mu,\e}) P_0 
 (\textup{Id} - P_{\mu,\e}) P_0  = \frac38 P_0 \, P_2 \, P_0 \, P_2 \, P_0
 +  \cO_5 \, .
\end{align}
\end{subequations}
We now Taylor expand the operators in \eqref{expansionB} and collect the terms of the same order.\\
\underline{Expression of $\mathfrak{B}_0$:} The term of order 0 in \eqref{expansionB} is simply $\mathfrak{B}_0 = P_0^* {\cal B}_{0}P_0 $. \\
\underline{Expression of $\mathfrak{B}_1$:} The term of order 1 is  
$$
\mathfrak{B}_1 = \frac12 P_0^* \big( {\cal B}_0 P_1 + {\cal B}_1 P_0 + P_1^* {\cal B}_0 + P_0^* {\cal B}_1 \big) P_0 
= P_0^* \, {\cal B}_1 \, P_0 
$$
using \eqref{P.id} and \eqref{B.id}
and that ${\cal B}_0, {\cal B}_1$ are self-adjoint.\\
\underline{Expression of $\mathfrak{B}_2$:} We compute the terms of order 2
in \eqref{expansionB}. By \eqref{g2} we get 
\begin{equation}\label{expkBord2par}
\mathfrak{B}_2 
=   \mathbf{Sym}[P_0^* \big({\cal B}_2 P_0 + {\cal B}_1P_1 + {\cal B}_0P_2 - {\cal B}_0 P_0 P_2  \big) P_0 ]  \, .
\end{equation}
Moreover 
$$
P_0^* \left( {\cal B}_0P_2 - {\cal B}_0 P_0 P_2  \right) P_0
=
P_0^*  {\cal B}_0 (\textup{Id} - P_0) P_2  P_0 \stackrel{\eqref{B.id}} = 
  {\cal B}_0  P_0 (\textup{Id} - P_0) P_2  P_0 = 0 \ ,
$$
and  from \eqref{expkBord2par} descends
the expression of  $\mathfrak{B}_2  $ in \eqref{ordini012}.\\
\underline{Expression of $\mathfrak{B}_3$:}  We compute the terms of order 3
in \eqref{expansionB}.
By \eqref{g2}, identity \eqref{ordine3} follows from
\begin{align*}
\mathfrak{B}_3 & =  \mathbf{Sym} [P_0^* \big({\cal B}_3 + {\cal B}_2 P_1 + {\cal B}_1 P_2 
+ {\cal B}_0 P_3
- 
 ({\cal B}_0P_1 + {\cal B}_1P_0)P_0 \, P_2 \, P_0
 - {\cal B}_0 \, P_0\,  P_3 \,  P_0 \big) P_0] \\
& = \mathbf{Sym}[P_0^* \big({\cal B}_3 + {\cal B}_2 P_1 + {\cal B}_1 P_2  - {\cal B}_1P_0P_2 
\big) P_0] ,
\end{align*}
where we used 
$P_0^* {\cal B}_0 P_3 = P_0^* {\cal B}_0  P_0 P_3   $ 
and
$P_0^* {\cal B}_0 P_1 P_0 P_2 P_0 = 0$
by \eqref{B.id} and \eqref{P.id}.\\
\underline{Expression of $\mathfrak{B}_4$:} 
At the fourth order we get, in view of \eqref{g2} and \eqref{g4},
\begin{align} \notag
\mathfrak{B}_4 & = 
 \mathbf{Sym} \big[
 P_0^* \Big( {\cal B}_0 P_4 + {\cal B}_1 P_3 + {\cal B}_2 P_2 + {\cal B}_3 P_1 + {\cal B}_4 - {\cal B}_0 P_0 P_4 P_0 - ({\cal B}_0P_1 +{\cal B}_1P_0) P_0 P_3 P_0 \\ \notag &\quad - ({\cal B}_2 P_0 + {\cal B}_1P_1 + {\cal B}_0P_2)P_0P_2P_0 + \frac34 {\cal B}_0P_0 P_2 P_0 P_2 P_0 + \frac14 P_2^* P_0^* {\cal B}_0 P_0 P_2 P_0 \Big) P_0 \big] \notag \\ \notag
& = \mathbf{Sym} \big[ P_0^* \Big( {\cal B}_1 (\uno -P_0) P_3 + {\cal B}_2 (\uno-P_0) P_2 + {\cal B}_3 P_1 + {\cal B}_4 - {\cal B}_1P_1 P_0 P_2 P_0 \\ &\quad -\frac14 {\cal B}_0(P_0P_2P_0)^2+\frac14 P_2^* {\cal B}_0P_0P_2P_0 \Big) P_0 \big]\,  , \label{4orderaux}
\end{align}
where to pass from the first to the second line we used $P_0^*{\cal B}_0 P_4  =P_0^*{\cal B}_0 P_0 P_4$ (by  \eqref{B.id}) and 
$P_0^* {\cal B}_0 P_1 P_0 P_3 P_0 = 0$ (by  \eqref{B.id} and \eqref{P.id}).
We sum up the last two terms in \eqref{4orderaux} into $\mathbf{Sym}[ P_0^* \mathfrak{N} P_0 P_2  P_0]$ where
$ \mathfrak{N} $ is in \eqref{deffrakN}.
We observe that, in view of \eqref{B.id}-\eqref{lastid}, we have, for any $ f_k^\sigma, f_{k'}^{\sigma'}\in\{f_1^+,f_1^-,f_0^- \} $, that \eqref{kN} holds. 
Thus we obtain \eqref{ordine4}.
In conclusion, we have proved formula \eqref{B.Tay.ex}.
\end{proof}

\paragraph{Action of the jets of  $\mathfrak{B}_{\mu,\e} $ on the kernel vectors.}
We now collect  how the operators $\kB_{i,j}$ (cfr. \eqref{notazione})  acts on the vectors $f_1^+, f_1^-, f_0^-$.

\begin{lem}
The first jets of the operator $\mathfrak{B}_{\mu,\e} $ in \eqref{Bgotico} act, for $f_k^\sigma \in \{f_1^+,f_1^-,f_0^- \}$, as
\begin{subequations}\label{Bgoticoexpbis}
\begin{align}\label{ordini02:20}
&\kB_{0,2}f_k^\sigma =P_0^* \big( \cB_{0,2}+\cB_{0,1}P_{0,1} \big) f_k^\sigma  \, ,\quad  
\kB_{2,0}f_k^\sigma  = P_0^* \big( \cB_{2,0}+\cB_{1,0}P_{1,0} \big) f_k^\sigma  \, , \\
\label{ordine11}
&\textup{$\kB$}_{1,1} f_k^\sigma =P_0^* \big( \cB_{1,1}+\cB_{1,0}P_{0,1} +\cB_{0,1}P_{1,0} +   \frac12 \Pi_0^+ P_{1,1} \big) f_k^\sigma\, , \\
\label{ordine0330}
&\begin{aligned} &\textup{$\kB$}_{0,3}f_k^\sigma =P_0^*\big( \cB_{0,3} + \cB_{0,2} P_{0,1} + \cB_{0,1} P_{0,2} - \textup{\textbf{Sym}}[\cB_{0,1}P_0P_{0,2}]\big)f_k^\sigma \, , \\
 &\textup{$\kB$}_{3,0}f_k^\sigma = P_0^*\big(\cB_{3,0} + \cB_{2,0} P_{1,0} + \cB_{1,0} P_{2,0} - \textup{\textbf{Sym}}[\cB_{1,0}P_0P_{2,0}]\big) f_k^\sigma \, ,
 \end{aligned} \\
 \label{ordine12}
&\textup{$\kB$}_{1,2} f_k^\sigma =P_0^* \big( \cB_{1,2} + \cB_{1,1} P_{0,1}+ \cB_{0,2} P_{1,0}  + \cB_{1,0} P_{0,2} + \cB_{0,1} P_{1,1}  + \frac12 \Pi_0^+ P_{1,2}  \\  \notag
&\qquad \qquad \qquad   - \textup{\textbf{Sym}}[\cB_{1,0}P_0P_{0,2}+\cB_{0,1} P_0 P_{1,1}] \big)f_k^\sigma \, , \\
\label{ordine21}
&\textup{$\kB$}_{2,1} f_k^\sigma = P_0^* \big( \cB_{2,1} + \cB_{1,1} P_{1,0}+ \cB_{2,0} P_{0,1}  + \cB_{0,1} P_{2,0} + \cB_{1,0} P_{1,1}  + \frac12 \Pi_0^+ P_{2,1}  \\  \notag
&\qquad \qquad \qquad   - \textup{\textbf{Sym}}[\cB_{0,1}P_0P_{2,0}+\cB_{1,0} P_0 P_{1,1}] \big)f_k^\sigma \, , \\ \label{ordine04}
&\textup{$\kB$}_{0,4} f_k^\sigma =P_0^* \big( {\cal B}_{0,4}+ {\cal B}_{0,3} P_{0,1} + {\cal B}_{0,2}P_{0,2}  + {\cal B}_{0,1} P_{0,3} \\  \notag
&\qquad \qquad \qquad   - \textup{\textbf{Sym}}[{\cal B}_{0,2}P_0P_{0,2}  + {\cal B}_{0,1}P_0P_{0,3}  + {\cal B}_{0,1}P_{0,1} P_0 P_{0,2} 
-\mathfrak{N}_{0,2} P_0 P_{0,2} ] \big) f_k^\sigma 
 \, ,\\ \notag 
&\textup{$\kB$}_{2,2} f_k^\sigma =P_0^* \big( {\cal B}_{2,2}+ {\cal B}_{1,2} P_{1,0}+ {\cal B}_{2,1} P_{0,1} + {\cal B}_{0,2}P_{2,0} + {\cal B}_{1,1}P_{1,1}+ {\cal B}_{2,0} P_{0,2}   + {\cal B}_{0,1} P_{2,1}  + {\cal B}_{1,0} P_{1,2} \\ \notag 
&\qquad  + \frac12 \Pi_0^+ P_{2,2}   - \textup{\textbf{Sym}}[{\cal B}_{0,2}P_0P_{2,0}+{\cal B}_{1,1}P_0P_{1,1}  +{\cal B}_{2,0}P_0P_{0,2}  + {\cal B}_{0,1}P_0P_{2,1} + {\cal B}_{1,0}P_0P_{1,2} \\ \label{ordine22}
 &\qquad \qquad \qquad + {\cal B}_{0,1}P_{0,1} P_0 P_{2,0}+ {\cal B}_{1,0}P_{0,1} P_0 P_{1,1}+ {\cal B}_{0,1}P_{1,0} P_0 P_{1,1}+ {\cal B}_{1,0}P_{1,0} P_0 P_{0,2} 
\\  \notag
&  \qquad \qquad \qquad - \mathfrak{N}_{2,0} P_0 P_{0,2} - \mathfrak{N}_{0,2} P_0 P_{2,0} - \mathfrak{N}_{1,1} P_0 P_{1,1}
 ] \big) f_k^\sigma  \, ,\\ \notag
&\textup{$\kB$}_{1,3} f_k^\sigma =P_0^*\big( {\cal B}_{1,3}+ {\cal B}_{0,3} P_{1,0}+ {\cal B}_{1,2} P_{0,1}  + {\cal B}_{0,2}P_{1,1}+ {\cal B}_{1,1}P_{0,2}  + {\cal B}_{1,0} P_{0,3}+ {\cal B}_{0,1} P_{1,2} \\  \notag
&\qquad \qquad  + \frac12 \Pi_0^+ P_{1,3}   - \textup{\textbf{Sym}}[{\cal B}_{0,2}P_0P_{1,1} +{\cal B}_{1,1}P_0P_{0,2}  + {\cal B}_{1,0}P_0P_{0,3} + {\cal B}_{0,1}P_0P_{1,2} \\  \label{ordine13}
&\qquad \qquad \qquad  + {\cal B}_{1,0}P_{0,1} P_0 P_{0,2}+ {\cal B}_{0,1}P_{1,0} P_0 P_{0,2}+ {\cal B}_{0,1}P_{0,1} P_0 P_{1,1}    - \mathfrak{N}_{1,1} P_0 P_{0,2} -  \mathfrak{N}_{0,2} P_0 P_{1,1}
] \big) f_k^\sigma \, ,
\end{align}
\end{subequations}
with ${\cal B}_j$, $j=0,\dots,4$, in \eqref{Bsani} and $P_j$, $j=0,\dots,3$, in \eqref{Psani}. 
\end{lem}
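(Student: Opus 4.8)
The plan is to obtain each bidegree jet $\kB_{i,j}$ (in the sense of \eqref{notazione}) by reading off the coefficient of $\mu^i\e^j$ from the homogeneous pieces $\mathfrak{B}_0,\dots,\mathfrak{B}_4$ in \eqref{upto4exp}. For a fixed total order $m=i+j$ I would substitute the Taylor jets of ${\cal B}_{\mu,\e}$ from \eqref{def.Bj} and of $P_{\mu,\e}$ from \eqref{expP}--\eqref{Psani} into the formula for $\mathfrak{B}_m$ and collect the monomial $\mu^i\e^j$. Acting on a vector $f_k^\sigma\in\{f_1^+,f_1^-,f_0^-\}$ the expressions collapse considerably: these three vectors lie in $\mathrm{Rg}(P_0)$, so $P_0 f_k^\sigma=f_k^\sigma$ and every trailing $P_0$ may be erased, and moreover ${\cal B}_0 f_k^\sigma=0$ (whereas ${\cal B}_0 f_0^+=f_0^+$), which is exactly why the statement is restricted to these three vectors.

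\textbf{Resolving the symmetriser.} The heart of the computation is to unfold $\mathbf{Sym}[\,\cdot\,]=\tfrac12(\,\cdot\,)+\tfrac12(\,\cdot\,)^*$. Writing $\mathfrak{B}_m=P_0^*\mathbf{Sym}[\mathcal{M}_m]P_0$ with $\mathcal{M}_m$ the bracket in \eqref{upto4exp}, the bidegree $(i,j)$ jet reads $\kB_{i,j}f_k^\sigma=\tfrac12 P_0^*(\mathcal{M}_m)_{i,j}f_k^\sigma+\tfrac12 P_0^*((\mathcal{M}_m)_{i,j})^* f_k^\sigma$, and I would rewrite the adjoint half. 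Using that each ${\cal B}_\ell$ is self-adjoint together with the order-$m$ expansion of the identity $P_{\mu,\e}^*{\cal B}_{\mu,\e}={\cal B}_{\mu,\e}P_{\mu,\e}$ from \eqref{PB}, namely $\sum_{a+b=m}P_a^*{\cal B}_b=\sum_{a+b=m}{\cal B}_a P_b$ at bidegree $(i,j)$, every adjoint product $P_a^*{\cal B}_b$ gets traded for the direct product ${\cal B}_b P_a$, plus a remainder $P_0^*{\cal B}_0 P_{i,j}=\Pi_0^+ P_{i,j}$ coming from the first identity in \eqref{B.id}, plus terms containing ${\cal B}_0 f_k^\sigma=0$ that drop; the leftover $P_0^*{\cal B}_{i,j}$ cancels its twin since $P_0^2=P_0$. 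Hence the direct terms appear with coefficient $1$ (the two halves coincide) while the defect keeps the factor $\tfrac12$, producing precisely the $\tfrac12\Pi_0^+ P_{i,j}$ summands in \eqref{ordine11}, \eqref{ordine12}, \eqref{ordine21}, \eqref{ordine04}, \eqref{ordine22}, \eqref{ordine13}; where such a term is absent, as in \eqref{ordini02:20}, it is because $\Pi_0^+ P_{i,j}f_k^\sigma$ vanishes identically on these vectors by the explicit jets of Lemmas \ref{lem:secondjetsP} and \ref{lem:P03acts}.

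\textbf{The genuinely symmetric and $\mathfrak{N}$ terms.} The summands carrying an \emph{internal} factor $P_0$, such as ${\cal B}_1(\uno-P_0)P_2$, $-{\cal B}_1 P_1 P_0 P_2$ and $\mathfrak{N}P_0 P_2$ in \eqref{ordine3}--\eqref{ordine4}, do not admit the doubling above, because the intervening $P_0$ blocks the reduction; they survive under an honest $\mathbf{Sym}$ and, expanded by bidegree, give the symmetrised brackets $\mathbf{Sym}[{\cal B}_{0,1}P_0 P_{0,2}]$, $\mathbf{Sym}[{\cal B}_{1,0}P_0 P_{0,2}+{\cal B}_{0,1}P_0 P_{1,1}]$, and so on, together with the contributions $\mathfrak{N}_{2,0}P_0 P_{0,2}$, $\mathfrak{N}_{0,2}P_0 P_{2,0}$, $\mathfrak{N}_{1,1}P_0 P_{1,1}$ read off from $\mathfrak{N}P_0 P_2$ with $\mathfrak{N}$ as in \eqref{deffrakN}. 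One checks by the same identities \eqref{B.id}, \eqref{P.id} that the auxiliary piece $-\tfrac14{\cal B}_0(P_0 P_2 P_0)^2$ contributes nothing on $f_k^\sigma$.

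\textbf{Main obstacle.} I expect the real work to be the fourth-order bookkeeping, and above all the jet $\kB_{2,2}$ in \eqref{ordine22}: there $\mathcal{M}_4$ contains numerous products ${\cal B}_a P_b$ whose bidegrees must be sorted, the order-$4$ form of \eqref{PB} has to be applied with care to separate the doubled direct terms from the lone $\tfrac12\Pi_0^+ P_{2,2}$ defect, and all three $\mathfrak{N}_{2,0},\mathfrak{N}_{0,2},\mathfrak{N}_{1,1}$ pieces must be tracked with the correct placement of the internal $P_0$. The lower-order jets $\kB_{0,2},\kB_{2,0},\kB_{1,1}$ and the cubic ones are structurally identical but far lighter, and serve as consistency checks that the doubling-plus-defect reduction has been applied correctly throughout.
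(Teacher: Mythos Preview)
Your approach is essentially the paper's: extract the bidegree $(i,j)$ from the homogeneous expressions \eqref{ordini012}--\eqref{ordine4}, split off the ``full chain'' $\cB_m+\cB_{m-1}P_1+\cdots+\cB_1 P_{m-1}$, and use the self-adjointness of $\cB_{\mu,\e}P_{\mu,\e}$ (from \eqref{PB}) to trade $\mathbf{Sym}$ of the chain for the chain plus the defect $\tfrac12\Pi_0^+P_{i,j}$, while the terms carrying an internal $P_0$ (and the $\mathfrak{N}$ pieces) remain under an honest $\mathbf{Sym}$. This is exactly the content of the paper's Lemma~\ref{lem:05051}, and your reading of the $\mathfrak{N}$ bidegrees matches the paper.

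There is one genuine gap. In the pure jets $(0,2),(2,0),(0,3),(3,0),(0,4)$ the $\tfrac12\Pi_0^+P_{i,j}$ term is absent from the statement, and you justify this by appealing to the explicit formulas of Lemmas~\ref{lem:secondjetsP} and~\ref{lem:P03acts}. Those lemmas do \emph{not} cover all the required cases: for instance $P_{2,0}f_1^\pm$, $P_{0,3}f_1^-$, $P_{3,0}f_k^\sigma$, and $P_{0,4}f_k^\sigma$ are never computed there. The paper instead proves the vanishing $\Pi_0^+P_{0,j}f=\Pi_0^+P_{j,0}f=0$ for all $f\in\{f_1^+,f_1^-,f_0^-\}$ and all $j$ by a short structural argument (Lemma~\ref{lem:05052}): using the skew-Hamiltonian identity $\cJ P_{\mu,\e}=P_{\mu,\e}^*\cJ$ together with $P_{0,\e}f_0^-=f_0^-$ and $P_{\mu,0}f_0^-=f_0^-$ one shows $(P_{0,\e}f,f_0^+)=(\cJ f,f_0^-)=0$ for every such $f$, and similarly in $\mu$. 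You should replace your case-by-case appeal to explicit jets with this argument; once you do, your proof is complete and coincides with the paper's.
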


The proof of \eqref{Bgoticoexpbis} relies on formulas \eqref{ordini012}--\eqref{ordine4} and Lemmata \ref{lem:05051}, \ref{lem:05052} below.
\begin{lem}\label{lem:05051}
Let $f_k^\sigma \in \{ f_1^+, f_1^-,f_0^-\} $. For any $j \in \bN$ we have
\begin{equation}\label{perplimente} {\footnotesize \begin{matrix}
P_0^* \textup{\textbf{Sym}}[\cB_j+ \cB_{j-1} P_1 + \dots + \cB_1 P_{j-1}] P_0 f_k^\sigma = P_0^* \big( \cB_j+ \cB_{j-1} P_1 + \dots + \cB_1 P_{j-1} + \frac12 \Pi_0^+ P_j \big) P_0 f_k^\sigma\end{matrix} } \, ,
\end{equation}
where $\Pi_0^+ $ is the orthogonal projector on $f_0^+ $.
\end{lem}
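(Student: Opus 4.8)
The plan is to reduce the whole identity \eqref{perplimente} to the order-by-order expansion of the intertwining relation \eqref{PB}, using only the idempotency of $P_0$ and the identities $\cB_0 P_0 = P_0^*\cB_0 = \Pi_0^+$ from \eqref{B.id}; no estimates enter. Write $A := \cB_j + \sum_{i=1}^{j-1}\cB_{j-i}P_i$ for the operator inside the symmetrization, so that $\mathbf{Sym}[A] = \frac12(A+A^*)$ and the claim is equivalent to
\[
P_0^*(A^*-A)\,P_0\, f_k^\sigma = P_0^*\,\Pi_0^+ P_j\, P_0\, f_k^\sigma \, .
\]
Since each $\cB_i$ is self-adjoint, $A^* = \cB_j + \sum_{i=1}^{j-1}P_i^*\cB_{j-i}$, and the $\cB_j$ terms cancel so that $A^*-A = \sum_{i=1}^{j-1}\big(P_i^*\cB_{j-i}-\cB_{j-i}P_i\big)$.

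First I would Taylor expand \eqref{PB}, $P_{\mu,\e}^*\cB_{\mu,\e} = \cB_{\mu,\e}P_{\mu,\e}$, and collect the homogeneous terms of total degree $j$, getting $\sum_{i=0}^{j}\big(P_i^*\cB_{j-i}-\cB_{j-i}P_i\big)=0$. Isolating the extreme indices $i=0$ and $i=j$ yields
\[
A^*-A = -\big(P_0^*\cB_j-\cB_j P_0\big)-\big(P_j^*\cB_0-\cB_0 P_j\big)\, .
\]
Sandwiching between $P_0^*$ and $P_0$, the first parenthesis contributes nothing: since $(P_0^*)^2=P_0^*$ and $P_0^2=P_0$ one has $P_0^*(P_0^*\cB_j-\cB_j P_0)P_0 = P_0^*\cB_j P_0 - P_0^*\cB_j P_0 = 0$. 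For the second parenthesis I would substitute $\cB_0 P_0 = \Pi_0^+$ and $P_0^*\cB_0 = \Pi_0^+$ from \eqref{B.id}, obtaining the operator identity
\[
P_0^*(A^*-A)P_0 = -P_0^* P_j^*\,\Pi_0^+ + \Pi_0^+ P_j P_0 \, .
\]

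Finally I would evaluate on $f_k^\sigma\in\{f_1^+,f_1^-,f_0^-\}$. These three vectors are orthogonal to $f_0^+$ for the product \eqref{scalar} — immediate from \eqref{funperturbed}, since the relevant integrals of $\cos x$ and $\sin x$ vanish — so $\Pi_0^+ f_k^\sigma = 0$ kills the first summand, while $P_0 f_k^\sigma = f_k^\sigma$ because $f_k^\sigma\in\mathrm{Rg}(P_0)$. Thus $P_0^*(A^*-A)P_0 f_k^\sigma = \Pi_0^+ P_j f_k^\sigma$, a scalar multiple of $f_0^+$. To match this with the right-hand side $P_0^*\Pi_0^+ P_j f_k^\sigma$ it then suffices to check $P_0^* f_0^+ = f_0^+$, which is the one structurally nontrivial point: from \eqref{lastid} at $j=0$, $\cJ P_0 = P_0^*\cJ$ gives $P_0^* = -\cJ P_0\cJ$ (using $\cJ^2=-\uno$), and since $\cJ f_0^+ = -f_0^-$, $P_0 f_0^- = f_0^-$ and $\cJ f_0^- = f_0^+$, we get $P_0^* f_0^+ = \cJ P_0 f_0^- = f_0^+$. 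I expect the only care to be needed in the bookkeeping of the degree-$j$ expansion of \eqref{PB} and in these two boundary facts ($\Pi_0^+ f_k^\sigma = 0$ and $P_0^* f_0^+ = f_0^+$), both of which rest on the explicit kernel basis \eqref{funperturbed} and the skew-Hamiltonian symmetry \eqref{lastid} rather than on any quantitative estimate.
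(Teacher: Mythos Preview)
Your proof is correct and follows essentially the same strategy as the paper: both arguments expand the intertwining relation $P_{\mu,\e}^*\cB_{\mu,\e}=\cB_{\mu,\e}P_{\mu,\e}$ at order $j$ and reduce the remainder via the identities $P_0^*\cB_0=\cB_0 P_0=\Pi_0^+$ together with $\cB_0 f_k^\sigma=0$ (equivalently $\Pi_0^+ f_k^\sigma=0$). The paper packages the computation as $P_0^*(\cB_0 P_j-\mathbf{Sym}[\cB_0 P_j])P_0 f_k^\sigma=\tfrac12 P_0^*\Pi_0^+ P_j P_0 f_k^\sigma$ using \eqref{lastid}, while you isolate the boundary indices $i=0,j$ in $A^*-A$ and verify $P_0^* f_0^+=f_0^+$ via the skew-Hamiltonian identity; these are the same ingredients rearranged.
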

\begin{proof}
By identity \eqref{PB} the operator $\cB_{\mu,\e} P_{\mu,\e}$ is, like $\cB_{\mu,\e}$, self-adjoint, hence its $j$-th jet fulfills
\begin{equation}\label{Symrotto}
\textup{\textbf{Sym}}[\cB_jP_0+  \dots + \cB_1 P_{j-1} ]  =\cB_jP_0 + \dots + \cB_{1} P_{j-1} + \cB_0 P_j -  \textup{\textbf{Sym}}[ \cB_0 P_j ]\, . 
\end{equation}
We claim that, for $f_k^\sigma\in \{ f_1^+, f_1^-,f_0^-\}$ we have
\begin{equation}\label{claim_max}
P_0^* ( \cB_0 P_j - \mathbf{Sym}[\cB_0 P_j] )P_0 f_k^\sigma = \frac12 P_0^* \Pi_0^+ P_j P_0  f_k^\sigma\, ,
\end{equation}
which, together with \eqref{Symrotto}, proves \eqref{perplimente}. Claim \eqref{claim_max} follows, by observing that $f_k^\sigma$ fulfills
 $\cB_0f_k^\sigma = 0 $ and $\Pi^\angle P_0 f_k^\sigma=f_k^\sigma$ (cfr. Lemma \ref{lemmaboh}), then
\begin{equation}\label{SymB0Pj}
P_0^* \textup{\textbf{Sym}}[\cB_0 P_{j}] P_0 f_k^\sigma =\frac12 P_0^* \cB_0 P_{j} f_k^\sigma +\frac12 P_0^* P_{j}^* \cB_0   f_k^\sigma =  \frac12 P_0^* \cB_0 P_{j}\Pi^\angle P_0  f_k^\sigma \stackrel{\eqref{lastid}}{=} \frac12 \Pi_0^+ P_j f_k^\sigma  \, .
\end{equation}
 Using again that $P_0^* \cB_0 P_{j} f_k^\sigma = \Pi_0^+ P_j f_k^\sigma $ we obtain \eqref{claim_max}. 
\end{proof}

\begin{lem}\label{lem:05052}
 For any  $f \in \{f_1^+, f_{1}^-, f_0^- \}$ and $j \in \bN$  we have $\Pi_0^+ P_{0,j} f = \Pi_0^+ P_{j,0} f = 0 $.
\end{lem}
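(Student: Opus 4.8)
The plan is to reduce the whole statement to the single fact that $f_0^-$ is fixed by the projector along each coordinate axis, namely
\[
P_{0,\e}\,f_0^- = f_0^- \qquad\text{and}\qquad P_{\mu,0}\,f_0^- = f_0^- \, ,
\]
and then to transfer this to the other vectors by skew-Hamiltonicity. Indeed, since $\Pi_0^+$ is the orthogonal projector onto the unit vector $f_0^+$, one has $\Pi_0^+ g = (g,f_0^+)\,f_0^+$, so that $\Pi_0^+ P_{0,j} f = 0$ is equivalent to $(f, P_{0,j}^* f_0^+)=0$, and likewise for $P_{j,0}$. Expanding the skew-Hamiltonian identity $\cJ P_{\mu,\e} = P_{\mu,\e}^*\cJ$ of \eqref{propPU} into bihomogeneous jets (exactly as in \eqref{lastid}) gives $\cJ P_{i,j} = P_{i,j}^*\cJ$, whence $P_{i,j}^* = -\cJ P_{i,j}\cJ$, and using $\cJ f_0^+ = -f_0^-$,
\[
P_{i,j}^*\, f_0^+ = \cJ\,\big(P_{i,j}\,f_0^-\big)\, .
\]
Thus it suffices to prove $P_{0,j} f_0^- = 0$ and $P_{j,0} f_0^- = 0$ for every $j\ge 1$, i.e. that $\e\mapsto P_{0,\e} f_0^-$ and $\mu\mapsto P_{\mu,0} f_0^-$ are constantly equal to $f_0^-$; the conclusion then follows for all $f$, in particular for $f\in\{f_1^+,f_1^-,f_0^-\}$.

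For the pure-$\e$ direction I would show that $f_0^-$ lies in the kernel of $\sL_{0,\e}$ for every $\e$. A direct inspection of \eqref{calL.ham} at $\mu=0$ gives $\cB_{0,\e} f_0^- = 0$, because both $\pa_x$ and $|D|\tanh((\tth+\ttf_\e)|D|)$ annihilate constants; hence $\sL_{0,\e} f_0^- = \cJ\,\cB_{0,\e} f_0^- = 0$. Therefore $f_0^-$ is a genuine null vector of $\sL_{0,\e}$, so it belongs to $\mathcal V_{0,\e}=\mathrm{Rg}(P_{0,\e})$ and $P_{0,\e} f_0^- = f_0^-$ identically in $\e$; differentiating yields $P_{0,j} f_0^- = 0$ for all $j\ge1$.

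For the pure-$\mu$ direction the mechanism is different, since $\cB_{\mu,0} f_0^-\neq 0$ for $\mu\neq0$. Here I would exploit that at $\e=0$ the operator $\sL_{\mu,0}=\cJ\,\cB_{\mu,0}$ has constant coefficients, hence is a Fourier multiplier, block-diagonal over the modes $e^{\im k x}$; consequently the Riesz projector $P_{\mu,0}$ in \eqref{Pproj} is block-diagonal as well. On the zero mode, spanned by $f_0^+,f_0^-$, the operator $\sL_{\mu,0}$ reduces to the matrix $\left(\begin{smallmatrix} 0 & \mu\tanh(\tth\mu)\\ -1 & 0\end{smallmatrix}\right)$, whose eigenvalues $\pm\im\sqrt{\mu\tanh(\tth\mu)}$ are both $\cO(\mu)$ and therefore lie inside the contour $\Gamma$ for $\mu$ small. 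Thus the entire zero mode belongs to $\mathcal V_{\mu,0}$ and $P_{\mu,0}$ acts there as the identity; in particular $P_{\mu,0} f_0^- = f_0^-$ for all small $\mu$, and differentiating gives $P_{j,0} f_0^- = 0$ for every $j\ge1$. This recovers in particular $P_{2,0}f_0^-=0$, used in \eqref{secondjetsPmu}.

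The computations are short; the only genuinely delicate point is the spectral bookkeeping of the pure-$\mu$ step, where one must recognise that \emph{both} zero-mode eigenvalues $\pm\im\sqrt{\mu\tanh(\tth\mu)}$ collapse to $0$ as $\mu\to 0$ and are hence enclosed by $\Gamma$, so that $P_{\mu,0}$ restricts to the \emph{full} identity on the zero mode rather than to a rank-one projector. I would cross-check this count against the unperturbed Jordan structure on the zero mode, $\sL_{0,0} f_0^+=-f_0^-$ and $\sL_{0,0} f_0^-=0$, which confirms that this $2$-dimensional block is precisely the portion of the generalized kernel carried by the zero Fourier mode. As a consistency check, on the vectors $f_1^\pm$ one may bypass skew-Hamiltonicity and invoke the harmonic-parity bookkeeping \eqref{P evodd}: along the pure-$\mu$ axis $P_{j,0}$ carries only even harmonics and thus sends the first-harmonic vectors $f_1^\pm$ into odd-harmonic functions, automatically orthogonal to the zero-harmonic $f_0^+$.
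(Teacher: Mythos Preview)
Your proof is correct and follows essentially the same approach as the paper: both combine the skew-Hamiltonian identity $\cJ P_{\mu,\e}=P_{\mu,\e}^*\cJ$ with the facts $P_{0,\e}f_0^-=f_0^-$ and $P_{\mu,0}f_0^-=f_0^-$. The only difference is organizational---the paper manipulates the scalar product $(P_{0,\e}f,f_0^+)$ directly and shows it is $\e$-independent, whereas you pass through $P_{i,j}^*f_0^+=\cJ P_{i,j}f_0^-$---and that you supply self-contained arguments for $P_{0,\e}f_0^-=f_0^-$ and $P_{\mu,0}f_0^-=f_0^-$ where the paper simply cites \cite[formula (4.8) and Lemma A.5]{BMV3}.
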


\begin{proof}
We have that $ \Pi_0^+ P_{0,j} f = 0 $ if and only if  $ (  P_{0,j}  f, f_0^+  ) = 0 $.
By  \cite[formula (4.8)]{BMV3} we have that 
$ P_{0,\e} f_0^- = f_0^- $ for any $ \epsilon $ and 
we have the chain of identities 
$$
\big (  P_{0,\e}  f, f_0^+ \big )
= - \big ( \cJ  P_{0,\e}  f,  \cJ f_0^+ \big ) \stackrel{\eqref{propPU},\cJ f_0^+ = - f_0^- } = 
 \big ( P_{0,\e}^* \cJ    f,   f_0^- \big )  = 
 \big (  \cJ    f,  P_{0,\e} f_0^- \big ) = 
  \big (  \cJ    f,   f_0^- \big ) = 0
$$
for any  $f \in \{f_1^+, f_{1}^-, f_0^- \}$,  deducing,
in particular, that  $ (  P_{0,j}  f, f_0^+  ) = 0 $.
The proof that $\Pi_0^+ P_{j,0} f = 0 $ is obtained similarly, exploiting   that 
$P_{\mu,0} f_0^- = f_0^-$ as proved in \cite[Lemma A.5]{BMV3}.
\end{proof}

In virtue of \eqref{notazione}, \eqref{cB evodd} and \eqref{P evodd} and in view of \eqref{upto4exp}-\eqref{deffrakN} we obtain
\begin{equation}\label{kB evodd}
\kB_{i,j}^{[\mathtt{ ev}]} = \begin{cases} \kB_{i,j} &\textup{if }j\textup{ is even}\, , \\ 
0 &\textup{if }j\textup{ is odd}\, , 
\end{cases} \qquad \kB_{i,j}^{[\mathtt{odd}]} = \begin{cases} 0 &\textup{if }j\textup{ is even}\, , \\ 
\kB_{i,j}  &\textup{if }j\textup{ is odd}\, . 
\end{cases} 
\end{equation}

\subsection{Proof of Proposition \ref{BexpG}}\label{secProof31}

Proposition \ref{BexpG} is a direct consequence of  the next proposition.

\begin{prop}\label{Bthirdorderexp}
The $2\times 2$ matrices $\Eta := \Eta  (\mu,\e) $, $\Gamma := \Gamma (\mu,\e) $, 
$\Phi := \Phi (\mu,\e) $ in \eqref{BinG1old}-\eqref{BinG3old} admit the  expansions
\begin{subequations}
\begin{align}\label{BinG1new}
& \Eta := 
\begin{pmatrix} 
  \wt\eta_{11}\e^3 + \eta_{11} \e^4 +  r_1( \e^5,\mu\e^3,\mu^2\e,\mu^3)
  & 
  \im  \left( \eta_{12}\mu\e^2  + r_2(\mu\e^3,\mu^2\e,\mu^3)\right)\\
- \im \left( \eta_{12}\mu\e^2 +  r_2(\mu\e^3,\mu^2\e,\mu^3)\right)  & \widetilde \eta_{22} \mu^2 \e + r_5(\mu^2\e^2,\mu^3)
 \end{pmatrix} \, , \\ 
&   \label{BinG2new} \Gamma := 
\begin{pmatrix} 
\gamma_{11} \e^2 + r_8(\e^3,\mu\e^2,\mu^2\e)
 &  -\im \gamma_{12} \mu\e^2- \im r_9(\mu\e^3,\mu^2\e) 
 \\
 \im \gamma_{12} \mu\e^2  +  \im  r_9(\mu\e^3, \mu^2\e )  
 &
 \widetilde \gamma_{22} \mu^2 \e + 
  \gamma_{22}\mu^2 \e^2 +  r_{10}(\mu^2\e^3,\mu^3\e)
 \end{pmatrix}\, , \\
 & \label{BinG3new}
 \Phi :=  \begingroup 
\setlength\arraycolsep{-13pt}
\begin{pmatrix} 
\!\!\!\!\!\!\!\!\!\!\!\!\!\!\!\! \phi_{11} \e^3  +  r_3(\e^4,\mu\e^2,\mu^2\e ) 
& 
 \im \widetilde \phi_{12} \mu \e^2 + \im \phi_{12} \mu \e^3 + 
\im \wt  \psi_{12} \mu^2 \e 
+\im  r_4(\mu\e^4, \mu^2 \e^2,\mu^3\e )  \\
 \im \phi_{21}\mu\e  + \im \wt \phi_{21} \mu \e^2 + \im   r_6(\mu\e^{3}, \mu^2\e)  
   & 
 \phi_{22}\mu^2\e + 
\wt \phi_{22}   \mu^2 \e^2 + 
    r_7(\mu^2\e^3, \mu^3\e) 
 \end{pmatrix} \endgroup\, ,
  \end{align}
  \end{subequations}
 where 
 \begin{subequations}\label{coeff2comp}
 \begin{equation}\label{zerocoeff}
 \begin{aligned}
& \widetilde \eta_{11}:= \big(\textup{$\mathfrak{B}_{0,3}$} f_1^+,f_1^+ \big) = 0 \, , \qquad
 \widetilde \eta_{22}:=  \big(\textup{$\mathfrak{B}_{2,1}$} f_1^-, f_1^- \big)  = 0  \, , 
 \qquad
  \widetilde \gamma_{22} :=    \big(\textup{$\mathfrak{B}_{2,1}$} f_0^-, f_0^- \big) = 0 \, , \\
 & \im  \widetilde \phi_{12}:= \big(\textup{$\mathfrak{B}_{1,2}$} f_0^-,f_1^+ \big)  = 0 \, , \ \quad
  \im \widetilde \phi_{21}:=\big(\textup{$\mathfrak{B}_{1,2}$} f_0^+,f_1^- \big)  = -\big(\textup{$\mathfrak{B}_{1,2}$} f_1^-,f_0^+ \big) = 0 \, , \\
& 
  \wt \phi_{22} := \big(\textup{$\mathfrak{B}_{2,2}$} f_0^-,f_1^- \big) = 0 \, ,  \qquad   \im \wt\psi_{12} :=    \big(\textup{$\mathfrak{B}_{2,1}$} f_0^-, f_1^+ \big)= 0\, ,
\end{aligned}
\end{equation}
whereas the coefficients
\begin{equation}\label{nzerocoeff}
\begin{aligned}
& \eta_{11}:= \big(\textup{$\mathfrak{B}_{0,4}$} f_1^+,f_1^+ \big) \, , \qquad \ 
\im  \eta_{12}:= \big(\textup{$\mathfrak{B}_{1,2}$} f_1^-,f_1^+ \big) \, , \\
 & \gamma_{11}:=  \big(\textup{$\mathfrak{B}_{0,2}$} f_0^+,f_0^+ \big) \, , 
 \qquad 
 \im \gamma_{12}:=  \big(\textup{$\mathfrak{B}_{1,2}$} f_0^-,f_0^+ \big) \, , 
 \qquad
 \gamma_{22}:=  \big(\textup{$\mathfrak{B}_{2,2}$} f_0^-,f_0^- \big) \, , 
 \\
 & \phi_{11}:=   \big(\textup{$\mathfrak{B}_{0,3}$} f_0^+,f_1^+ \big) = \big(\textup{$\mathfrak{B}_{0,3}$} f_1^+,f_0^+ \big) \, , \quad 
 \im \phi_{12}:=  \big(\textup{$\mathfrak{B}_{1,3}$} f_0^-,f_1^+ \big) \, , \\
& \im  \phi_{21}:=  \big(\textup{$\mathfrak{B}_{1,1}$} f_0^+,f_1^- \big)= - \big(\textup{$\mathfrak{B}_{1,1}$} f_1^-,f_0^+ \big) \, , \quad
 \phi_{22} =   \big(\textup{$\mathfrak{B}_{2,1}$} f_0^-,f_1^- \big)\, , 
 \end{aligned}
 \end{equation}
 \end{subequations}
are given in \eqref{te11}-\eqref{phi2s}. 
\end{prop}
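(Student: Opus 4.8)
The plan is to read off each refined coefficient in \eqref{zerocoeff}--\eqref{nzerocoeff} as a single Taylor jet of the scalar products that constitute the entries of $\tB_{\mu,\e}$, and then to evaluate those jets. By \eqref{tocomputeentries} every entry of $\Eta$, $\Gamma$, $\Phi$ is a scalar product $\big(\mathfrak{B}_{\mu,\e}f_k^\sigma, f_{k'}^{\sigma'}\big)$ with $f_k^\sigma, f_{k'}^{\sigma'}$ among the kernel vectors \eqref{funperturbed}; hence, by the expansion \eqref{B.Tay.ex}, the coefficient of $\mu^i\e^j$ in such an entry equals $\big(\mathfrak{B}_{i,j}f_k^\sigma, f_{k'}^{\sigma'}\big)$. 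Matching these against the known lower-order expansions \eqref{BinG1old}--\eqref{BinG3old} identifies each $\widetilde{(\cdot)}$ and each coefficient in \eqref{nzerocoeff} as the jet pairing displayed there; the Hamiltonian symmetry $\tB_{\mu,\e}=\tB_{\mu,\e}^*$ yields the adjunction relations (e.g. $\phi_{11}$ computed as either $\big(\mathfrak{B}_{0,3}f_0^+,f_1^+\big)$ or $\big(\mathfrak{B}_{0,3}f_1^+,f_0^+\big)$), while reversibility forces each entry to be real or purely imaginary, and the same parity/jet analysis pins down the precise remainder orders stated in \eqref{BinG1new}--\eqref{BinG3new}.

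Next I would dispatch the vanishing coefficients \eqref{zerocoeff} by the even/odd harmonic dichotomy \eqref{kB evodd}: the jet $\mathfrak{B}_{i,j}$ carries only even harmonics when $j$ is even and only odd harmonics when $j$ is odd. Since $f_1^\pm$ are supported on the first harmonic (odd) while $f_0^\pm$ are constants (zeroth harmonic), an operator with odd-harmonic coefficients maps odd-harmonic functions into even-harmonic ones and constants into odd-harmonic ones, and symmetrically for even-harmonic coefficients. Orthogonality of even against odd harmonics under the pairing \eqref{scalar} then kills $\widetilde\eta_{11}=\big(\mathfrak{B}_{0,3}f_1^+,f_1^+\big)$, $\widetilde\eta_{22}=\big(\mathfrak{B}_{2,1}f_1^-,f_1^-\big)$, $\widetilde\gamma_{22}=\big(\mathfrak{B}_{2,1}f_0^-,f_0^-\big)$, $\widetilde\phi_{12}=\big(\mathfrak{B}_{1,2}f_0^-,f_1^+\big)$, $\widetilde\phi_{21}=\big(\mathfrak{B}_{1,2}f_0^+,f_1^-\big)$ and $\widetilde\phi_{22}=\big(\mathfrak{B}_{2,2}f_0^-,f_1^-\big)$ at once. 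The one case \emph{not} settled by parity alone is $\widetilde\psi_{12}=\big(\mathfrak{B}_{2,1}f_0^-,f_1^+\big)$, where both factors carry odd harmonics: here I would expand $\mathfrak{B}_{2,1}f_0^-$ through \eqref{ordine21}, insert the projector jets \eqref{secondjetsPmu} and \eqref{P03acts} together with the operator jets \eqref{Bsani}, and check by a short direct computation, using the reversibility structure and the vectors $f_{-1}^\pm$ of \eqref{fm1sigma}, that its first-harmonic component is orthogonal to $f_1^+$, so that $\widetilde\psi_{12}=0$.

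For the remaining nonzero coefficients in \eqref{nzerocoeff} the strategy is purely computational: substitute the jet formulas \eqref{Bgoticoexpbis}, which reduce each $\mathfrak{B}_{i,j}f_k^\sigma$ to explicit linear combinations of the operator jets $\cB_{i',j'}$ from \eqref{Bsani} applied to the projector jets $P_{i',j'}$ of Lemmata \ref{lem:secondjetsP}--\ref{lem:P03acts} acting on \eqref{funperturbed}. Every resulting term is then an explicit trigonometric polynomial with coefficients rational in $\ch$ and $\tth$ (together with the Stokes-wave data of Appendix \ref{sec:App31}), and each scalar product reduces to an elementary Fourier integral. Evaluating these yields the closed formulas \eqref{te11}--\eqref{phi2s}. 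The cancellation \eqref{kN} and the identities of Lemma \ref{lemmaboh} are used throughout to discard the many terms that pair trivially, which is what makes the bookkeeping feasible.

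The hard part will be the quartic coefficient $\eta_{11}=\big(\mathfrak{B}_{0,4}f_1^+,f_1^+\big)$. Its jet $\mathfrak{B}_{0,4}$ in \eqref{ordine04} couples $\cB_{0,4}$, $\cB_{0,3}P_{0,1}$, $\cB_{0,2}P_{0,2}$, $\cB_{0,1}P_{0,3}$ and the symmetrized $\mathfrak{N}_{0,2}$-corrections, so its evaluation requires the third-order projector jet $P_{0,3}$ from \eqref{P03acts}, and hence the fourth-order expansions of the Stokes wave and of $a_\e,p_\e$ (Appendices \ref{sec:App31} and \ref{sec:exape}), together with a large number of scalar products. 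Keeping this computation under control depends entirely on exploiting the parity structure \eqref{kB evodd}, the vanishing \eqref{kN}, and the adjunction identities of Lemma \ref{lemmaboh} to prune terms before expanding; the final algebraic reduction to \eqref{te11} is the step I would verify symbolically, as the authors do with Mathematica.
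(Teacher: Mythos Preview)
Your proposal is correct and follows essentially the same route as the paper: identify each refined coefficient as the jet pairing $\big(\mathfrak{B}_{i,j}f_k^\sigma,f_{k'}^{\sigma'}\big)$, kill six of the seven tilded coefficients by the harmonic parity dichotomy \eqref{kB evodd} (exactly as in Lemma~\ref{3ordcanc}), compute $\widetilde\psi_{12}$ directly since parity does not decide it, and evaluate the nonzero ones via \eqref{Bgoticoexpbis} and the projector jets of Lemmata~\ref{lem:secondjetsP}--\ref{lem:P03acts}. Your identification of $\eta_{11}$ as the bottleneck, requiring the third-order projector jet and the fourth-order Stokes data, is also on target.
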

The rest of the section is devoted to the proof of  Proposition \ref{Bthirdorderexp}.

\begin{lem}\label{3ordcanc}
 The coefficients $\widetilde \eta_{11},  \widetilde \eta_{22}, 
  \widetilde \gamma_{22},  \widetilde \phi_{12},   \widetilde \phi_{21}, \widetilde \phi_{22}$ in \eqref{zerocoeff} vanish.
\end{lem}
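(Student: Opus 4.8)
The plan is to obtain all six vanishings from a single parity argument in the spatial variable, built on the alternating even/odd harmonic structure already recorded in \eqref{kB evodd}. To formalize it I would introduce the $\pi$-translation involution $(\tau u)(x):=u(x+\pi)$, acting diagonally on $\bC^2$-valued functions. A function carries only even harmonics iff $\tau u=u$, and only odd harmonics iff $\tau u=-u$; since $\tau$ is unitary and translation invariant, $\molt{\tau u}{\tau v}=\molt{u}{v}$, so two functions of opposite $\tau$-parity are automatically orthogonal, $\molt{u}{v}=0$. The relevant kernel vectors split accordingly: $f_1^+,f_1^-$ are odd, $\tau f_1^\sigma=-f_1^\sigma$ (they live on the first harmonic), whereas $f_0^+,f_0^-$ are even, $\tau f_0^\sigma=f_0^\sigma$ (being constants).

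Next I would read off the parity of each jet from \eqref{kB evodd}: $\kB_{i,j}$ carries only even harmonics when $j$ is even and only odd harmonics when $j$ is odd, that is $\tau\kB_{i,j}\tau=(-1)^j\kB_{i,j}$. The point is that this parity is controlled by the power $j$ of $\e$ alone: every power of $\e$ in \eqref{Bsani} is accompanied by a multiplicative coefficient $a_n(x)$ or $p_n(x)$ of matching harmonic parity, while the powers of $\mu$ and the operators $\pa_x$, $|D|\tanh(\cdot)$, $\sgn(D)$ all commute with $\tau$ and hence do not shift the harmonic parity. Consequently $\kB_{i,j}$ preserves $\tau$-parity for $j$ even and reverses it for $j$ odd, so $\molt{\kB_{i,j}f_k^\sigma}{f_{k'}^{\sigma'}}$ vanishes whenever $(-1)^j$ times the $\tau$-parity of $f_k^\sigma$ differs from that of $f_{k'}^{\sigma'}$.

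Applying this criterion termwise settles the lemma. For $\widetilde\eta_{11}=\molt{\kB_{0,3}f_1^+}{f_1^+}$ and $\widetilde\eta_{22}=\molt{\kB_{2,1}f_1^-}{f_1^-}$ the order $j$ is odd, so $\kB$ turns the odd vectors $f_1^\sigma$ into even functions, orthogonal to the odd test vector; for $\widetilde\gamma_{22}=\molt{\kB_{2,1}f_0^-}{f_0^-}$ the odd jet sends the even $f_0^-$ to an odd function, orthogonal to $f_0^-$. For $\im\widetilde\phi_{12}=\molt{\kB_{1,2}f_0^-}{f_1^+}$ and $\widetilde\phi_{22}=\molt{\kB_{2,2}f_0^-}{f_1^-}$ the order $j$ is even, so $\kB$ keeps the even $f_0^-$ even, which is orthogonal to the odd $f_1^\sigma$. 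For $\widetilde\phi_{21}$ I would use the self-adjoint reformulation $\im\widetilde\phi_{21}=-\molt{\kB_{1,2}f_1^-}{f_0^+}$ already recorded in \eqref{zerocoeff}, so that the input $f_1^-$ lies in the set $\{f_1^+,f_1^-,f_0^-\}$ for which the jet formulas \eqref{Bgoticoexpbis} are available: the even jet $\kB_{1,2}$ keeps $f_1^-$ odd, orthogonal to the even $f_0^+$.

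The only delicate point — and the closest thing to an obstacle — is bookkeeping rather than analysis: one must make sure that \eqref{kB evodd} is genuinely a statement about the full operators $\kB_{i,j}$, inherited from the even/odd structure \eqref{cB evodd} and \eqref{P evodd} of every factor in the expansions \eqref{upto4exp}, together with the fact that $P_0=P_{0,0}$ is a constant-coefficient projector commuting with $\tau$. This is exactly what legitimizes evaluating $\kB_{1,2}$ on $f_0^+\notin\{f_1^+,f_1^-,f_0^-\}$, or equivalently justifies passing to the adjoint pairing. No cancellation among the explicit algebraic coefficients is required: each of the six scalar products is annihilated by the parity mismatch alone.
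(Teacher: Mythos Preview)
Your proof is correct and is essentially the paper's argument, just phrased through the $\pi$-translation involution $\tau$ rather than through the explicit even/odd harmonic splitting $\kB_{i,j}=\kB_{i,j}^{[\mathtt{ev}]}+\kB_{i,j}^{[\mathtt{odd}]}$. The paper observes that each of the six scalar products only sees the part of $\kB_{i,j}$ whose harmonic parity is wrong (e.g.\ $\molt{\kB_{0,3}f_1^+}{f_1^+}=\molt{\kB_{0,3}^{[\mathtt{ev}]}f_1^+}{f_1^+}$) and then invokes \eqref{kB evodd} to kill that part; your conjugation identity $\tau\kB_{i,j}\tau=(-1)^j\kB_{i,j}$ encodes the same information, and your case-by-case parity check matches the paper's list term for term, including the self-adjoint flip used for $\widetilde\phi_{21}$.
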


\begin{proof}
The first six  
coefficients 
in  \eqref{zerocoeff} are (use also the self-adjointness of the jets of $\kB_{\mu,\e}$)
\begin{equation*}
\big(\textup{$\mathfrak{B}_{0,3}^{[\mathtt{ ev}]}$} f_1^+,f_1^+ \big) \, ,\  \big(\textup{$\mathfrak{B}_{2,1}^{[\mathtt{ ev}]}$} f_1^-,f_1^- \big) \, ,\ 
\big(\textup{$\mathfrak{B}_{2,1}^{[\mathtt{ ev}]}$} f_0^-,f_0^- \big) \, , \
\big(\textup{$\mathfrak{B}_{1,2}^{[\mathtt{odd}]}$} f_0^-,f_1^+ \big) \, ,\ 
\big( f_0^+, \textup{$\mathfrak{B}_{1,2}^{[\mathtt{odd}]}$} f_1^- \big) \, , \ 
 \big(\textup{$\mathfrak{B}_{2,2}^{[\mathtt{odd}]}$} f_0^-,f_1^- \big) \, ,
\end{equation*}
which are zero because,
 by \eqref{kB evodd}, the operators 
 $\mathfrak{B}_{0,3}^{[\mathtt{ ev}]}\mathfrak{B}_{1,2}^{[\mathtt{odd}]}=\mathfrak{B}_{2,1}^{[\mathtt{ ev}]} =   \mathfrak{B}_{2,2}^{[\mathtt{odd}]}  = 0 $.
\end{proof}

For the computation of the other coefficients we use the following lemma. 
\begin{lem}
We have
\begin{align}\label{cBacts} 
&\cB_{0,1} f_1^+  = \vet{\frac12 (a_1^{[1]} \ch^{\frac12} - p_1^{[1]}\ch^{-\frac12} )\cos(2x)}{- p_1^{[1]}\ch^\frac12 \sin(2x)} + \kh^{[0]}(x) \, , \\ \notag
&\cB_{1,0} f_1^+  = \vet{0}{-\im \ch^{-\frac12}\big(\ch^2+\tth(1-\ch^4)\big)\cos(x)}\, ,\quad \cB_{1,1} f_1^+  = \frac{\im p_1^{[1]}}2\vet{-\ch^{-\frac12}\sin(2x)}{\ch^\frac12 \cos(2x)}+\kh^{[0]}(x) \, ,\\ \notag
&\cB_{0,2} f_1^+  = \vet{\big( (a_2^{[0]} +\frac12 a_2^{[2]} )\ch^{\frac12} -  (p_2^{[0]} +\frac12 p_2^{[2]} )\ch^{-\frac12}  \big) \cos(x)}{\big(\ttf_2 (1-\ch^4) \ch^{-\frac12}-  (p_2^{[0]} +\frac12 p_2^{[2]})\ch^{\frac12} \big) \sin(x)} + \vet{\frac12 \big(  a_2^{[2]} \ch^{\frac12} -  p_2^{[2]} \ch^{-\frac12}  \big) \cos(3x)}{-\frac32 p_2^{[2]}\ch^{\frac12}\sin(3x)} \, ,\\  \notag
&\cB_{0,3} f_1^+  = \vet{\frac12 ( a_3^{[1]}\ch^{\frac12} -p_3^{[1]}\ch^{-\frac12}) }{0} + \vet{\frac12 \big(a_3^{[1]}\ch^{\frac12} + a_3^{[3]}\ch^{\frac12} -p_3^{[1]}\ch^{-\frac12} - p_3^{[3]}\ch^{-\frac12}  \big)\cos(2x) }{-(p_3^{[1]}+p_3^{[3]})\ch^{\frac12}  \sin(2x) }+\kh^{[4]}(x)\, ,\\  \notag
&\cB_{0,4} f_1^+  = \vet{\big(\ch^\frac12 (a_4^{[0]}+\frac12 a_4^{[2]}) -\ch^{-\frac12} (p_4^{[0]}+\frac12 p_4^{[2]})\big)\cos(x)}{\big(\ch^{-\frac12}(1-\ch^4)(\ttf_4-\ttf_2^2\ch^2)-\ch^{\frac12}(p_4^{[0]}+\frac12p_4^{[2]} \big)\sin(x) }+\kh^{[3,5]}(x) \, ,\\ \notag
&\cB_{1,0} f_1^-  = \vet{0}{\im \ch^{-\frac12}\big(\ch^2+\tth(1-\ch^4)\big)\sin(x)}\, ,\quad \cB_{1,1} f_1^-  =-\frac{\im p_1^{[1]}}2 \vet{\ch^{-\frac12}}{0} +\kh^{[2]}(x) \, , \\ \notag
&\cB_{1,2} f_1^- = -\im \vet{\ch^{-\frac12}(p_2^{[0]}+\frac12 p_2^{[2]})\cos(x)}{\ch^{\frac12} (p_2^{[0]}-\frac12 p_2^{[2]})\sin(x)} + \kh^{[3]}(x) \, ,\\ \notag
&\cB_{0,1} f_0^+  = \vet{a_1^{[1]}\cos(x)}{-p_1^{[1]}\sin(x)}\, ,\quad 
\cB_{0,2} f_0^+  = \vet{a_2^{[0]}}{0}+ \vet{a_2^{[2]}\cos(2x)}{-2p_2^{[2]}\sin(2x)}\, ,\\ \notag
&\cB_{1,1} f_0^-  =-\im p_1^{[1]} \vet{\cos(x)}{0} \, ,\quad 
\cB_{1,2} f_0^-  = -\im p_2^{[0]}\vet{1}{0} - \im p_2^{[2]} \vet{\cos(2x)}{0} \, ,\\ \notag
&\cB_{1,3} f_0^-  = -\im p_3^{[1]}\vet{\cos(x)}{0}+\kh^{[3]}(x) \,, \quad
\cB_{2,2} f_0^-  = \vet{0}{\ttf_2}\, ,  
\end{align}
with $p_j^{[i]}$ and $a_j^{[i]}$, $j =1,\dots ,4$, $i=0,\dots,j$, in \eqref{apexp}, \eqref{expa04}
% \eqref{pino1fd}-\eqref{aino2fd} 
and $\ttf_2$ in \eqref{expfe} and
where $\kh^{[\kappa_1,\dots,\kappa_\ell]}(x)$ denotes a function supported on Fourier modes $\kappa_1,\dots,\kappa_\ell\in {\bN_0} $. 
\end{lem}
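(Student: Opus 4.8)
The proof is a direct, if lengthy, computation, and the plan is to organize it in four steps. First I would read off the homogeneous jets $\cB_{i,j}$ (in the sense of \eqref{notazione}) directly from the Taylor expansion \eqref{Bsani}: since each $\cB_k$ is already displayed there as a sum of monomials $\mu^i\e^j$ multiplying explicit operators, the jet $\cB_{i,j}$ is simply the corresponding coefficient operator. Thus \eqref{B1sano} gives that $\cB_{0,1}$ is the matrix with entries $a_1(x)$, $-p_1(x)\pa_x$, $\pa_x\circ p_1(x)$, $0$, while $\cB_{1,0}=\ell_{1,0}(|D|)\Pi_{\mathtt{s}}$; from \eqref{B2sano} one extracts $\cB_{0,2}$, $\cB_{1,1}=-\im p_1(x)\cJ$ and $\cB_{2,0}=\ell_{2,0}(|D|)\Pi_{\mathtt{ev}}$; and analogously $\cB_{0,3}$, $\cB_{1,2}=-\im p_2(x)\cJ$, $\cB_{0,4}$, $\cB_{1,3}=-\im p_3(x)\cJ$, $\cB_{2,2}=\ell_{2,2}(|D|)\Pi_{\mathtt{ev}}$ are read off from \eqref{B3sano}--\eqref{B4sano}.

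Second, I would recall the explicit Fourier content of the coefficient functions $a_j(x),p_j(x)$ from \eqref{apexp}, \eqref{expa04}: these are even real trigonometric polynomials $a_j(x)=\sum_i a_j^{[i]}\cos(ix)$ and $p_j(x)=\sum_i p_j^{[i]}\cos(ix)$ whose harmonics have the same parity as $j$, namely $a_1,p_1$ carry only the harmonic $1$, $a_2,p_2$ only the harmonics $0$ and $2$, $a_3,p_3$ only $1$ and $3$, and $a_4,p_4$ only $0,2,4$. This is exactly the parity encoded in \eqref{cB evodd}.

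Third, I would apply each jet to the explicit vectors $f_1^+,f_1^-,f_0^+,f_0^-$ of \eqref{funperturbed} componentwise, using only: (i) the product-to-sum identities for multiplication by $\cos(ix)$, which send a pure harmonic $n$ to the harmonics $n\pm i$; (ii) the action of $\pa_x$ on pure harmonics; and (iii) the fact that the Fourier multipliers $|D|$, $\tanh(\tth|D|)$, $\sgn(D)$ and the symbols $\ell_{i,j}(|D|)$ act diagonally, returning the symbol evaluated at the relevant integer frequency, e.g. $\ell_{1,0}(1)=\ch^2+\tth(1-\ch^4)$ after using $\ch^2=\tanh(\tth)$. Carrying this out component by component produces each right-hand side of \eqref{cBacts}.

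Fourth -- the only organizational subtlety -- I would decide which harmonics to keep explicit and which to absorb into the placeholder $\kh^{[\ldots]}$. These expressions are used only inside the jet formulas \eqref{Bgoticoexpbis} and are ultimately paired, in Proposition \ref{Bthirdorderexp}, against the harmonic-$1$ vectors $f_1^\pm$ and the harmonic-$0$ vectors $f_0^\pm$, possibly after composition with the projector jets $P_{i,j}$ whose harmonic content is recorded in Lemmata \ref{lem:secondjetsP}--\ref{lem:P03acts}. Hence only the harmonics that can survive those pairings need be tracked, and every other harmonic is collected into $\kh$ with its support recorded in the superscript. I do not expect any conceptual obstacle: the statement is an entirely mechanical bookkeeping of trigonometric products and Fourier-multiplier evaluations. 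The genuine difficulty is organizational -- propagating the harmonic parities correctly and never discarding into $\kh$ a harmonic that later yields a nonzero scalar product -- and this is precisely what the parity rules \eqref{cB evodd} and \eqref{P evodd} are designed to guarantee, so I would lean on them to justify the placement of each $\kh$ term.
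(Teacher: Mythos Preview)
Your proposal is correct and is exactly the approach the paper takes: its entire proof reads ``By \eqref{Bsani}--\eqref{Pi_odd} and \eqref{funperturbed}.'' Your four steps simply spell out what that one-line reference means, and the organizational point about which harmonics to track versus absorb into $\kh$ is precisely the bookkeeping that the paper leaves implicit.
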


\begin{proof}
By \eqref{Bsani}-\eqref{Pi_odd} and \eqref{funperturbed}.
\end{proof}
We now compute the remaining coefficients in \eqref{coeff2comp}. 
\\[1mm] 
\noindent {\bf Computation of $\gamma_{11}$.} 
In view of \eqref{nzerocoeff} and \eqref{ordini012} we have  
\begin{equation}\label{expgamma11}
\gamma_{11}  = \underbrace{\big( {\cal B}_{0,2} f_0^+ , f_0^+ \big)}_{a_2^{[0]}
\ {\text{by} \ \eqref{B2sano}} } + \underbrace{
\frac12 \big( {\cal B}_{0,1} P_{0,1} f_0^+ , f_0^+ \big)   
+
\frac12\big( {\cal B}_{0,1}  f_0^+ , P_{0,1} f_0^+ \big)}_{\ku_{0,1} \big(   f_{-1}^+ , {\cal B}_{0,1} f_0^+ \big) \ {\text{by} \ \eqref{tuttederivate}} }
 \, .
\end{equation}
By \eqref{cBacts} 
and \eqref{fm1sigma} it results that  \eqref{expgamma11} is equal to 
\begin{equation*}
\gamma_{11} =  a_2^{[0]}+\frac12 \ku_{0,1} (a_1^{[1]}\ch^{\frac12}+p_1^{[1]}\ch^{-\frac12})\, , 
\end{equation*}
which in view of \eqref{apexp}, \eqref{u01}  gives the term in \eqref{gammas}. 
\\[1mm]
\noindent {\bf Computation of $ \phi_{21}$.} 
 In view of \eqref{nzerocoeff} and \eqref{ordine11} we have
\begin{equation*}
\begin{aligned}
\im \phi_{21} & =-\big(\mathfrak{B}_{1,1} f_1^- , f_0^+ \big) \\
&  =- \big( {\cal B}_{1,1} f_1^- , f_0^+ \big) - \underbrace{ \big( {\cal B}_{0,1} P_{1,0} f_1^- , f_0^+ \big)}_{\im \ku_{1,0} \big( f_{-1}^+ , {\cal B}_{0,1} f_0^+ \big) 
\ {\text{by} \ \eqref{tuttederivate}} } 
-\underbrace{ \big( {\cal B}_{1,0} P_{0,1} f_1^- , f_0^+ \big) 
}_{0 \ {\text{by} \ \eqref{tuttederivate}}}
-\underbrace{ 
\frac12 \big(\Pi_0^+ P_{1,1} f_1^- , f_0^+ \big)}_{0 \ {\text{by} \ \eqref{secondjetsPmix}}}
\, . 
\end{aligned}
\end{equation*}
By \eqref{cBacts}, \eqref{funperturbed} and \eqref{fm1sigma} it results
\begin{equation*}
\phi_{21} = \frac12 \big(\ch^{-\frac12} p_1^{[1]}-\ku_{1,0} \ch^{\frac12} a_1^{[1]}-\ku_{1,0} \ch^{-\frac12} p_1^{[1]}  \big)
\end{equation*}
which in view of \eqref{apexp}, \eqref{u01}   gives the term in \eqref{phi2s}.
\\[1mm]
\noindent {\bf Computation of $  \eta_{12}$.}  In view of \eqref{nzerocoeff} and \eqref{ordine12}, \eqref{tuttederivate}, Lemmata \ref{lem:secondjetsP} and \ref{lem:P03acts}  we have
\begin{align*}
\im \eta_{12}&=  \big( {\cal B}_{1,2} f_1^- , f_1^+ \big) + 
 \underbrace{\big( {\cal B}_{1,1} P_{0,1} f_1^- , f_1^+\big)}_{\big(  {\scriptsize \vet{-\ka_{0,1}\sin(2x)}{\kb_{0,1}\cos(2x)}} ,{\cal B}_{1,1} f_1^+\big) \ \textup{by } \eqref{tuttederivate} }
 + 
 \underbrace{\big( {\cal B}_{0,2} P_{1,0}  f_1^- , f_1^+ \big)}_{\im \ku_{1,0} \big(  f_{-1}^+, {\cal B}_{0,2} f_1^+ \big) \textup{ by } \eqref{tuttederivate} } \\ 
 \notag &+
\underbrace{\big( {\cal B}_{1,0}  P_{0,2}  f_1^- , f_1^+ \big)}_{\underline{\kn_{0,2}(\cB_{1,0} f_1^-, f_1^+)} + \ku_{0,2}^-( f_{-1}^-,\cB_{1,0} f_1^+) \textup{ by } \eqref{secondjetsPeps} }
+\underbrace{\big( {\cal B}_{0,1}  P_{1,1}  f_1^- , f_1^+ \big)}_{\im \big(\vet{\ka_{1,1}\cos(2x)}{\kb_{1,1}\sin(2x)}, {\cal B}_{0,1}  f_1^+ \big)\textup{ by } \eqref{secondjetsPmix}} 
\\
\notag
&+\underbrace{\frac12 \big( \Pi_0^+ P_{1,2} f_1^-, f_1^+ \big)}_{=0\textup{ since }\Pi_0^+ f_1^+ =0} \underbrace{- \frac12 \big( {\cal B}_{1,0} P_0 P_{0,2}  f_1^- , f_1^+ \big)}_{\underline{-\frac12 \kn_{0,2}(\cB_{1,0} f_1^-, f_1^+)}\textup{ by } \eqref{secondjetsPeps}} 
 \underbrace{- \frac12 \big( {\cal B}_{0,1} P_0  P_{1,1}  f_1^- , f_1^+\big)}_{=0\textup{ by } \eqref{secondjetsPmix}}\\
 \notag
 & \underbrace{- \frac12 \big( {\cal B}_{1,0}  f_1^- ,P_0 P_{0,2} f_1^+ \big)}_{\underline{-\frac12 \kn_{0,2}(\cB_{1,0} f_1^-, f_1^+)} \textup{ by } \eqref{secondjetsPbis}}\ \
 \underbrace{ - \frac12 \big( {\cal B}_{0,1} f_1^- , P_0  P_{1,1} f_1^+ \big) }_{\stackrel{\eqref{secondjetsPmix}}{=}+\im \frac12 \widetilde \km_{1,1}( f_1^-,\cB_{0,1} f_0^-) =0} \, ,
\end{align*}
where the three underlined terms cancel out. 
Hence, by \eqref{cBacts}, \eqref{funperturbed} and \eqref{fm1sigma},
\begin{align*}
\eta_{12} = &- p_2^{[0]} - \frac14 p_1^{[1]} (\kb_{0,1} \ch^{\frac12} + \ka_{0,1} \ch^{-\frac12}) +\ku_{1,0} \big( \frac12 \ch a_2^{[0]}+ \frac14 \ch a_2^{[2]} - \frac12 \ch^{-1} \tf_2 (1-\ch^4) \big) \\ \notag
&+\frac12  \ku_{0,2}^- (\ch^2+\tth(1-\ch^4))\ch^{-1} + \frac14 \ch^{\frac12}  a_1^{[1]}\ka_{1,1} - \frac12 \ch^{\frac12}  p_1^{[1]} \kb_{1,1} -\frac14\ch^{-\frac12} p_1^{[1]} \ka_{1,1} 
\end{align*}
which in view of \eqref{apexp}, \eqref{u01} and  \eqref{u02-}
 gives the term in \eqref{eta12}.
\\[1mm]
\noindent {\bf Computation of $ \gamma_{12}$.}
By \eqref{nzerocoeff} and \eqref{ordine12}, \eqref{tuttederivate}, Lemma \ref{lem:secondjetsP} and \ref{lem:P03acts} and since $\cB_{0,1} f_0^- = 0$ and $\cB_{1,0} f_0^- = 0$  we have 
\begin{align*}
\im \gamma_{12}&= \big( {\cal B}_{1,2}f_0^- , f_0^+ \big) 
\underbrace{+  \big( {\cal B}_{1,1} P_{0,1} f_0^- , f_0^+\big)}_{= 0}
\underbrace{+\big( {\cal B}_{0,2} P_{1,0}  f_0^- , f_0^+ \big)}_{=0} \\ \notag &\underbrace{+\big( {\cal B}_{1,0}  P_{0,2}  f_0^- , f_0^+ \big)}_{=0}
\underbrace{+\big( {\cal B}_{0,1}  P_{1,1}  f_0^- , f_0^+ \big) }_{-\im \frac12 \ch^{-3/2} ( f_{-1}^+, \cB_{0,1} f_0^+)}
\underbrace{+\frac12 \big( \Pi_0^+ P_{1,2} f_0^-, f_0^+ \big)}_{=0}\\
\notag
& 
\underbrace{- \frac12 \big( {\cal B}_{1,0} P_0 P_{0,2}  f_0^- , f_0^+ \big)}_{=0}
\underbrace{ - \frac12 \big( {\cal B}_{0,1} P_0  P_{1,1}  f_0^- , f_0^+\big)}_{=0} \\
 & \underbrace{- \frac12 \big( {\cal B}_{1,0}  f_0^- ,P_0 P_{0,2} f_0^+ \big)}_{=0} \underbrace{- \frac12 \big( {\cal B}_{0,1} f_0^- , P_0  P_{1,1} f_0^+ \big)}_{=0}  \, .
\end{align*}
So, by \eqref{cBacts}, \eqref{funperturbed} and \eqref{fm1sigma},
\begin{equation*}
\gamma_{12} = -p_2^{[0]} - \frac14 \ch^{-3/2} (a_1^{[1]}\ch^{\frac12} + p_1^{[1]}\ch^{-\frac12} ) 
\end{equation*}
which in view of  \eqref{apexp} gives the term \eqref{gammas}. 
\\[1mm]
\noindent {\bf Computation of $\phi_{11}$.}
By \eqref{nzerocoeff} and \eqref{ordine0330}, \eqref{tuttederivate}, Lemma \ref{lem:secondjetsP} and \ref{lem:P03acts}  we have
\begin{align*}
\phi_{11}&
= \big( {\cal B}_{0,3}f_1^+ , f_0^+ \big) +  \underbrace{\big( {\cal B}_{0,2} P_{0,1} f_1^+ , f_0^+\big)}_{\big({\scriptsize \vet{\ka_{0,1}\cos(2x)}{\kb_{0,1}\sin(2x)}}, \cB_{0,2} f_0^+\big)}+\underbrace{\big( {\cal B}_{0,1}  P_{0,2}  f_1^+ , f_0^+ \big) }_{
\kn_{0,2}(f_1^+,\cB_{0,1} f_0^+) + 
\ku_{0,2}^+(f_{-1}^+, \cB_{0,1} f_0^+) 
}\\ \notag  
&
\underbrace{- \frac12 \big( {\cal B}_{0,1} P_0 P_{0,2}  f_1^+ , f_0^+ \big)}_{ -\frac12 \kn_{0,2}(f_1^+, \cB_{0,1} f_0^+)
}
\underbrace{- \frac12 \big( {\cal B}_{0,1}  f_1^+ ,P_0 P_{0,2} f_0^+ \big) }_{\stackrel{\eqref{secondjetsPbis}}{=}0}   \, .
\end{align*}
Thus, by \eqref{cBacts}, \eqref{funperturbed} and \eqref{fm1sigma},
$$
\begin{aligned}
\phi_{11} & = \frac12 a_3^{[1]} \ch^\frac12 -\frac12 p_3^{[1]}\ch^{-\frac12}  + \frac12 \ka_{0,1} a_2^{[2]} -   \kb_{0,1} p_2^{[2]}   \\
& \ \ + \frac14 \kn_{0,2} (a_1^{[1]}\ch^\frac12 -p_1^{[1]}\ch^{-\frac12} ) + \frac12 \ku_{0,2}^+ (a_1^{[1]}\ch^\frac12 +p_1^{[1]}\ch^{-\frac12} ) 	
\end{aligned}
$$
which in view of 
\eqref{apexp}, \eqref{u01}, \eqref{u02-},   gives the term \eqref{phi11}. 
\\[1mm]
\noindent {\bf Computation of $\phi_{22}$.}
By \eqref{nzerocoeff}  and \eqref{ordine21}, \eqref{tuttederivate}, Lemma \ref{lem:secondjetsP} and \ref{lem:P03acts}  and since $\cB_{2,1}=0$ and  ${\cal B}_{0,1}  f_0^-=0$, ${\cal B}_{1,0}  f_0^-=0$ we have
\begin{align*}
\phi_{22} &=
\underbrace{ \big( {\cal B}_{2,1}f_0^- , f_1^-\big)}_{0}
 +  \underbrace{\big( {\cal B}_{1,1} P_{1,0} f_0^- , f_1^-\big)}_{0}
 \underbrace{+\big( {\cal B}_{2,0} P_{0,1}  f_0^- , f_1^- \big)}_{0}
  \\ \notag &
  \underbrace{+\big( {\cal B}_{0,1}  P_{2,0}  f_0^- , f_1^- \big)}_{0}
 \underbrace{ +\big( {\cal B}_{1,0}  P_{1,1}  f_0^- , f_1^- \big) }_{-\frac{\im}{2}\ch^{-3/2} \big(f_{-1}^+,\cB_{1,0}  f_1^-\big) }+
\underbrace{\frac12 \big( \Pi_0^+ P_{2,1} f_0^-, f_1^-)}_{\frac12 \big( \tilde \kn_{2,1} \Pi_0^+ f_1^- +\tilde \ku_{2,1} \Pi_0^+ f_{-1}^-, f_1^-)=0} \\
& \underbrace{- \frac12 \big( {\cal B}_{0,1} P_0 P_{2,0}  f_0^- , f_1^- \big)}_{=0} 
 -\underbrace{\frac12 \big( {\cal B}_{1,0} P_0  P_{1,1}  f_0^- , f_1^-\big)}_{=0}\\
 & - \underbrace{ \frac12 \big( {\cal B}_{0,1}  f_0^- ,P_0 P_{2,0} f_1^- \big)}_{=0} - \underbrace{ \frac12 \big( {\cal B}_{1,0} f_0^- , P_0  P_{1,1} f_1^- \big)}_{=0}  \, .
\end{align*}
So, by \eqref{cBacts} and \eqref{fm1sigma}, 
\begin{equation*}
\phi_{22} = \frac14 \ch^{-\frac52} (\ch^2+\tth(1-\ch^4))
\end{equation*}
which  is the term \eqref{phi2s}. 
\\[1mm]
\noindent {\bf Computation of $ \widetilde \psi_{12}$.}
By \eqref{zerocoeff}, \eqref{ordine21} and since $\cB_{2,1} =0$, {$ P_{1,0} f_0^- = 0 $,
$ P_{0,1} f_0^- = 0 $} 
by \eqref{tuttederivate}, Lemmata \ref{lem:secondjetsP} and \ref{lem:P03acts} 
 this term is given by
\begin{align*}
\im \widetilde \psi_{12} & =\underbrace{ \big( {\cal B}_{2,1}f_0^- , f_1^+ \big)}_{=0} + \underbrace{ \big( {\cal B}_{1,1} P_{1,0} f_0^- , f_1^+\big)}_{=0}+\underbrace{\big( {\cal B}_{2,0} P_{0,1}  f_0^- , f_1^+ \big)}_{=0} \\ \notag &+\underbrace{\big( {\cal B}_{0,1}  P_{2,0}  f_0^- , f_1^+ \big)}_{=0 \ {\text{by} \, \eqref{secondjetsPmu}} }
+\underbrace{\big( {\cal B}_{1,0}  P_{1,1}  f_0^- , f_1^+ \big) }_{-\frac\im2 \ch^{-\frac32}  \big(f_{-1}^+,{\cal B}_{1,0} f_1^+ \big) 
\ \text{by} \eqref{secondjetsPmix}}
+\underbrace{\frac12 \big(\Pi_0^+ P_{2,1} f_0^-, f_1^+ \big)}_{
=0 \ \text{as } \Pi_0^+    f_1^+ = 0
}\\
&- \underbrace{\frac12 \big( {\cal B}_{0,1} P_0 P_{2,0}  f_0^- , f_1^+ \big) }_{=0
\ {\text{by} \, \eqref{secondjetsPmu}} }
 - \frac12 \underbrace{\big( {\cal B}_{1,0} P_0  P_{1,1}  f_0^- , f_1^+\big)}_{=0
 \ \text{by}  \, \eqref{secondjetsPmix} } \\
 & - \frac12 \underbrace{\big( {\cal B}_{0,1}  f_0^- ,P_0 P_{2,0} f_1^+ \big)}_{=0
  \ \text{since} \ {\cal B}_{0,1}  f_0^- = 0  } - \frac12 \underbrace{\big( {\cal B}_{1,0} f_0^- , P_0  P_{1,1} f_1^+ \big) }_{=0  { \ \text{since} \ {\cal B}_{1,0}  f_0^- = 0 } } 
\end{align*}
and finally, by \eqref{fm1sigma} and \eqref{cBacts}, 
$$
\im \widetilde \psi_{12} =  -\tfrac{\im}{2 } \ch^{-3/2}
\big(f_{-1}^+,{\cal B}_{1,0} f_1^+ \big)  = 0 \, . 
$$ 
\noindent {\bf Computation of $\eta_{11}$.}
By \eqref{nzerocoeff}, \eqref{ordine04}, \eqref{tuttederivate}, Lemma \ref{lem:secondjetsP} and Lemma \ref{lem:P03acts} and \eqref{kN},  we have
\begin{align*}
\eta_{11} &=  \big( {\cal B}_{0,4} f_1^+,f_1^+\big)+
\underbrace{ \big({\cal B}_{0,3} P_{0,1} f_1^+,f_1^+\big) }_{ \big( {\scriptsize \vet{\ka_{0,1} \cos(2x)}{\kb_{0,1} \sin(2x)}}, \cB_{0,3} f_1^+ \big)}
+
\underbrace{\big( {\cal B}_{0,2}P_{0,2} f_1^+,f_1^+\big)}_{\big(\underline{\kn_{0,2} f_1^+} + \ku_{0,2}^+ f_{-1}^+ + {\scriptsize\vet{\ka_{0,2} \cos(3x)}{\kb_{0,2} \sin(3x)}}, \cB_{0,2} f_1^+ \big)} \notag 
\\ 
& +
\underbrace{ \big( {\cal B}_{0,1} P_{0,3}f_1^+,f_1^+\big)}_{\big( {\scriptsize\vet{\ka_{0,3} \cos(2x)}{\kb_{0,3} \sin(2x)}}, \cB_{0,1} f_1^+ \big)}  \underbrace{
 -  \frac12 \big({\cal B}_{0,2}P_0P_{0,2}  f_1^+,f_1^+\big)}_{\underline{-\frac12 \kn_{0,2} \big({\cal B}_{0,2}  f_1^+,f_1^+\big)}} \notag \\ 
& \underbrace{-
\frac12  \big({\cal B}_{0,2} f_1^+, P_0P_{0,2}  f_1^+\big)}_{\underline{-\frac12 \kn_{0,2} \big({\cal B}_{0,2}  f_1^+,f_1^+\big)}}
-\underbrace{\frac12  \big( {\cal B}_{0,1}P_0P_{0,3} f_1^+,f_1^+\big)}_{=0} \\
&-\frac12  \underbrace{\big({\cal B}_{0,1} f_1^+,P_0 P_{0,3}f_1^+\big)}_{=0}
\underbrace{- \frac12  \big( {\cal B}_{0,1}P_{0,1} P_0 P_{0,2} f_1^+,f_1^+\big)}_{-\frac12  \kn_{0,2} \big( {\scriptsize\vet{\ka_{0,1} \cos(2x)}{\kb_{0,1}\sin(2x)}}, {\cal B}_{0,1} f_1^+\big)}
\underbrace{- \frac12  \big( {\cal B}_{0,1} f_1^+,P_{0,1} P_0 P_{0,2} f_1^+\big)}_{-\frac12  \kn_{0,2} \big(  {\scriptsize\vet{\ka_{0,1} \cos(2x)}{\kb_{0,1}\sin(2x)}},{\cal B}_{0,1} f_1^+\big)}  \\
& +\frac12 \underbrace{\big(\mathfrak{N}_{0,2} P_0 P_{0,2} f_1^+, f_1^+ \big)}_{=0}
+ \underbrace{\frac12 \big( f_1^+, \mathfrak{N}_{0,2} P_0 P_{0,2} f_1^+ \big)}_{=0}
\, ,
\end{align*}
where the three underlined terms cancel out.
Thus, in view of \eqref{cBacts}, \eqref{funperturbed} and \eqref{fm1sigma}, we get
\begin{align}
\eta_{11} & = \ch \frac{a_4^{[0]}}{2} + 
\ch \frac{a_4^{[2]}}{4}
- p_4^{[0]} - \frac{p_4^{[2]}}{2} + \frac{1}{2\ch} (1-\ch^4)(\ttf_4-\ttf_2^2\ch^2) \\ \notag
&+ \frac14 \big( \ch^{\frac12} (a_3^{[1]} + a_3^{[3]})- \ch^{-\frac12} ( p_3^{[1]} + p_3^{[3]})  \big)\ka_{0,1}   
- \frac12 \ch^{\frac12} ( p_3^{[3]} + p_{3}^{[1]}  ) \kb_{0,1} \notag
\\ \notag
& +
\frac{1}{4}\big( a_{2}^{[2]} \ch^{\frac12} - p_{2}^{[2]} \ch^{-\frac12} \big) \ka_{0,2}
-
\frac34 \ch^{\frac12} p_{2}^{[2]} \kb_{0,2} \\ \notag
&+ \frac12 \ku_{0,2}^+ 
 \big( \ch a_2^{[0]} + \frac12 \ch a_2^{[2]} - \ch^{-1} \ttf_2 (1-\ch^4) \big) \notag \\ \notag
& + \frac14\ka_{0,3} \big(a_1^{[1]} \ch^{\frac12} - p_{1}^{[1]} \ch^{-\frac12} \big) 
-\frac{1}{2} \kb_{0,3} \ch^{\frac12} p_1^{[1]} \\ \notag
&- \frac14 \kn_{0,2} \ka_{0,1}(a_{1}^{[1]} \ch^{\frac12} - p_1^{[1]} \ch^{-\frac12} )  + \frac12 \kn_{0,2} \kb_{0,1} \ch^{\frac12} p_{1}^{[1]} 
\end{align}
which in view of  \eqref{apexp}, \eqref{aino4}, \eqref{u01}, \eqref{u02-}, 
\eqref{n03}  gives \eqref{te11}. 
\\[1mm]
\noindent {\bf Computation of $\gamma_{22}$.}
By \eqref{nzerocoeff}, \eqref{ordine22}, where we exploit that $\big(\mathbf{Sym}[A] f,f\big)= \mathfrak{Re} \big(Af,f \big) $,  \eqref{tuttederivate}, Lemma \ref{lem:secondjetsP}, Lemma \ref{lem:P03acts} and since $\cB_{0,1} f_0^-=0$ and $\cB_{1,0} f_0^-=0$ we have
\begin{align*}
\gamma_{22} & = 
\underbrace{\big( {\cal B}_{2,2} f_0^-, f_0^- \big)}_{\tf_2} 
+ \underbrace{\big( {\cal B}_{1,2} P_{1,0} f_0^-, f_0^- \big)}_{=0}
  +
  \underbrace{ \big( {\cal B}_{2,1} P_{0,1}f_0^-, f_0^- \big)}_{=0}
 + \underbrace{\big( {\cal B}_{0,2}P_{2,0}f_0^-, f_0^- \big)}_{=0}
 \\ 
 \notag
&  + 
\underbrace{\big( {\cal B}_{1,1}P_{1,1}f_0^-, f_0^- \big)}_{-\frac{\im}{2} \ch^{-3/2}\big(  f_{-1}^+,{\cal B}_{1,1} f_0^-\big) }
 + \underbrace{\big( {\cal B}_{2,0} P_{0,2} f_0^-, f_0^- \big)}_{=0}
  +
 \underbrace{\big( {\cal B}_{0,1} P_{2,1} f_0^-, f_0^- \big)}_{\big(  P_{2,1} f_0^-,{\cal B}_{0,1} f_0^- \big)=0}
 + \underbrace{\big( {\cal B}_{1,0} P_{1,2} f_0^-, f_0^- \big)}_{ = 0}  + \frac12 \underbrace{\big( \Pi_0^+ P_{2,2} f_0^-, f_0^-\big) }_{=0}
 \\ 
 \notag 
&  \underbrace{- \mathfrak{Re} \big( {\cal B}_{0,2}P_0P_{2,0}f_0^-, f_0^- \big)}_{= 0}
  \underbrace{-\mathfrak{Re} \big ({\cal B}_{1,1}P_0P_{1,1}f_0^-, f_0^- \big)}_{ = 0 }
  \underbrace{-\mathfrak{Re} \big ({\cal B}_{2,0}P_0P_{0,2}f_0^-, f_0^- \big)}_{=0}
 \underbrace{ -\mathfrak{Re}  \big ( {\cal B}_{0,1}P_0P_{2,1}f_0^-, f_0^- \big)}_{\big ( P_0P_{2,1}f_0^-, {\cal B}_{0,1} f_0^- \big)=0} \\ \notag
& \underbrace{-\mathfrak{Re}  \big ( {\cal B}_{1,0}P_0P_{1,2}f_0^-, f_0^- \big)}_{ = 0}
  \underbrace{-\mathfrak{Re} \big (  {\cal B}_{0,1}P_{0,1} P_0 P_{2,0}f_0^-, f_0^- \big) }_{=0} \\ \notag
 &  \underbrace{-\mathfrak{Re} \big (  {\cal B}_{1,0}P_{0,1} P_0 P_{1,1} f_0^-, f_0^- \big)}_{=0}  \underbrace{-\mathfrak{Re}  \big (  {\cal B}_{0,1}P_{1,0} P_0 P_{1,1}f_0^-, f_0^- \big) }_{=0}\underbrace{-\mathfrak{Re}  \big (  {\cal B}_{1,0}P_{1,0} P_0 P_{0,2} f_0^-, f_0^- \big) }_{=0}\\
 &
 \underbrace{+\mathfrak{Re} \big( \mathfrak{N}_{2,0} P_0 P_{0,2} f_0^-, f_0^- \big)}_{=0}  \underbrace{+\mathfrak{Re} \big( \mathfrak{N}_{0,2} P_0 P_{2,0}f_0^-, f_0^-)}_{=0}    \underbrace{+\mathfrak{Re}\big( \mathfrak{N}_{1,1} P_0 P_{1,1} f_0^-, f_0^- \big)}_{=0}\, . 
\end{align*}
Then by \eqref{B4sano}, \eqref{Pi_odd}, \eqref{ell22}, \eqref{fm1sigma} and \eqref{cBacts}  we get
\begin{equation*}
\gamma_{22}  = \tf_2 + \frac{p_1^{[1]}}{4\ch}
\end{equation*}
which, in view of \eqref{expfe}, \eqref{pino4} gives \eqref{gammas}. 
\\[1mm]
\noindent {\bf Computation of $\phi_{12}$.}
By \eqref{nzerocoeff}, \eqref{ordine13},  \eqref{tuttederivate}, Lemma \ref{lem:secondjetsP}, Lemma \ref{lem:P03acts}, \eqref{kN} and since $\cB_{0,2} f_0^-=0$, $\cB_{1,0} f_0^-=0$ and $\cB_{0,1} f_0^-=0$ we have
 \begin{align*} 
\im \phi_{12} & = 
\big( {\cal B}_{1,3} f_0^-, f_1^+ \big)
+ \underbrace{\big( {\cal B}_{0,3} P_{1,0}f_0^-, f_1^+ \big)}_{=0}
+ \underbrace{\big( {\cal B}_{1,2} P_{0,1}f_0^-, f_1^+ \big)}_{=0} \\ 
 \notag
&+ \underbrace{\big( {\cal B}_{0,2}P_{1,1}f_0^-, f_1^+ \big)}_{-\frac{\im}{2}\ch^{-3/2}  \big(  f_{-1}^+, {\cal B}_{0,2} f_1^+ \big)}
+ \underbrace{\big( {\cal B}_{1,1}P_{0,2} f_0^-, f_1^+ \big)}_{=0} 
+ \underbrace{\big({\cal B}_{1,0} P_{0,3}f_0^-, f_1^+ \big)}_{=0}
+\underbrace{\big( {\cal B}_{0,1} P_{1,2}f_0^-, f_1^+ \big)}_{\im  \big({\scriptsize \vet{\ka_{1,2}\cos(2x)}{\kb_{1,2}\sin(2x)}},  {\cal B}_{0,1} f_1^+ \big)} \notag  \\
& +\underbrace{\frac12 \big( \Pi_0^+ P_{1,3} f_0^- , f_1^+ \big)}_{=0} 
   - \frac12 \underbrace{\big( {\cal B}_{0,2}P_0P_{1,1}f_0^-, f_1^+ \big)}_{=0} 
- \frac12 
\underbrace{\big( {\cal B}_{0,2}f_0^-,P_0P_{1,1} f_1^+ \big)}_{=0} \\
& 
-\frac12
\underbrace{\big({\cal B}_{1,1} P_0P_{0,2} f_0^-, f_1^+ \big)}_{=0}
\underbrace{-\frac12\big({\cal B}_{1,1}f_0^-, P_0P_{0,2} f_1^+ \big)}_{-\frac12 \kn_{0,2} \big({\cal B}_{1,1}f_0^-, f_1^+ \big) } 
-\frac12  \underbrace{\big({\cal B}_{1,0}P_0P_{0,3}f_0^-, f_1^+ \big)}_{=0} \notag \\
& -\frac12 \underbrace{ \big({\cal B}_{1,0}f_0^-,  P_0P_{0,3} f_1^+ \big)}_{=0} -\frac12 \underbrace{\big( {\cal B}_{0,1}P_0P_{1,2} f_0^-, f_1^+ \big)}_{=0}
-\frac12 \underbrace{\big( {\cal B}_{0,1}f_0^-,P_0P_{1,2}  f_1^+ \big)}_{=0} \\  \notag 
& -\frac12\underbrace{\big( {\cal B}_{1,0}P_{0,1} P_0 P_{0,2}f_0^-, f_1^+ \big)}_{=0}
-\frac12\underbrace{\big( {\cal B}_{1,0}f_0^-, P_{0,1} P_0 P_{0,2}  f_1^+ \big)}_{=0}
-\frac12
\underbrace{\big( {\cal B}_{0,1}P_{1,0} P_0 P_{0,2}f_0^-, f_1^+ \big)}_{=0}\\ \notag
& -\frac12
\underbrace{\big( {\cal B}_{0,1}f_0^-, P_{1,0} P_0 P_{0,2} f_1^+ \big)}_{=0}
-\frac12\underbrace{\big( {\cal B}_{0,1}P_{0,1} P_0 P_{1,1}f_0^-, f_1^+ \big)}_{=0}
-\frac12
\underbrace{\big( {\cal B}_{0,1}f_0^-, P_{0,1} P_0 P_{1,1} f_1^+ \big)}_{=0} \\
&
+\underbrace{\frac12 \big( \mathfrak{N}_{1,1} P_0 P_{0,2} f_0^-, f_1^+ \big)}_{=0} 
+\underbrace{\frac12 \big(  f_0^-, \mathfrak{N}_{1,1} P_0 P_{0,2} f_1^+ \big)}_{=0} \\
& +\underbrace{\frac12 \big(   \mathfrak{N}_{0,2} P_0 P_{1,1} f_0^-, f_1^+ \big) }_{=0}
+\underbrace{\frac12 \big(   f_0^-,  \mathfrak{N}_{0,2} P_0 P_{1,1} f_1^+ \big) }_{-\frac\im 2 \widetilde \km_{1,1} \big(f_0^- , \mathfrak{N}_{0,2} f_0^- \big)\stackrel{\eqref{kN}}{=} 0 } 
\, .
\end{align*}
Hence by \eqref{cBacts}, \eqref{funperturbed}, \eqref{fm1sigma}, we have
\begin{equation*}
\begin{aligned}
\phi_{12} = &- \frac12 p_3^{[1]} \ch^{1/2} - \frac14 \ch^{-\frac12} (a_2^{[0]} +\frac12 a_2^{[2]}) +\frac14 \ch^{-\frac52}\ttf_2 (1-\ch^4) \\
 &+ \frac14 \ka_{1,2} (a_1^{[1]}\ch^\frac12 - p_1^{[1]}\ch^{-\frac12} )- \frac12 \kb_{1,2} \ch^{\frac12} p_1^{[1]} +\frac14 \kn_{0,2} p_1^{[1]}\ch^{\frac12} \, ,
\end{aligned}
\end{equation*}
which, in view of \eqref{apexp}, \eqref{n03} gives the term \eqref{phi12}.

\section{Block-decoupling and proof of Theorem \ref{abstractdec}} 
\label{sec:block}

In this section we prove Theorem \ref{abstractdec} by block-decoupling
the $ 4 \times 4 $ Hamiltonian matrix $\tL_{\mu,\e} = \tJ_4 \tB_{\mu,\e} $ in 
\eqref{Lmuepsi} obtained 
in Proposition \ref{BexpG}, expanding the computations of \cite{BMV3}
at a higher degree of accuracy.

We  first perform the singular symplectic and reversibility-preserving
change of coordinates  in \cite[Lemma 5.1]{BMV3}. 

\begin{lem}\label{decoupling1prep}
{{\bf (Singular symplectic rescaling)}}
 The conjugation of the Hamiltonian and reversible matrix $\tL_{\mu,\e} = \tJ_4 \tB_{\mu,\e} $ 
 in \eqref{Lmuepsi}  
 obtained in Proposition \ref{BexpG} 
 through the symplectic and reversibility-preserving 
 $ 4 \times 4 $-matrix 
$$
 Y := \begin{pmatrix} Q & 0 \\ 0 & Q \end{pmatrix}
 \quad  \text{with} \quad 
 Q:=\begin{pmatrix} \mu^{\frac12} & 0 \\ 0 & \mu^{-\frac12}\end{pmatrix} \, ,  \ \ \mu > 0 \, ,
$$
yields the Hamiltonian and reversible matrix 
\begin{align}
&\tL_{\mu,\e}^{(1)} := Y^{-1} \tL_{\mu,\e} Y = \tJ_4\tB^{(1)}_{\mu,\e}  =
\begin{pmatrix}   \tJ_2 E^{(1)}  & \tJ_2  F^{(1)}  \\  
\tJ_2 [F^{(1)}]^*  &  \tJ_2 G^{(1)}  \end{pmatrix}  \label{LinHprep}
 \end{align}
where $ \tB_{\mu,\e}^{(1)} $ is a self-adjoint and reversibility-preserving 
 $ 4 \times 4$  matrix 
$$
\tB_{\mu,\e}^{(1)} =
\begin{pmatrix} 
E^{(1)} & F^{(1)} \\ 
[F^{(1)}]^* & G^{(1)} 
\end{pmatrix}, \quad 
E^{(1)}  = [E^{(1)}]^* \, , \ G^{(1)} = [G^{(1)}]^* \, , 
$$
 where the $ 2 \times 2 $  reversibility-preserving matrices $E^{(1)} :=
 E^{(1)} (\mu, \epsilon) $, $ G^{(1)} :=  G^{(1)} (\mu, \epsilon) $  and $ F^{(1)} :=
 F^{(1)} (\mu, \epsilon ) $ extend analytically at $ \mu  = 0 $ with 
 the expansion
 \begin{subequations}\label{BinHprep}
\begin{align}\label{BinH1prep}
& E^{(1)} = 
\begin{pmatrix} 
\te_{11}  \mu \e^2(1+ r_1'(\e^3,\mu\e)) + \eta_{11}  \mu \e^4 - \te_{22}\frac{\mu^3}{8}(1+r_1''(\e,\mu))  
  & 
  \im \big( \frac12\te_{12}\mu+\eta_{12}\mu\e^2+ r_2(\mu\e^3,\mu^2\e,\mu^3) \big)  \\
- \im \big( \frac12\te_{12} \mu+\eta_{12}\mu\e^2 + r_2(\mu\e^3,\mu^2\e,\mu^3) \big) & -\te_{22}\frac{\mu}{8}(1+r_5(\e^2,\mu))
 \end{pmatrix} \\
 & \label{BinH2prep} G^{(1)} := 
\begin{pmatrix} 
 \mu +\gamma_{11}  \mu \e^2 +r_8( \mu \e^3, {\mu^2\e^2},\mu^3\e) &   
-\im \gamma_{12} \mu\e^2 - \im r_9(\mu\e^3,\mu^2\e) % \mu^3)
 \\
 \im \gamma_{12} \mu\e^2+\im  r_9(\mu\e^3, \mu^2\e % ,\mu^3
  )  &
  \tanh(\tth\mu)+\gamma_{22} \mu\e^2 +r_{10}(\mu\e^3,\mu^2\e)
 \end{pmatrix} \\
 & \label{BinH3prep}
 F^{(1)} :=\begingroup 
\setlength\arraycolsep{2pt}
\begin{pmatrix} 
\tf_{11} \mu \e+\phi_{11}  \mu \e^3+ r_3( \mu \e^4,\mu^2\e^2,\mu^3\e ) & \im 
 \mu\e \ch^{-\frac12} +\im\phi_{12} \mu\e^3 +\im  r_4({\mu\e^4}, \mu^2 \e^2, \mu^3\e
 )  \\
  \im  \phi_{21}\mu\e+\im r_6(\mu\e^3,\mu^2\e  )    & \phi_{22}\mu\e + r_7(\mu \e^3,\mu^2\e ) 
 \end{pmatrix} \endgroup 
 \end{align}
 \end{subequations}
 where the coefficients appearing in the entries are the same of \eqref{BinG}.
\end{lem}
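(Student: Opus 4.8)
The plan is to observe that, because $Y$ is symplectic, the conjugation $Y^{-1}\tL_{\mu,\e}Y$ acts on the self-adjoint factor $\tB_{\mu,\e}$ as a congruence $Y^*\tB_{\mu,\e}Y$, which automatically preserves the Hamiltonian and reversible structure; the only genuine content is then to verify that the resulting blocks, although defined through the matrix $Y$ which is singular as $\mu\to0$, extend analytically to $\mu=0$, and that this extension matches \eqref{BinHprep}.

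First I would check that $Y$ is symplectic and reversibility-preserving. Since $Q$ is real and diagonal, a direct computation gives
\[
Q\,\tJ_2\,Q=\begin{pmatrix}\mu^{\frac12}&0\\0&\mu^{-\frac12}\end{pmatrix}\begin{pmatrix}0&1\\-1&0\end{pmatrix}\begin{pmatrix}\mu^{\frac12}&0\\0&\mu^{-\frac12}\end{pmatrix}=\tJ_2\, ,
\]
so that $Y^*\tJ_4 Y=\tJ_4$; moreover the real entries $\mu^{\pm\frac12}$ commute with the conjugation $\Gc$, hence $Q$ commutes with $\rho_2$ and $\rho_4 Y=Y\rho_4$. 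Because $Y$ is symplectic, so is $Y^*$ (as recalled in the excerpt), i.e. $Y\tJ_4 Y^*=\tJ_4$, which rewrites as $Y^{-1}\tJ_4=\tJ_4 Y^*$. Therefore
\[
\tL^{(1)}_{\mu,\e}=Y^{-1}\tJ_4\,\tB_{\mu,\e}\,Y=\tJ_4\,Y^*\tB_{\mu,\e}Y=\tJ_4\,\tB^{(1)}_{\mu,\e}\, ,\qquad \tB^{(1)}_{\mu,\e}:=Y^*\tB_{\mu,\e}Y=Y\tB_{\mu,\e}Y\, ,
\]
the last equality using $Y^*=Y$. Self-adjointness and reversibility-preservation of $\tB^{(1)}_{\mu,\e}$ are then immediate from $\tB_{\mu,\e}=\tB_{\mu,\e}^*$, $\rho_4\tB_{\mu,\e}=\tB_{\mu,\e}\rho_4$ together with $Y^*=Y$ and $\rho_4 Y=Y\rho_4$, so $\tL^{(1)}_{\mu,\e}$ is Hamiltonian and reversible.

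Next I would compute the blocks explicitly. Writing $\tB^{(1)}_{\mu,\e}$ in $2\times2$ blocks one gets $E^{(1)}=QEQ$, $F^{(1)}=QFQ$, $G^{(1)}=QGQ$ and $[F^{(1)}]^*=QF^*Q$, and for any $2\times2$ matrix $M=(m_{ij})$ the congruence reads
\[
Q\,M\,Q=\begin{pmatrix}\mu\,m_{11}&m_{12}\\ m_{21}&\mu^{-1}m_{22}\end{pmatrix}\, .
\]
Substituting the expansions \eqref{BinG} of Proposition \ref{BexpG}, the $(1,1)$ entries are multiplied by $\mu$ and the off-diagonal entries are left unchanged; tracking how the extra factor $\mu$ distributes over the monomials inside each additive remainder (while the remainders sitting in the factors $(1+\,\cdot\,)$ are unaffected), one reads off directly the $(1,1)$, $(1,2)$ and $(2,1)$ entries of \eqref{BinH1prep}--\eqref{BinH3prep}.

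The main obstacle, and indeed the whole point of the lemma, is the analytic extension at $\mu=0$, since $Y$ degenerates there: the $(2,2)$ entries are divided by $\mu$. This is harmless precisely because, by Proposition \ref{BexpG}, each such entry is divisible by $\mu$. Indeed $E_{22}=-\te_{22}\tfrac{\mu^2}{8}(1+r_5(\e^2,\mu))$ and $F_{22}=\phi_{22}\mu^2\e+r_7(\mu^2\e^3,\mu^3\e)$ vanish to second order in $\mu$, while $G_{22}=\mu\tanh(\tth\mu)+\gamma_{22}\mu^2\e^2+r_{10}(\mu^2\e^3,\mu^3\e)$ satisfies $\mu^{-1}G_{22}=\tanh(\tth\mu)+\gamma_{22}\mu\e^2+r_{10}(\mu\e^3,\mu^2\e)$, with $\tanh(\tth\mu)$ analytic at $\mu=0$. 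Hence $\mu^{-1}E_{22}=-\te_{22}\tfrac{\mu}{8}(1+r_5(\e^2,\mu))$ and $\mu^{-1}F_{22}=\phi_{22}\mu\e+r_7(\mu\e^3,\mu^2\e)$ are analytic near $(0,0)$, yielding exactly the $(2,2)$ entries of \eqref{BinH1prep}, \eqref{BinH2prep}, \eqref{BinH3prep}. This vanishing of the low-order $\mu$-terms in the second diagonal slot is not accidental: it reflects the Hamiltonian and reversible symmetries underlying Proposition \ref{BexpG}, and it is exactly what makes the singular rescaling regularizing rather than destructive, so that $E^{(1)}$, $F^{(1)}$, $G^{(1)}$ extend analytically to $\mu=0$ with the stated expansions.
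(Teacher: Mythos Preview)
Your proof is correct and is precisely the natural argument: verify that $Y$ is symplectic and reversibility-preserving, deduce that conjugation acts on $\tB_{\mu,\e}$ as the congruence $Q(\,\cdot\,)Q$ on each block, and then check entrywise that the $(2,2)$ entries of $E$, $F$, $G$ from Proposition~\ref{BexpG} are divisible by $\mu$ so that the resulting blocks extend analytically to $\mu=0$. The paper itself does not spell out a proof of this lemma here, referring instead to \cite[Lemma~5.1]{BMV3}, where the same computation is carried out; your argument reproduces that.
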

Note  that the matrix $\tL_{\mu,\e}^{(1)}$, 
initially defined only for $\mu \neq 0$,  extends analytically to the zero matrix at  
 $\mu = 0$.
For $\mu \neq 0$ the spectrum of $\tL_{\mu,\e}^{(1)}$ coincides with the spectrum of $\tL_{\mu,\e}$.

\paragraph{Non-perturbative step of block-decoupling.}  \label{sec:5.2}

The following lemma
computes  the first order Taylor expansions \eqref{expXentries}
of the matrix entries in \eqref{Xsylvy}  
and  then the expansion \eqref{Bsylvy1} at a higher degree of accuracy
with respect to \cite[Lemma 5.4]{BMV3}.

\begin{lem} {\bf (Step of block-decoupling)}\label{decoupling2}
There exists a $2\times 2$ reversibility-preserving matrix $ X $,
analytic in $(\mu, \e) $,  of the form
\begin{equation} \label{Xsylvy}
X := \begin{pmatrix} x_{11} & \im x_{12} \\ \im x_{21} & x_{22} \end{pmatrix}  \, ,
\qquad x_{ij}\in\bR \, , \ i,j=1,2 \, , 
\end{equation}
with  
\begin{subequations} \label{expXentries}
\begin{align}
x_{11} &= x_{11}^{(1)} \e + r(\e^3,\mu\e) \, ,
\qquad
x_{12} =  x_{12}^{(1)} \e  + r(\e^3, \mu\e) 
\\ \notag
x_{21} &= x_{21}^{(1)} \e  + x_{21}^{(3)} \e^3  + r(\e^4,\mu\e^2,\mu^2\e) \, , 
\qquad
x_{22} = x_{22}^{(1)} \e  + x_{22}^{(3)} \e^3  + r(\e^4,\mu\e^2,\mu^2\e) \, , 
\end{align}
where 
\begin{equation} \label{46b}
 x_{21}^{(1)} := - \tfrac12 \mathtt{D}_\tth^{-1}  \big( \te_{12} \tf_{11} + 2\ch^{-\frac12} \big) \, ,\quad
x_{22}^{(1)} := \tfrac12 \mathtt{D}_\tth^{-1} \big( \ch^{-\frac12}\te_{12} +2\tth \tf_{11} \big) \, , 
\end{equation}
and 
\begin{equation}\label{4.6cnew}
\begin{aligned}
 x_{11}^{(1)}   :&=   \mathtt{D}_\tth^{-1} \big(\tfrac1{16} \te_{12}\te_{22} x_{21}^{(1)} -\tfrac12 \te_{12} \phi_{21} +\phi_{22} -\tfrac18 \te_{22} x_{22}^{(1)} \big) \, ,\\  
 x_{12}^{(1)} :&=  \mathtt{D}_\tth^{-1}\big(\tfrac18 \tth \te_{22} x_{21}^{(1)}-\tth \phi_{21} +\tfrac12 \te_{12} \phi_{22}-\tfrac1{16} \te_{12}\te_{22} x_{22}^{(1)} \big) \, , \\ 
 x_{21}^{(3)} :&= \mathtt{D}_\tth^{-1}\big( -\tfrac12 \te_{11}\te_{12} x_{11}^{(1)}+\tfrac12(\gamma_{12}+\eta_{12} ) \te_{12} x_{21}^{(1)}+\tfrac12 \te_{12} \gamma_{11} x_{22}^{(1)}  \\
 &\qquad -\tfrac12 \phi_{11} \te_{12} - \te_{11} x_{12}^{(1)}-\gamma_{22} x_{21}^{(1)} -(\gamma_{12} +\eta_{12}) x_{22}^{(1)}-\phi_{12} \big) \, ,\\  
x_{22}^{(3)} :&={\mathtt{D}_\tth^{-1} \big(\tth \te_{11} x_{11}^{(1)} -\tth (\gamma_{12} +\eta_{12})x_{21}^{(1)} -\tth \gamma_{11} x_{22}^{(1)} +\tth \phi_{11} } \\
&\qquad +\tfrac12 \te_{11} \te_{12} x_{12}^{(1)}+\tfrac12 \te_{12} \gamma_{22} x_{21}^{(1)}  +\tfrac12 \te_{12} (\gamma_{12} +\eta_{12} )x_{22}^{(1)}+ {\tfrac12 \te_{12}} \phi_{12} \big)\, ,
\end{aligned}
\end{equation}
\end{subequations}
with 
$\te_{12}$, $ \te_{22} $, $\te_{11} $, 
$ \phi_{21}, \phi_{22}, \gamma_{12}, \eta_{12}, \gamma_{11}, \phi_{11}, \gamma_{22}, \phi_{12}, \tf_{11} $ computed 
 in \eqref{mengascoeffs} and 
(cfr. \cite[(5.7)]{BMV3})
 \begin{equation}\label{defDh}
 \mathtt{D}_\tth := \tth-\frac14\te_{12}^2 >0\, , \quad \forall\tth>0 \, ,
 \end{equation} 
such that the following holds true. By conjugating the Hamiltonian and reversible matrix 
$\tL_{\mu,\e}^{(1)}$, defined in \eqref{LinHprep}, with the symplectic and reversibility-preserving $ 4 \times 4 $ matrix 
\begin{equation}\label{formaS}
\exp\big(S^{(1)} \big) \, , 
\quad \text{ where } 
\qquad S^{(1)} := \tJ_4 \begin{pmatrix} 0 & \Sigma \\ \Sigma^* & 0 \end{pmatrix} \, , \qquad \Sigma:= \tJ_2 X \, , 
\end{equation}
we get the Hamiltonian and reversible matrix  
\begin{equation}\label{sylvydec}
 \tL_{\mu,\e}^{(2)} := \exp\big(S^{(1)} \big) \tL_{\mu,\e}^{(1)} \exp\big(-S^{(1)} \big)= \tJ_4 \tB_{\mu,\e}^{(2)} =
\begin{pmatrix}   \tJ_2 E^{(2)}  & \tJ_2  F^{(2)}  \\  
\tJ_2 [F^{(2)}]^*  &  \tJ_2 G^{(2)}  \end{pmatrix}\, ,
\end{equation}
where the reversibility-preserving $2\times 2$ self-adjoint 
matrix   $ E^{(2)}$ has the form 
\begin{align} 
\label{Bsylvy1}
& E^{(2)} = {
\begin{pmatrix} 
 \teWB \mu\e^2+  \etaWB \mu\e^4 + r_1'(\mu \e^5, \mu^2 \e^3 )-\te_{22}\frac{\mu^3}{8}(1+r_1''(\e,\mu))  & \im  \big( \frac12\te_{12}\mu+ r_2(\mu\e^2,\mu^2\e,\mu^3) \big)  \\
- \im  \big( \frac12\te_{12}\mu+ r_2(\mu\e^2,\mu^2\e,\mu^3) \big) & -\te_{22}\frac{\mu}{8}(1+r_5(\e,\mu))
 \end{pmatrix} }\, ,
 \end{align}
  where 
 $\teWB $
is the Whitham-Benjamin function in \eqref{funzioneWB} and
\begin{align}\label{etaWB}
&\etaWB = \eta_{11} + { x_{21}^{(1)} \phi_{12} + x_{21}^{(3)} \ch^{-\frac12} - x_{22}^{(1)}\phi_{11} - x_{22}^{(3)} \tf_{11} + \frac32 (x_{21}^{(1)})^2 x_{22}^{(1)}\phi_{22} +(x_{21}^{(1)})^2 x_{12}^{(1)}\ch^{-\frac12}} \\ \notag
&\qquad \qquad   {- \frac32 x_{21}^{(1)} x_{12}^{(1)} x_{22}^{(1)}\tf_{11}+ \frac32 (x_{22}^{(1)})^2  x_{21}^{(1)}\phi_{21} -\frac32 x_{22}^{(1)}x_{11}^{(1)} x_{21}^{(1)} \ch^{-\frac12} + (x_{22}^{(1)})^2  {x_{11}^{(1)}\tf_{11}}}  
\end{align}
with $x_{11}^{(1)}$, $x_{12}^{(1)}$, $x_{22}^{(1)}$, $x_{21}^{(1)}$, 
$x_{21}^{(3)}$, $x_{22}^{(3)} $  in \eqref{expXentries} and
the remaining coefficients in   \eqref{mengascoeffs}, whereas
the reversibility-preserving $2\times 2$ self-adjoint 
matrix  $ G^{(2)} $ has the form
\begin{equation}
  \label{Bsylvy2} G^{(2)} = 
 \begin{pmatrix} 
\mu+ r_8(\mu\e^2, \mu^3 \e )
&   - \im r_9(\mu\e^2,\mu^2\e) \\
  \im  r_9(\mu\e^2, \mu^2\e)  & \tanh(\tth\mu) + r_{10}(\mu\e) 
 \end{pmatrix}\, ,
 \end{equation}
and finally
 \begin{equation}
 \label{Bsylvy3}F^{(2)}= \begin{pmatrix}
  r_3(\mu\e^3 ) 
& \im r_4(\mu\e^3 ) \\
\im r_6(\mu\e^3 ) &
r_7(\mu\e^3)
\end{pmatrix} \, .
 \end{equation}
\end{lem}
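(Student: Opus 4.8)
The plan is to realize the conjugation \eqref{sylvydec} as a single Lie-series step and to fix the generator $S^{(1)}$ in \eqref{formaS} by a homological equation cancelling the off-diagonal block to leading order. I decompose $\tL^{(1)}_{\mu,\e}=\mathcal D+\mathcal O$ into its block-diagonal part $\mathcal D=\mathrm{diag}\big(\tJ_2 E^{(1)},\tJ_2 G^{(1)}\big)$ and its block-anti-diagonal part $\mathcal O$, whose upper-right block is $\tJ_2 F^{(1)}$. Since $S^{(1)}$ is block-anti-diagonal, $\mathrm{ad}_{S^{(1)}}:=[S^{(1)},\,\cdot\,]$ exchanges block-diagonal and block-anti-diagonal matrices, so in
\[
\tL^{(2)}_{\mu,\e}=e^{S^{(1)}}\tL^{(1)}_{\mu,\e}e^{-S^{(1)}}=\sum_{k\ge 0}\tfrac{1}{k!}\mathrm{ad}_{S^{(1)}}^{\,k}\big(\mathcal D+\mathcal O\big)
\]
the anti-diagonal part at first order is $\mathcal O+\mathrm{ad}_{S^{(1)}}\mathcal D$. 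Writing the upper-right block of $S^{(1)}=\tJ_4\begin{pmatrix}0&\Sigma\\\Sigma^*&0\end{pmatrix}$ as $\tJ_2\Sigma=-X$ (using $\tJ_2^2=-\uno$ and $\Sigma=\tJ_2X$), the condition $\mathcal O+\mathrm{ad}_{S^{(1)}}\mathcal D=0$ becomes the Sylvester-type equation $\tJ_2E^{(1)}X-X\tJ_2G^{(1)}=-\tJ_2F^{(1)}$. Imposing on $X$ the reversibility-preserving form \eqref{Xsylvy} makes both sides reversibility-preserving and reduces this to a $4\times4$ real linear system for $(x_{11},x_{12},x_{21},x_{22})$.

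Both $\tJ_2E^{(1)}$ and $\tJ_2G^{(1)}$ vanish at $\mu=0$, being $\cO(\mu)$ by \eqref{BinH1prep}--\eqref{BinH2prep}, as does $\tJ_2F^{(1)}=\cO(\mu\e)$. Dividing the Sylvester operator and the datum by $\mu$ and using the analytic extension at $\mu=0$ recorded after \eqref{BinHprep}, I obtain an operator that is analytic near $(0,0)$ and invertible precisely when the spectra of the $\mu=0$ diagonal blocks are disjoint. This nondegeneracy is exactly $\mathtt{D}_\tth=\tth-\tfrac14\te_{12}^2\neq0$, which holds for every $\tth>0$ by \eqref{defDh}; hence the equation has a unique analytic solution $X=X(\mu,\e)$, and this is the source of the ubiquitous factor $\mathtt{D}_\tth^{-1}$ in \eqref{46b}--\eqref{4.6cnew}. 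I then extract the Taylor coefficients of $X$ by expanding the equation in powers of $\e$: the $\cO(\e)$ balance is solved in two stages, first for $x_{21}^{(1)},x_{22}^{(1)}$ (fed by $\te_{12},\tf_{11},\ch^{-\frac12}$, since the $(1,1)$ entry of $E^{(1)}$ enters only at order $\mu\e^2$) and then for $x_{11}^{(1)},x_{12}^{(1)}$ (fed additionally by $\te_{22},\phi_{21},\phi_{22}$), giving \eqref{46b} and the first two lines of \eqref{4.6cnew}; the $\cO(\e^3)$ balance, now involving the higher data $\te_{11},\gamma_{11},\gamma_{12},\eta_{12},\phi_{11},\phi_{12},\gamma_{22}$ from \eqref{mengascoeffs}, produces $x_{21}^{(3)},x_{22}^{(3)}$.

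With $X$ fixed so that $\mathcal O+\mathrm{ad}_{S^{(1)}}\mathcal D=0$, the surviving anti-diagonal part of the series equals $\tfrac13\mathrm{ad}_{S^{(1)}}^{2}\mathcal O$ plus higher brackets; since $S^{(1)}=\cO(\e)$ and $\mathcal O=\cO(\mu\e)$, this is $\cO(\mu\e^3)$, yielding \eqref{Bsylvy3} and, analogously, the remainders of \eqref{Bsylvy2}. The block-diagonal part is $\mathcal D+\tfrac12\mathrm{ad}_{S^{(1)}}\mathcal O+\cO$, and its upper-left block gives $E^{(2)}$. I collect its $(1,1)$ entry order by order: evaluating $\tfrac12\mathrm{ad}_{S^{(1)}}\mathcal O$ on $f_1^+$, i.e. the combinations built from $B=\tJ_2F^{(1)}$ and $C=\tJ_2[F^{(1)}]^*$, produces the $\mu\e^2$ term which, after the cancellations already used in Section \ref{secProof31}, reassembles into the Whitham--Benjamin function $\teWB$ of \eqref{funzioneWB}; at order $\mu\e^4$ the same bracket contributes the bilinear terms $x_{21}^{(1)}\phi_{12}+x_{21}^{(3)}\ch^{-\frac12}-x_{22}^{(1)}\phi_{11}-x_{22}^{(3)}\tf_{11}$, while the cubic-in-$X$ monomials of \eqref{etaWB} arise from the third-order bracket $\mathrm{ad}_{S^{(1)}}^{3}\mathcal O$ acting at order $\mu\e^4$ (three factors $S^{(1)}=\cO(\e)$ against $\mathcal O=\cO(\mu\e)$). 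Added to the intrinsic coefficient $\eta_{11}=(\mathfrak{B}_{0,4}f_1^+,f_1^+)$ already carried by $E^{(1)}$, this gives $\etaWB$ as in \eqref{etaWB}.

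Throughout, the Hamiltonian and reversible structure is automatic: by \eqref{formaS} the generator $S^{(1)}$ is Hamiltonian and, because $X$ is reversibility-preserving, reversibility-preserving, so $e^{\pm S^{(1)}}$ is symplectic and reversibility-preserving and $\tL^{(2)}_{\mu,\e}=\tJ_4\tB^{(2)}_{\mu,\e}$ stays of Hamiltonian reversible type with $E^{(2)},G^{(2)}$ self-adjoint; this forces the real/imaginary entry pattern of \eqref{Bsylvy1}--\eqref{Bsylvy2}. The main obstacle I anticipate is the order-$\mu\e^4$ bookkeeping for $\etaWB$: it requires the third-order Lie bracket together with the cubic monomials in the entries of $X$ carrying their exact rational weights (the factors $\tfrac32$ and the signs in \eqref{etaWB}), and a careful account of which $\mathfrak{B}_{i,j}$-coefficients survive the reversibility-induced cancellations. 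By contrast the solvability is comparatively soft, resting only on $\mathtt{D}_\tth>0$.
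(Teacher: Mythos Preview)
Your proposal is correct and follows essentially the same approach as the paper: split $\tL^{(1)}_{\mu,\e}$ into block-diagonal and block-anti-diagonal parts, solve the homological (Sylvester) equation $D_1X-XD_0=-\tJ_2F^{(1)}$ for $X$ by expanding the resulting $4\times4$ real linear system order by order in $(\mu,\e)$ (with invertibility governed by $\mathtt D_\tth>0$), and then read off $E^{(2)},G^{(2)},F^{(2)}$ from the Lie series $D^{(1)}+\tfrac12[S,R^{(1)}]+\tfrac13\mathrm{ad}_S^2(R^{(1)})+\tfrac18\mathrm{ad}_S^3(R^{(1)})+\cdots$, with $\etaWB$ receiving $\eta_{11}$ from $E^{(1)}$, the bilinear-in-$X$ terms from $\tfrac12[S,R^{(1)}]$, and the cubic-in-$X$ terms from $\tfrac18\mathrm{ad}_S^3(R^{(1)})$. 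The paper executes exactly this plan through Lemmata~4.3--4.7, so the only work remaining beyond your outline is the explicit algebra you already flag as the main bookkeeping obstacle.
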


The rest of the section is devoted to the proof of Lemma \ref{decoupling2}.
In Lemma 5.4 of \cite{BMV3} we proved 
the existence of a matrix $ X $ as in \eqref{Xsylvy} such that we 
obtain \eqref{sylvydec} with matrices  $ G^{(2)}, F^{(2)}  $  as in 
\eqref{Bsylvy2}-\eqref{Bsylvy3}
and a $ 2 \times 2 $-self adjoint and reversibility preserving 
matrix $ E^{(2)} $ whose first entry  has the form 
$ [E^{(2)}]_{11} =  \teWB  \mu \epsilon^2
+ r_1 (\mu \e^3, \mu^2 \e^2 ) $. The main result of Lemma 
\ref{decoupling2} is 
that the first entry $ [E^{(2)}]_{11}  $ 
 has the better expansion 
$$ 
[E^{(2)}]_{11} =  \teWB  \mu \epsilon^2
+ r_1 (\mu \e^3, \mu^2 \e^2 ) = 
\teWB \mu\e^2+  \etaWB \mu\e^4 + r_1'(\mu \e^5, \mu^2 \e^3 ) \, 
$$
with $\etaWB  $ computed in \eqref{etaWB}, 
which is relevant to determine the stability/instablity of the Stokes wave at the 
critical depth. Clearly we could compute explicitly also other  Taylor coefficients
% of higher order expansions 
of the matrix entries of 
$ E^{(2)}, G^{(2)}, F^{(2)}  $,  but it is not needed. 

The coefficients 
$x_{21}^{(1)}$ and $x_{22}^{(1)}$ in \eqref{46b} 
were already computed in \cite[Lemma 5.4]{BMV3}.

We now expand in Lie series 
the Hamiltonian and reversible matrix $ \tL_{\mu,\e}^{(2)} 
= \exp (S)\tL_{\mu,\e}^{(1)} \exp (-S) $
where for simplicity we set $ S := S^{(1)} $. 
We split $\tL_{\mu,\e}^{(1)}$ 
into its $2\times 2$-diagonal and off-diagonal Hamiltonian and reversible matrices
\begin{align}
& \qquad  \qquad \qquad  \qquad  \qquad \qquad \tL_{\mu,\e}^{(1)} = D^{(1)} + R^{(1)}  \, , \label{LDR}\\
& 
D^{(1)} :=\begin{pmatrix} D_1 & 0 \\ 0 & D_0 \end{pmatrix} :=  \begin{pmatrix} \tJ_2 E^{(1)} & 0 \\ 0 & \tJ_2 G^{(1)} \end{pmatrix}, \quad 
R^{(1)} := \begin{pmatrix}  0 & \tJ_2 F^{(1)} \\ \tJ_2 [F^{(1)}]^* & 0 \end{pmatrix} , \notag
\end{align} 
and we perform the Lie expansion
\begin{align}
\label{lieexpansion}
& \tL_{\mu,\e}^{(2)} 
 = \exp(S)\tL_{\mu,\e}^{(1)} \exp(-S)  =  D^{(1)} + [S,D^{(1)}] + 
 \frac12 \big[ S, [S, D^{(1)}] \big] + 
 R^{(1)}  + [S, R^{(1)}]  \\
 & + 
\frac12 \int_0^1 (1-\tau)^2 \exp(\tau S)  \text{ad}_S^3( D^{(1)} )  \exp(-\tau S) \, \de \tau 
+ \int_0^1 (1-\tau) \, \exp(\tau S) \, \text{ad}_S^2( R^{(1)} ) \, \exp(-\tau S) \, \de \tau \notag 
\end{align} 
where $\text{ad}_A(B) := [A,B] := AB - BA $ denotes the commutator 
between the linear operators $ A, B $.

We look for a $ 4 \times 4 $ matrix $S$ as in \eqref{formaS}
 that solves
the homological equation
$  R^{(1)}  +\lie{S}{ D^{(1)} } = 0  $, 
which, recalling \eqref{LDR}, reads 
\begin{equation}\label{homoesp}
\begin{pmatrix} 0 & \tJ_2F^{(1)}+\tJ_2\Sigma D_0
- D_1\tJ_2\Sigma \\ 
\tJ_2{[F^{(1)}]}^*+\tJ_2\Sigma^*D_1-D_0\tJ_2\Sigma^* & 0 \end{pmatrix} =0 \, .
\end{equation}
Writing  $ \Sigma =\tJ_2  X  $, namely $ X = - \tJ_2  \Sigma $,  
the equation \eqref{homoesp} amounts to solve the  ``Sylvester'' equation 
\begin{equation}\label{Sylvestereq}
D_1 X - X D_0 = - \tJ_2F^{(1)}   \, .
\end{equation}
We write  the matrices $ E^{(1)}, F^{(1)}, G^{(1)}$ in \eqref{LinHprep}
as 
\begin{equation}\label{splitEFGprep}
E^{(1)} = 
\begin{pmatrix} 
E_{11}^{(1)} & \im E_{12}^{(1)} \\ 
- \im E_{12}^{(1)} & E_{22}^{(1)}
\end{pmatrix}\, , \quad
F^{(1)} = 
\begin{pmatrix} 
F_{11}^{(1)} & \im F_{12}^{(1)} \\ 
\im F_{21}^{(1)} & F_{22}^{(1)}
\end{pmatrix} \, , \quad
G^{(1)} = 
\begin{pmatrix} 
G_{11}^{(1)} & \im G_{12}^{(1)} \\ 
- \im G_{12}^{(1)} & G_{22}^{(1)} 
\end{pmatrix} 
\end{equation} 
where the real numbers 
$ E_{ij}^{(1)}, F_{ij}^{(1)}, G_{ij}^{(1)} $, $ i , j = 1,2 $, have the expansion 
in  \eqref{BinH1prep}, \eqref{BinH2prep}, \eqref{BinH3prep}.  
Thus, by  \eqref{LDR}, \eqref{Xsylvy}  and \eqref{splitEFGprep},
the equation \eqref{Sylvestereq}  amounts  to solve the 
 $4\times 4$ real linear system 
\begin{align}\label{Sylvymat}
\underbrace{ \begin{pmatrix}  
G_{12}^{(1)} - E_{12}^{(1)} &   G_{11}^{(1)} &  E_{22}^{(1)} & 0 \\   
G_{22}^{(1)} & G_{12}^{(1)} - E_{12}^{(1)} & 0 & - E_{22}^{(1)} \\
 E_{11}^{(1)} & 0 & G_{12}^{(1)} - E_{12}^{(1)}  & - G_{11}^{(1)} \\
 0 &  - E_{11}^{(1)} & -G_{22}^{(1)}  &  G_{12}^{(1)} - E_{12}^{(1)}
 \end{pmatrix}}_{=: {\cal A} }
 \underbrace{ \begin{pmatrix} x_{11} \\ x_{12} \\ x_{21} \\ x_{22} \end{pmatrix}}_{ =: \vec x}
  =
  \underbrace{
    \begin{pmatrix} 
 -F_{21}^{(1)}  \\  F_{22}^{(1)} \\ - F_{11}^{(1)} \\  F_{12}^{(1)}
 \end{pmatrix}
 }_{=: \vec f}.
\end{align}
By \cite{BMV3} system \eqref{Sylvymat} admits a unique solution. We now prove
that it has the form \eqref{expXentries}. 

\begin{lem}
The vector $ \vec x = (x_{11}, x_{12}, x_{21}, x_{22}) $ with entries in \eqref{expXentries}
solves \eqref{Sylvymat}. 
\end{lem}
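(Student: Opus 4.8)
The plan is to construct the unique solution of \eqref{Sylvymat} order by order in $\e$ and to verify that its Taylor coefficients coincide with those in \eqref{expXentries}. First I observe that, by \eqref{BinH1prep}--\eqref{BinH3prep}, every entry of the matrix $\cA$ and of the vector $\vec f$ in \eqref{Sylvymat} is divisible by $\mu$: each of $G_{12}^{(1)}-E_{12}^{(1)}$, $G_{11}^{(1)}$, $G_{22}^{(1)}$, $E_{11}^{(1)}$, $E_{22}^{(1)}$ and each $F_{ij}^{(1)}$ vanishes at $\mu=0$. Dividing \eqref{Sylvymat} by $\mu$ yields the equivalent system $\widehat\cA\,\vec x=\widehat f$, with $\widehat\cA:=\mu^{-1}\cA$ and $\widehat f:=\mu^{-1}\vec f$ analytic up to $\mu=0$. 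At $(\mu,\e)=(0,0)$ the entry $\mu^{-1}E_{11}^{(1)}=\te_{11}\e^2+\cO(\e^4,\mu^2)$ vanishes, so $\widehat\cA(0,0)$ is block upper-triangular with diagonal blocks
\begin{equation*}
N:=\begin{pmatrix} -\tfrac12\te_{12} & 1 \\ \tth & -\tfrac12\te_{12}\end{pmatrix}\, ,\qquad
M:=\begin{pmatrix} -\tfrac12\te_{12} & -1 \\ -\tth & -\tfrac12\te_{12}\end{pmatrix}\, ,
\end{equation*}
each of determinant $\tfrac14\te_{12}^2-\tth=-\mathtt{D}_\tth\neq0$ by \eqref{defDh}. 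Hence $\det\widehat\cA(0,0)=\mathtt{D}_\tth^2\neq0$, $\widehat\cA$ is invertible for $(\mu,\e)$ small, and $\vec x=\widehat\cA^{-1}\widehat f$ is the unique analytic solution; it remains to identify its expansion.

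Next I solve at first order in $\e$. Since the only coupling of the third and fourth equations back to $(x_{11},x_{12})$ is through $\mu^{-1}E_{11}^{(1)}=\cO(\e^2)$, at order $\e$ these two equations close on $(x_{21},x_{22})$ and read $M\,(x_{21}^{(1)},x_{22}^{(1)})^\top=(-\tf_{11},\ch^{-\frac12})^\top$ (using that to leading order $\mu^{-1}G_{11}^{(1)}=1$, $\mu^{-1}G_{22}^{(1)}=\tth$, and that the order-$\e$ parts of $-\mu^{-1}F_{11}^{(1)}$ and $\mu^{-1}F_{12}^{(1)}$ are $-\tf_{11}\e$ and $\ch^{-\frac12}\e$). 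Inverting $M$ gives precisely $x_{21}^{(1)},x_{22}^{(1)}$ in \eqref{46b}. Substituting these into the first and second equations at order $\e$, where $\mu^{-1}E_{22}^{(1)}=-\tfrac18\te_{22}+\cO(\e,\mu)$, produces $N\,(x_{11}^{(1)},x_{12}^{(1)})^\top=\big(-\phi_{21}+\tfrac18\te_{22}x_{21}^{(1)},\,\phi_{22}-\tfrac18\te_{22}x_{22}^{(1)}\big)^\top$, and inverting $N$ reproduces $x_{11}^{(1)},x_{12}^{(1)}$ in \eqref{4.6cnew}.

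At third order in $\e$ I return to the third and fourth equations, where now three families of terms contribute: the block cross-terms $\te_{11}x_{11}^{(1)}$ and $\te_{11}x_{12}^{(1)}$ coming from $\mu^{-1}E_{11}^{(1)}=\te_{11}\e^2+\cdots$; the quadratic-in-$\e$ corrections $\gamma_{11}$, $\gamma_{22}$, $\gamma_{12}+\eta_{12}$ of $\mu^{-1}G_{11}^{(1)}$, $\mu^{-1}G_{22}^{(1)}$ and $\mu^{-1}(G_{12}^{(1)}-E_{12}^{(1)})$ acting on $x_{21}^{(1)},x_{22}^{(1)}$; and the $\e^3$-parts $\phi_{11},\phi_{12}$ of the right-hand side. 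This produces the system $M\,(x_{21}^{(3)},x_{22}^{(3)})^\top=(b_1,b_2)^\top$ with
\begin{align*}
b_1&=-\phi_{11}-\te_{11}x_{11}^{(1)}+(\gamma_{12}+\eta_{12})x_{21}^{(1)}+\gamma_{11}x_{22}^{(1)}\, ,\\
b_2&=\phi_{12}+\te_{11}x_{12}^{(1)}+\gamma_{22}x_{21}^{(1)}+(\gamma_{12}+\eta_{12})x_{22}^{(1)}\, ,
\end{align*}
and inverting $M$ once more gives exactly $x_{21}^{(3)},x_{22}^{(3)}$ in \eqref{4.6cnew}.

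Finally, the stated remainders follow from the Cramer representation $\vec x=(\det\widehat\cA)^{-1}\,\mathrm{adj}(\widehat\cA)\,\widehat f$: since $\det\widehat\cA=\mathtt{D}_\tth^2+\cO(\mu,\e)$ is a unit, $(\det\widehat\cA)^{-1}$ is analytic, and substituting \eqref{BinH1prep}--\eqref{BinH3prep} and counting the minimal $(\mu,\e)$-orders of each product of entries shows that the coefficients beyond those displayed are of order $r(\e^3,\mu\e)$ for $x_{11},x_{12}$ and $r(\e^4,\mu\e^2,\mu^2\e)$ for $x_{21},x_{22}$, as claimed. The main obstacle is precisely the third-order step, where one must keep track of exactly which cross-terms $\te_{11}x_{1j}^{(1)}$ and which quadratic corrections $\gamma_\bullet,\eta_{12}$ enter $b_1,b_2$; once these are correctly identified, the three inversions of the fixed, $(\mu,\e)$-independent matrices $M$ and $N$ are routine linear algebra, and this establishes that the vector with entries \eqref{expXentries} solves \eqref{Sylvymat}.
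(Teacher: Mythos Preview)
Your proof is correct and follows essentially the same strategy as the paper: divide \eqref{Sylvymat} by $\mu$, then solve the resulting analytic system order by order in $\e$, obtaining first $(x_{21}^{(1)},x_{22}^{(1)})$ from rows 3--4, then $(x_{11}^{(1)},x_{12}^{(1)})$ from rows 1--2 via the $E_{22}^{(1)}$ coupling, and finally $(x_{21}^{(3)},x_{22}^{(3)})$ from rows 3--4 at order $\e^3$; your systems with matrices $N,M$ coincide exactly with the paper's \eqref{newx3} and \eqref{newx31}. Your argument is slightly more self-contained than the paper's, which cites \cite{BMV3} for $(x_{21}^{(1)},x_{22}^{(1)})$ and for the remainder estimates, whereas you rederive the former and justify the latter via the invertibility of $\widehat\cA(0,0)$ and Cramer's rule.
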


\begin{proof}
Since $ \tanh(\tth\mu) = \tth\mu + r(\mu^3)$, we have
\begin{align}
&  G_{12}^{(1)} - E_{12}^{(1)}  \label{abcde} = 
- \te_{12}\frac{\mu}{2}- (\gamma_{12}+\eta_{12}) \mu\e^2 +r( \mu \e^3, \mu^2\e,\mu^3 ) \, ,  \\ \notag
&  G_{11}^{(1)} 
=\mu + \gamma_{11} \mu\e^2+   r_8(\mu\e^3,{\mu^2\e^2}, \mu^3 \e  
) \, ,\ \  
E_{22}^{(1)} =-\te_{22}\frac{\mu}{8}(1+r_5(\e^2,\mu)) \, 
,
\\ \notag
&  G_{22}^{(1)}
= \mu \tth  + \gamma_{22} \mu\e^2+ r( \mu \e^3, \mu^2\e, \mu^3 )\, , \ \ 
  E_{11}^{(1)} = \te_{11} \mu\e^2 +r(\mu\e^4,\mu^2\e^3, \mu^3) \, ,
\end{align}
with  coefficients  $\te_{12}$, $\gamma_{12}$, $\eta_{12}$, $\gamma_{11}$, $\te_{22}$, $\gamma_{22}$ and $\te_{11}$ computed in \eqref{mengascoeffs}. We exploit that the
 terms $ x_{21}^{(1)} $ and $ x_{22}^{(2)} $ have been already computed in 
  \cite[Lemma 5.4]{BMV3}, in order  to get $ x_{11}^{(1)} $  and $ x_{12}^{(2)} $  in \eqref{expXentries} as solutions of the system
\begin{equation}\label{newx3}
 \begingroup 
\setlength\arraycolsep{1pt}\begin{pmatrix} -\tfrac12 \te_{12} & 1 \\ \tth & -\tfrac12 \te_{12} \end{pmatrix}\endgroup  \begin{pmatrix} x_{11}^{(1)} \\ x_{12}^{(1)} \end{pmatrix} = \begin{pmatrix} \tfrac18 \te_{22} x_{21}^{(1)}-\phi_{21} \\ \phi_{22}-\tfrac18 \te_{22} x_{22}^{(1)} \end{pmatrix}  \, ,\quad \det  \begingroup 
\setlength\arraycolsep{1pt}\begin{pmatrix} -\tfrac12 \te_{12} & 1 \\ \tth & -\tfrac12 \te_{12} \end{pmatrix}\endgroup \stackrel{\eqref{defDh}}= - \mathtt{D}_\tth < 0\, ,  
\end{equation}
given, using also \eqref{BinH3prep}, 
by the first two lines in \eqref{Sylvymat} at order $ \mu \e $.
 Similarly, $x_{21}^{(3)}$ and $x_{22}^{(3)}$  in \eqref{expXentries} solve the system
\begin{equation}\label{newx31}
 \begingroup 
\setlength\arraycolsep{0pt}\begin{pmatrix} -\tfrac12 \te_{12} & -1 \\ -\tth & -\tfrac12 \te_{12} \end{pmatrix}\endgroup  \begin{pmatrix} x_{21}^{(3)} \\ x_{22}^{(3)} \end{pmatrix} = \begin{pmatrix} -\te_{11} x_{11}^{(1)} + (\gamma_{12} +\eta_{12}) x_{21}^{(1)}+\gamma_{11} x_{22}^{(1)}-\phi_{11}  \\ \te_{11} x_{12}^{(1)}+\gamma_{22} x_{21}^{(1)}+(\gamma_{12}+\eta_{12})x_{22}^{(1)}+\phi_{12} \end{pmatrix}  \, ,
\end{equation}
which comes, also by \eqref{BinH3prep}, from
 the last two lines of \eqref{Sylvymat} at order $\mu\e^3$. 
 The solutions of  \eqref{newx3}-\eqref{newx31} are given in \eqref{4.6cnew}. 
\end{proof}

We now prove the expansion \eqref{Bsylvy1}.
 Since the matrix $ S $ solves the homological equation $\lie{S}{ D^{(1)} }+  R^{(1)} =0$, identity \eqref{lieexpansion} simplifies to 
\begin{equation}\label{Lie2prep}
\tL_{\mu,\e}^{(2)}
  =   D^{(1)}  +\frac12 \big[ S, R^{(1)} \big]+
\frac12 \int_0^1 (1-\tau^2) \, \exp(\tau S) \, \text{ad}_S^2( R^{(1)} ) \, \exp(-\tau S) \de \tau \, .
\end{equation}
By plugging the Lie expansion 
$$
\begin{aligned}
&\exp(\tau S) \, \text{ad}_S^2( R^{(1)} )   
 \, \exp(-\tau S) \\ 
 &\qquad= 
 \text{ad}_S^2( R^{(1)} )   + \tau
  \text{ad}_S^3( R^{(1)} ) +\tau^2 
  \int_0^1 (1-\tau') \exp(\tau'\tau S) \, \text{ad}_S^4( R^{(1)} )   
 \, \exp(-\tau'\tau S) \, \de \tau' 
 \end{aligned}
$$
into \eqref{Lie2prep} we get 
\begin{subequations}\label{Lie2tutta}
\begin{align} \label{Lie2}
&\tL_{\mu,\e}^{(2)}
  =  D^{(1)}  +\frac12 \big[ S, R^{(1)} \big] +
\frac13  \text{ad}_S^2( R^{(1)} )+ 
\frac{1}{8}
\text{ad}_S^3( R^{(1)} )  \\ \label{Lieremainder}
%&\qquad  +  {\frac1{24}} \int_0^1 (1-\tau)^3 (3+\tau)  \exp(\tau S) \, \text{ad}_S^4( R^{(1)} )   
% \, \exp(-\tau S) \, \de \tau \, .
&+\frac12 \int_0^1 (1-\tau^2)\tau^2 \int_0^1 (1-\tau') \exp(\tau\tau'S) \text{ad}_S^4(R^{(1)}) \exp(-\tau\tau' S) \de \tau' \de \tau\, .
\end{align}
\end{subequations}
Next we compute the commutators in the expansion \eqref{Lie2}.

\begin{lem}\label{lem:EGtilde}
One has
\begin{equation}\label{Lieeq2}
 \frac12 \big[S, R^{(1)} \big] = 
 \begin{pmatrix} 
 \tJ_2 \tilde E_1 & 0 \\ 0 &\tJ_2 \tilde G_1
 \end{pmatrix}
\end{equation}
where  $ \tilde E_1 $, $  \tilde G_1  $
are  self-adjoint and reversibility-preserving matrices of the form
\begin{equation}
\begin{aligned}
& \tilde E_1 = \begin{pmatrix} 
 \tilde\te_{11}\mu\e^2  + \tilde \eta_{11}^{(a)} \mu \e^4  +  \tilde r_1(\mu\e^5,\mu^2\e^3, \mu^3\e^2) & 
 \im \big(\tilde \te_{12} \mu \e^2 + \tilde  r_2(\mu\e^4, \mu^2 \e^2)\big) \\ 
 - \im \big(  \tilde\te_{12} \mu \e^2 +  \tilde r_2(\mu\e^4, \mu^2 \e^2) \big) &   
  \tilde  r_5(\mu\e^2) \end{pmatrix} \, , \\
& \tilde G_1 = \begin{pmatrix} 
 \tilde\tg_{11}\mu\e^2  +   \tilde r_8(\mu\e^4,\mu^2\e^2) & 
 \im \big(\tilde \tg_{12} \mu \e^2 + \tilde  r_9(\mu\e^4, \mu^2 \e^2)\big) \\ 
 - \im \big(  \tilde\tg_{12} \mu \e^2 +  \tilde r_9(\mu\e^4, \mu^2 \e^2) \big) &   \tilde \tg_{22} \mu\e^2
 + \tilde  r_{10}(\mu\e^4,\mu^2\e^2) \end{pmatrix} \, ,  \end{aligned}  \label{tilde.E.G}
\end{equation}
where 
\begin{equation}
\begin{aligned} \label{primocomm}
&\tilde\te_{11}:=  x_{21}^{(1)} \ch^{-\frac12} - x_{22}^{(1)}\tf_{11}\, ,\quad \tilde \eta_{11}^{(a)}:=  
x_{21}^{(1)}\phi_{12}+x_{21}^{(3)}\ch^{-\frac12}-x_{22}^{(1)}\phi_{11}-x_{22}^{(3)}\tf_{11}
 \, ,\quad  \\ 
&\tilde\te_{12}:= -\tilde\tg_{12}:={\frac12 \big( x_{21}^{(1)}\phi_{22}+x_{22}^{(1)}\phi_{21} -x_{11}^{(1)}\ch^{-\frac12}-x_{12}^{(1)}\tf_{11} \big)} \, , \\ 
&\tilde\tg_{11}:=  x_{11}^{(1)}\tf_{11}+x_{21}^{(1)}\phi_{21} \, ,\quad
\tilde\tg_{22}:= x_{22}^{(1)}\phi_{22} +x_{12}^{(1)}\ch^{-\frac12} \,  .
 \end{aligned}
 \end{equation}
\end{lem}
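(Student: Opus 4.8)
The plan is to exploit the block-off-diagonal structure of both generators. Writing $S = S^{(1)} = \begin{pmatrix} 0 & \tJ_2\Sigma \\ \tJ_2\Sigma^* & 0 \end{pmatrix}$ from \eqref{formaS} and recalling $R^{(1)} = \begin{pmatrix} 0 & \tJ_2 F^{(1)} \\ \tJ_2[F^{(1)}]^* & 0 \end{pmatrix}$ from \eqref{LDR}, I would first observe that the commutator of two block-off-diagonal $4\times4$ matrices is block-diagonal. A direct multiplication gives
\[
\tfrac12[S, R^{(1)}] = \begin{pmatrix} \tfrac12\big(\tJ_2\Sigma\,\tJ_2[F^{(1)}]^* - \tJ_2 F^{(1)}\,\tJ_2\Sigma^*\big) & 0 \\ 0 & \tfrac12\big(\tJ_2\Sigma^*\,\tJ_2 F^{(1)} - \tJ_2[F^{(1)}]^*\,\tJ_2\Sigma\big) \end{pmatrix},
\]
which already yields the block-diagonal shape \eqref{Lieeq2} and identifies $\tJ_2\tilde E_1$ and $\tJ_2\tilde G_1$ as the two diagonal blocks.

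Next I would simplify these blocks using $\Sigma = \tJ_2 X$ together with $\tJ_2^2 = -\uno$, $\tJ_2^* = -\tJ_2$ and hence $\Sigma^* = -X^*\tJ_2$. A short manipulation collapses the blocks to the compact form
\[
\tilde E_1 = \mathbf{Sym}\big[\tJ_2 X\,\tJ_2\,[F^{(1)}]^*\big], \qquad \tilde G_1 = \mathbf{Sym}\big[[F^{(1)}]^* X\big],
\]
with $\mathbf{Sym}[A]=\tfrac12(A+A^*)$, so that self-adjointness of $\tilde E_1,\tilde G_1$ is automatic. The reversibility-preserving property follows because both $X$ (by \eqref{Xsylvy}) and $F^{(1)}$ (by \eqref{BinH3prep}) lie in the class of matrices $\begin{pmatrix} * & \im * \\ \im * & * \end{pmatrix}$ with real entries, a class stable under adjoint, products, conjugation by $\tJ_2$ and $\mathbf{Sym}$; alternatively it is forced by the general principle (Lemma 3.8 in \cite{BMV1}) that $\exp(S)$ is symplectic and reversibility-preserving, applied to the homogeneous pieces of the Lie expansion \eqref{Lie2}.

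The final step is the explicit extraction of coefficients. Using $\tJ_2 X\,\tJ_2 = \begin{pmatrix} -x_{22} & \im x_{21} \\ \im x_{12} & -x_{11} \end{pmatrix}$ and $[F^{(1)}]^* = \begin{pmatrix} F_{11}^{(1)} & -\im F_{21}^{(1)} \\ -\im F_{12}^{(1)} & F_{22}^{(1)} \end{pmatrix}$ (the $F_{ij}^{(1)}$ being real), I would read off the entries, for instance
\[
[\tilde E_1]_{11} = \mathfrak{Re}\big(x_{21}F_{12}^{(1)} - x_{22}F_{11}^{(1)}\big), \qquad [\tilde G_1]_{11} = \mathfrak{Re}\big(x_{11}F_{11}^{(1)} + x_{21}F_{21}^{(1)}\big),
\]
together with the analogous formulas for the $(1,2)$ and $(2,2)$ entries. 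Since every entry of $F^{(1)}$ carries a factor $\mu$ and starts at order $\mu\e$, while every entry of $X$ starts at order $\e$, each product starts at $\mu\e^2$. Collecting the coefficient of $\mu\e^2$ — which involves only the linear coefficients $x_{ij}^{(1)}$ from \eqref{expXentries} and the leading $F^{(1)}$-coefficients $\tf_{11}$, $\ch^{-\frac12}$, $\phi_{21}$, $\phi_{22}$ — produces $\tilde\te_{11}$, $\tilde\te_{12}$, $\tilde\tg_{11}$, $\tilde\tg_{12}$, $\tilde\tg_{22}$ exactly as in \eqref{primocomm}, and the comparison immediately exhibits the identity $\tilde\te_{12}=-\tilde\tg_{12}$. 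Retaining in addition the cubic pieces $x_{21}^{(3)}, x_{22}^{(3)}$ and $\phi_{11},\phi_{12}$ gives the coefficient $\tilde\eta_{11}^{(a)}$ of $\mu\e^4$ in $[\tilde E_1]_{11}$, and the stated remainder orders follow from the same order-counting.

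The computations are elementary $2\times2$ matrix algebra once the compact $\mathbf{Sym}$-formulas are in place, so I do not expect any conceptual difficulty. I anticipate the main obstacle to be purely a matter of bookkeeping: consistently tracking the real-versus-purely-imaginary structure (the $\im$ factors and the $\rho_2$-parity) through all the products, since it is exactly this parity accounting that certifies that each diagonal block lands in the self-adjoint, reversibility-preserving class with the entry types displayed in \eqref{tilde.E.G}.
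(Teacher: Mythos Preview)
Your proposal is correct and follows essentially the same route as the paper: the paper also reduces the block commutator to the compact formulas $\tilde E_1=\mathbf{Sym}[\tJ_2 X\tJ_2[F^{(1)}]^*]$ and $\tilde G_1=\mathbf{Sym}[X^*F^{(1)}]$ (your $\mathbf{Sym}[[F^{(1)}]^*X]$ is the same since $\mathbf{Sym}[A]=\mathbf{Sym}[A^*]$), then reads off the entries of $\tJ_2 X\tJ_2[F^{(1)}]^*$ and $X^*F^{(1)}$ and plugs in the expansions \eqref{expXentries}, \eqref{BinH3prep}. The only cosmetic difference is that the paper cites \cite[(5.28)--(5.29)]{BMV3} for the $\mathbf{Sym}$ identities rather than rederiving them as you do.
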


\begin{proof}
By \eqref{formaS}, \eqref{LDR},  and since $ \Sigma = \tJ_2 X $, we have  
\begin{equation}\label{EGtilde}
\frac12 \lie{S}{ R^{(1)} } 
= \begin{pmatrix} \tJ_2 \tilde E_1 & 0 \\ 0 &\tJ_2 \tilde G_1 \end{pmatrix},\quad 
\tilde E_1 := \textup{\textbf{Sym}} \big[ \tJ_2 X \tJ_2  [F^{(1)}]^* \big]
\, , \quad 
\tilde G_1 :=  \textup{\textbf{Sym}} \big[  X^* F^{(1)} \big] \, ,  
\end{equation}
where $ \textup{\textbf{Sym}}[A] := \frac12 (A+ A^* ) $, see \cite[(5.28)-(5.29)]{BMV3}. 
By   \eqref{Xsylvy}, \eqref{splitEFGprep}, setting $F=F^{(1)}$, we have   
\begin{align}\label{reuseEtilde}
\tJ_2 X \tJ_2  F^* &  =  
 \begin{pmatrix}   x_{21}F_{12}-x_{22}F_{11} &  \im  (x_{21}F_{22}+x_{22} F_{21})
 \\ 
 \im (x_{11}F_{12}+x_{12}F_{11}) & - x_{11}F_{22} + x_{12}F_{21} \end{pmatrix} \, , \\   
X^*   F &=  \begin{pmatrix}
x_{11}F_{11}+x_{21}F_{21} &  \im ( x_{11}F_{12} - x_{21}F_{22}) \\ \im (x_{22}F_{21}-x_{12}F_{11} ) & x_{22}F_{22}+ x_{12}F_{12}  \end{pmatrix}  \, , \label{reuseEtild1}
\end{align}
and the expansions in \eqref{tilde.E.G} with the coefficients given in  \eqref{primocomm} follow by 
\eqref{reuseEtilde}, \eqref{reuseEtild1}, \eqref{expXentries} and \eqref{BinH3prep}.
\end{proof}

\begin{lem}\label{lem:EGtilde2}
One has
\begin{equation}\label{Lieeq22}
\begin{aligned}
& \frac13 \mathrm{ad}_S^2( R^{(1)} ) = \begin{pmatrix} 0 & \tJ_2 \tilde F \\ \tJ_2 \tilde F^* & 0  \end{pmatrix},
\end{aligned}
\end{equation}
where  $ \tilde F  $ is 
a  reversibility-preserving matrix of the form 
\begin{equation}  \label{tilde.F}
\tilde F = \begin{pmatrix} 
 \tilde\tf_{11}\mu\e^3  +   \tilde r_3(\mu\e^5,\mu^2\e^3) & 
 \im\tilde \tf_{12}\mu\e^3 + \im  \tilde  r_4(\mu\e^5,\mu^2\e^3) \\ 
 \im  \tilde r_6(\mu\e^3)  &   
  \tilde  r_{7}(\mu\e^3) \end{pmatrix} \, ,
\end{equation}
with
\begin{equation}\label{tildeFcoeffs}
\begin{aligned}
\tilde\tf_{11} & := {\frac43 x_{21}^{(1)} x_{11}^{(1)} \ch^{-\frac12} -\frac43 x_{22}^{(1)} x_{11}^{(1)} \tf_{11} -\frac43 x_{22}^{(1)} x_{21}^{(1)} \phi_{21} +\frac23 x_{21}^{(1)} x_{12}^{(1)} \tf_{11} - \frac23 (x_{21}^{(1)})^2 \phi_{22} } \, ,\\
\tilde\tf_{12} & := {\frac43 x_{21}^{(1)} x_{22}^{(1)} \phi_{22} + \frac43 x_{12}^{(1)} x_{21}^{(1)} \ch^{-\frac12} -\frac43 x_{12}^{(1)} x_{22}^{(1)} \tf_{11} + \frac23(x_{22}^{(1)})^2 \phi_{21} -\frac23 x_{11}^{(1)} x_{22}^{(1)} \ch^{-\frac12} }  \,.
\end{aligned}
\end{equation}
\end{lem}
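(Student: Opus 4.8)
The plan is to exploit the $2\times2$-block grading in which, by \eqref{LDR} and \eqref{formaS}, the matrix $D^{(1)}$ is \emph{even} (block-diagonal) while both $S=S^{(1)}$ and $R^{(1)}$ are \emph{odd} (block-off-diagonal). Since the commutator of two odd matrices is even and the commutator of an odd with an even matrix is odd, $\mathrm{ad}_S^2(R^{(1)})=[S,[S,R^{(1)}]]$ is automatically block-off-diagonal, which already accounts for the shape \eqref{Lieeq22}. Moreover $S$ and $R^{(1)}$ are reversibility-preserving, so each commutator is reversibility-preserving and the surviving block $\tilde F$ inherits the alternating real/imaginary entry pattern displayed in \eqref{tilde.F}. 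Hence it only remains to compute the top-right block and to identify the two leading Taylor coefficients $\tilde\tf_{11}$ and $\tilde\tf_{12}$.

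To avoid recomputing the commutators from scratch, I would reuse Lemma \ref{lem:EGtilde}, which gives $\tfrac12[S,R^{(1)}]=\mathrm{diag}(\tJ_2\tilde E_1,\tJ_2\tilde G_1)$. Writing $S=\begin{pmatrix}0&\tJ_2\Sigma\\ \tJ_2\Sigma^*&0\end{pmatrix}$ and using $\Sigma=\tJ_2 X$ together with $\tJ_2^2=-\uno$, so that $\tJ_2\Sigma=-X$, one gets $\mathrm{ad}_S^2(R^{(1)})=2\,[S,\mathrm{diag}(\tJ_2\tilde E_1,\tJ_2\tilde G_1)]$, whose top-right block equals $2\big(\tJ_2\tilde E_1 X-X\tJ_2\tilde G_1\big)$. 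Dividing by $3$ and stripping the left factor $\tJ_2$ (i.e. multiplying by $\tJ_2^{-1}=-\tJ_2$) yields the closed bilinear formula
\[
\tilde F=\tfrac23\big(\tilde E_1 X+\tJ_2 X\tJ_2\,\tilde G_1\big),
\]
which expresses $\tilde F$ through the already-known matrices $X$ in \eqref{Xsylvy}--\eqref{expXentries} and $\tilde E_1,\tilde G_1$ in \eqref{tilde.E.G}--\eqref{primocomm}.

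Since $X=O(\e)$ and $\tilde E_1,\tilde G_1=O(\mu\e^2)$, the product is $O(\mu\e^3)$ with no lower-order contribution, so the leading order in \eqref{tilde.F} is structurally correct and I only need the $\mu\e^3$ coefficient of the first-row entries. Using $\tJ_2 X\tJ_2=\begin{pmatrix}-x_{22}&\im x_{21}\\ \im x_{12}&-x_{11}\end{pmatrix}$ and the leading coefficients $x^{(1)}_{ij}$ of $X$, the $(1,1)$ and $(1,2)$ entries of $\tilde E_1 X+\tJ_2 X\tJ_2\tilde G_1$ involve only $\tilde\te_{11},\tilde\te_{12},\tilde\tg_{11},\tilde\tg_{12},\tilde\tg_{22}$ — all named in \eqref{primocomm} — and \emph{not} the unnamed $(2,2)$ entry of $\tilde E_1$, which is why that coefficient never had to be computed. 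Substituting \eqref{primocomm} (recalling $\tilde\tg_{12}=-\tilde\te_{12}$) and collecting like monomials in $x^{(1)}_{ij}$ against $\{\ch^{-\frac12},\tf_{11},\phi_{21},\phi_{22}\}$ reproduces exactly \eqref{tildeFcoeffs}; the entries $\tilde F_{21},\tilde F_{22}$ are $O(\mu\e^3)$ as well, but their leading coefficients do not enter $\etaWB$ and are left as $\tilde r_6(\mu\e^3),\tilde r_7(\mu\e^3)$, while the remainders in the first row follow from the next jets of $X$ and of $\tilde E_1,\tilde G_1$ recorded in \eqref{expXentries} and \eqref{tilde.E.G}.

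The main obstacle is not conceptual but the organized bookkeeping of the $2\times2$ products combined with the alternating real/imaginary structure and the $\tJ_2$-conjugations: a single sign slip in $\tJ_2 X\tJ_2$ or in the identification $\tJ_2\Sigma=-X$ would corrupt every coefficient. I would therefore double-check the outcome by computing the same block a second, independent way, directly as $\tfrac13\big(ABP+PBA-2AQA\big)$ with $A=-X$, $B=\tJ_2\Sigma^*$, $P=\tJ_2 F^{(1)}$, $Q=\tJ_2[F^{(1)}]^*$, substituting the leading order of $F^{(1)}$ from \eqref{BinH3prep}; the two routes must yield the identical \eqref{tildeFcoeffs}, which is the natural consistency check before feeding these coefficients into the computation of $\etaWB$ in \eqref{etaWB}.
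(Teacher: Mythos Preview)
Your proposal is correct and follows essentially the same route as the paper: both write $\tfrac13\,\mathrm{ad}_S^2(R^{(1)})=\tfrac23\big[S,\tfrac12[S,R^{(1)}]\big]$, invoke Lemma~\ref{lem:EGtilde} for the inner bracket, and arrive at the identical formula $\tilde F=\tfrac23\big(\tilde E_1 X+\tJ_2 X\tJ_2\,\tilde G_1\big)$, from which the first-row leading coefficients are read off via \eqref{primocomm}. Your additional remarks (the observation that $[\tilde E_1]_{22}$ never enters the first row, and the independent cross-check through $R^{(1)}$ directly) are sound but go slightly beyond what the paper records.
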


\begin{proof}
Using the form of  $ S $ in \eqref{formaS} and  $[S,  R^{(1)} ]$ in \eqref{Lieeq2} we deduce \eqref{Lieeq22}  with
\begin{equation}\label{def:tildeF}
\tilde F:= 
\frac2{3} \big(\tJ_2 X  \tJ_2 \tilde G_1 + \tilde E_1 X \big)  
\end{equation}
where $ \tilde E_1 $ and $\tilde G_1 $ are the matrices in  \eqref{tilde.E.G}. 
Writing  $ \tilde E_1 = 
{\footnotesize \begin{pmatrix} 
{[\tilde E_1]}_{11} & \im{[\tilde E_1]}_{12} \\ 
- \im {[\tilde E_1]}_{12} & {[\tilde E_1]}_{22}
\end{pmatrix} }$, 
$\tilde G_1 = 
{\footnotesize\begin{pmatrix} 
{[\tilde G_1]}_{11} & \im {[\tilde G_1]}_{12}\\ 
- \im {[\tilde G_1]}_{12} &{[\tilde G_1]}_{22}
\end{pmatrix} }$
 % in  \eqref{tilde.E.G}. 
 we have, in view of  \eqref{Xsylvy}, 
\begin{equation}\label{secondocomm}
\begin{aligned}
\tJ_2 X \tJ_2 \tilde G_1 &= \begin{pmatrix} 
x_{21} {[\tilde G_1]}_{12}- x_{22} {[\tilde G_1]}_{11} & \im\big(x_{21} {[\tilde G_1]}_{22}- x_{22} {[\tilde G_1]}_{12} \big)  \\ 
\im\big(x_{11} {[\tilde G_1]}_{12}+ x_{12} {[\tilde G_1]}_{11} \big) &- x_{11} {[\tilde G_1]}_{22}- x_{12} {[\tilde G_1]}_{12}
\end{pmatrix}\, , \\
\tilde E_1 X &= \begin{pmatrix} 
x_{11} {[\tilde E_1]}_{11}- x_{21} {[\tilde E_1]}_{12} & \im\big(x_{12} {[\tilde E_1]}_{11}+ x_{22} {[\tilde E_1]}_{12} \big)  \\ 
\im\big(x_{21} {[\tilde E_1]}_{22}- x_{11} {[\tilde E_1]}_{12} \big) & x_{12} {[\tilde E_1]}_{12}+ x_{22} {[\tilde E_1]}_{22}
\end{pmatrix}\, . \\
\end{aligned}
\end{equation}
By \eqref{def:tildeF}, \eqref{secondocomm}, \eqref{expXentries} and \eqref{tilde.E.G} we deduce that the matrix $\tilde F$ has the expansion \eqref{tilde.F} 
with
\begin{equation*}
\begin{aligned}
\tilde\tf_{11} & =  \frac23 \big( x_{21}^{(1)}\tilde \tg_{12} - x_{22}^{(1)} \tilde\tg_{11} + x_{11}^{(1)}\tilde \te_{11}- x_{21}^{(1)} \tilde \te_{12} \big) \, ,\\
\tilde\tf_{12} & =  \frac23 \big( x_{21}^{(1)}\tilde \tg_{22} - x_{22}^{(1)} \tilde\tg_{12} + x_{12}^{(1)}\tilde \te_{11}+ x_{22}^{(1)} \tilde \te_{12} \big) \, ,
\end{aligned}
\end{equation*}
which, by \eqref{primocomm}, gives \eqref{tildeFcoeffs}.
\end{proof}

\begin{lem}\label{lem:EGtilde3}
One has
\begin{equation}\label{Lieeq23}
\begin{aligned}
& \frac1{8} \mathrm{ad}_S^3( R^{(1)} ) = \begin{pmatrix} \tJ_2 \tilde E_3 & 0 \\ 0 &\tJ_2 \tilde G_3 \end{pmatrix},
\end{aligned}
\end{equation}
where
the  self-adjoint and reversibility-preserving matrices
 $ \tilde E_3, \tilde G_3$ in \eqref{Lieeq23} have entries of size $\cO(\mu\e^4) $. In particular the first entry of the matrix $ \tilde E_3$ has the expansion
\begin{equation}\label{tilde.E.G3}
{[\tilde E_3]}_{11} =
 \tilde\eta_{11}^{(b)}\mu\e^4  +  r(\mu\e^5,\mu^2\e^4) 
\end{equation}
with
\begin{align}\label{terzocomm}
 \tilde\eta_{11}^{(b)} 
& := { \frac32 (x_{21}^{(1)})^2 x_{22}^{(1)}\phi_{22} +(x_{21}^{(1)})^2 x_{12}^{(1)}\ch^{-\frac12}  - \frac32 x_{21}^{(1)} x_{12}^{(1)} x_{22}^{(1)}\tf_{11}}\\  \notag
 &\ \quad {+ \frac32 (x_{22}^{(1)})^2  x_{21}^{(1)}\phi_{21} -\frac32 x_{22}^{(1)}x_{11}^{(1)} x_{21}^{(1)} \ch^{-\frac12}
 + (x_{22}^{(1)})^2  {x_{11}^{(1)}\tf_{11}}} 
 \, .
\end{align}
\end{lem}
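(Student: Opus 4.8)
The plan is to exploit the alternating block structure of the iterated commutators already established in Lemmata \ref{lem:EGtilde} and \ref{lem:EGtilde2}. Writing $\mathrm{ad}_S^3(R^{(1)}) = \big[S,\mathrm{ad}_S^2(R^{(1)})\big]$ and recalling from \eqref{Lieeq22} that $\mathrm{ad}_S^2(R^{(1)}) = 3\left(\begin{smallmatrix} 0 & \tJ_2\tilde F \\ \tJ_2\tilde F^* & 0\end{smallmatrix}\right)$ is block off-diagonal, while $S$ in \eqref{formaS} is block off-diagonal too, the single remaining commutation produces a block-diagonal matrix; thus $\tfrac18\mathrm{ad}_S^3(R^{(1)})$ has the shape \eqref{Lieeq23}. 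Since it is a commutator of Hamiltonian and reversible matrices it is itself Hamiltonian and reversible, whence $\tilde E_3$ and $\tilde G_3$ are self-adjoint and reversibility-preserving. Because $X = \cO(\e)$ by \eqref{expXentries} and $\tilde F = \cO(\mu\e^3)$ by \eqref{tilde.F}, all entries are $\cO(\mu\e^4)$.

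Next I would observe that the off-diagonal matrix $\mathrm{ad}_S^2(R^{(1)})$ has exactly the same form as $R^{(1)}$ in \eqref{LDR}, with $\tilde F$ replacing $F^{(1)}$. Hence the explicit formula \eqref{EGtilde} computed in the proof of Lemma \ref{lem:EGtilde} applies with $\tilde F$ in place of $F^{(1)}$, giving, after tracking the normalizations $\tfrac18\mathrm{ad}_S^3 = \tfrac38\big[S,\left(\begin{smallmatrix} 0 & \tJ_2\tilde F \\ \tJ_2\tilde F^* & 0\end{smallmatrix}\right)\big] = \tfrac34\left(\begin{smallmatrix} \tJ_2\mathbf{Sym}[\tJ_2 X\tJ_2\tilde F^*] & 0 \\ 0 & \tJ_2\mathbf{Sym}[X^*\tilde F]\end{smallmatrix}\right)$, the identification $\tilde E_3 = \tfrac34\mathbf{Sym}[\tJ_2 X\tJ_2\tilde F^*]$. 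To reach \eqref{tilde.E.G3} I would then read off the $(1,1)$ entry of $\tJ_2 X\tJ_2\tilde F^*$ from the first line of \eqref{reuseEtilde} (with $\tilde F$ in place of $F$), namely the real quantity $x_{21}\tilde F_{12} - x_{22}\tilde F_{11}$, so that $[\tilde E_3]_{11} = \tfrac34(x_{21}\tilde F_{12} - x_{22}\tilde F_{11})$; inserting the leading terms $x_{21}\simeq x_{21}^{(1)}\e$, $x_{22}\simeq x_{22}^{(1)}\e$ from \eqref{expXentries} and $\tilde F_{11}\simeq \tilde\tf_{11}\mu\e^3$, $\tilde F_{12}\simeq\tilde\tf_{12}\mu\e^3$ from \eqref{tilde.F} isolates the coefficient $\tilde\eta_{11}^{(b)} = \tfrac34(x_{21}^{(1)}\tilde\tf_{12} - x_{22}^{(1)}\tilde\tf_{11})$.

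Finally I would substitute the values of $\tilde\tf_{11},\tilde\tf_{12}$ from \eqref{tildeFcoeffs} into this expression and collect monomials in $x_{11}^{(1)},x_{12}^{(1)},x_{21}^{(1)},x_{22}^{(1)}$: the cross terms combine (for instance the $\phi_{22}$-contributions add as $(\tfrac43+\tfrac23)(x_{21}^{(1)})^2 x_{22}^{(1)} = 2(x_{21}^{(1)})^2 x_{22}^{(1)}$, and similarly for $\phi_{21}$ and the $\ch^{-1/2},\tf_{11}$ terms), and multiplying the result by $\tfrac34$ reproduces exactly \eqref{terzocomm}. I expect the only genuine obstacle to be bookkeeping: correctly propagating the numerical factor $\tfrac34$ through the triple commutator (the $3$ from \eqref{Lieeq22} against the $\tfrac18$ normalization and the $\tfrac12$ built into \eqref{EGtilde}) and carefully collecting the six monomials so that the fractional coefficients coalesce into the stated $\tfrac32$'s and $1$'s. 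The structural claims — block-diagonality, self-adjointness, reversibility, and the $\cO(\mu\e^4)$ size — are essentially immediate from the framework already developed.
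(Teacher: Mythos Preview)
Your proposal is correct and follows essentially the same route as the paper: both write $\tfrac18\mathrm{ad}_S^3(R^{(1)})=\tfrac38\big[S,\tfrac13\mathrm{ad}_S^2(R^{(1)})\big]$, reuse the computation \eqref{EGtilde} with $\tilde F$ in place of $F^{(1)}$ to obtain $\tilde E_3=\tfrac34\mathbf{Sym}[\tJ_2 X\tJ_2\tilde F^*]$, read off $[\tilde E_3]_{11}=\tfrac34(x_{21}\tilde F_{12}-x_{22}\tilde F_{11})$ from \eqref{reuseEtilde}, and substitute \eqref{tildeFcoeffs} to collect the six monomials of \eqref{terzocomm}. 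Your factor tracking and the sample collection of the $\phi_{22}$ terms are accurate.
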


\begin{proof}
Since $\frac1{8} \mathrm{ad}_S^3( R^{(1)} ) = \frac38 [S, \frac13 \mathrm{ad}_S^2(R^{(1)})]$
and using \eqref{Lieeq22}, the identity 
 \eqref{Lieeq23} holds with 
\begin{equation}\label{EGtilde3}
\tilde E_3 := \frac34\textup{\textbf{Sym}} \big[ \tJ_2 X \tJ_2  [\tilde F]^* \big] 
\, , \qquad 
\tilde G_3 :=  \frac34 \textup{\textbf{Sym}} \big[  X^* \tilde F \big] \, .
\end{equation}
Since, by \eqref{expXentries} the matrix $X$ in \eqref{Xsylvy} 
has entries of size $ \cO(\e) $ and the matrix $\tilde F$ 
in  \eqref{tilde.F} has entries of size $\cO(\mu\e^3) $ we 
deduce that  the matrices $\tilde E_3$, $\tilde G_3$ in \eqref{EGtilde3} have entries of 
size $\cO(\mu\e^4) $. 
By \eqref{EGtilde3} and denoting $\tilde F = {\footnotesize
\begin{pmatrix} 
\tilde F_{11} & \im \tilde F_{12} \\ 
\im \tilde F_{21} & \tilde F_{22}
\end{pmatrix} } $,
we deduce, similarly to \eqref{reuseEtilde},
that ${[\tilde E_3]}_{11} =\frac34( x_{21} \tilde F_{12} - x_{22} \tilde F_{11}) $ which, by \eqref{expXentries} and \eqref{tilde.F}, gives  \eqref{tilde.E.G3} with
$ \tilde\eta_{11}^{(b)} = \frac34 \big( x_{21}^{(1)} \tilde \tf_{12} - x_{22}^{(1)}\tilde \tf_{11} \big) $
which by \eqref{tildeFcoeffs}  gives \eqref{terzocomm}.
\end{proof}

Finally we show that the term in \eqref{Lieremainder}  is small.

\begin{lem}\label{lem:series}
The $ 4 \times 4 $  Hamiltonian and reversible  matrix  $\footnotesize \begin{pmatrix}
\tJ_2 \widehat E & \tJ_2 \widehat F\\
\tJ_2 [ \widehat F ]^* & \tJ_2 \widehat G
\end{pmatrix}$ given by \eqref{Lieremainder}
has the $ 2 \times 2 $ 
self-adjoint and reversibility-preserving  blocks  $\widehat E $, $ \widehat G$
and the $2\times 2$ reversibility-preserving block $ \widehat F$ all with entries of size  $\cO(\mu\e^5)$. 
\end{lem}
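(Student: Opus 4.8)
The plan is to establish Lemma~\ref{lem:series} by a pure order-counting argument on the iterated commutators, together with the observation that both the commutator $\mathrm{ad}_S$ and the conjugation by $\exp(\pm\tau\tau'S)$ preserve the Hamiltonian and reversible block structure, so that only the \emph{size} of the entries has to be tracked.

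First I would record the sizes of the two building blocks entering \eqref{Lieremainder}. By \eqref{formaS}, \eqref{Xsylvy} and the expansions \eqref{expXentries}, the generator $S=S^{(1)}=\tJ_4\begin{pmatrix} 0 & \Sigma \\ \Sigma^* & 0\end{pmatrix}$, $\Sigma=\tJ_2 X$, has all entries of size $\cO(\e)$, the leading coefficients $x_{ij}^{(1)}$ being $\mu$-independent. By \eqref{LDR} and \eqref{BinH3prep}, the off-diagonal block $R^{(1)}$ has all entries of size $\cO(\mu\e)$. Since every entry of a commutator $[A,B]$ is a finite sum of products of one entry of $A$ with one entry of $B$, each application of $\mathrm{ad}_S$ produces an extra factor $\e$ while the single factor $\mu$ carried by $R^{(1)}$ survives; hence
\[
\mathrm{ad}_S(R^{(1)})=\cO(\mu\e^2),\quad \mathrm{ad}_S^2(R^{(1)})=\cO(\mu\e^3),\quad \mathrm{ad}_S^3(R^{(1)})=\cO(\mu\e^4),\quad \mathrm{ad}_S^4(R^{(1)})=\cO(\mu\e^5),
\]
the last estimate being the crucial one.

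Next I would control the conjugation and the integral. For $(\mu,\e)$ small one has $\exp(\pm\tau\tau'S)=\uno+\cO(\e)$ uniformly in $\tau,\tau'\in[0,1]$, so these matrices are uniformly bounded and the similarity transformation $\exp(\tau\tau'S)\,\mathrm{ad}_S^4(R^{(1)})\,\exp(-\tau\tau'S)$ keeps the size $\cO(\mu\e^5)$ of $\mathrm{ad}_S^4(R^{(1)})$. The scalar weights $(1-\tau^2)\tau^2$ and $(1-\tau')$ are bounded on $[0,1]$, so integrating in $\tau'$ and then in $\tau$ yields a matrix still of size $\cO(\mu\e^5)$, which is the entrywise bound claimed for $\widehat E$, $\widehat F$, $\widehat G$.

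Finally I would argue that the resulting matrix has exactly the announced block form. The generator $S$ is Hamiltonian and reversibility-preserving, while $R^{(1)}$ is Hamiltonian and reversible; a short check---using that Hamiltonian matrices form a Lie algebra, that $S$ commutes with $\rho$ and that $R^{(1)}$ anticommutes with $\rho$---shows that $\mathrm{ad}_S$ maps Hamiltonian reversible matrices into Hamiltonian reversible matrices, so $\mathrm{ad}_S^4(R^{(1)})$ is again Hamiltonian and reversible. Moreover $\exp(\pm\tau\tau'S)$ is symplectic and reversibility-preserving (cfr.\ Lemma~3.8 in \cite{BMV1}), and conjugating a Hamiltonian reversible matrix by such a transformation, followed by integration against real weights, preserves the Hamiltonian reversible structure. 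Hence \eqref{Lieremainder} takes the form $\begin{pmatrix} \tJ_2\widehat E & \tJ_2\widehat F \\ \tJ_2[\widehat F]^* & \tJ_2\widehat G\end{pmatrix}$ with $\widehat E,\widehat G$ self-adjoint and reversibility-preserving and $\widehat F$ reversibility-preserving. I do not expect a genuine obstacle: the only point requiring care is that all $\cO$-bounds be uniform in $\tau,\tau'\in[0,1]$ and in $\tth$ on compact subsets of $(0,+\infty)$, which follows from the uniform analyticity of $S$ and $R^{(1)}$ already established in \eqref{expXentries} and \eqref{BinH3prep}.
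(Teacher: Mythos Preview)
Your proposal is correct and follows essentially the same approach as the paper: both arguments boil down to the observation that $S=\cO(\e)$, each application of $\mathrm{ad}_S$ gains one factor of $\e$ so that $\mathrm{ad}_S^4(R^{(1)})=\cO(\mu\e^5)$, the conjugation by the bounded symplectic reversibility-preserving operators $\exp(\pm\tau\tau'S)$ preserves both the size and the Hamiltonian--reversible structure, and integration against bounded scalar weights does not alter the estimate. The paper phrases the size bound by invoking the already-established $\mathrm{ad}_S^3(R^{(1)})=\cO(\mu\e^4)$ from the preceding lemma and then applying one more commutator, while you iterate directly from $R^{(1)}=\cO(\mu\e)$; this is a cosmetic difference only.
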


\begin{proof}
By the Hamiltonian and reversibility properties of $S $ and $  R^{(1)}  $ the matrix $ \textup{ad}_S^4  (R^{(1)}) $ is Hamiltonian  and reversible and the same holds, for any $\tau,\tau' \in [0,1]$, for 
%thus the matrix $ \exp(\tau\tau' S) \, \textup{ad}_S^4( R^{(1)} ) \, \exp(-\tau \tau'S)$ 
\begin{equation}\label{Lieaux}
\exp(\tau\tau' S) \, \textup{ad}_S^4( R^{(1)} ) \, \exp(-\tau\tau' S) =  
[S,\textup{ad}_S^3( R^{(1)} )]  (1 + \cO(\mu,\e))\, .
\end{equation}
The claimed estimate on the entries  of the matrix given by \eqref{Lieremainder} follows by \eqref{Lieaux} and
%from an identical one on $\textup{ad}_S^4( R^{(1)} ) = [S,\textup{ad}_S^3( R^{(1)} )] $ due to the fact that
because 
$ S $ in \eqref{formaS} has entries of size $\cO(\e)$ and  $ \textup{ad}_S^3( R^{(1)} ) $ in \eqref{Lieeq23} has entries of size $\cO(\mu\e^4)$.
\end{proof}

\begin{proof}[Proof of Lemma \ref{decoupling2}.]
It follows by \eqref{Lie2tutta}, \eqref{LDR} and Lemmata  \ref{lem:EGtilde}, \ref{lem:EGtilde2}, \ref{lem:EGtilde3} and 
\ref{lem:series}. 
The matrix  $E^{(2)} := E^{(1)} + \tilde E_1 
 + \tilde E_3 +  \widehat{ E}$ has the expansion  in \eqref{Bsylvy1}, with 
 $$ 
 \teWB := \te_{11} + \tilde \te_{11} = 
  \te_{11} -  
\mathtt{D}_\tth^{-1} \big( \ch^{-1} + \tth \tf_{11}^2 +\te_{12}\tf_{11}\ch^{-\frac12} \big)
 \, , 
 \qquad \etaWB := \eta_{11}+ \tilde \eta_{11}^{(a)}+ \tilde \eta_{11}^{(b)} \, , 
 $$ 
 as in \eqref{etaWB}.
Furthermore $G^{(2)} := G^{(1)} + \tilde G_1 +   \tilde G_3 + \widehat{G} $ has the expansion  in \eqref{Bsylvy2}
and  $F^{(2)} :=  \tilde F +   \widehat{F} $  has the expansion in \eqref{Bsylvy3}.
\end{proof}

\paragraph{Complete block-decoupling and proof of the main result.}\label{section34}

Finally Theorem \ref{abstractdec} is proved as in \cite{BMV3}
by  block-diagonalizing the  $ 4\times 4$ Hamiltonian and reversible 
matrix $\tL_{\mu,\e}^{(2)}$    in \eqref{sylvydec},
\begin{align}
&  \tL_{\mu,\e}^{(2)} = D^{(2)} + R^{(2)} \, , \quad 
D^{(2)}:= 
  \begin{pmatrix} \tJ_2 E^{(2)} & 0 \\ 0 & \tJ_2 G^{(2)}  \end{pmatrix}, \quad 
R^{(2)}:= \begin{pmatrix}  0 & \tJ_2 F^{(2)} \\ \tJ_2 [F^{(2)}]^* & 0 \end{pmatrix} . \label{LDR2}
\end{align} 
The next lemma is  \cite[Lemma 5.9]{BMV3}. 

\begin{lem}\label{ultimate}
There exist a  $4\times 4$ reversibility-preserving Hamiltonian  matrix $S^{(2)}:=S^{(2)}(\mu,\e)$ of the form \eqref{formaS}, analytic in $(\mu, \e)$, of size 
{$\cO(\e^3)$}, and a $4\times 4$ block-diagonal reversible Hamiltonian matrix $P:=P(\mu,\e)$, analytic in $(\mu, \e)$, of size ${ \cO(\mu\e^6)}$  such that 
\begin{equation}\label{ultdec}
\exp(S^{(2)})(D^{(2)}+R^{(2)}) \exp(-S^{(2)}) = D^{(2)}+P \, . 
\end{equation}
\end{lem}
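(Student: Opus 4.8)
The plan is to completely block-diagonalize $\tL_{\mu,\e}^{(2)} = D^{(2)} + R^{(2)}$ by a convergent iteration of symplectic, reversibility-preserving Lie conjugations, following the scheme of \cite[Lemma 5.9]{BMV3}; here I indicate only the structure of one step, the quantitative estimate that drives the iteration, and the point where the degeneracy at $\mu=0$ must be controlled. Throughout, each generator is taken off-diagonal of the form \eqref{formaS}, so that $\exp(S)$ is symplectic and reversibility-preserving and every conjugate of $\tL_{\mu,\e}^{(2)}$ stays Hamiltonian and reversible; this guarantees a posteriori that the final remainder $P$ is block-diagonal, reversible and Hamiltonian.

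First I would perform the Lie expansion
\[
\exp(S)\big(D^{(2)}+R^{(2)}\big)\exp(-S) = D^{(2)} + \big(R^{(2)}+[S,D^{(2)}]\big) + \tfrac12[S,R^{(2)}] + \dots
\]
and choose $S$ to solve the homological equation $R^{(2)} + [S,D^{(2)}]=0$. Since $D^{(2)}$ is block-diagonal while $S$ and $R^{(2)}$ are off-diagonal, writing $\Sigma = \tJ_2 X$ as in \eqref{formaS} this reduces, exactly as in \eqref{Sylvestereq}, to the Sylvester equation
\[
(\tJ_2 E^{(2)})\, X - X\,(\tJ_2 G^{(2)}) = -\tJ_2 F^{(2)}\, .
\]

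The crucial point is the invertibility of the Sylvester operator $X\mapsto (\tJ_2 E^{(2)})X - X(\tJ_2 G^{(2)})$, whose eigenvalues are the differences of the eigenvalues of the two diagonal blocks; invertibility holds iff $\sigma(\tJ_2 E^{(2)})\cap\sigma(\tJ_2 G^{(2)})=\emptyset$. Reading off \eqref{Bsylvy1} and \eqref{Bsylvy2}, the eigenvalues of $\tJ_2 E^{(2)}$ cluster at $-\im\tfrac12\te_{12}\mu$ (split only by $\cO(\mu\e)$), while those of $\tJ_2 G^{(2)}$ are $\pm\im\big(\sqrt{\tth}\,\mu+\cO(\mu^2)\big)$; since $\tfrac12\te_{12}(\tth)\neq\sqrt{\tth}$ for every $\tth>0$ (as already exploited in \cite{BMV3}), the two spectra are separated with a gap of order $\mu$, uniformly for $\tth$ in compact subsets of $(0,+\infty)$. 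Hence the Sylvester operator is invertible with inverse of size $\cO(\mu^{-1})$, and since $F^{(2)}=\cO(\mu\e^3)$ by \eqref{Bsylvy3}, the solution satisfies $X=\cO(\e^3)$, so that $S=\cO(\e^3)$.

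After this step $R^{(2)}$ is cancelled at first order; the leading new block-diagonal term is $\tfrac12[S,R^{(2)}]=\cO(\e^3)\cdot\cO(\mu\e^3)=\cO(\mu\e^6)$, which sets the size of $P$, whereas the surviving off-diagonal terms, starting with $\tfrac12[S,[S,R^{(2)}]]$, are of order $\cO(\mu\e^9)$. Iterating the same construction — each step solving a Sylvester equation with the same $\cO(\mu^{-1})$ control and producing an off-diagonal block quadratically smaller than the previous one — yields a rapidly convergent sequence whose composed generator $S^{(2)}=\cO(\e^3)$ and whose limit realizes \eqref{ultdec} with a block-diagonal, reversible Hamiltonian remainder $P=\cO(\mu\e^6)$. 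I expect the genuine obstacle to be exactly the uniform control of the Sylvester inverse as $\mu\to0$: both diagonal blocks vanish at $\mu=0$ and their spectral gap is only of order $\mu$, so one must verify that the $\cO(\mu^{-1})$ loss is compensated precisely by the factor $\mu$ carried by $R^{(2)}=\cO(\mu\e^3)$, thereby keeping $S^{(2)}$ bounded and analytic up to $\mu=0$, and that the non-degeneracy $\tfrac12\te_{12}(\tth)\neq\sqrt{\tth}$ persists on the relevant compact range of depths.
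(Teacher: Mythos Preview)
Your proposal is essentially correct and follows the same route as \cite[Lemma 5.9]{BMV3}, which is all the paper invokes here: solve the homological (Sylvester) equation with the blocks $E^{(2)},G^{(2)}$ in place of $E^{(1)},G^{(1)}$, use that the Sylvester matrix factors as $\mu$ times a matrix whose determinant at $(\mu,\e)=(0,0)$ equals $\mathtt{D}_\tth^{\,2}>0$ (this is exactly your condition $\tfrac12\te_{12}\neq\sqrt{\tth}$, cf.\ \eqref{defDh}), and observe that the factor $\mu$ in $F^{(2)}=\cO(\mu\e^3)$ cancels the $\mu^{-1}$ in the inverse, yielding $X=\cO(\e^3)$ analytically up to $\mu=0$. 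Your size bookkeeping $P=\tfrac12[S^{(2)},R^{(2)}]+\cdots=\cO(\mu\e^{6})$ and new off-diagonal $=\cO(\mu\e^{9})$ is also right.

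One point deserves tightening: if you literally iterate by composing several conjugations $\exp(S_k)$ with each $S_k$ of the off-diagonal form \eqref{formaS}, the Baker--Campbell--Hausdorff composition $\log\big(\exp(S_2)\exp(S_1)\big)$ acquires block-diagonal pieces (since $[S_1,S_2]$ is block-diagonal), so the ``composed generator'' is not of the form \eqref{formaS} as the lemma requires. The clean way around this---and what makes the analyticity at $\mu=0$ transparent---is to solve directly, by the analytic implicit function theorem, the single nonlinear equation ``off-diagonal part of $\exp(S(X))(D^{(2)}+R^{(2)})\exp(-S(X))=0$'' for $X$: since $D^{(2)},R^{(2)}=\cO(\mu)$, the whole equation carries a factor $\mu$; after dividing by $\mu$ the linearization is your nondegenerate Sylvester operator with $\cO(1)$ inverse, and the solution $X(\mu,\e)=\cO(\e^3)$ is analytic. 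This yields a single $S^{(2)}$ of the required form in one stroke, which is how \cite{BMV3} proceeds.
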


By \eqref{ultdec}, \eqref{Bsylvy1}-\eqref{Bsylvy2} and the fact that  
$ P $ has size ${ \cO(\mu\e^6)}$ we deduce Theorem \ref{abstractdec}: 
there exists a symplectic and reversibility-preserving linear map that conjugates
the matrix $\im\ch\mu+\tL_{\mu,\e} $ (which represents $ {\cal L}_{\mu,\e} $) 
with $\tL_{\mu,\e} $ in \eqref{Lmuepsi}
into the  Hamiltonian and reversible matrix \eqref{matricefinae}  with 
$\mathtt{U} $ in \eqref{Udav} and $\mathtt S$   in \eqref{S}. 
The function $\DeltaBF(\tth;\mu,\e)$ expands as in \eqref{DWB}.

\appendix

\section{Fourth-order expansion of the Stokes waves}\label{sec:App3}

In this Appendix we compute the Taylor coefficients \eqref{allcoefStokes}
of  the fourth order expansions  \eqref{exp:Sto} of the Stokes waves. 
We also compute 
the  fourth order expansion \eqref{SN1} of the % analytic 
$ 2 \pi $-periodic functions $p_\e (x) $ and $a_\e (x) $ and 
the expansion \eqref{expfe0} % -\eqref{expfe}
of the constant $ \mathtt{f}_\e $  in \eqref{cLepsilon}.

\subsection{Expansion of Stokes waves}\label{sec:App31}

 \begin{prop}
{\bf (Expansion of Stokes waves)} \label{expstokes} The Stokes waves 
 $ \eta_\e (x) $, $\psi_\e (x) $ and the speed $ c_\epsilon $ in Theorem \ref{LeviCivita} have the expansions 
 \eqref{exp:Sto} with coefficients 
\begin{subequations}\label{allcoefStokes}
  \begin{align} \label{expcoef}
 &\eta_{2}^{[0]} :=  \frac{\ch^4-1}{4\ch^2} \, , \qquad 
\eta_{2}^{[2]} := \frac{3-\ch^4}{4\ch^6} \, , \qquad 
\psi_{2}^{[2]} := \frac{3+\ch^8}{8\ch^7} \, , \qquad 
\\  
\label{expc2}
&c_2:=
\frac{9-10\ch^4+9\ch^8  }{16\ch^7}
+ 
{\frac{(1-\ch^4) }{2\ch}}
\eta_2^{[0]} =  
  \frac{-2 \ch^{12}+13 \ch^8-12 \ch^4+9}{16 \ch^7}\, , \\
\label{expcoefbis}
 & \begin{aligned} &\eta_{3}^{[1]} := \frac{-2\ch^{12}+3\ch^8+3}{16\ch^8(1+\ch^2)}  \, , \qquad 
\eta_{3}^{[3]} :=  \frac{-3\ch^{12}+9\ch^8-9\ch^4+27}{64\ch^{12}}  \, , \\
&\psi_{3}^{[1]} := \frac{2\ch^{12}-3\ch^{8}-3}{16\ch^7(1+\ch^2)} \, , \qquad \psi_{3}^{[3]} :=   \frac{-9\ch^{12}+19\ch^{8}+5\ch^4+9}{64\ch^{13}}\, , 
\end{aligned} \\
\label{expcoeftris}
 & \begin{aligned} &\eta_{4}^{[0]} :=  \frac{-4 \ch^{20}-4 \ch^{18}+17 \ch^{16}+6 \ch^{14}-48 \ch^8+6 \ch^6+36 \ch^4-9}{64 \ch^{14}} \, , \\  
 &\eta_{4}^{[2]} := \frac{1}{384 \ch^{18} (\ch^2+1)} \Big(-24 \ch^{22}+285 \ch^{18}+177 \ch^{16}-862 \ch^{14}-754 \ch^{12}\\ 
 &\qquad +1116 \ch^{10}+1080 \ch^8-162 \ch^6-54 \ch^4-81 \ch^2-81 \Big) \, , \\ 
&\eta_{4}^{[4]} := \frac{21 \ch^{20}+\ch^{16}-262 \ch^{12}+522 \ch^8+81 \ch^4+405}{384 \ch^{18} (\ch^4+5)} \, , \\
&\psi_{4}^{[2]} := \frac{1}{768 \ch^{19} (\ch^2+1)} \Big(-12 \ch^{26}-36 \ch^{24}+57 \ch^{22}+93 \ch^{20}+51 \ch^{18}-21 \ch^{16}-646 \ch^{14}\, , \\
&\qquad -502 \ch^{12}+1098 \ch^{10}+1098 \ch^8-243 \ch^6-135 \ch^4-81 \ch^2-81\Big)   \, ,\\
&\psi_{4}^{[4]} :=  \frac{-21 \ch^{24}+60 \ch^{20}+343 \ch^{16}-1648 \ch^{12}+3177 \ch^8+756 \ch^4+405}{1536 \ch^{19} (\ch^4+5)} \, ,
\end{aligned}  \\ \label{expc4}
&c_4=\frac1{1024 \ch^{19}(\ch^2+1)}\Big(56 \ch^{30}+88 \ch^{28}-272 \ch^{26}-528 \ch^{24}-7 \ch^{22}+497 \ch^{20}+1917 \ch^{18}\\ \notag
&\quad +1437 \ch^{16}-4566 \ch^{14}-4038 \ch^{12}+4194 \ch^{10}+3906 \ch^8-891 \ch^6-675 \ch^4+81 \ch^2+81 \Big) \, .
\end{align} 
\end{subequations}
\end{prop}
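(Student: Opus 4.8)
The plan is to determine the coefficients in \eqref{allcoefStokes} by a direct power-series (Lyapunov--Schmidt) expansion of the traveling-wave equations \eqref{travelingWW} in the amplitude $\e$, exploiting the analyticity of the branch granted by Theorem \ref{LeviCivita}. The first structural step is to eliminate the nonlocal Dirichlet--Neumann operator from the Bernoulli equation: since the first equation of \eqref{travelingWW} reads $G(\eta)\psi=-c\,\eta_x$, substituting this relation into the second equation turns the factor $G(\eta)\psi+\eta_x\psi_x$ into $\eta_x(\psi_x-c)$, so that the second equation becomes the purely local relation $c\,\psi_x-\eta-\tfrac12\psi_x^2+\tfrac{\eta_x^2(\psi_x-c)^2}{2(1+\eta_x^2)}=0$. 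After this reduction the only operator that needs to be expanded is $G(\eta)$ in the kinematic equation $c\,\eta_x+G(\eta)\psi=0$; for that I would use its homogeneous Taylor expansion $G(\eta)=\sum_{j\ge 0}G_j(\eta)$, with $G_0=|D|\tanh(\tth|D|)$ and $G_1,G_2,G_3$ the standard multilinear Craig--Sulem operators, each of which maps finitely many harmonics into finitely many harmonics (we need only up to $G_3$, since $\eta=\cO(\e)$ and we expand to $\cO(\e^4)$).

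Next I would insert the ansatz \eqref{exp:Sto}, using that $\eta_\e$ is even, $\psi_\e$ is odd, and that $c_\e$ is even in $\e$; the last property follows from the reversibility $\rho$ of \eqref{revrho} together with the invariance of the branch under the half-period shift $x\mapsto x+\pi$, $\e\mapsto-\e$, which forces all odd speed corrections to vanish. Collecting the coefficient of $\e^1$ recovers the dispersion relation $\ch^2=\tanh\tth$, i.e. $\ch=\sqrt{\tanh\tth}$, and exhibits the two-dimensional kernel $\mathrm{span}\{f_1^+,f_1^-\}$ of the linearization at $\e=0$; the parity constraints discard the $f_1^-$ direction, and the residual $f_1^+$ freedom is fixed by the amplitude normalization (the one matching Fenton after the rescaling of Remark \ref{check-controllo}).

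I would then proceed order by order, noting that at order $\e^n$ only the harmonics $k\in\{n,n-2,\dots\}$ occur. At the even orders $\e^2$ and $\e^4$ only even harmonics appear and the relevant linear systems are non-resonant, since the dispersion relation $c^2k=\tanh(\tth k)$ fails for every $k\neq 1$ at $c=\ch$; hence they are invertible, the zero-mode of the Bernoulli equation yielding $\eta_2^{[0]}$ and $\eta_4^{[0]}$, and the mode-$k$ pairs yielding $\eta_2^{[2]},\psi_2^{[2]}$ and $\eta_4^{[2]},\eta_4^{[4]},\psi_4^{[2]},\psi_4^{[4]}$. At the odd order $\e^3$ the mode $k=3$ is again non-resonant and gives $\eta_3^{[3]},\psi_3^{[3]}$, whereas the mode $k=1$ is resonant: the Fredholm solvability condition, obtained by pairing the mode-$1$ forcing with $f_1^+$ in the scalar product \eqref{scalar}, is exactly the scalar equation fixing $c_2$, while the remaining kernel component is set by the amplitude normalization to produce $\eta_3^{[1]},\psi_3^{[1]}$. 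Finally $c_4$ is \emph{not} produced at order $\e^4$ but by the mode-$1$ solvability condition at order $\e^5$; this uses the order-$\e^4$ profiles just computed and requires only the scalar projection onto $f_1^+$, not the full $\e^5$ correction.

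The main obstacle is purely computational: at orders $\e^4$ and $\e^5$ one must expand the multilinear operators $G_2,G_3$ on products of the multi-harmonic lower-order profiles and reduce the rational Bernoulli nonlinearity $\eta_x^2(\psi_x-c)^2/(2(1+\eta_x^2))$ to trigonometric harmonics, which generates a large number of terms whose exact cancellations collapse into the compact rational functions of $\ch$ displayed in \eqref{allcoefStokes}. I would organize these reductions symbolically (as in the accompanying Mathematica code) and, as an independent check, verify that the resulting coefficients agree with the classical Stokes expansion of Fenton \cite{Fenton} after the rescaling, translation and change of moving frame described in Remark \ref{check-controllo}.
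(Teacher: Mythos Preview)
Your proposal is correct and follows essentially the same route as the paper: substitute $G(\eta)\psi=-c\,\eta_x$ to make the Bernoulli equation local, expand $G(\eta)$ via the Craig--Sulem multilinear operators, and solve the resulting hierarchy order by order, with the mode-$1$ solvability conditions at $\e^3$ and $\e^5$ fixing $c_2$ and $c_4$. The only cosmetic difference is that you invoke the half-period/reversibility symmetry to argue $c_\e$ is even (hence $c_3=0$) a priori, whereas the paper reads off $c_3=0$ directly from the $\e^4$ solvability condition; both are fine.
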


The rest of this section is devoted to the proof  of Proposition \ref{expstokes}. 

In \cite[Theorem 2.1]{BMV3} we have yet computed the second order expansion 
of the 
Stokes waves in \eqref{exp:Sto}  and proved that  the coefficients 
$\eta_2^{[0]}$, $\eta_2^{[2]}$, $\psi_2^{[2]}$ 
are  \eqref{expcoef} 
and $c_2$  is \eqref{expc2}.
Then we write
 \begin{equation}\label{etapsic}
\begin{aligned}
 & \eta_\e(x) = \e \cos(x) + \e^2 (\eta_2^{[0]}+\eta_2^{[2]}\cos(2x)) + \e^3 \eta_3(x)+ \e^4 \eta_4(x)+\cO(\e^5) \, , \\
 &  \psi_\e(x) = \e \ch^{-1} \sin(x) + \e^2 \psi_2^{[2]}\sin(2x) + \e^3 \psi_3(x)+\e^4 \psi_4(x)+ \cO(\e^5) \, ,  
\\ &c_\e = \ch  + \e^2 c_2+ \e^3 c_3 +\e^4 c_4+ \cO(\e^5) \, ,  
\end{aligned}
\end{equation}
where the functions 
$\eta_3(x)$, $ \eta_4 (x) $  are even and $\psi_3(x) $, $\psi_4 (x) $ are odd.
We rewrite the equations \eqref{travelingWW} as the system 
\begin{equation}
\label{Sts} 
\begin{cases}
\eta -c \, \psi_x   + \dfrac{\psi_x^2}{2} - 
\dfrac{\eta_x^2}{2(1+\eta_x^2)} ( c  -  \psi_x )^2  = 0 \\
c \, \eta_x+G(\eta)\psi = 0 	\, ,
\end{cases}
\end{equation}
where  in the first equation we have replaced $G(\eta)\psi $ with $-c \, \eta_x $. 

We Taylor expand 
 the Dirichlet-Neumann operator $G(\eta)$ as 
 $$
 G(\eta)=  G_0+ G_1(\eta) + G_2(\eta)+ G_3(\eta) + \cO(\eta^4)  
 $$
where, by \cite[formulae (39)-(40)]{CN},
\begin{equation}
\begin{aligned}\label{expDiriNeu}
 & G_0  := D\tanh(\tth D) = |D| \tanh(\tth |D|) \,, \\
 &  G_1(\eta) := -\pa_x \eta \pa_x-G_0\eta G_0\,,\\
 &   G_2(\eta)  := -\frac12 G_0 \pa_x \eta^2 \pa_x+ \frac12 \pa_x^2 \eta^2 G_0 -G_0 \eta G_1(\eta) \, ,  \\ 
 &  G_3(\eta) :=\frac16 \pa_x^3  \eta^3 \pa_x + \frac16 G_0 \pa_x^2 \eta^3  G_0 - G_0  \eta G_2(\eta) +\frac12\pa_x^2  \eta^2 G_1(\eta)	\\
 &  G_4(\eta) := \frac{1}{24} G_0 \pa_x^3  \eta^4 \pa_x
 - \frac{1}{24} \pa_x^4  \eta^4 G_0  + \frac12 \pa_x^2 \eta^2 G_2 (\eta)  +
 \frac{1}{6} G_0 \pa_x^2 \eta^3 G_1 (\eta) - G_0 \eta G_3 (\eta ) 
 \,.
 \end{aligned}
 \end{equation}
 
\begin{rmk}
In order to check that \eqref{expDiriNeu} coincides with 
\cite[formulae (39)-(40)]{CN} use  the identity $D^2=-\pa_x^2 $.
We point out that \eqref{expDiriNeu} coincides with \cite[formulae (2.13)-(2.14)]{CS} and the recursion formulae of \cite[p. 24]{Wilkening}.
\end{rmk}

The `unperturbed" linear part of the system \eqref{Sts}
is associated with the self-adjoint closed operator 
\begin{equation}\label{defB0}
\cB_0= \begin{bmatrix} 1 & -\ch\pa_x \\ \ch\pa_x & G_0  \end{bmatrix} : \mathcal{D}\subset H^{\sigma,s}_{ev}\times H^{\sigma,s}_{odd} \longrightarrow H^{\sigma,s}_{ev}\times H^{\sigma,s}_{odd}\, ,
\end{equation}
with domain $\mathcal{D}:=H^{\sigma,s+1}_{ev}\times H^{\sigma,s+1}_{odd} $.
To compute the higher-order expansions one needs the following
\begin{lem}\label{aboutB0} 
The kernel of the operator $ \cB_0 $ in \eqref{defB0} is 
\begin{subequations}
\begin{equation}\label{KernelB0}
K:=
\text{Ker}\;\cB_0= 
\text{span}\,\Big\{\vet{\cos(x)}{\ch^{-1}\sin(x)} \Big\}
\end{equation}
and its range $R:=\text{Rn}\;\cB_0= K^{\perp_{\mathcal{D}}}$ is given by $R=R_0\oplus R_1\oplus R_\off $, where
\begin{equation}
\begin{aligned}
& \quad R_0 := \text{span}\,\Big\{\vet{1}{0} \Big\}\, ,\ \ R_1 := \text{span}\,\Big\{\vet{-\cos(x)}{\ch\sin(x)} \Big\}\, , \\
& R_\off := \overline{\bigoplus_{k=2}^\infty R_k}\, ,\ \ R_k := \text{span}\,\Big\{\vet{\cos(kx)}{0}\, ,\;\vet{0}{\sin(kx)} \Big\}\, .
\end{aligned}
\end{equation}
\end{subequations}
Consequently there exists a unique self-adjoint bounded linear operator $\cB_0^{-1}:R\to R $, given by
\begin{align} \label{cB0inv}
&\cB_0^{-1} \vet{1}{0} = \vet{1}{0}\, ,\quad \cB_0^{-1} \vet{-\cos(x)}{\ch \sin(x)} = \frac{1}{1+\ch^2}\vet{-\cos(x)}{\ch \sin(x)}\, , \\[2mm] \notag
&\cB_0^{-1} \vet{f(x)}{g(x)} = \big(|D|\tanh(\tth |D|)+\ch^2\pa_x^2 \big)^{-1}
\begin{bmatrix} |D|\tanh(\tth |D|) & \ch\pa_x \\ -\ch\pa_x & 1 \end{bmatrix} \vet{f(x)}{g(x)} \, , \ \forall \vet{f}{g}\in  R_\off\, , 
\end{align}
 such that $\cB_0\cB_0^{-1}=\cB_0^{-1}\restr{\cB_0}{R}=\uno_R $.
\end{lem}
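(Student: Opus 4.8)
The plan is to diagonalize $\cB_0$ mode by mode. Since $\cB_0$ in \eqref{defB0} has constant coefficients it commutes with translations, hence is block-diagonal with respect to the Fourier decomposition of $H^{\sigma,s}_{ev}\times H^{\sigma,s}_{odd}$ into the mode-$0$ line $R_0=\text{span}\{\vet{1}{0}\}$ and, for each $k\geq 1$, the two-dimensional space $V_k:=\text{span}\{\vet{\cos(kx)}{0},\vet{0}{\sin(kx)}\}$. First I would compute the action of $\cB_0$ on this basis: using $\pa_x\cos(kx)=-k\sin(kx)$, $\pa_x\sin(kx)=k\cos(kx)$ together with $G_0\cos(kx)=g_k\cos(kx)$, $G_0\sin(kx)=g_k\sin(kx)$, where $g_k:=k\tanh(\tth k)$, one finds that in the coordinates $(a,b)\mapsto a\vet{\cos(kx)}{0}+b\vet{0}{\sin(kx)}$ the operator $\cB_0$ acts on $V_k$ as the symmetric matrix
$$M_k:=\begin{pmatrix} 1 & -\ch k \\ -\ch k & g_k \end{pmatrix}\, ,\qquad \det M_k=g_k-\ch^2 k^2=k\big(\tanh(\tth k)-k\tanh\tth\big)\, ,$$
where I used $\ch^2=\tanh\tth$; on $R_0$ the operator is the identity.

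Second, I would locate the kernel. Because $g_1=\tanh\tth=\ch^2$, the block $M_1$ is singular, $\det M_1=0$, with kernel spanned by the coordinate vector $(\ch,1)$, i.e. by $\vet{\cos(x)}{\ch^{-1}\sin(x)}$, which yields \eqref{KernelB0}. For $k\geq 2$ I would invoke the strict concavity of $\tanh$ on $(0,\infty)$ (equivalently, that $t\mapsto t^{-1}\tanh t$ is strictly decreasing), giving $\tanh(\tth k)<k\tanh\tth$ and hence $\det M_k<0$; thus $M_k$ is invertible for every $k\neq 1$ and $\cB_0$ has no further kernel. Since $\cB_0$ is self-adjoint and $\det M_k\sim-\ch^2 k^2\to-\infty$, the range is closed and equals $K^{\perp}$; decomposing $K^{\perp}$ into modes produces $R=R_0\oplus R_1\oplus R_\off$, where within $V_1$ the one-dimensional range $R_1=\text{Rn}(M_1)$ is spanned by $\vet{-\cos(x)}{\ch\sin(x)}$, which is indeed orthogonal to the kernel since $(\ch,1)\cdot(1,-\ch)=0$ and $\|\cos x\|_{L^2}=\|\sin x\|_{L^2}$, while $V_k\subset R$ entirely for $k\geq 2$.

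Third, I would invert block by block. On $R_0$ trivially $\cB_0^{-1}\vet{1}{0}=\vet{1}{0}$. On $R_1$ a direct computation gives $M_1\vet{1}{-\ch}=(1+\ch^2)\vet{1}{-\ch}$, so $M_1$ has eigenvalue $1+\ch^2$ on its range, producing the factor $\tfrac{1}{1+\ch^2}$ in the second identity of \eqref{cB0inv}. On $R_\off$ I would write $M_k^{-1}=\frac{1}{\det M_k}\begin{pmatrix} g_k & \ch k\\ \ch k & 1\end{pmatrix}$ and observe, mode by mode, that $\det M_k$ is exactly the symbol of $|D|\tanh(\tth|D|)+\ch^2\pa_x^2$, whereas the adjugate matrix is the symbol of $\begin{bmatrix} |D|\tanh(\tth|D|) & \ch\pa_x\\ -\ch\pa_x & 1\end{bmatrix}$ restricted to $V_k$ (this is verified by applying the latter to $\vet{\cos kx}{0}$ and $\vet{0}{\sin kx}$). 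Composing gives precisely the third line of \eqref{cB0inv}. Boundedness and self-adjointness of $\cB_0^{-1}:R\to R$ then follow because every $M_k^{-1}$ is symmetric with entries decaying like $k^{-1}$, so the resulting Fourier multiplier is in fact regularizing.

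I expect the only genuinely analytic input to be the sign of $\det M_k$ for $k\geq 2$, namely the strict inequality $\tanh(\tth k)<k\tanh\tth$ coming from concavity; everything else is mode-by-mode linear algebra together with the identification of the explicit block inverse with the claimed Fourier-multiplier form. A minor point that must be handled with care is the compatibility of the block decomposition with the analytic Sobolev spaces $H^{\sigma,s}$ and the even/odd parity constraints, but this is immediate since each $V_k$ consists of a single cosine in the first component and a single sine in the second.
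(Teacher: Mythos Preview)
Your argument is correct. The paper does not actually prove this lemma here: it cites \cite[Lemma B.1]{BMV3} for the kernel/range description and says the inverse formulas \eqref{cB0inv} follow ``by inspection''. Your mode-by-mode Fourier block diagonalization is precisely the natural way to supply those details, and your computations (the matrix $M_k$, the use of $\ch^2=\tanh\tth$ to get $\det M_1=0$, the concavity inequality $\tanh(\tth k)<k\tanh\tth$ for $k\ge 2$, the eigenvalue $1+\ch^2$ on $R_1$, and the identification of the adjugate of $M_k$ with the displayed Fourier multiplier) are all accurate. In effect you have written out what the cited lemma contains, so there is nothing to compare beyond noting that your proof is self-contained while the paper's is a pointer.
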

\begin{proof}
This is the content of \cite[Lemma B.1]{BMV3}. 
Then \eqref{cB0inv}  follows  by inspection.
\end{proof}

From the second-order expansion of the Stokes waves in \cite[Appendix B]{BMV3} we recover 
\begin{equation}\label{G0psi1G1eta1psi1}
G_0\psi_1 = \ch \sin(x)\, ,\quad   G_1(\eta_1)\psi_1 
 =\frac{1-\ch^4}{\ch(1+\ch^4)}\sin(2x)\, .
\end{equation}
{\bf Third order in $ \e $.} By plugging 
the expansion \eqref{etapsic}  in system \eqref{Sts} and discarding quartic terms we find the following linear system (we drop the dependence w.r.t. $x$) 
\begin{equation}\label{syslin3}
 \cB_0 \vet{\eta_3}{\psi_3} = \vet{c_2(\psi_1)_x 
 - (\psi_1)_x (\psi_2)_x - (\eta_1)_x^2 (\psi_1)_x \ch + 
 (\eta_1)_x (\eta_2)_x \ch^2
 }{-c_2(\eta_1)_x - G_1(\eta_1)\psi_2- G_1(\eta_2)\psi_1 - G_2(\eta_1)\psi_1}=: \vet{f_3}{g_3}\, .
\end{equation}
In view of \eqref{exp:Sto} we have 
%$c_2 (\psi_1)_x = c_2 \ch^{-1} \cos(x)$, $ c_2 (\eta_1)_x = -c_2 \sin(x)$ and
\begin{equation}\label{projder}
\begin{aligned}
&(\psi_1)_x (\psi_2)_x = \frac{\psi_2^{[2]}}{\ch}(\cos(x)+ \cos(3x))\, ,\\
&(\eta_1)_x^2 (\psi_1)_x \ch  = \frac14 (\cos(x)-\cos(3x))\, ,
\end{aligned} \quad \ch^2 (\eta_1)_x (\eta_2)_x = \ch^2 \eta_2^{[2]}(\cos(x)- \cos(3x))\, .
\end{equation}
By means of \eqref{expDiriNeu} and since
\begin{equation}\label{Tartagliate}\tanh(\tth)=\ch^2\, ,\quad \tanh(2\tth) = \frac{2\ch^2}{1+\ch^4}\, ,\quad \tanh(3\tth)= \frac{3\ch^2+\ch^6}{1+3\ch^4} \, ,
\end{equation}
whereby
\begin{equation}\label{G0acts2}
G_0\psi_2 = \frac{4\ch^2}{1+\ch^4}\psi_2^{[2]} \sin(2x)  \, ,\quad 
\end{equation}
we have, in view of \eqref{G0psi1G1eta1psi1} too,
\begin{equation}\label{projDN}
\begin{aligned}
&G_1(\eta_1)\psi_2 =\psi_2^{[2]} \frac{1-\ch^4}{1+\ch^4}\sin(x)+ 3\psi_2^{[2]} \frac{1-2\ch^4+\ch^8}{1+4\ch^4+3\ch^8}\sin(3x)\, , \\ &G_2(\eta_1)\psi_1=\frac{\ch}{4}\frac{3\ch^4-1}{1+\ch^4}\sin(x)-\frac34 \ch \frac{\ch^8-4\ch^4+3}{1+4\ch^4+3\ch^8}\sin(3x)\, , 
 \\ &G_1(\eta_2)\psi_1 =\frac{1}{\ch}\big(\eta_2^{[0]}(1-\ch^4)+\frac12\eta_2^{[2]}(1+\ch^4)\big)\sin(x)+ \frac3{2\ch} \eta_2^{[2]} \frac{1-\ch^8}{1+3\ch^4}\sin(3x)\, .  
\end{aligned}
\end{equation}
By \eqref{expc2}, \eqref{projder}, \eqref{projDN} and \eqref{expcoef} the right-hand side
 of system \eqref{syslin3} is given by
 \begin{equation}\label{RHS3}
 \begin{bmatrix}f_3(x)\\[2mm] g_3(x)\end{bmatrix} = \begin{bmatrix} f_3^{[1]}\cos(x)+f_3^{[3]}\cos(3x) \\[2mm] g_3^{[1]}\sin(x)+g_3^{[3]}\sin(3x) \end{bmatrix}\, ,
 \end{equation}
 with
 \begin{equation}
 \begin{aligned}
& f_3^{[1]}:= \frac{-2\ch^{12}+3\ch^8+3}{16\ch^8} \, ,\quad 
f_3^{[3]}:= \frac{3\ch^8-6\ch^4-3}{8\ch^8}\, , \\
& g_3^{[1]}:= \frac{2\ch^{12}-3\ch^{8}-3}{16\ch^7}  \, ,\quad g_3^{[3]}:=  \frac{-6\ch^8+15\ch^4-9}{4\ch^7(1+3\ch^4)}\, .
\end{aligned}
 \end{equation}
We recall that the term $c_2$ in \eqref{expc2} is determined (cfr. \cite[Appendix B]{BMV3}) to make the vector $(f(x),g(x))$ in \eqref{RHS3} orthogonal to the kernel of the operator $\cB_0$, therefore 
ensuring the existence of a solution 
$$
\vet{\eta_3(x)}{\psi_3(x)}:= \vet{\eta_3^{[1]}\cos(x)+\eta_3^{[3]}\cos(3x)}{\psi_3^{[1]}\sin(x)+\psi_3^{[3]}\sin(3x)}:=\cB_0^{-1}\vet{f_3(x)}{g_3(x)} \, .$$
By Lemma \ref{aboutB0} and \eqref{Tartagliate} we have
\begin{align}\label{syslin3tris}
 \vet{\eta_3^{[3]}\cos(3x)}{\psi_3^{[3]}\sin(3x)} =  \cB_0^{-1}\vet{\beta\cos(3x)
}{\delta\sin(3x)} &= -\frac{1+3\ch^4}{24\ch^6}  \begin{bmatrix} |D|\tanh(\tth |D|) & \ch\pa_x \\ -\ch\pa_x & 1 \end{bmatrix}\vet{f_3^{[3]} \cos(3x)}{g_3^{[3]} \sin(3x)} \\
&=  -\frac{1+3\ch^4}{24\ch^6} \vet{3\big(\frac{3\ch^2+\ch^6}{1+3\ch^4}f_3^{[3]}+\ch g_3^{[3]}\big)\cos(3x)}{(3\ch f_3^{[3]} + g_3^{[3]})\sin(3x)}\, ,
\end{align}
and $
\vet{\eta_3^{[1]}\cos(x)}{\psi_3^{[1]}\sin(x)} =  \cB_0^{-1}\vet{f_3^{[1]}\cos(x)
}{g_3^{[1]}\sin(x)} = \frac{1}{1+\ch^2}\vet{f_3^{[1]}\cos(x)
}{g_3^{[1]}\sin(x)}$.
The coefficients in \eqref{expcoefbis} follow. \medskip 
\\[1mm]
{\bf Fourth order in $ \e $.}
By plugging 
the expansion \eqref{etapsic}  in system \eqref{Sts} and discarding quintic terms we find the  linear system 
\begin{equation}\label{syslin4}
 \cB_0 \vet{\eta_4}{\psi_4} = \vet{f_4}{g_4}\, ,
\end{equation}
with 
\begin{subequations}\label{RHS4}
\begin{align}\label{RHS4a}
 f_4 &:= c_3(\psi_1)_x +c_2(\psi_2)_x 
 - (\psi_1)_x (\psi_3)_x - \frac12 (\psi_2)_x^2 + \ch (\eta_1)_x^2 \big(c_2-(\psi_2)_x\big) \\ \notag &\ \ + 
 (\eta_1)_x (\eta_3)_x \ch^2 +\frac12 (\eta_2)_x^2\ch^2+\frac12(\eta_1)_x^2(\psi_1)_x^2-\frac12\ch^2 (\eta_1)_x^4 -2\ch (\eta_1)_x(\eta_2)_x(\psi_1)_x\, ,\\
  \label{RHS4b}
 g_4 & := -c_3(\eta_1)_x-c_2(\eta_2)_x - G_1(\eta_1)\psi_3- G_1(\eta_2)\psi_2- G_1(\eta_3)\psi_1 \\ \notag 
  &\ \ - G_2'(\eta_1)[\eta_2]\psi_1-G_2(\eta_1)\psi_2- G_3(\eta_1)\psi_1 ,
 \end{align} 
 \end{subequations}
where, in view of \eqref{expDiriNeu}, 
\begin{equation}\label{G2prime}
G_2'(\eta)[\hat\eta] := -G_0 \pa_x \eta\hat\eta \pa_x +\pa_x^2 \eta\hat\eta G_0 - G_0 \hat\eta G_1(\eta) - G_0 \eta G_1(\hat\eta)\, .
\end{equation}
Let us inspect the terms in \eqref{RHS4a}. In view of \eqref{exp:Sto} we have
\begin{align}\label{f4pieces}
&c_3 (\psi_1)_x = \ch^{-1}c_3 \cos(x)\, , \quad c_3 (\eta_1)_x =- c_3 \sin(x) \, ,\quad c_2 (\psi_2)_x = 2c_2 \psi_2^{[2]} \cos(2x)\, ,\\ \notag &c_2(\eta_2)_x = -2c_2\eta_2^{[2]}  \sin(2x)\, ,\quad \frac12(\psi_2)_x^2 = (\psi_2^{[2]})^2 + (\psi_2^{[2]})^2\cos(4x)\, , \\ \notag
&(\psi_1)_x(\psi_3)_x =  \frac12 \ch^{-1}\psi_3^{[1]} +\frac12 \ch^{-1}\big(\psi_3^{[1]}+3\psi_3^{[3]}\big)\cos(2x)+ \frac32 \ch^{-1}\psi_3^{[3]}\cos(4x)\, ,  \\ \notag
&\ch (\eta_1)^2_x (\psi_2)_x = -\frac12 \ch\psi_2^{[2]}+\ch\psi_2^{[2]}\cos(2x) - \frac12 \ch\psi_2^{[2]} \cos(4x) \, , \\ \notag
&\frac12\ch^2 (\eta_2)_x^2 = \ch^2(\eta_2^{[2]})^2-\ch^2(\eta_2^{[2]})^2\cos(4x)\, ,\quad \ch c_2 (\eta_1)_x^2 = \frac12 \ch c_2 \big(1-\cos(2x)\big)\, , \\ \notag
&\ch^2 (\eta_1)_x (\eta_3)_x = \frac12 \ch^2 \eta_3^{[1]} + \frac12 \ch^2 \big(- \eta_3^{[1]} +3\eta_3^{[3]} \big) \cos(2x) - \frac32 \ch^2 \eta_3^{[3]} \cos(4x)\, , \\ \notag
&\frac12 (\eta_1)^2_x (\psi_1)^2_x = \frac{\ch^{-2}}{16} \big(1-\cos(4x)\big)\, ,\quad \frac12 \ch^2 (\eta_1)_x^4 = \frac{\ch^2}{16}  \big( 3 - 4\cos(2x) +  \cos(4x) \big) \, , \\ \notag
&2\ch (\eta_1)_x(\eta_2)_x(\psi_1)_x = \eta_2^{[2]}-\eta_2^{[2]}\cos(4x)\, .
\end{align}
Let us inspect the terms in \eqref{RHS4b}. In view  of  \eqref{expDiriNeu}, \eqref{etapsic}, \eqref{Tartagliate},  whereby
\begin{equation}\label{G0acts3}
 G_0\psi_3 = \ch^2 \psi_3^{[1]}\sin(x) + 3\ch^2 \frac{3+\ch^4}{1+3\ch^4} \psi_3^{[3]}\sin(3x)\, ,
\end{equation}
and since
\begin{equation}\label{bisTartagliate}
 \tanh(4\tth) = \frac{4\ch^2+4\ch^6}{1+6\ch^4+\ch^8}\, ,
\end{equation}
we have
\begin{equation}\label{g4pieces}
\begin{aligned}
G_1(\eta_1)\psi_3 &=\Big(\frac{1-\ch^4}{1+\ch^4}\psi_3^{[1]} +\frac{3(1-\ch^4)^2\psi_3^{[3]}}{(1+\ch^4)(1+3\ch^4)} \Big)\sin(2x)  + 6\frac{(1-\ch^4)^3\psi_3^{[3]}\sin(4x)}{(1+3\ch^4)(1+6\ch^4+\ch^8)}\, ,\\
G_1(\eta_2)\psi_2 &=4\frac{(1-\ch^4)^2}{(1+\ch^4)^2}\psi_2^{[2]}\eta_2^{[0]}\sin(2x)+ 4 \frac{(1-\ch^4)^2\psi_2^{[2]}\eta_2^{[2]}}{1+6\ch^4+\ch^8}\sin(4x)\, ,\\
G_1(\eta_3)\psi_1 &=\Big( \frac{1-\ch^4}{\ch(1+\ch^4)}\eta_3^{[1]} + \frac{1+3\ch^4}{\ch(1+\ch^4)} \eta_3^{[3]}  \Big) \sin(2x)+ 2\frac{(1-\ch^4)(1+3\ch^4)}{\ch(1+6\ch^4+\ch^8)}\eta_3^{[3]}\sin(4x)\, .
\end{aligned}
\end{equation}
In view of \eqref{exp:Sto}, \eqref{G2prime}, \eqref{Tartagliate}, \eqref{G0psi1G1eta1psi1}, \eqref{projDN} and  \eqref{bisTartagliate}, we have
\begin{equation}
\label{G2primeacts}
\begin{aligned}
&G_2'(\eta_1)[\eta_2]\psi_1  = \Big(\frac{8\ch^5\eta_2^{[2]}}{(1+3\ch^4)(1+\ch^4)}-\frac{4\ch(1-\ch^4)}{(1+\ch^4)^2}\eta_2^{[0]}\Big)\sin(2x) 
 - \frac{8\ch(1-\ch^8)\eta_2^{[2]}}{(1+3\ch^4)(1+6\ch^4+\ch^8)}\sin(4x)\, .
\end{aligned}
\end{equation}
By  \eqref{exp:Sto}, \eqref{expDiriNeu}, \eqref{projDN}, \eqref{Tartagliate} and \eqref{bisTartagliate} we have
\begin{equation}\label{G2eta1psi2}
G_2(\eta_1)\psi_2= -\frac{8\ch^2(1-\ch^4)\psi_2^{[2]}}{(1+\ch^4)^2(1+3\ch^4)}\sin(2x)-\frac{16\ch^2(1-\ch^4)^2\psi_2^{[2]}}{(1+\ch^4)(1+3\ch^4)(1+6\ch^4+\ch^8)}\sin(4x)\, .
\end{equation}
Finally, by \eqref{exp:Sto}, \eqref{expDiriNeu}, \eqref{projDN}, \eqref{G0psi1G1eta1psi1}, \eqref{Tartagliate} and \eqref{bisTartagliate}, we have
\begin{equation}\label{G3acts}
G_3(\eta_1)\psi_1 
= \frac{-1+14\ch^4-9\ch^8}{3\ch(1+\ch^4)^2(1+3\ch^4)}\sin(2x)
 +2 \frac{-1+15\ch^4-23\ch^8+9\ch^{12}}{3\ch(1+\ch^4)(1+3\ch^4)(1+6\ch^4+\ch^8)}\sin(4x)\, .
\end{equation}
By \eqref{RHS4}, \eqref{f4pieces}, \eqref{g4pieces}, \eqref{G2primeacts}, \eqref{G2eta1psi2}, \eqref{G3acts} and \eqref{expcoef}-\eqref{expcoefbis} system \eqref{syslin4} reads as
\begin{equation}\label{f4g4exp}
\begin{bmatrix} f_4(x) \\[2mm] g_4(x) \end{bmatrix}=\begin{bmatrix} f_4^{[0]} + \ch^{-1}c_3\cos(x) + f_4^{[2]}\cos(2x) +  f_4^{[4]}\cos(4x) \\[2mm] c_3\sin(x) + g_4^{[2]}\sin(2x) +  g_4^{[4]}\sin(4x) \end{bmatrix} \, ,
\end{equation}
with
\begin{align} \notag
f_4^{[0]} &= \frac{-4 \ch^{20}-4 \ch^{18}+17 \ch^{16}+6 \ch^{14}-48 \ch^8+6 \ch^6+36 \ch^4-9}{64 \ch^{14}}\, ,\ \ f_4^{[4]} = \frac{7 \ch^{16}-48 \ch^{12}+126 \ch^8-168 \ch^4-45}{128 \ch^{14}} \, ,\\ \notag
f_4^{[2]} &= \frac{4 \ch^{22}+12 \ch^{20}-27 \ch^{18}-31 \ch^{16}+78 \ch^{14}+66 \ch^{12}-72 \ch^{10}-84 \ch^8+6 \ch^6-6 \ch^4+27 \ch^2+27}{128 \ch^{14}(1+\ch^2)}\, ,\\ 
g_4^{[2]} &=\frac{3 \ch^{16}-12 \ch^{14}-39 \ch^{12}+18 \ch^{10}+139 \ch^8-225 \ch^4+18 \ch^2+18}{48\ch^9 (1+\ch^4)}\, , \label{f40tog44} \\ \notag
g_4^{[4]} &=\frac{-21 \ch^{20}+61 \ch^{16}+14 \ch^{12}-198 \ch^8+279 \ch^4-135}{48 \ch^{13} (\ch^8+6 \ch^4+1)}\, . 
\end{align}
As a consequence of \eqref{f4g4exp} we have
$$ \Big(\vet{f_4(x)}{g_4(x)}\, , \, \vet{\cos(x)}{\ch^{-1}\sin(x)} \Big) = \ch^{-1}c_3\, . $$
By Lemma \ref{aboutB0}, with $c_3=0$ as stated in \eqref{cexp}, one ensures that the system \eqref{syslin4} is solved by
\begin{equation}
\vet{\eta_4(x)}{\psi_4(x)} = \cB_0^{-1} \vet{f_4(x)}{g_4(x)}\, ,
\end{equation}
where, in view of \eqref{cB0inv} and \eqref{f4g4exp},
\begin{equation}\label{structordo4}
\begin{bmatrix} \eta_4(x) \\[2mm] \psi_4(x) \end{bmatrix}=\begin{bmatrix} \eta_4^{[0]}+ \eta_4^{[2]}\cos(2x) +  \eta_4^{[4]}\cos(4x) \\[2mm] \psi_4^{[2]}\sin(2x) +  \psi_4^{[4]}\sin(4x) \end{bmatrix}  \, ,
\end{equation}
 with, in view of \eqref{Tartagliate} and \eqref{bisTartagliate} too, 
\begin{align} \label{coeffordo4}
&\eta_4^{[0]} :=f_4^{[0]}\, , \quad \begin{bmatrix} \eta_4^{[2]}\\[2mm] \psi_4^{[2]} \end{bmatrix} = - \frac{1+\ch^4}{4\ch^6} \begin{bmatrix} \frac{4\ch^2}{1+\ch^4} & 2\ch \\ 2\ch & 1 \end{bmatrix} \begin{bmatrix} f_4^{[2]}\\[2mm] g_4^{[2]}  \end{bmatrix} = - \frac{1+\ch^4}{4\ch^6} \begin{bmatrix} \frac{4\ch^2}{1+\ch^4} f_4^{[2]} +2\ch g_4^{[2]} \\[2mm] 2\ch f_4^{[2]} +g_4^{[2]} \end{bmatrix} \, , \\ \notag
&\begin{bmatrix} \eta_4^{[4]}\\[2mm] \psi_4^{[4]} \end{bmatrix} = - \frac{1+6\ch^4+\ch^8}{16\ch^6(5+\ch^4)} \begin{bmatrix} 16\ch^2 \frac{1+\ch^4}{1+6\ch^4+\ch^8} & 4\ch \\ 4\ch & 1 \end{bmatrix} \begin{bmatrix} f_4^{[4]}\\[2mm] g_4^{[4]}  \end{bmatrix} =  - \frac{1+6\ch^4+\ch^8}{16\ch^6(5+\ch^4)} \begin{bmatrix} 16\ch^2 \frac{1+\ch^4}{1+6\ch^4+\ch^8} f_4^{[4]}+ 4\ch g_4^{[4]} \\[2mm] 4\ch f_4^{[4]} + g_4^{[4]}  \end{bmatrix} \, .
\end{align} 
By \eqref{structordo4} we conclude the proof of \eqref{etaexp}-\eqref{psiexp} and, in view of  \eqref{coeffordo4} and \eqref{f40tog44}, of \eqref{expcoeftris}. 
The proof of \eqref{exp:Sto}, \eqref{allcoefStokes} needs only the computation of $c_4$.
\\[1mm]
{\bf Fifth order in $ \e $.}
By plugging 
the expansion \eqref{etapsic}  in system \eqref{Sts} and discarding sextic terms we find the  linear system 
\begin{equation}\label{syslin5}
 \cB_0 \vet{\eta_5}{\psi_5} = \vet{{\mathtt f}_5}{{\mathtt g}_5} + c_4 \vet{(\psi_1)_x}{-(\eta_1)_x}\, ,
\end{equation}
with 
\begin{subequations}\label{RHS5}
\begin{align}\label{RHS5a}
{\mathtt f}_5 & := c_2(\psi_3)_x 
 - (\psi_1)_x (\psi_4)_x- (\psi_2)_x (\psi_3)_x+ (\eta_1)_x^2 \big(-\ch (\psi_3)_x-c_2 (\psi_1)_x+(\psi_1)_x(\psi_2)_x \big)\\ \notag
 &\ \  + (\eta_1)_x(\eta_2)_x \big((\psi_1)_x^2 + 2\ch c_2 - 2\ch (\psi_2)_x \big)-\ch (\psi_1)_x \big( (\eta_2)_x^2 - (\eta_1)_x^4 + 2(\eta_1)_x (\eta_3)_x \big) \\ \notag
 &\ \  + \ch^2 \big( (\eta_1)_x (\eta_4)_x + (\eta_2)_x (\eta_3)_x - 2(\eta_1)_x^3(\eta_2)_x \big)\, , \\
  \label{RHS5b}
 {\mathtt g}_5 & := 
 -c_2 (\eta_3)_x- G_1(\eta_1)\psi_4 - G_1(\eta_2)\psi_3 - G_1(\eta_3)\psi_2- G_1(\eta_4)\psi_1- G_2(\eta_1)\psi_3 \\  \notag
 &\ \ -G_2(\eta_2)\psi_1 - G_2'(\eta_1)[\eta_2,\psi_2] - G_2'(\eta_1)[\eta_3,\psi_1] - G_3(\eta_1)\psi_2 - G_3'(\eta_1)[\eta_2,\psi_1] - G_4(\eta_1)\psi_1\,,
 \end{align} 
 \end{subequations}
 where $G_2'$ is in \eqref{G2prime} and, by \eqref{expDiriNeu},
 \begin{equation}\label{G3prime}
 G_3'(\eta)[\hat\eta] = \frac12 \pa_x^3  \eta^2 \hat\eta \pa_x + \frac12 G_0 \pa_x^2 \eta^2 \hat\eta  G_0  - G_0  \hat\eta G_2(\eta) 
  - G_0  \eta G_2'(\eta)[\hat\eta] + \pa_x^2  \eta\hat\eta G_1(\eta) + \frac12 \pa_x^2  \eta^2 G_1(\hat\eta) \, .
 \end{equation}
The term $c_4$ is obtained by imposing the expression on the right-hand side of \eqref{syslin5} to be orthogonal to the kernel of the operator $\cB_0$ in \eqref{KernelB0}, obtaining, in view of \eqref{scalar},
\begin{equation}\label{c4conta}
c_4  = -\dfrac{\big( {\mathtt f}_5, \eta_1\big)+\big( {\mathtt g}_5, \psi_1\big)}{\big( (\psi_1)_x, \eta_1\big)-\big( (\eta_1)_x, \psi_1\big)} = -\ch \Big(\big( {\mathtt f}_5, \eta_1\big)+\big( {\mathtt g}_5, \psi_1\big)\Big) \, .
\end{equation}
By \eqref{RHS5} and \eqref{exp:Sto} we find that $c_4$ in \eqref{c4conta} has the explicit expression in \eqref{expc4}.

\begin{rmk}\label{check-controllo}
Expansion \eqref{exp:Sto}, \eqref{allcoefStokes} coincides with that in \cite[formulae (12)-(14)]{Fenton}, provided one
rescales properly their amplitude $ \varepsilon_{\rm Fen} = \e + f(\tth) \e^3 $ with a suitable
$f(\tth) $, 
translates their bottom to $ d:=\tth+\e^2 \eta_2^{[0]}+\e^4 \eta_4^{[0]}+\cO(\e^5) $
(in \cite{Fenton} the water surface $ \eta $ has zero average) and 
removes from the velocity potential  a 
shear term $-\overline{u}x$ (which corresponds to a Galilean reference frame).
\end{rmk}

\subsection{Expansions of  $ a_\e (x) $ and $ p_\e (x) $}\label{sec:exape}

In this section we compute the fourth order expansion of 
the functions $ a_\e (x) $ and $ p_\e (x) $ and of the constant 
$  \ttf_\e $ in \eqref{cLepsilon}. 

\smallskip

By \cite[formula (2.18)]{BMV3}) 
the functions $a_\e (x) $ and $p_\e (x) $ in \eqref{cLepsilon} 
are given by
\begin{align}\label{def:pa}
\ch+p_\e(x) :&=  \displaystyle{\frac{ c_\e-V(x+\mathfrak{p}(x))}{ 1+\mathfrak{p}_x(x)}} \, , \\ \notag
 1+a_\e(x):&=   \displaystyle{\frac{1+ (V(x + \mathfrak{p}(x)) - c_\e)
 B_x(x + \mathfrak{p}(x))  }{1+\mathfrak{p}_x(x)}}= \dfrac{1}{1+\mathfrak{p}_x(x)} - (\ch+p_\e(x))B_x(x+\mathfrak{p}(x)) \, ,
 \end{align}
and by \cite[formula (2.14)]{BMV3}) the constant $  \ttf_\e $ %  in \eqref{cLepsilon}
is 
\begin{equation} \label{def:ttf}
 \ttf_\e = \frac{1}{2\pi} \int_\bT \eta_\e (x +  \mathfrak{p}(x)) \de x \, .
  \end{equation}
Here
the function $\mathfrak p(x)$ is determined (cfr. \cite[formula (2.14)]{BMV3}, 
\cite[formula (A.15)]{BBHM}) 
by the fixed point equation 
\begin{equation}\label{fixedpoint}
\mathfrak{p} (x)  =  \frac{\cH}{\tanh \big((\tth + \ttf_\e)|D| \big)}[\eta_\e ( x + \mathfrak{p}(x))] \, , 
\end{equation} 
where $ {\mathcal H} := -\im \sgn(D) $ is the Hilbert transform\footnote{$ \sgn(D) $ is the Fourier multiplier operator 
 with symbol 
$ \sgn(k) := 1\  \forall k > 0 $, $  \sgn(0):=0 $, $  \sgn(k) := -1 \ \forall k < 0 $.}, 
whereas the functions $V$ and $B$  (cfr. \cite[formula (2.10)]{BMV3}) are given by
\begin{equation}\label{espVB}
V(x)  := -B(x) \eta'_\e(x) + \psi'_\e(x) \, , \quad  
 B(x) := %\frac{[G(\eta_\e)\psi_\e](x) + \psi'_\e(x) %\eta'_\e(x) }{1+(\eta'_\e(x))^2}=
\frac{ \psi'_\e(x)- c_\e}{1+(\eta'_\e(x))^2} \eta'_\e(x)
  \, . 
\end{equation}
To provide the Taylor expansion of the functions in \eqref{def:pa} we need some preparatory results.
\begin{lem} {\bf (Fourth order expansion of $ \mathfrak{p} (x) $ and $  \ttf_\e $)}\label{lem:pfrak}
The function $ \mathfrak{p} (x)  $ in \eqref{fixedpoint} admits the following Taylor expansion 
\begin{equation}\label{coeffnotipf}
\begin{aligned}
\mathfrak{p}(x) &  = \e \ch^{-2} \sin(x)  + \e^2 
 \mathfrak{p}_2^{[2]}\sin(2x)  + \e^3
 \big( \mathfrak{p}_3^{[1]}\sin(x) +\mathfrak{p}_3^{[3]}\sin(3x) \big) \\
& \quad   + \e^4 \big(  \mathfrak{p}_4^{[2]}\sin(2x) + \mathfrak{p}_4^{[4]}\sin(4x) \big)  
   +\cO(\e^5)
   \end{aligned}
\end{equation}
with coefficients 
\begin{align} \label{coeffnotipfbis}
 &  
 \mathfrak{p}_2^{[2]}:=\frac{3+4\ch^4+\ch^8}{8\ch^8}\, ,  \\
 &  \mathfrak{p}_3^{[1]} := \frac{4 \ch^{14}+2 \ch^{12}-17 \ch^{10}-14 \ch^8+10 \ch^6+10 \ch^4-15 \ch^2-12}{16 \ch^{10} (\ch^2+1)} \, ,\\
 & \label{frakp3}
 \mathfrak{p}_3^{[3]} := \frac{9+41\ch^4+43\ch^8 +3\ch^{12}}{64\ch^{14}} \, , \\ \label{coefffrakp2}
&\mathfrak{p}_4^{[2]}:= -\frac1{256 \ch^{20} (\ch^2+1)} \big(8 \ch^{24}-57 \ch^{22}-37 \ch^{20}+199 \ch^{18}+175 \ch^{16}+238 \ch^{14}\\ \notag 
&\qquad\qquad +190 \ch^{12}-130 \ch^{10}-178 \ch^8+171 \ch^6+135 \ch^4+27 \ch^2+27 \big) \, , \\
&\mathfrak{p}_4^{[4]}:=\frac{\ch^{24}+44 \ch^{20}+557 \ch^{16}+2528 \ch^{12}+3595 \ch^8+1332 \ch^4+135}{512 \ch^{20} (\ch^4+5)} \, . \label{coefffrakp4}
\end{align}
The real constant $  \ttf_\e $ in \eqref{cLepsilon} 
has the  Taylor expansion 
\begin{equation}\label{expfe0}
   \ttf_\e = \e^2 \tf_2 + \e^4 \tf_4 + \cO(\e^5) 
 \end{equation} 
 with coefficients 
 %(cfr. \cite[(2.15)]{BMV3} for $\ttf_2$) and 
 \begin{align}\label{expfe}
&\ttf_2:=\frac{\ch^4-3}{4\ch^2} \, ,\\ \notag
&\ttf_4:= \frac1{64 \ch^{14} (\ch^2+1)}\big(-4 \ch^{22}-8 \ch^{20}+5 \ch^{18}+23 \ch^{16}+40 \ch^{14}+22 \ch^{12}-78 \ch^{10}-72 \ch^8+72 \ch^6+54 \ch^4-27 \ch^2-27\big) \, .
% \ttf_4 := - \frac{1}{4 \ch^{14} (\ch^2+1)}  \\ \notag
%  &+\frac{-4 \ch^{20}-4 \ch^{18}+9 \ch^{16}+14 \ch^{14}+26 \ch^{12}-4 \ch^{10}-74 \ch^8+2 \ch^6+70 \ch^4-16 \ch^2-11}{64 \ch^{14}}  \, . 
\end{align}
\end{lem}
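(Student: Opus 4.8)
The plan is to determine the coefficients in \eqref{coeffnotipf} and \eqref{expfe0} by inserting the ansatz directly into the fixed-point equation \eqref{fixedpoint} together with the mean identity \eqref{def:ttf}, and matching Fourier modes order by order in $\e$. Since the Stokes wave $\eta_\e$ is already known to fourth order from Proposition \ref{expstokes}, the two equations form a triangular recursion: at each order $n$ the new data, namely the coefficients $\mathfrak{p}_n^{[k]}$ and the constant $\mathtt{f}_n$, are produced by quantities of strictly lower order, so no genuine fixed-point argument beyond the analyticity already granted in \cite{BMV3,BBHM} is needed here, only an explicit computation.

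First I would record the action of the Fourier multiplier. Since $\cH=-\im\sgn(D)$ gives $\cH[\cos(kx)]=\sin(kx)$ and $\cH[\sin(kx)]=-\cos(kx)$ for $k\geq1$, and annihilates constants, the operator in \eqref{fixedpoint} sends $\cos(kx)\mapsto \tanh\big((\tth+\ttf_\e)k\big)^{-1}\sin(kx)$. Because $\eta_\e$ is even and $\mathfrak{p}$ odd, the composition $\eta_\e(x+\mathfrak{p}(x))$ is an even cosine series, and applying the multiplier produces the odd sine series for $\mathfrak{p}$, consistent with the ansatz. Expanding $\eta_\e(x+\mathfrak{p}(x))=\sum_{n\geq0}\frac1{n!}\eta_\e^{(n)}(x)\,\mathfrak{p}(x)^n$ and collecting powers of $\e$, the order-$\e$ identity reads $\mathfrak{p}_1=\tanh(\tth)^{-1}\sin x=\ch^{-2}\sin x$ (using $\tanh\tth=\ch^2$), fixing the leading term in \eqref{coeffnotipf}. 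At order $\e^2$ the input to the multiplier is $\eta_2^{[0]}+\eta_2^{[2]}\cos2x-\ch^{-2}\sin^2x$; its $\cos 2x$ coefficient, divided by $\tanh(2\tth)$ from \eqref{Tartagliate}, gives $\mathfrak{p}_2^{[2]}$, while its mean yields $\mathtt{f}_2$ through \eqref{def:ttf}, reproducing \eqref{coeffnotipfbis} and \eqref{expfe}.

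The same scheme then runs at orders $\e^3$ and $\e^4$: I would expand the composition $\eta_\e(x+\mathfrak{p}(x))$ to the required order, reduce all products of trigonometric factors to single harmonics, and read off the $\cos(x),\cos(2x),\cos(3x),\cos(4x)$ coefficients, dividing each by $\tanh(k\tth)$ with the values \eqref{Tartagliate}, \eqref{bisTartagliate}. One extra ingredient appears from order $\e^3$ on: the symbol itself depends on $\ttf_\e$, so one must Taylor expand $\tanh\big((\tth+\ttf_\e)k\big)^{-1}=\tanh(\tth k)^{-1}+\ttf_\e\,\partial_\tth\tanh(\tth k)^{-1}+\cO(\ttf_\e^2)$. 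Since $\ttf_\e=\e^2\mathtt{f}_2+\cO(\e^4)$ with $\mathtt{f}_2$ already determined, this correction acts on the leading $\cos x$ content to contribute to $\mathfrak{p}_3^{[1]}$, and at order $\e^4$ on lower harmonics, preserving the triangular structure. Finally $\mathtt{f}_4$ is obtained as the mean of the order-$\e^4$ part of $\eta_\e(x+\mathfrak{p}(x))$ via \eqref{def:ttf}.

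The main obstacle is purely computational: the fourth-order composition produces products of up to four odd or even factors whose harmonic reduction, combined with the $\ttf_\e$-expansion of the symbol, generates lengthy rational expressions in $\ch$ that must be simplified to the closed forms \eqref{coefffrakp2}, \eqref{coefffrakp4} and \eqref{expfe}. I expect the bookkeeping at order $\e^4$, namely consistently carrying the symbol perturbation at frequencies $1,2,4$ and tracking which Stokes coefficients from \eqref{allcoefStokes} enter each harmonic, to be the delicate step; conceptually there is no difficulty, and the computation can be organized, and as elsewhere in the paper verified by Mathematica, harmonic by harmonic.
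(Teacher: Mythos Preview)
Your proposal is correct and follows essentially the same route as the paper: expand the composition $\eta_\e(x+\mathfrak{p}(x))$ in $\e$, apply the Fourier multiplier $\cH/\tanh((\tth+\ttf_\e)|D|)$ with its symbol Taylor-expanded in $\ttf_\e$, and read off $\mathfrak{p}_n$ harmonic by harmonic while extracting $\ttf_n$ from the mean via \eqref{def:ttf}. The paper organizes the computation identically, citing \cite{BMV3} for the first two orders and then writing out the order-$3$ and order-$4$ recursions exactly as you describe, including the $\ttf_2$-correction to the symbol acting on $\eta_1$ at order three and on $\eta_2+\eta_1'\mathfrak{p}_1$ at order four.
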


\begin{proof}
We expand
\begin{equation}\label{coeffnotipf1}
\begin{aligned}
& \mathfrak{p}(x)  = \e\mathfrak{p}_1(x) + \e^2 \mathfrak{p}_2(x) + \e^3 \mathfrak{p}_3(x)+ \e^4 \mathfrak{p}_4(x) +\cO(\e^5)\, , \\
& \ttf_\e = \e^2 \ttf_2 + \e^3 \ttf_3+ \e^4 \ttf_4 + \cO(\e^5) \, ,
\end{aligned}
\end{equation}
where, by \cite[formula (2.15)]{BMV3}, $ \mathfrak{p}_1(x)$, $ \mathfrak{p}_2(x) $
 are given in \eqref{coeffnotipfbis} and $\ttf_2  $ in \eqref{expfe}. 

Let us denote derivatives w.r.t $x$ with a prime $ ' $. 
By \eqref{def:ttf}, \eqref{exp:Sto}  and \eqref{coeffnotipf1}, we get
$$
\tf_3 =\frac1{2\pi} \int_\bT  \big( \eta_3(x)+\eta_2'(x)\mathfrak{p}_1(x) + \eta_1'(x)\mathfrak{p}_2(x)+\frac12 \eta_1''(x)\mathfrak{p}_1^2(x)\big) \de x = 0
$$
as stated in the expansion \eqref{expfe0}. 
In view of \eqref{fixedpoint} and \eqref{expfe} we have
\begin{equation}\label{pfrak3}
\begin{aligned}
\mathfrak{p}_3(x) &= - \im \frac{\sgn(D)}{\tanh(\tth |D|)} \Big( \eta_3(x) + \eta_2'(x) \mathfrak{p}_1(x) + \eta_1'(x) \mathfrak{p}_2(x) + \frac12 \eta_1''(x) \mathfrak{p}_1^2(x) \Big) \\
 &\qquad + \ttf_2 \frac{\pa_x}{\tanh^2(\tth |D|)} \big(1-\tanh^2(\tth |D|) \big)\eta_1(x) \, .
\end{aligned}
\end{equation}
In view of \eqref{pfrak3}, \eqref{exp:Sto}, \eqref{coeffnotipf}-\eqref{coeffnotipfbis} and \eqref{Tartagliate}, we have
\begin{equation} 
\begin{aligned}
\mathfrak{p}_3(x) &= \frac{16\eta_3^{[1]}\ch^8 - 16 \eta_2^{[2]} \ch^6 - 3 -6\ch^4 -\ch^8-16\ttf_2 \ch^6+16 \ttf_2 \ch^{10} {16\ch^{10}}}\sin(x) \\ 
&\qquad + \frac{(1+3\ch^4)(8\eta_3^{[3]}\ch^4 + 8 \eta_2^{[2]} \ch^2 + 4\ch^4\kp_2^{[2]}+1)}{8\ch^6(3+\ch^4)}\sin(3x)\, ,
\end{aligned}
\end{equation}
which, by \eqref{expcoef}-\eqref{expcoefbis}, is \eqref{frakp3}.
By \eqref{def:ttf}, \eqref{exp:Sto}, \eqref{coeffnotipfbis} and \eqref{frakp3}, we have
\begin{align}
\tf_4 &= \frac{1}{2\pi} \int_\bT \big( \eta_4(x) + \eta_3'(x) \mathfrak{p}_1(x) +\eta_2'(x) \mathfrak{p}_2(x) + \eta_1'(x) \mathfrak{p}_3(x)  \\  
&\quad +\frac12 \eta_2''(x) \mathfrak{p}_1^2(x)+ \eta_1''(x) \mathfrak{p}_2(x)\mathfrak{p}_1(x)+\frac16 \eta_1'''(x)\mathfrak{p}_1^3(x) \big) \de x \notag \\ \notag
&= \eta_4^{[0]} - \frac{\eta_3^{[1]}}{2\ch^2} - \mathfrak{p}_2^{[2]}\eta_2^{[2]} - \frac12 \mathfrak{p}_3^{[1]} +\frac{\eta_2^{[2]}}{2\ch^4} - \frac{\mathfrak{p}_2^{[2]}}{4\ch^2} + \frac{1}{16\ch^6} \, ,
\end{align}
which gives the fourth-order coefficient in \eqref{expfe}. Finally, by \eqref{fixedpoint} and \eqref{expfe},
\begin{align}
\mathfrak{p}_4(x) &= -\im \frac{\sgn(D)}{\tanh(\tth |D|)} \big( \eta_4(x) + \eta_3'(x) \mathfrak{p}_1(x) +\eta_2'(x) \mathfrak{p}_2(x) + \eta_1'(x) \mathfrak{p}_3(x) \notag \\  
&\qquad +\frac12 \eta_2''(x) \mathfrak{p}_1^2(x)+ \eta_1''(x) \mathfrak{p}_2(x)\mathfrak{p}_1(x)+\frac16 \eta_1'''(x)\mathfrak{p}_1^3(x) \big) \notag \\
&\qquad + \tf_2 \frac{\pa_x}{\tanh^2(\tth|D|)}\big( 1-\tanh^2(\tth|D|) \big) \big(\eta_2(x) + \eta_1'(x) \mathfrak{p}_1(x)\big) \notag \\
&=  \frac{1+\ch^4}{2\ch^2} \big(\frac12 \mathfrak{p}_3^{[1]}-\frac12 \mathfrak{p}_3^{[3]}-\frac{1}{12\ch^6}-\frac{ \eta_2^{[2]}}{\ch^4}+\frac{ \eta_3^{[1]}}{2\ch^2}-\frac{3 \eta_3^{[3]}}{2\ch^2}+ \eta_4^{[2]} \big) \sin(2x) \notag \\ \notag
&\qquad + \frac{1+6\ch^4+\ch^8}{4\ch^2(1+\ch^4)} \big(\frac{\mathfrak{p}_2^{[2]}}{4 \ch^2}+\frac{3\eta_3^{[3]}}{2\ch^2}+\mathfrak{p}_2^{[2]} \eta_2^{[2]}+\frac12 \mathfrak{p}_3^{[3]}+\frac{1}{48 \ch^6}+\frac{\eta_2^{[2]}}{2 \ch^4}+\eta_4^{[4]} \big) \sin(4x) \\
&\qquad  - \ttf_2 \frac{(1-\ch^4)^2(1+2\ch^2\eta_2^{[2]})}{4\ch^6} \sin(2x) \, ,\notag
\end{align}
which, by \eqref{expcoef}-\eqref{expcoefbis}, gives
\eqref{coeffnotipf} with the coefficients computed in  \eqref{coefffrakp2}-\eqref{coefffrakp4}.
\end{proof}

The second preparatory result is given by the following

\begin{lem}[Expansion of $B(x)$ and $V(x)$]\label{lemBV}
The functions $ B(x) $ and $ V(x) $ in \eqref{espVB} admit the following 
Taylor expansion
\begin{equation}\label{espVB2}
\begin{aligned}
 B(x)&= \e B_1(x)+ \e^2 B_2(x) + \e^3 B_3(x)+ \e^4 B_4(x)+\cO(\e^5)\, ,  \\ 
 V(x)&= \e V_1(x) + \e^2 V_2(x)+\e^3 V_3(x)+\e^4 V_4(x) + \cO(\e^5)
 \, , 
 \end{aligned}
\end{equation}
where 
\begin{equation}\label{espVB12}
\begin{alignedat}{3}
 B_1(x) &= \ch \sin(x) \, , \quad &&B_2(x) = B_2^{[2]}\sin(2x) \, ,\quad &B_2^{[2]}:=\frac{3-2\ch^4}{2\ch^5}\, ,
  \\ V_1(x) &=\ch^{-1} \cos(x) \, , \quad &&V_2(x) =\frac\ch2  +  V_2^{[2]}\cos(2x)\, ,\quad &V_2^{[2]}:=\frac{3-\ch^8}{4\ch^7}\, ,
 \end{alignedat}
 \end{equation} 
 and
 \begin{equation}\label{espBV34}
 \begin{alignedat}{2}
 &B_3(x) = B_3^{[1]}\sin(x)+ B_3^{[3]}\sin(3x)\, ,\quad &&B_4(x) = B_4^{[2]}\sin(2x)+ B_4^{[4]}\sin(4x)\, , \\
 &V_3(x) = V_3^{[1]}\cos(x)+ V_3^{[3]}\cos(3x)\, ,\quad &&V_4(x) = V_4^{[0]} +  V_4^{[2]}\cos(2x)+ V_4^{[4]}\cos(4x)\, ,
 \end{alignedat}
 \end{equation}
 with
 \begin{subequations}\label{coeffsBV34}
\begin{align}\notag
 &B_3^{[1]}:=\frac{6+3\ch^2-8\ch^4-8\ch^6+6\ch^8+3\ch^{10}-4\ch^{12}-2\ch^{14}}{16\ch^7(1+\ch^2)}\, ,\ \ B_3^{[3]}:= \frac{81-99\ch^4+43\ch^8-\ch^{12}}{64\ch^{11}}\, ,\\ \label{coeffsB3V3}
 &V_3^{[1]}:=\frac{2\ch^{12}-15\ch^{8}-12\ch^6+24\ch^4+24\ch^2-3}{16\ch^7 (1+\ch^{2})}\, ,\ \  V_3^{[3]}:= \frac{21\ch^{12}-39\ch^8+15\ch^4+27}{64\ch^{13}}\, ,
 \end{align}
 and
\begin{equation}\label{coeffsB4V4}
\begin{aligned}
V_4^{[0]}:&=\frac{-2 \ch^{18}-6 \ch^{16}+3 \ch^{14}+9 \ch^{12}-33 \ch^6-27 \ch^4+36 \ch^2+36}{32 \ch^{11} (\ch^2+1)} \, ,\\
B_4^{[2]}:&=\frac1{192 \ch^{17} (\ch^2+1)} \big(-24 \ch^{22}+24 \ch^{20}+354 \ch^{18}+210 \ch^{16}\\ 
&-943 \ch^{14}-835 \ch^{12}+927 \ch^{10}+855 \ch^8-81 \ch^6+27 \ch^4-81 \ch^2-81\big) \, ,\\
V_4^{[2]}:&= \frac1{384 \ch^{19} (\ch^2+1)} \big( 12 \ch^{26}+36 \ch^{24}-9 \ch^{22}-45 \ch^{20}+357 \ch^{18}+285 \ch^{16}\\
&-1060 \ch^{14}-988 \ch^{12}+1584 \ch^{10}+1584 \ch^8-243 \ch^6-135 \ch^4-81 \ch^2-81\big)  \, , \\
B_4^{[4]}:&=\frac{6 \ch^{20}-47 \ch^{16}-100 \ch^{12}+522 \ch^8-594 \ch^4+405}{96 \ch^{17} (\ch^4+5)}\, , \\
V_4^{[4]}:&= \frac{9 \ch^{24}-96 \ch^{20}-377 \ch^{16}+1484 \ch^{12}-1413 \ch^8+756 \ch^4+405}{384 \ch^{19} (\ch^4+5)} \, .
\end{aligned}
\end{equation}
\end{subequations}
\end{lem}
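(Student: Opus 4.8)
The plan is to substitute the fourth-order expansions of $\eta_\e$, $\psi_\e$ and $c_\e$ furnished by Proposition \ref{expstokes} directly into the closed formulas \eqref{espVB} for $B(x)$ and $V(x)$, and to Taylor expand in $\e$, collecting the homogeneous contributions order by order. First I would record the derivatives $\eta_\e'(x)$ and $\psi_\e'(x)$, obtained by differentiating \eqref{exp:Sto} term by term, noting that $\eta_\e'=\cO(\e)$ while $\psi_\e'=\ch^{-1}\e\cos x+\cO(\e^2)$. The only non-polynomial ingredient is the factor $(1+(\eta_\e')^2)^{-1}$ in the definition of $B$; since $\eta_\e'=\cO(\e)$, I expand it as the geometric series $1-(\eta_\e')^2+(\eta_\e')^4-\cdots$, whose corrections enter only at orders $\e^2$ and $\e^4$ within the range of accuracy required here. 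The tangent identities \eqref{Tartagliate} play no role, since $B$ and $V$ involve no Dirichlet--Neumann operator and are purely algebraic in $\eta_\e,\psi_\e,c_\e$ and their $x$-derivatives.

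Concretely, writing $B=\sum_{j\ge1}\e^j B_j(x)$ and $V=\sum_{j\ge1}\e^j V_j(x)$, the coefficient $B_j$ is the degree-$j$ part of the product of $(\psi_\e'-c_\e)\,\eta_\e'$ with the expanded denominator, while $V_j=-\sum_{k}B_k\,(\eta_\e')_{j-k}+(\psi_\e')_j$ follows at once once the $B_k$ are known. Each product of trigonometric monomials arising here is reduced to a finite Fourier sum by the elementary product-to-sum identities, so the scalar coefficients $B_j^{[m]}$ and $V_j^{[m]}$ in \eqref{espVB12}, \eqref{espBV34} can be read off harmonic by harmonic. A useful structural check is parity: since $\eta_\e$ is even and $\psi_\e$ odd, $\eta_\e'$ is odd and $\psi_\e'$ even, whence $B=(\text{even})\cdot(\text{odd})/(\text{even})$ is odd and $V=-(\text{odd})^2+(\text{even})$ is even, so that $B_j$ carries only sine harmonics and $V_j$ only cosine harmonics, with an additive constant for $V_j$ when $j$ is even. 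Moreover, the $\bZ_2$-graded structure of the Stokes expansion (each $\eta_k,\psi_k$ supported on harmonics of the parity of $k$) forces the harmonics present at order $\e^j$ to have the same parity as $j$; this forbids odd harmonics in $B_2,B_4,V_2,V_4$ and even harmonics in $B_1,B_3,V_1,V_3$, cutting down the list of coefficients to be determined.

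The low-order coefficients \eqref{espVB12} were already recorded in \cite{BMV3}, so the substance of the lemma lies in the third- and fourth-order coefficients \eqref{coeffsB3V3}, \eqref{coeffsB4V4}. I do not expect any conceptual obstruction: the entire computation is a finite symbolic expansion. The genuine difficulty is purely one of bookkeeping, namely the size of the rational functions of $\ch$ that proliferate once the fourth-order Stokes coefficients \eqref{expcoeftris}--\eqref{expc4} are inserted and simplified, as the denominators $\ch^{17}$, $\ch^{19}$, $\ch^4+5$ already visible in \eqref{coeffsB4V4} make clear. Accordingly I would perform the algebraic simplification with the same symbolic computation used throughout the paper, after which reading off the coefficients $B_j^{[m]},V_j^{[m]}$ from the resulting Fourier series completes the proof.
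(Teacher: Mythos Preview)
Your proposal is correct and follows essentially the same route as the paper: substitute the Stokes expansions \eqref{exp:Sto} into \eqref{espVB}, expand $(1+(\eta_\e')^2)^{-1}$ as a geometric series, and collect homogeneous terms harmonic by harmonic, with the low orders already known from \cite{BMV3}. The paper writes out the resulting expressions for $B_3,V_3,B_4,V_4$ explicitly in terms of the Stokes coefficients (e.g.\ $B_3=-\ch\eta_3'+\ch(\eta_1')^3+\psi_1'\eta_2'+(\psi_2'-c_2)\eta_1'$, $V_3=\psi_3'-B_1\eta_2'-B_2\eta_1'$, etc.) and then evaluates the Fourier coefficients, exactly as you describe.
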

\begin{proof}
The first jets of  \eqref{espVB2} in \eqref{espVB12} were computed in \cite[(B.12-B.13)]{BMV3}. \\
On the other hand, in view of \eqref{espVB} and \eqref{exp:Sto}, \eqref{allcoefStokes}, the third order terms are
\begin{subequations}\label{littVB34}
\begin{align}
 B_3(x) &= -\ch \eta_3'(x) + \ch (\eta_1'(x))^3 + \psi_1'(x) \eta_2'(x) + (\psi_2'(x)-c_2)\eta_1'(x) \label{espB3}   \\   \notag
 &= \big(\ch \eta_3^{[1]} - \frac34 \ch - \frac{1}{\ch} \eta_2^{[2]} + \psi_2^{[2]} + c_2  \big)\sin(x) + \big( 3\ch \eta_3^{[3]} + \frac14 \ch - \frac{1}{\ch} \eta_2^{[2]} -\psi_2^{[2]} \big)\sin(3x) 
 \end{align}
 and 
 \begin{align}
 \label{espV3} 
 V_3(x) &= \psi_3'(x) - B_1(x) \eta_2'(x) - B_2(x) \eta_1'(x)\\ \notag
 &= \big(\psi_3^{[1]} + \ch \eta_2^{[2]} +\frac12 B_2^{[2]} \big)\cos(x) + \big( 3\psi_3^{[3]} -   \ch \eta_2^{[2]} -\frac12 B_2^{[2]} \big) \cos(3x) \, .
\end{align}
The fourth order terms are given by
\begin{align}  \notag
B_4(x) &= \psi_3'(x) \eta_1'(x) + \big(\psi_2'(x)-c_2+3\ch (\eta_1'(x))^2\big) \eta_2'(x) + \psi_1'(x) \eta_3'(x) - \psi_1'(x) (\eta_1'(x))^3 -\ch \eta_4'(x)  \, ,   \\ \notag
&= \big(\frac32 \psi_3^{[3]}-\frac12 \psi_3^{[1]}+2c_2 \eta_2^{[2]}-3\ch \eta_2^{[2]} - \frac1{2\ch} \eta_3^{[1]} - \frac3{2\ch} \eta_3^{[3]} + \frac1{4\ch}+2\ch\eta_4^{[2]} \big) \sin(2x)\\ \label{espB4V4}
&\!\!\!\!\!\!\!\!\!\!\!\!\!\!\!\!\,\,\begin{aligned}
&\quad\ \big(-\frac32 \psi_3^{[3]}- 2\eta_2^{[2]}\psi_2^{[2]}+\frac32 \ch \eta_2^{[2]} -\frac3{2\ch} \eta_3^{[3]} - \frac1{8\ch} +4\ch \eta_4^{[4]} \big) \sin(4x) \, ,\\ 
V_4(x)&= \psi_4'(x) -B_1(x)\eta_3'(x) - B_2(x) \eta_2'(x) -B_3(x) \eta_1'(x)
\end{aligned} \\ \notag
&=\frac12 \ch \eta_3^{[1]}+B_2^{[2]}\eta_2^{[2]}+\frac12 B_3^{[1]}+ \big(4\psi_4^{[4]}-\frac32 \ch\eta_3^{[3]} -B_2^{[2]} \eta_2^{[2]} -\frac12 B_3^{[3]} \big) \cos(4x) \\ \notag
&\quad +\big(2\psi_4^{[2]} -\frac12 \ch \eta_3^{[1]}+\frac32 \ch \eta_3^{[3]}-\frac12 B_3^{[1]} +\frac12 B_3^{[3]}\big)\cos(2x) \, .
\end{align}
\end{subequations}
From \eqref{littVB34} we obtain \eqref{espBV34} with the coefficients in \eqref{coeffsBV34}.
\end{proof}

We now provide the  fourth order expansion of the functions $p_\e (x) $ and $a_\e (x) $
in \eqref{cLepsilon}.

\begin{prop}\label{propaepe}
The functions $p_\e (x) $ and $a_\e (x) $ in \eqref{cLepsilon} have a Taylor expansion 
\begin{equation}\label{SN1}
\begin{aligned}
p_\e (x)  
&= \e p_1 (x) + \e^2 p_2 (x)+ \e^3 p_3(x) + \e^4 p_4(x) + \cO(\e^5) \, , \\
a_\e (x)  
&= \e a_1(x) +\e^2 a_2 (x) + \e^3 a_3(x)  + \e^4 a_4(x)  + \cO(\e^5) \, ,
\end{aligned} 
\end{equation}
with %coefficients 
\begin{subequations}\label{apexp}
\begin{align}
\label{pino1fd}
 &p_1(x) = p_1^{[1]}\cos (x)\, , \quad p_1^{[1]} :=  - 2 \ch^{-1}\, ,\\ \notag
&p_2(x) = %\label{pino2fd} 
p_2^{[0]}+p_2^{[2]}\cos(2x)\, , \quad  
 p_2^{[0]} := 
 \frac{9+12 \ch^4+ 5\ch^8-2 \ch^{12}}{16 \ch^7}\, , \quad
  p_2^{[2]}:= - \frac{3+\ch^4}{2\ch^7}\, , 
 \\ \label{pino3}
  &p_3(x) = 
p_3^{[1]}\cos(x)+p_3^{[3]}\cos(3x)\, , \quad p_3^{[3]} := -\frac{\ch^{12}+17 \ch^8+51 \ch^4+27}{32\ch^{13}} \, , \\ \notag
 &p_3^{[1]} :=\frac{-2 \ch^{14}+14 \ch^{10}+11 \ch^8-10 \ch^6-10 \ch^4+24 \ch^2+21}{8 \ch^9 (\ch^2+1)} \, , 
\\ \label{pino4}
 &p_4(x) = p_4^{[0]}+p_4^{[2]}\cos(2x) + p_4^{[4]}\cos(4x) \, , \\ \notag
 &p_4^{[0]} :=\frac1{1024 \ch^{19} (\ch^2+1)} \big( 56 \ch^{30}+88 \ch^{28}-208 \ch^{26}-336 \ch^{24}+441 \ch^{22}+369 \ch^{20}-995 \ch^{18}\\ \notag
 &\qquad-899 \ch^{16}-630 \ch^{14}-294 \ch^{12}+1026 \ch^{10}+1314 \ch^8-27 \ch^6+189 \ch^4+81 \ch^2+81\big) \, ,\\ \notag
 &p_4^{[2]} :=  \frac1{64 \ch^{19} (\ch^2+1)}\big(-12 \ch^{22}-4 \ch^{20}-19 \ch^{18}-7 \ch^{16}+350 \ch^{14}\\ \notag
 &\qquad +314 \ch^{12}-256 \ch^{10}-268 \ch^8+198 \ch^6+162 \ch^4+27 \ch^2+27\big) \, ,  \\ \notag
 &p_4^{[4]} :=\frac{-\ch^{20}-39 \ch^{16}-366 \ch^{12}-850 \ch^8-657 \ch^4-135}{64 \ch^{19} (\ch^4+5)}\, ,
  \end{align}
  \end{subequations}
  and
  \begin{subequations} \label{expa04}
  \begin{equation}
 \begin{aligned} \label{aino2fd}
& a_1(x) %\label{aino1fd} 
= a_1^{[1]} \cos (x)\, , \quad
a_1^{[1]}:= - ( \ch^2 + \ch^{-2})\, , \\
& a_2(x)  = a_2^{[0]}+a_2^{[2]}\cos(2x)\, ,\quad   a_2^{[0]}:=\frac32 + \frac1{2\ch^4}\, , 
\quad a_2^{[2]} := 
 \frac{9\ch^8-14\ch^4-3}{4\ch^8}\, , 
 \end{aligned}
 \end{equation}
 \begin{align}
  &
  \begin{aligned}\label{aino3}
  & a_3(x) = 
a_3^{[1]}\cos(x)+a_3^{[3]}\cos(3x)\, ,  \quad a_3^{[3]}:= \frac{-\ch^{16}-98 \ch^{12}+252 \ch^8-318 \ch^4-27}{64\ch^{14}}\, ,\\
 & a_3^{[1]} := \frac{4 \ch^{18}+6 \ch^{16}-11 \ch^{14}-12 \ch^{12}-45 \ch^{10}-48 \ch^8+93 \ch^6+90 \ch^4+27 \ch^2+24}{16 \ch^{10} (\ch^2+1)}
\, , 
\end{aligned} \\  
& a_4(x) = a_4^{[0]}+a_4^{[2]}\cos(2x) + a_4^{[4]}\cos(4x) \, , \label{aino4} \\ 
&\begin{aligned}  \notag 
&a_4^{[0]}:=\frac{-12 \ch^{20}-31 \ch^{18}-17 \ch^{16}+40 \ch^{14}+46 \ch^{12}-150 \ch^{10}-132 \ch^8+84 \ch^6+90 \ch^4+9 \ch^2+9}{32 \ch^{16} (\ch^2+1)}\,, \\ 
&a_4^{[2]}:=\frac1{128 \ch^{20} (\ch^2+1)} \big(-72 \ch^{24}-431 \ch^{22}-211 \ch^{20}+1767 \ch^{18}\\ \notag
&\qquad +1623 \ch^{16}-2142 \ch^{14}-2070 \ch^{12}+1022 \ch^{10}+854 \ch^8+333 \ch^6+297 \ch^4+27 \ch^2+27\big)\, , \\ 
&a_4^{[4]}:= \frac{9 \ch^{24}+238 \ch^{20}-233 \ch^{16}-1676 \ch^{12}+743 \ch^8-3042 \ch^4-135}{128 \ch^{20} (\ch^4+5)}\, .
\end{aligned}
\end{align} 
\end{subequations}
\end{prop}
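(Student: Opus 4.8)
The plan is to prove Proposition \ref{propaepe} by directly substituting into the closed formulas \eqref{def:pa} the Taylor expansions already assembled in the previous lemmas: the Stokes wave expansion \eqref{exp:Sto}--\eqref{allcoefStokes} from Proposition \ref{expstokes}, the conformal reparametrization $\mathfrak{p}(x)$ and the constant $\ttf_\e$ from Lemma \ref{lem:pfrak}, and the velocity components $B(x)$, $V(x)$ from Lemma \ref{lemBV}. Since $V$, $B$ and $\mathfrak{p}$ are all $\cO(\e)$ while $c_\e = \ch + \cO(\e^2)$, the functions $\ch + p_\e$ and $1+a_\e$ are analytic in $\e$ near $0$, so it suffices to expand each factor in \eqref{def:pa} up to order $\e^4$ and collect. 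The first jets $p_1, a_1$ (and $p_2, a_2$) are recorded in \cite{BMV3}; I would reproduce them from the same scheme to fix conventions and then push the computation to orders three and four.

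First I would expand the reciprocal $(1+\mathfrak{p}_x(x))^{-1}$ as the geometric series $1 - \mathfrak{p}_x + \mathfrak{p}_x^2 - \mathfrak{p}_x^3 + \cO(\e^4)$, using the expansion \eqref{coeffnotipf}--\eqref{coefffrakp4} of $\mathfrak{p}$ and differentiating term by term. Next I would Taylor-expand the two compositions appearing in \eqref{def:pa}, namely
\begin{equation*}
V(x+\mathfrak{p}(x)) = \sum_{m\geq 0} \frac{1}{m!} V^{(m)}(x)\,\mathfrak{p}(x)^m, \qquad B_x(x+\mathfrak{p}(x)) = \sum_{m\geq 0} \frac{1}{m!} B^{(m+1)}(x)\,\mathfrak{p}(x)^m,
\end{equation*}
truncated so as to retain all contributions up to $\cO(\e^4)$; here one inserts the expansions \eqref{espVB2}--\eqref{coeffsBV34} of $V$ and $B$ and again \eqref{coeffnotipf}. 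Grouping by powers of $\e$ then yields, at each order $k=1,\dots,4$, explicit combinations of the coefficients $\mathfrak{p}_j^{[i]}$, $V_j^{[i]}$, $B_j^{[i]}$.

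Then I would carry out the remaining two operations: form $c_\e - V(x+\mathfrak{p}(x))$ (recalling that $c_3=0$ from \eqref{cexp}) and multiply by the expanded reciprocal to obtain $\ch + p_\e$, whence $p_\e$; and likewise assemble $1+a_\e$ from $(1+\mathfrak{p}_x)^{-1}$ and the product $(\ch+p_\e)\,B_x(x+\mathfrak{p})$. Every product of trigonometric polynomials is reduced to Fourier modes via the elementary product-to-sum identities together with the relations $\ch^2=\tanh\tth$ and \eqref{Tartagliate}, \eqref{bisTartagliate}; reading off the coefficient of each harmonic $\cos(kx)$ or $\sin(kx)$ produces the scalars in \eqref{apexp} and \eqref{expa04}.

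The conceptual structure is entirely routine; the real obstacle is the fourth-order bookkeeping. The coefficients $p_4^{[i]}$ and $a_4^{[i]}$ depend on the full fourth-order Stokes data $\eta_4^{[i]}, \psi_4^{[i]}, c_4$, on $\mathfrak{p}_4^{[i]}$ and $\ttf_4$, and on the many cross terms $V_j'\,\mathfrak{p}_{k}$, $\mathfrak{p}_1^2 \mathfrak{p}_2$, and the like, so that each harmonic receives a large number of contributions whose rational-function dependence on $\ch$ must be summed and simplified exactly. This is precisely the step delegated to the symbolic computation recorded in the accompanying Mathematica code: I would verify the lower-order coefficients by hand and then rely on the algebraic simplification to obtain the closed forms \eqref{apexp} and \eqref{expa04}.
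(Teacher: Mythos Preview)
Your proposal is correct and follows essentially the same route as the paper: the paper also expands the defining formulas \eqref{def:pa} by Taylor-expanding $(1+\mathfrak{p}_x)^{-1}$ as a geometric series, expanding the compositions $V(x+\mathfrak{p}(x))$ and $B_x(x+\mathfrak{p}(x))$ in powers of $\mathfrak{p}$, substituting the jets of $V$, $B$, $\mathfrak{p}$, $c_\e$ from Lemmata \ref{lem:pfrak} and \ref{lemBV}, and then reducing the resulting trigonometric products harmonic by harmonic (with the final rational-function simplification done symbolically). The paper records the intermediate third- and fourth-order combinations explicitly (equations (A.50)--(A.53) in the appendix), but the underlying scheme is exactly the one you outline.
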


%The rest of this section is devoted to the proof of Proposition \ref{propaepe}.
\begin{proof}
The first two jets $ p_1 (x), p_2(x) $  in \eqref{pino1fd}  and $ a_1 (x), a_2 (x) $ 
 in \eqref{aino2fd}  of the expansion \eqref{SN1} have been computed 
in \cite[Lemma 2.2]{BMV3}.  Let us compute the third order terms. 
In view of \eqref{def:pa}, \eqref{espVB2}  and \eqref{coeffnotipf}, we have
\begin{subequations}
\begin{align} \label{pino3preespanso}
p_3(x) &= \ch \big(-\mathfrak{p}_3'(x) +2\mathfrak{p}'_1(x)\mathfrak{p}'_2(x)- (\mathfrak{p}_1'(x))^3 \big)  + V_1(x) \big(\mathfrak{p}_2'(x) - (\mathfrak{p}_1'(x))^2 \big)  \\
& \qquad  - \big(c_2 - V_2(x) - V_1'(x) \mathfrak{p}_1(x) \big)\mathfrak{p}_1'(x) \notag \\
&\qquad  - V_3(x) - V_2'(x) \mathfrak{p}_1(x) - V_1'(x) \mathfrak{p}_2(x) -\frac12 V_1''(x)\mathfrak{p}_1^{{2}}(x) \notag \\ \notag
&= \big(-\ch \kp_3^{[1]} +\frac7{2\ch}\kp_2^{[2]}-\frac{13}{8} \ch^{-5} -\frac{c_2}{\ch^2}+\frac1{2\ch} +\frac{3V_2^{[2]}}{2\ch^2}-V_3^{[1]} \big)\cos(x)\\ \notag
&\qquad +\big(-3\ch \kp_3^{[3]} + \frac{5}{2\ch}\kp_2^{[2]} -\frac3{8\ch^5} - \frac{V_2^{[2]}}{2\ch^2}-V_3^{[3]}\big)\cos(3x)\, ,
\\[2mm] \label{aino3preespanso}
a_3(x) &= 
-\kp_3'(x)+2\kp_1'(x)\kp_2'(x) - (\kp_1'(x))^3 \\ \notag
&\qquad - \ch \big(B_3'(x) + B_2''(x) \kp_1(x) + B_1''(x) \kp_2(x) +\frac12 B_1'''(x) \kp_1^2(x) \big) \\ \notag
&\qquad -p_1(x) \big(B_2'(x) + B_1''(x) \kp_1(x) \big) - p_2(x) B_1'(x)\, , \\ \notag
&= \big(-\kp_3^{[1]}+2\ch^{-2} \kp_2^{[2]} -\frac34 \ch^{-6} - \ch B_3^{[1]}+2\ch^{-1} B_2^{[2]} +\frac12 \ch^2 \kp_2^{[2]}  \\ \notag
&\qquad+\frac18 \ch^{-2}-p_1^{[1]} B_2^{[2]} +\frac14 \ch^{-1} p_1^{[1]} -\ch p_2^{[0]} -\frac12 \ch p_2^{[2]} \big)\cos(x) \\ \notag
&\qquad +\big(-3\kp_3^{[3]}+2\ch^{-2} \kp_2^{[2]} -\frac14 \ch^{-6} -3\ch B_3^{[3]} -2\ch^{-1}B_2^{[2]}\\ \notag
&\qquad-\frac12 \ch^2 \kp_2^{[2]} -\frac18 \ch^{-2} -p_1^{[1]} B_2^{[2]} -\frac14 \ch^{-1} p_1^{[1]} -\frac12 \ch p_2^{[2]} \big) \cos(3x)\, ,
%%%%%%%%%%%%%%%%%%%%%%%%%%%%%%%%%%%%%%%%%%%%%%%%%%%%%%%%%%%%%%%
%-\mathfrak{p}_1'(x)^3-\ch \mathfrak{p}_1'(x)^2 B_1'(x) + V_1(x) B_2'(x) -\mathfrak{p}_3'(x) \\ 
%& \qquad + B_1'(x)\big( -c_2 + V_2(x) + \ch \mathfrak{p}_2'(x) + \mathfrak{p}_1(x)V_1'(x) \big)+\mathfrak{p}_1(x) V_1(x) B_1''(x) \notag \\ 
%&\qquad  + \mathfrak{p}_1'(x) \big( -V_1(x)B_1'(x) + 2\mathfrak{p}_2'(x) + \ch B_2'(x) + \ch \mathfrak{p}_1(x) B_1''(x)\big) \notag \\ 
%&\qquad - \ch \big( B_3'(x) + \mathfrak{p}_2(x) B_1''(x) \big)  -\ch \mathfrak{p}_1(x) B_2''(x)-\frac12 \ch \mathfrak{p}_1(x)^2 B_1'''(x) \, , \notag
%%%%%%%%%%%%%%%%%%%%%%%%%%%%%%%%%%%%%%%%%%%%%%%%%%%%%%%%%%%%%
\end{align}
\end{subequations}
and
\begin{subequations}
\begin{align} \label{pino4preespanso}
p_4(x) &= c_4 - V_4(x) - V_3'(x)\mathfrak{p}_1(x) - V_2'(x) \mathfrak{p}_2(x) -V_1'(x)\mathfrak{p}_3(x)\\
&\qquad	-\frac12 V_2''(x)\mathfrak{p}^2_1(x) - V_1''(x) \mathfrak{p}_2(x)\mathfrak{p}_1(x) - \frac16 V_1'''(x) \mathfrak{p}_1^3(x) \notag \\
&\qquad +\mathfrak{p}_1'(x) \big( V_3(x) + V_2'(x) \mathfrak{p}_1(x) + V_1'(x) \mathfrak{p}_2(x) + \frac12 V_1''(x) \mathfrak{p}_1^2(x)  \big)  \notag \\
&\qquad + \big( (\mathfrak{p}_1'(x))^2 -\mathfrak{p}_2'(x) \big) \big( c_2 -V_2(x) - V_1'(x) \mathfrak{p}_1(x)\big) \notag \\
&\qquad + V_1(x) \big( \mathfrak{p}_3'(x) - 2 \mathfrak{p}_2'(x)\mathfrak{p}_1'(x) + (\mathfrak{p}_1'(x))^3\big) \notag  \\
&\qquad +\ch \big( -\mathfrak{p}_4'(x)+ 2\mathfrak{p}_3'(x)\mathfrak{p}_1'(x) + (\mathfrak{p}_2'(x))^2 -3\mathfrak{p}_2'(x) (\mathfrak{p}_1'(x))^2  +  (\mathfrak{p}_1'(x))^4  \big)   \, , \notag \\  \notag
&=\frac{3}{4}\ch^{-7}+\frac12 c_2 \ch^{-4}-\frac54\ch^{-4} V_2^{[2]}-2\ch^{-3}\kp_2^{[2]}-\frac14 \ch^{-3}+\ch^{-2}V_3^{[1]}+2 \ch  (\kp_2^{[2]})^2\\ \notag
&\  +2\ch^{-1}\kp_3^{[1]}+2 \kp_2^{[2]} V_2^{[2]}+c_4-V_4^{[0]} +\big(\frac{13}{12}\ch^{-7}+\frac12 \ch^{-4}c_2+\frac12 \ch^{-4}V_2^{[2]}-6\ch^{-3}\kp_2^{[2]} \\ \notag
&\ -\frac14 \ch^{-3}+2\ch^{-2}V_3^{[3]}+\ch \kp_2^{[2]}+\ch^{-1}\kp_3^{[1]}+5\ch^{-1}\kp_3^{[3]}-2\ch \kp_4^{[2]}-2 \kp_2^{[2]} c_2-V_4^{[2]} \big) \cos(2x)  \\ \notag
&\ +\big(\frac16 \ch^{-7}-\frac14 \ch^{-4}V_2^{[2]}-2\ch^{-3}\kp_2^{[2]}-\ch^{-2}V_3^{[3]}+2 \ch (\kp_2^{[2]})^2+4 \ch^{-1}\kp_3^{[3]}-4 \ch \kp_4^{[4]}-V_4^{[4]} \big) \cos(4x)
\\[2mm]
\label{aino4preespanso}
a_4(x)&=-\kp_4'(x) + (\kp_2'(x))^2+2\kp_1'(x)\kp_3'(x) - 3(\kp_1'(x))^2\kp_2'(x) + (\kp_1'(x))^4 \\ \notag
&\qquad-\ch \big(B_4'(x) + B_3''(x) \kp_1(x) + B_2''(x) \kp_2(x) + B_1''(x) \kp_3(x) \\ \notag
&\qquad + \frac12 B_2'''(x) \kp_1^2(x) + B_1'''(x) \kp_1(x) \kp_2(x) + \frac16 B_1^{\scriptscriptstyle{\mathit{IV}}}(x) \mathfrak{p}_1^3(x) \big) \\ \notag
&\qquad -p_1(x) \big(B_3'(x) + B_2''(x) \kp_1(x) + B_1''(x) \kp_2(x) +\frac12 B_1'''(x) \kp_1^2(x) \big) \\ \notag
&\qquad -p_2(x) \big(B_2'(x) + B_1''(x) \kp_1(x) \big) - p_3(x) B_1'(x)\\ \notag
&=\frac38 \ch^{-8}-\frac1{16}\ch^{-4}-\frac32 \ch^{-4}\kp_2^{[2]}+ \frac1{16}\ch^{-3}p_1^{[1]}-\ch^{-3}B_2^{[2]}+\ch^{-2}B_2^{[2]}p_1^{[1]}+ \ch^{-2}\kp_3^{[1]}\\ \notag
&\ +\frac12 \ch^2 \kp_3^{[1]}+2\ch \kp_2^{[2]}B_2^{[2]}+\frac14 \ch \kp_2^{[2]}p_1^{[1]}-\frac12 \ch p_3^{[1]}+\frac12 \ch^{-1} B_3^{[1]}+\frac12 \ch^{-1}p_2^{[0]}-\frac14 \ch^{-1}p_2^{[2]}\\ \notag
&\ + 2 (\kp_2^{[2]})^2+\frac14 \kp_2^{[2]}- B_2^{[2]} p_2^{[2]}-\frac12 B_3^{[1]} p_1^{[1]} + \big(\frac12 \ch^{-8}-3\ch^{-4}\kp_2^{[2]}+\frac1{12}\ch^{-4}+2\ch^{-3}B_2^{[2]}\\ \notag
&\  -\frac12 \ch^2 \kp_3^{[1]}+ \ch^{-2}\kp_3^{[1]}+\frac12 \ch^2 \kp_3^{[3]}+3\ch^{-2}\kp_3^{[3]}-\frac12 \ch^{-1}B_3^{[1]}+\frac92 \ch^{-1}B_3^{[3]}-2\ch B_4^{[2]}-\frac12 \ch^{-1}p_2^{[0]}\\ \notag
&\ +\frac12 \ch^{-1} p_2^{[2]}-\frac12\ch p_3^{[1]}-\frac12 \ch p_3^{[3]}-2B_2^{[2]}p_2^{[0]}-\frac12 B_3^{[1]}p_1^{[1]}-\frac32 B_3^{[3]}p_1^{[1]}-2\kp_4^{[2]} \big)\cos(2x)\\ \notag
&+ \big(\frac18 \ch^{-8}-\frac32 \ch^{-4}\kp_2^{[2]}-\frac1{48} \ch^{-4}-\ch^{-3}B_2^{[2]}-\frac1{16} \ch^{-3} p_1^{[1]}-\ch^{-2}B_2^{[2]}p_1^{[1]}-\frac12 \ch^2 \kp_3^{[3]}\\ \notag
&\ +3\ch^{-2}\kp_3^{[3]}-2\ch \kp_2^{[2]} B_2^{[2]}-\frac14 \ch \kp_2^{[2]}p_1^{[1]}-\frac92 \ch^{-1}B_3^{[3]}-4\ch B_4^{[4]}-\frac14 \ch^{-1} p_2^{[2]} \\ \notag
&\ -\frac12 \ch p_3^{[3]}+2 (\kp_2^{[2]})^2 -\frac14 \kp_2^{[2]}-B_2^{[2]}p_2^{[2]}-\frac32 B_3^{[3]}p_1^{[1]}-4 \kp_4^{[4]}\big)\cos(4x)  
\, .
%%%%%%%%%%%%%%%%%%%%%%%%%%%%%%%%%%%%%%%%%%%%%%%%%%%
% B_1'(x) \big(V_3(x) + V_2'(x) \mathfrak{p}_1(x) + V_1'(x)\mathfrak{p}_2(x)+\frac12 V_1''(x)\mathfrak{p}_1^2(x) \big) \\
%&\qquad +\big(V_2(x) + V_1'(x) \mathfrak{p}_1(x)-c_2 \big) \big(B_2'(x)+B_1''(x)\mathfrak{p}_1(x) \big) \notag \\
%&\qquad + V_1(x)\big(B_3'(x) +B_2''(x)\mathfrak{p}_1(x) + B_1''(x)\mathfrak{p}_2(x) + \frac12 B_1'''(x) \mathfrak{p}_1^2(x) \big) \notag \\
%&\qquad - \ch \big( B_4'(x) + B_3''(x) \mathfrak{p}_1(x) + B_2''(x) \mathfrak{p}_2(x)+ B_1''(x) \mathfrak{p}_3(x) \notag \\ 
%&\qquad + \frac12 B_2'''(x)\mathfrak{p}_1^2(x) + B_1'''(x)\mathfrak{p}_1(x)\mathfrak{p}_2(x)+ \frac16 B_1^{\scriptscriptstyle{\mathit{IV}}}(x) \mathfrak{p}_1^3(x) \big) \notag \\
%&\qquad -\mathfrak{p}_1'(x) \big[ B_1'(x) \big(V_2(x) + V_1'(x)\mathfrak{p}_1(x) - c_2 \big) + V_1(x) \big(B_2'(x) + B_1''(x) \mathfrak{p}_1(x) \big) \notag \\
%&\qquad - \ch \big( B_3'(x) +B_2''(x) \mathfrak{p}_1(x) + B_1''(x) \mathfrak{p}_2(x) +\frac12 B_1'''(x) \mathfrak{p}_1^2(x) \big)  \big] \notag \\
%&\qquad+  \big((\mathfrak{p}_1'(x))^2-\mathfrak{p}'_2(x) \big) \big(V_1(x)B_1'(x)-\ch B_2'(x) - \ch B_1''(x) \mathfrak{p}_1(x) \big) \notag \\
%&\qquad +\ch B_1'(x) \big(\mathfrak{p}_3'(x) -2\mathfrak{p}_2'(x)\mathfrak{p}_1'(x) + (\mathfrak{p}_1'(x))^3\big) \notag \\
%&\qquad  -\mathfrak{p}_4'(x)+ 2\mathfrak{p}_3'(x)\mathfrak{p}_1'(x) + (\mathfrak{p}_2'(x))^2 -3\mathfrak{p}_2'(x) (\mathfrak{p}_1'(x))^2  +  (\mathfrak{p}_1'(x))^4  \, .\notag
%%%%%%%%%%%%%%%%%%%%%%%%%%%%%%%%%%%%%%%%%%%%%%%%%%%%%%%%%%%
\end{align}
\end{subequations}
The expansions of $p_3(x)$, $a_3(x)$, $p_4(x)$ and $a_4(x)$ in \eqref{pino1fd}-\eqref{aino2fd} descend from \eqref{pino3preespanso}, \eqref{aino3preespanso}, \eqref{pino4preespanso} and \eqref{aino4preespanso} respectively, in view of %\eqref{frakp4}-
\eqref{coefffrakp4}, \eqref{frakp3} \eqref{coeffnotipfbis},
 \eqref{espVB12},  \eqref{expc2}, \eqref{espV3}, \eqref{espB3} and \eqref{espB4V4}-\eqref{coeffsB4V4}. 
 \end{proof}
 
\begin{rmk}
The functions $V$ and $B$ in \eqref{espVB} coincide respectively with the horizontal and vertical derivative of the velocity potential $\Phi$ in \cite[formula (12)]{Fenton} after the rescaling procedure outlined in Remark \ref{check-controllo}. 
\end{rmk}

\section{Expansion of the projector $ P_{\mu,\e} $ }\label{conticinoleggeroleggero}

In this Appendix we prove Lemmata \ref{lem:secondjetsP} and \ref{lem:P03acts}.

We denote, for any $ k \in \bN $, 
\begin{equation}
\begin{aligned}
\label{fksigma}
& f_k^+:=\vet{\ch^{1/2}\cos(kx)}{\ch^{-1/2}\sin(kx)} \, ,
\quad  f_k^- :=\vet{-\ch^{1/2}\sin(kx)}{\ch^{-1/2}\cos(kx)} \, , \\
&  f_{-k}^+ :=\vet{\ch^{1/2}\cos(kx)}{-\ch^{-1/2}\sin(kx)}\, ,
\quad 
f_{-k}^- :=\vet{\ch^{1/2}\sin(kx)}{\ch^{-1/2}\cos(kx)}  \, ,
\end{aligned}  
\end{equation}
and we define for any $k\in \bZ $ the spaces
\begin{equation}\label{cWk}
\cW_k:= \text{span}\left\{f_k^+, \ f_k^-, f_{-k}^+, \ f_{-k}^- \right\} \, ,\quad \cW_k^\sigma := \text{span}_\bR \{f_k^\sigma,\ f_{-k}^\sigma\}\, ,\quad \sigma=\pm\, .
\end{equation}
We have the following 
\begin{lem}
 The jets of the operator $\cB_{\mu,\e}$ in \eqref{Bsani} act on the spaces in \eqref{cWk} as follows\footnote{the sum is direct if $j\leq k$, otherwise some spaces may  overlap.}
\begin{equation} \label{Bjkrev}
\cB_{\ell,j} \cW_k^\sigma = \underbrace{\im^\ell \cW_{k-j}^{(-1)^{\ell} \sigma}+_\bR \im^\ell \cW_{k-j+2}^{(-1)^{\ell} \sigma} +_\bR \dots  +_\bR \im^\ell \cW_{k+j}^{(-1)^{\ell} \sigma} }_{j+1 \ terms}\, ,\quad f_0^- \notin \cB_{0,j} \cW_k^-\, ,
\end{equation}
with $\ell,j=0,\dots,4 $, while the operator $\cJ$ in \eqref{PoissonTensor} acts as 
$\cJ  \cW_k^\pm =  \cW_k^\mp $. 
\end{lem}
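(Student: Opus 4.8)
The proof is a bookkeeping of how the few elementary operations occurring in the Taylor jets of $\cB_{\mu,\e}$ in \eqref{calL.ham} act on the cosine/sine ``type'' of the basis vectors \eqref{fksigma}. The plan is to first record, for $k\ge1$, the action on a single mode. Multiplication by an even trigonometric monomial $\cos(mx)$ sends a mode-$k$ vector to a combination of mode-$(k\pm m)$ vectors and \emph{preserves} the type, since componentwise $\cos(kx)\mapsto\cos((k\pm m)x)$ and $\sin(kx)\mapsto\sin((k\pm m)x)$, introducing no factor $\im$; the matrix $\cJ$ in \eqref{PoissonTensor} exchanges the two components, hence \emph{flips} the type, with no factor $\im$ and no Fourier shift; $\pa_x$ exchanges $\cos\leftrightarrow\sin$ inside a component (a type flip, no $\im$); and $\sgn(D)$ exchanges $\cos\leftrightarrow\sin$ while producing a factor $\im$ (a type flip with one factor $\im$). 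Finally, every purely Fourier-multiplier ingredient that is even in $D$ preserves both type and mode and is real. This yields at once $\cJ\cW_k^\pm=\cW_k^\mp$, and reduces the lemma to counting, in each jet $\cB_{\ell,j}$, the maximal Fourier shift, the net number of type flips modulo $2$, and the parity of the number of factors $\im$.

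Next I would read off these data from the structure of \eqref{calL.ham}. The $\e$-dependence enters only through the \emph{even} functions $a_\e(x),p_\e(x)$ and the constant $\ttf_\e$; by \eqref{apexp} the coefficients $a_j,p_j$ are even trigonometric polynomials supported on the harmonics $\{j,j-2,\dots\}$, so the $\e^{j}$-jet of any entry is a multiplication shifting Fourier support by at most $j$, in steps of $2$ and with the parity of $j$. This produces exactly the $j+1$ mode levels $k-j,k-j+2,\dots,k+j$, i.e. the spaces $\cW_{k-j},\dots,\cW_{k+j}$ of the claim, the common type being pinned in the next step. The $\mu$-dependence enters only through the off-diagonal terms $\pm\im\mu\,p_\e$ and through the symbol $|D+\mu|\tanh((\tth+\ttf_\e)|D+\mu|)$ of the $(2,2)$ entry. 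The key observation is that, being an even function of $D+\mu$, this symbol has $\mu^{\ell}$-jet equal to a Fourier multiplier $m_\ell(D)$ with $m_\ell(-k)=(-1)^\ell m_\ell(k)$, hence even for $\ell$ even and equal to $\sgn(D)$ times an even multiplier for $\ell$ odd (this is the source of the operators $\Pi_{\mathtt{ev}}$ and $\Pi_{\mathtt{s}}$ in \eqref{Bsani}).

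Combining, I would argue that every term contributing to $\cB_{\ell,j}$ carries total $\mu$-degree $\ell$, realized either by the single $\cJ$ of an off-diagonal $\im\mu\,p_\e$ term (only when $\ell=1$) or by the multiplier $m_\ell(D)$ of the $(2,2)$ entry. In both descriptions the net number of type flips is congruent to $\ell$ modulo $2$ and the output is real for $\ell$ even, purely imaginary for $\ell$ odd: the off-diagonal contribution gives one flip and a prefactor $\im$; the $(2,2)$ contribution $m_\ell$ gives no flip and is real for $\ell$ even, and one flip together with a factor $\im$ (through $\sgn(D)$) for $\ell$ odd. At order $\mu^0$ the only potential flip comes from the $\pa_x$ of $\cB_0$, but it sits off the diagonal and is therefore paired with a component exchange, so the two flips cancel and the type is preserved. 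Hence the target type is $(-1)^\ell\sigma$ and the overall prefactor is $\im^\ell$, which is the content of \eqref{Bjkrev}.

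Finally I would prove $f_0^-\notin\cB_{0,j}\cW_k^-$. Since $\ell=0$ preserves the type, the image lies in $\bigoplus_m \cW_m^-$, so meeting $\cW_0^-=\bR f_0^-$ would require a nonzero \emph{constant} in the second component. But the $(2,1)$ entry $\pa_x\circ(\ch+p_\e)$ carries an outer $\pa_x$ and hence outputs a zero-mean function, while the $\e^j$-jet of the $(2,2)$ symbol is $|D|\tanh(\tth|D|)$ for $j=0$ and the multipliers $\ell_{0,2},\ell_{0,4}$ of \eqref{ell02}, \eqref{ell04} for $j=2,4$ (it vanishes for $j$ odd, as $\ttf_\e$ has no odd jets by \eqref{expfe0}), all of which have symbols vanishing at $D=0$ and thus annihilate constants. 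Therefore the second component of $\cB_{0,j}v$ has no mode-$0$ part for every $v\in\cW_k^-$, and $f_0^-$ cannot occur. The main obstacle is the uniform verification that the $\im$-parity and the type-flip parity both coincide with $\ell$ across all the heterogeneous terms building a given jet, in particular matching the single $\sgn(D)$ produced by each odd $\mu$-derivative of $|D+\mu|\tanh(\cdot)$ with the $\im$ and $\cJ$ carried by the off-diagonal $\im\mu\,p_\e$ terms.
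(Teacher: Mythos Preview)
Your argument is correct. For the first formula in \eqref{Bjkrev} and for $\cJ\cW_k^\pm=\cW_k^\mp$, your approach is essentially the paper's --- both read the claim off the explicit jets \eqref{Bsani}--\eqref{Pi_odd} --- except that you spell out the type/parity bookkeeping in detail while the paper simply points to the formulas. For the statement $f_0^-\notin\cB_{0,j}\cW_k^-$ you take a genuinely different route: you argue directly that the second component of any $\cB_{0,j}v$ has zero mean (the outer $\pa_x$ in the $(2,1)$ entry, and the $|D|$-factor in the $(2,2)$ symbols $|D|\tanh(\tth|D|)$, $\ell_{0,2}$, $\ell_{0,4}$). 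The paper instead exploits that $\cB_{0,j}$ is self-adjoint and annihilates $f_0^-$: if $\cB_{0,j}g=f_0^-$ for some $g\in\cW_k^-$, then $1=(f_0^-,f_0^-)=(\cB_{0,j}g,f_0^-)=(g,\cB_{0,j}f_0^-)=0$. Your direct argument is slightly more computational but equally valid; the paper's is a one-line duality trick that bypasses the entry-by-entry inspection.
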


\begin{proof}
The first formula in \eqref{Bjkrev} follows by \eqref{Bsani}-\eqref{Pi_odd}. 
Let us prove the second statement by contradiction supposing 
that there exists $g\in \cW_k^-$ such that $\cB_{0,j} g = f_0^- $. Then
$1 =\big(f_0^- , f_0^- \big) = \big(\cB_{0,j} g , f_0^- \big) = \big( g , \cB_{0,j} f_0^- \big)  
= 0 $, by \eqref{Bsani}, which is a contradiction.
\end{proof}
We now include an extended version of \cite[Lemma A.2]{BMV3}. 
\begin{lem}\label{lem:VUW}
The space $ H^1(\bT) $ decomposes as 
$
H^1(\bT) =  \cV_{0,0} \oplus \cU \oplus {\cW_{H^1}}
${, with  $\cW_{H^1}=\overline{\bigoplus\limits_{k\geq 2}^{\textcolor{white}{,}} \cW_k}^{_{\footnotesize \begin{matrix} H^1 \end{matrix} }}$}
where the subspaces $\cV_{0,0}, \cU $ and $ \cW_k $, defined below, are 
invariant  under   $\sL_{0,0} $ and  the following properties hold:
\begin{itemize}
\item[(i)] $ \cV_{0,0} = \text{span} \{ f^+_1, f^-_1, f^+_0, f^-_0\}$  is the 
generalized kernel of $\sL_{0,0}$. For any $ \lambda \neq 0 $ the operator 
$ \sL_{0,0}-\lambda :  \cV_{0,0} \to \cV_{0,0} $ is invertible and  
\begin{subequations}\label{primainversione}
 \begin{align}\label{primainversione1}
& (\sL_{0,0}-\lambda)^{-1}f_1^+ = -\frac1\lambda f_1^+ \, ,
\  
(\sL_{0,0}-\lambda)^{-1}f_1^- = -\frac1\lambda f_1^-,
\    (\sL_{0,0}-\lambda)^{-1}f_0^- = -\frac1\lambda f_0^- \, ,  \\
& \label{primainversione2}
(\sL_{0,0}-\lambda)^{-1}f_0^+ = -\frac1\lambda f_0^+ + \frac{1}{\lambda^2} f_0^- \, .
\end{align} 
\item[(ii)] $\cU := \text{span}\left\{ f_{-1}^+, f_{-1}^-  \right\}$.   For any 
$ \lambda \neq \pm 2 \im $ the operator 
$ \sL_{0,0}-\lambda :  \cU \to \cU $ is invertible and
\begin{equation}
\label{primainversione3}
\begin{aligned}
&  (\sL_{0,0}-\lambda)^{-1} f_{-1}^+ =
   \frac{1}{\lambda^2+4 \ch^2}\left(-\lambda f_{-1}^+ + 2 \ch f_{-1}^-\right),  \\
&  (\sL_{0,0}-\lambda)^{-1} f_{-1}^- 
  = \frac{1}{\lambda^2+4\ch^2}
  \left(-2 \ch f_{-1}^+ - \lambda f_{-1}^-\right) \, .
  \end{aligned}
\end{equation}
\item[{(iii)}]  Each
subspace $\cW_k$ in \eqref{cWk} is  invariant under $ \sL_{0,0} $.  
%Let $\cW_{L^2}=\overline{\bigoplus\limits_{k=2}^\infty \cW_k}^{L^2}$. 
For any
$|\lambda| < 
\delta(\tth)$ small enough and any natural $k\geq 2$, the operator 
%$ {\sL_{0,0}-\lambda :  \cW_{H^1}\to \cW_{L^2} }$ 
$ {\sL_{0,0}-\lambda :  \cW_k\to \cW_k }$ 
is invertible and for any $f \in \cW_k$ and any natural number $N$
\begin{align}
\label{primainversione4}
 (\sL_{0,0}-\lambda)^{-1} f  =  \sL_{0,0}^{\, \inv} f + \lambda \big( \sL_{0,0}^{\, \inv}\big)^2 f + \dots +  \lambda^{N-1} \big( \sL_{0,0}^{\, \inv}\big)^{N} f + \lambda^N \varphi_{f,N}(\lambda, x) \, ,
\end{align}
\end{subequations}
for some analytic  function  $\lambda \mapsto \varphi_{f,N}(\lambda, \cdot) \in %\cW_{H^1}
\cW_k $, where $\sL_{0,0}^{\, \inv} : \cW_k\to \cW_k$ is
\begin{equation}\label{L0inv} {\footnotesize
\sL_{0,0}^{\, \inv}:= \begin{matrix} \big(\ch^2 \pa_x^2 + |D|\tanh(\tth |D|)\big)^{-1} \end{matrix} \begingroup 
\setlength\arraycolsep{2pt}\begin{bmatrix} \ch \partial_x & - |D|\tanh(\tth |D|) \\ 1 & \ch \partial_x\end{bmatrix} \endgroup  , \quad \sL_{0,0}^{\, \inv} \cW^\pm_k = \cW^\mp_k \, . }
\end{equation}
\end{itemize}
\end{lem}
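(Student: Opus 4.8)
The plan is to exploit that $\sL_{0,0}=\cL_{0,0}$ is a constant--coefficient operator, i.e. a Fourier multiplier matrix, and hence commutes with translations and preserves every subspace of $H^1(\bT,\bC^2)$ of fixed Fourier support. Reading \eqref{funperturbed}, \eqref{fm1sigma} and \eqref{fksigma}, the vectors $f_0^\pm$ live on frequency $0$, the four vectors $f_1^\pm,f_{-1}^\pm$ on frequency $\pm1$, and each $\cW_k$ in \eqref{cWk} on frequency $\pm k$; these form $\bC$--bases of the corresponding frequency blocks, which together span all of $H^1(\bT)$. First I would record that $\sL_{0,0}$ maps the frequency--$0$ block $\{f_0^\pm\}$ and each frequency--$(\pm k)$ block $\cW_k$, $k\ge2$, into themselves, and that on the $4$--dimensional frequency--$(\pm1)$ block the direct computation below shows $\sL_{0,0}$ leaves invariant the complementary pieces $\mathrm{span}\{f_1^+,f_1^-\}$ and $\cU=\mathrm{span}\{f_{-1}^+,f_{-1}^-\}$. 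Adjoining the frequency--$0$ block to the first piece gives $\cV_{0,0}=\mathrm{span}\{f_1^\pm,f_0^\pm\}$, and one obtains the claimed decomposition $H^1(\bT)=\cV_{0,0}\oplus\cU\oplus\cW_{H^1}$ with all summands $\sL_{0,0}$--invariant. This reduces everything to inverting $\sL_{0,0}-\lambda$ on each invariant block separately.

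For part (i) I would compute $\sL_{0,0}$ on $\{f_1^+,f_1^-,f_0^+,f_0^-\}$ directly from \eqref{cLepsilon} at $(\mu,\e)=(0,0)$, using $\tanh\tth=\ch^2$ and that $|D|\tanh(\tth|D|)$ annihilates constants: this yields $\sL_{0,0}f_1^\pm=\sL_{0,0}f_0^-=0$ and $\sL_{0,0}f_0^+=-f_0^-$, so $\restr{\sL_{0,0}}{\cV_{0,0}}$ is nilpotent of order two. Hence for $\lambda\neq0$ one has $(\sL_{0,0}-\lambda)^{-1}=-\lambda^{-1}\uno-\lambda^{-2}\sL_{0,0}$ on $\cV_{0,0}$, and substituting the action of $\sL_{0,0}$ gives \eqref{primainversione1}--\eqref{primainversione2}. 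For part (ii) the same computation produces $\sL_{0,0}f_{-1}^+=-2\ch f_{-1}^-$ and $\sL_{0,0}f_{-1}^-=2\ch f_{-1}^+$, so on $\cU$ the operator is the skew matrix $\bigl(\begin{smallmatrix}0&2\ch\\-2\ch&0\end{smallmatrix}\bigr)$ with eigenvalues $\pm2\im\ch$; inverting the $2\times2$ matrix of $\sL_{0,0}-\lambda$, whose determinant is $\lambda^2+4\ch^2$, gives exactly \eqref{primainversione3} and shows invertibility precisely for $\lambda\neq\pm2\im\ch$.

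For part (iii), writing $\cL_{0,0}=\bigl(\begin{smallmatrix} \ch\pa_x & |D|\tanh(\tth|D|)\\ -1 & \ch\pa_x\end{smallmatrix}\bigr)$ and inverting this $2\times2$ symbol (all entries being commuting Fourier multipliers) produces the candidate inverse $\sL_{0,0}^{\,\inv}$ in \eqref{L0inv}, whose common denominator $\ch^2\pa_x^2+|D|\tanh(\tth|D|)$ has, on frequency $k$, the symbol $-\ch^2k^2+k\tanh(\tth k)=k\bigl(\tanh(\tth k)-\ch^2k\bigr)$. Using $\ch^2=\tanh\tth$ together with the strict concavity of $\tanh$ on $[0,\infty)$ and $\tanh0=0$ (so $\tanh(\tth k)<k\tanh\tth$ for all $k\ge2$), this symbol is strictly negative, hence nonzero, so $\sL_{0,0}:\cW_k\to\cW_k$ is invertible with inverse $\sL_{0,0}^{\,\inv}$ and $\sL_{0,0}^{\,\inv}\cW_k^\pm=\cW_k^\mp$. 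The expansion \eqref{primainversione4} is then the Neumann series $(\sL_{0,0}-\lambda)^{-1}=\sum_{n\ge0}\lambda^n(\sL_{0,0}^{\,\inv})^{n+1}$, truncated at order $N$ with analytic remainder $\varphi_{f,N}(\lambda,\cdot)=(\sL_{0,0}^{\,\inv})^{N+1}(\uno-\lambda\sL_{0,0}^{\,\inv})^{-1}f$.

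The hard part is obtaining a single radius $\delta(\tth)$ valid for all $k\ge2$ at once, which amounts to a bound on $\|\sL_{0,0}^{\,\inv}\|_{\cL(\cW_k)}$ uniform in $k$. Inspecting the symbol of $\sL_{0,0}^{\,\inv}$ in \eqref{L0inv}, its denominator grows like $\ch^2k^2$ while the numerator entries are $O(k)$, so in fact $\|\sL_{0,0}^{\,\inv}\|_{\cL(\cW_k)}=O(1/k)$, with the supremum over $k\ge2$ attained at $k=2$. Setting $\delta(\tth)$ to be the reciprocal of this supremum makes $|\lambda|\,\|\sL_{0,0}^{\,\inv}\|_{\cL(\cW_k)}<1$ hold simultaneously for every $k\ge2$, so the Neumann series converges in each $\cW_k$ and the remainders $\varphi_{f,N}(\lambda,\cdot)$ are genuinely analytic on the disk $|\lambda|<\delta(\tth)$. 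Since the decomposition and parts (i)--(ii) are already contained in \cite[Lemma A.2]{BMV3}, the only genuinely new points are the explicit formula \eqref{L0inv} and this uniform estimate.
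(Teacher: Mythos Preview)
Your argument is correct and is exactly the direct Fourier--block computation the paper has in mind; the paper itself gives no proof beyond pointing to \cite[Lemma~A.2]{BMV3} (calling this an ``extended version''), so your write--up in fact supplies the details the paper omits. One small observation: your computation correctly shows the eigenvalues on $\cU$ are $\pm 2\im\ch$ (consistent with the denominator $\lambda^2+4\ch^2$ in \eqref{primainversione3}), whereas the stated hypothesis ``$\lambda\neq\pm 2\im$'' in item~(ii) is a typo in the paper---you might flag this.
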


\begin{rmk}
We will use in the sequel the following 
decomposition formula, for any, 
%of a vector in  $\mathcal{V}_{0,0}$ and that onto $\mathcal{U} $ 
\begin{equation}\label{inoutkernel}
\begin{aligned}
&{\footnotesize \begin{matrix} \vet{\mathtt{a} \cos(x) }{\mathtt{b} \sin(x)} \end{matrix} } = \frac12 (\ta \ch^{-\frac12}+\tb \ch^{\frac12}) f_1^+ + \frac12 (\ta \ch^{-\frac12}-\tb \ch^{\frac12}) f_{-1}^+ \, ,\\ 
&{\footnotesize \begin{matrix} \vet{\mathtt{a} \sin(x) }{\mathtt{b} \cos(x)} \end{matrix}} = \frac12(\tb \ch^{\frac12}-\ta \ch^{-\frac12}) f_1^- + \frac12 (\tb \ch^{\frac12}+\ta \ch^{-\frac12})  f_{-1}^- \, ,
 \end{aligned}\quad \forall \ta\, ,\tb\in\bC\, .
\end{equation}
\end{rmk}

\noindent {\bf Notation.} We denote by $\cO(\lambda)$ 
an analytic function having a zero of order $1$ at $\lambda=0$ and
$\cO_Z (\lambda^m)$ 
an analytic function with valued in a subspace $ Z $
having a zero of order $ m $ at $\lambda=0$. 
 We denote with $\cO(\lambda^{-1}:\lambda) $ any function having a Laurent series at $\lambda=0$ of the form
$ \sum_{\substack{j\in\bZ \setminus \{0\}}} a_j \lambda^j $. We denote by $ f_{{\cal W}_k}$
a function in $ {\cal W}_k $. 

If $h(\lambda) = h_0 +  \cO(\lambda^{-1}:\lambda)$, $h_0\in \bC$, then, by the residue theorem,
\begin{equation}\label{residuethm}
\frac1{2\pi\im} \oint_\Gamma \frac{h(\lambda)}{\lambda} \de\lambda = h_0\, .
\end{equation}

We prepend to the proof of Lemma \ref{lem:secondjetsP} a list of results given by straight-forward computations.

\begin{lem}[Action of $(\sL_{0,0}- \lambda)^{-1} \cJ  \cB_{0,1}$ on $\cV_{0,0}$, $\cU$ and $\cW_2$]
\label{lem:action1}
One has 
\begin{equation}\label{risolventeintermedio}
\begin{aligned} 
&(\sL_{0,0}- \lambda)^{-1} \cJ  \cB_{0,1} f_1^+= \frac{\zeta_1^+}{\lambda} f_0^- + \mathrm{A}_2^+
 +\lambda \mathrm B_2^- + \lambda^2 f_{\cW_2^+} + 
\lambda^3 f_{\cW_2^-} +  
 \cO_{\cW_2}(\lambda^4)\, , \\ 
&(\sL_{0,0}- \lambda)^{-1} \cJ  \cB_{0,1} f_1^-= \mathrm{A}_2^- + \lambda  \mathrm{B}_2^+ +  \lambda^2f_{\cW_2^-} + 
\lambda^3 f_{\cW_2^+} + 
\cO_{\cW_2}(\lambda^4)  \, ,  \\
  &(\sL_{0,0}- \lambda)^{-1} \cJ  \cB_{0,1} f_0^+=
  \frac{ \zeta_0^+}{\lambda} f_1^- + \frac{\alpha_0^+}{ \lambda^2+4\ch^2} f_{-1}^+
   + \lambda  \frac{\beta_0^+ }{\lambda^2+4\ch^2} f_{-1}^- \, , \\
    &(\sL_{0,0}- \lambda)^{-1} \cJ  \cB_{0,1} f_0^-= 0  \, ,  \\
     &  (\sL_{0,0}-\lambda)^{-1}\cJ \cB_{0,1} f^+_{-1}
 =  \frac{\zeta_{-1}^+ }{ \lambda } f_0^- +\mathrm{A}_{-2}^+  + \lambda f_{\cW_2^-}+\cO_{\cW_2} (\lambda^2)
 \, , \\
 & (\sL_{0,0}-\lambda)^{-1}\cJ \cB_{0,1} f^-_{-1}  =
 f_{\cW_2^-} + \cO_{\cW_2}(\lambda) \, , 
\end{aligned}
\end{equation}
where  $\zeta^+_1, \zeta^+_0$, $\alpha_0^+, \beta_0^+, \zeta_{-1}^+$ are real numbers, 
%$ \mathrm{A}_{-1}^-\, ,\mathrm{B}_1^+ \in \cW_2^- $, $\mathrm{C}_1^+  \in \cW_2^+ $
 and 
\begin{align}
\notag
&\mathrm{A}_2^+(x) := {\footnotesize \sepvet{\dfrac{-a_1^{[1]} \ch + (2 + \ch^4) p_1^{[1]}}{ 2 \ch^{9/2}} \cos(2 x)}{-\dfrac{(\ch^4+1) (a_1^{[1]} \ch-2 p_1^{[1]})}{4 \ch^{11/2}}  \sin (2 x)}}\,,\ \mathrm B_2^-(x) := {\footnotesize \sepvet{\dfrac{(\ch^4+1) \big((\ch^4+4) p_1^{[1]}-2 a_1^{[1]} \ch\big)}{4 \ch^{19/2}} \sin (2 x)}{\dfrac{(\ch^4+1)  \big(a_1^{[1]} \ch (\ch^4+2)-(3 \ch^4+4) p_1^{[1]} \big)}{8 \ch^{21/2}}\cos(2x) } }\, , \\ \notag
&\mathrm A_2^-(x):={\footnotesize \sepvet{\dfrac{ \big(a_1^{[1]} \ch- (\ch^4+2) p_1^{[1]}\big)}{2\ch^{9/2}}\sin(2x) }{ -\dfrac{(\ch^4+1)(a_1^{[1]}\ch-2 p_1^{[1]})}{4 \ch^{11/2}}  \cos (2 x) }} \, ,\ \ \mathrm{B}_2^+ (x):=   {\footnotesize \sepvet{\dfrac{(\ch^4+1) \big((\ch^4+4) p_1^{[1]}-2 a_1^{[1]} \ch\big)}{4 \ch^{19/2}}\cos (2 x)}{-\dfrac{(\ch^4+1) \big(a_1^{[1]} \ch (\ch^4+2)-(3 \ch^4+4) p_1^{[1]}\big)}{8 \ch^{21/2}}\sin(2x)}} \, , \\  \label{risolventeintermedio.1}
 & \mathrm{A}_{-2}^+(x):= {\footnotesize
 \sepvet{\dfrac{ (\ch^3 p_1^{[1]}-a_1^{[1]} )}{2 \ch^{7/2}} \cos(2x) }{-\dfrac{a_1^{[1]} (\ch^4+1) }{4 \ch^{9/2}}\sin(2x)}}
 \, .
\end{align}
Moreover 
\begin{align}\notag
& (\sL_{0,0}-\lambda)^{-1}\cJ \cB_{0,1}  \mathrm A_2^+ =
 \frac{\zeta_2^+}{\lambda} f_1^- + \frac{\alpha_2^+}{\lambda^2+4\ch^2} f_{-1}^+ + 
 \mathrm A_3^+ +\frac{\lambda \beta_2^+}{\lambda^2 + 4 \ch^2} f_{-1}^- +\lambda f_{\cW_3^-} + \cO_{ \cW_3}(\lambda^2)  \, ,\\ \notag
& (\sL_{0,0}-\lambda)^{-1}\cJ \cB_{0,1}  \mathrm B_2^- = \frac{\zeta_3^+}{\lambda} f_1^+ +  \frac{\alpha_3^+}{\lambda^2+4\ch^2} f_{-1}^- + f_{\cW_3^-} +  
\frac{\lambda \beta_3^+}{\lambda^2 + 4 \ch^2} f_{-1}^+ 
+
 \cO_{ \cW_3}(\lambda)  \,  ,\\ \notag
& (\sL_{0,0}-\lambda)^{-1}\cJ \cB_{0,1}  \mathrm A_2^- = 
\frac{\zeta_2^-}{\lambda} f_1^+ + \frac{\alpha_2^-}{\lambda^2+4\ch^2} f_{-1}^- + f_{\cW_3^-} +  \frac{\lambda \beta_2^-  }{\lambda^2 + 4\ch^2} f_{-1}^+
 + \lambda f_{\cW_3^+} 
+ \cO_{\cW_3}(\lambda^2)
\, ,\\
& (\sL_{0,0}-\lambda)^{-1}\cJ \cB_{0,1}  \mathrm B_2^+ =
 \frac{\zeta_3^-}{\lambda} f_1^-   
 + \frac{\alpha_3^-}{\lambda^2+4\ch^2} f_{-1}^+ + f_{\cW_3^+}  
 + \frac{\lambda \beta_3^-}{\lambda^2 + 4\ch^2} f_{-1}^- 
 + \cO_{\cW_3} (\lambda)\, ,
\end{align}
where $\zeta_2^- $, $\beta_2^\pm$,  $\alpha_3^\pm$  and $\beta_3^{\pm} $ are real numbers
%, $\mathrm{A}_2^- $ is a vector in $\cW_3$,
 and
\begin{align}\label{2104_1446}
& \zeta_2^+ := -\frac{(a_1^{[1]})^2 \ch^2-2 a_1^{[1]} (\ch^4+2) \ch p_1^{[1]}+(3 \ch^4+4) (p_1^{[1]})^2}{8 \ch^5}   \, , 
\\  \notag
&\alpha_2^+ := -\alpha_2^- := -\frac{(a_1^{[1]})^2 \ch-2 a_1^{[1]} (\ch^4+1) p_1^{[1]}+\ch^3 (p_1^{[1]})^2}{4 \ch^3 } \, , 
\\ \notag
&\zeta_3^+ : = \zeta_3^- := \frac{(\ch^4+1) (a_1^{[1]} \ch-2 p_1^{[1]}) \big( (\ch^4+2) p_1^{[1]}-a_1^{[1]} \ch \big)}{8 \ch^{10}}\, ,
  \\ \notag
& \mathrm A_3^+(x) := 
 \sepvet{\dfrac{  (a_1^{[1]})^2 (\ch^4+3) \ch^2-2\ch p_1^{[1]} a_1^{[1]} (\ch^8+9 \ch^4+6) + (11 \ch^8+29 \ch^4+12 ) (p_1^{[1]})^2 }{32 \ch^{19/2}}\cos (3 x)}{ \dfrac{(3 \ch^4+1) \big((a_1^{[1]})^2 \ch^2-2 \ch p_1^{[1]} a_1^{[1]} (\ch^4+2) + (3 \ch^4+4) (p_1^{[1]})^2\big)}{32 \ch^{21/2}} \sin (3 x)} 
 \, .
\end{align}
\end{lem}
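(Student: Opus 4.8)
The plan is to obtain every identity in \eqref{risolventeintermedio}, \eqref{risolventeintermedio.1} and \eqref{2104_1446} by the same three-step procedure. First I would apply $\cJ\cB_{0,1}$ to the given vector, using the explicit form of the jet $\cB_{0,1}=\begin{bmatrix} a_1(x) & -p_1(x)\pa_x \\ \pa_x\circ p_1(x) & 0\end{bmatrix}$ from \eqref{B1sano} (equivalently the actions already recorded in \eqref{cBacts}), with $a_1(x)=a_1^{[1]}\cos x$, $p_1(x)=p_1^{[1]}\cos x$ given in \eqref{aino2fd}, \eqref{pino1fd}. Next I would decompose the resulting vector along the $\sL_{0,0}$-invariant splitting $H^1(\bT)=\cV_{0,0}\oplus\cU\oplus\cW_{H^1}$ of Lemma \ref{lem:VUW}. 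Finally I would invert $\sL_{0,0}-\lambda$ block by block through \eqref{primainversione1}--\eqref{primainversione4}. The whole bookkeeping is governed by two structural facts: $\cB_{0,1}$ shifts Fourier modes by $\pm1$ (cf.\ \eqref{Bjkrev}) while $\cJ$ preserves each $\cW_k$ and swaps the $\pm$ label, so that $\cJ\cB_{0,1}$ sends a vector supported on mode $k$ to one supported on modes $k-1$ and $k+1$; this is precisely what selects which resolvent formula applies to each piece.

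First I would treat the inputs $f_1^\pm,f_0^\pm,f_{-1}^\pm\in\cV_{0,0}\oplus\cU$. For these $\cJ\cB_{0,1}$ produces a mode-$0$ component, a mode-$2$ component, and (for the mode-$1$ inputs $f_0^\pm$) also a mode-$1$ component. The mode-$0$ component is a multiple of $f_0^+$ or $f_0^-$, so by \eqref{primainversione1}--\eqref{primainversione2} the resolvent yields the singular $\lambda^{-1}$ contribution, i.e.\ the coefficients $\zeta_1^+,\zeta_0^+,\zeta_{-1}^+$; the mode-$1$ component splits, via the decomposition formula \eqref{inoutkernel}, into a kernel part inverted by \eqref{primainversione1} and a part in $\cU$ inverted by \eqref{primainversione3}, the latter being the origin of the $(\lambda^2+4\ch^2)^{-1}$ denominators and of $\alpha_0^+,\beta_0^+$; and the mode-$2$ component lies in $\cW_2$ and is inverted by the analytic Neumann-type series \eqref{primainversione4}, whose successive terms $\sL_{0,0}^{\,\inv},(\sL_{0,0}^{\,\inv})^2,\dots$ alternate between $\cW_2^+$ and $\cW_2^-$ (because $\sL_{0,0}^{\,\inv}\cW_k^\pm=\cW_k^\mp$) and thus produce $\mathrm A_2^+ + \lambda\,\mathrm B_2^- + \lambda^2 f_{\cW_2^+} + \lambda^3 f_{\cW_2^-}+\cO_{\cW_2}(\lambda^4)$ with the leading matrices recorded in \eqref{risolventeintermedio.1}. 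The identity $\cB_{0,1}f_0^-=0$ is immediate from \eqref{B1sano} (the first column of $\cB_{0,1}$ annihilates the constant $f_0^-$) and gives the vanishing fourth line.

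Next I would repeat the identical computation for the mode-$2$ inputs $\mathrm A_2^\pm,\mathrm B_2^\pm$. Now $\cJ\cB_{0,1}$ generates mode-$1$ and mode-$3$ components: the mode-$1$ component again splits by \eqref{inoutkernel} into a kernel piece (producing the $\lambda^{-1}$ terms with coefficients $\zeta_2^\pm,\zeta_3^\pm$) and a $\cU$ piece (producing the $(\lambda^2+4\ch^2)^{-1}$ terms with $\alpha_j^\pm,\beta_j^\pm$), while the mode-$3$ component lies in $\cW_3$ and is expanded by \eqref{primainversione4}, giving the leading term $\mathrm A_3^+$ of \eqref{2104_1446} together with the higher $\cW_3^\pm$ remainders. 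All scalar coefficients are then read off from the corresponding projections; their reality is automatic, since $\sL_{0,0}$, $\cJ$ and $\cB_{0,1}$ (whose symbols $a_1,p_1$ are real) act on the real vectors of \eqref{funperturbed}, \eqref{fm1sigma} while keeping the real structure (cf.\ Lemma \ref{lem:Kato1}).

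The individual computations are elementary but long, and I expect the only delicate point to be the consistent splitting of each mode-$1$ piece into its $\cV_{0,0}$ and $\cU$ parts through \eqref{inoutkernel}: this is exactly where the two distinct singular behaviours, the $\lambda^{-1}$ coming from the generalized kernel and the $(\lambda^2+4\ch^2)^{-1}$ coming from $\cU$ (whose eigenvalues are $\pm 2\im\ch$), must be disentangled. Tracking the $\pm$ labels through the alternation $\sL_{0,0}^{\,\inv}\cW_k^\pm=\cW_k^\mp$, so that each power of $\lambda$ carries the correct $\cW_k^\sigma$, is the main bookkeeping obstacle; everything else is direct substitution of $a_1^{[1]},p_1^{[1]}$ into the resolvent formulas. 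These expansions then feed, through the residue formula \eqref{residuethm}, into the proofs of Lemmata \ref{lem:secondjetsP} and \ref{lem:P03acts}.
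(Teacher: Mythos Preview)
Your proposal is correct and follows essentially the same approach as the paper: apply $\cJ\cB_{0,1}$ explicitly using \eqref{B1sano}, decompose the result along the invariant splitting of Lemma~\ref{lem:VUW}, and invert $\sL_{0,0}-\lambda$ block by block via \eqref{primainversione}. The paper's terse proof additionally records the identities $\tfrac{1}{2\tanh(2\tth)-4\ch^2}=-\tfrac{1+\ch^4}{4\ch^6}$ and $\tfrac{1}{3\tanh(3\tth)-9\ch^2}=-\tfrac{1+3\ch^4}{24\ch^6}$ (from \eqref{Tartagliate}) needed to make the $\cW_2$ and $\cW_3$ constants explicit, which your application of $\sL_{0,0}^{\,\inv}$ through \eqref{L0inv} would reproduce.
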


\begin{proof}
Use the operator $\cB_{0,1}$ in \eqref{B1sano}, Lemma \ref{lem:VUW} and that
\begin{equation*}
\frac1{2\tanh(2\tth) - 4\ch^2} = - \frac{1+\ch^4}{4\ch^6}\, ,\quad \frac1{3\tanh(3\tth) - 9\ch^2} = -\frac{1+3\ch^4}{24\ch^6}\, ,
\end{equation*}
which comes from the identities in \eqref{Tartagliate}.
\end{proof}

\begin{lem}[Action of $(\sL_{0,0}-\lambda)^{-1} \cJ\cB_{1,0}$ on $\cV_{0,0} $ and $\cU$]
\label{lem:action10} 
One has 
\begin{equation}\label{murisolventeinterno}
\begin{aligned}
&(\sL_{0,0}-\lambda)^{-1} \cJ\cB_{1,0}f_1^+ = - \im \frac{\mu_\tth}{ \lambda} f_1^+ + \im \frac{ 2\ch \mu_\tth}{\lambda^2 + 4 \ch^2} f_{-1}^- - \im  \frac{\lambda \mu_\tth}{\lambda^2 + 4 \ch^2} f_{-1}^+   \, , \\ 
&(\sL_{0,0}-\lambda)^{-1}\cJ\cB_{1,0}f_1^-= - \im \frac{ \mu_\tth}{\lambda} f_1^- + \im \frac{ 2 \ch \mu_\tth}{\lambda^2 + 4 \ch^2}  f_{-1}^+ 
+\im \lambda \frac{ \mu_\tth}{\lambda^2 + 4 \ch^2}  f_{-1}^-   \, , \\
& (\sL_{0,0}-\lambda)^{-1}\cJ\cB_{1,0}f_{0}^\pm=0\, , \\
&(\sL_{0,0}-\lambda)^{-1} \cJ\cB_{1,0}f_{-1}^+ =  \im \frac{\mu_\tth}{ \lambda} f_1^+ 
- \im  \frac{2\ch  \mu_\tth}{\lambda^2 + 4 \ch^2} f_{-1}^- 
+ \im \frac{ \lambda \mu_\tth}{\lambda^2 + 4 \ch^2} f_{-1}^+ 
  \, , \\ 
&(\sL_{0,0}-\lambda)^{-1}\cJ\cB_{1,0}f_{-1}^-=  -\im \frac{ \mu_\tth}{\lambda} f_1^- 
+ \im \frac{ 2 \ch \mu_\tth}{\lambda^2 + 4 \ch^2}  f_{-1}^+ 
+ \im \lambda \frac{ \mu_\tth}{\lambda^2 + 4 \ch^2}  f_{-1}^- \, , 
\end{aligned}
\end{equation}
with 
\begin{equation}\label{muh}
\mu_\tth:=\dfrac{(\ch^4-1) \tth-\ch^2}{2 \ch} \, . 
\end{equation}
\end{lem}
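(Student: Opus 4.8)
The plan is to reduce everything to a finite-dimensional computation inside the four-dimensional space $\cW_1=\mathrm{span}\{f_1^+,f_1^-,f_{-1}^+,f_{-1}^-\}$, where the resolvent of $\sL_{0,0}$ is already known in closed form. First I would read off from \eqref{B1sano} that the jet $\cB_{1,0}$ is the Fourier multiplier $\ell_{1,0}(|D|)\,\Pi_{\mathtt{s}}$, with $\Pi_{\mathtt{s}}$ in \eqref{Pi_odd} and $\ell_{1,0}$ in \eqref{ell10}. The crucial structural observation is that $\cB_{1,0}$ acts only on the second component, through $\sgn(D)$, and, being a Fourier multiplier, leaves each frequency invariant; in particular $\cB_{1,0}\,\cW_1\subseteq\cW_1$, while $\cB_{1,0}f_0^\pm=0$ since $\sgn(D)$ annihilates the zero mode. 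This is exactly what makes the final formulas exact rational functions of $\lambda$, rather than the $\lambda$-Taylor expansions that appear for the higher blocks $\cW_k$, $k\ge2$, in Lemma~\ref{lem:VUW}.

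Next I would carry out the computation in four steps. First, using $\sgn(D)\cos x=\im\sin x$, $\sgn(D)\sin x=-\im\cos x$, and the fact that $\ell_{1,0}(|D|)$ has eigenvalue $\ch^2+\tth(1-\ch^4)=-2\ch\,\mu_\tth$ on frequency one (by $\tanh\tth=\ch^2$ and \eqref{muh}), I evaluate $\cB_{1,0}$ on the six basis vectors; for instance
\begin{equation*}
\cB_{1,0}f_1^+=2\im\,\ch^{\frac12}\mu_\tth\vet{0}{\cos x}\,,\qquad
\cB_{1,0}f_1^-=-2\im\,\ch^{\frac12}\mu_\tth\vet{0}{\sin x}\,,
\end{equation*}
in agreement with \eqref{cBacts}, and analogously on $f_{-1}^\pm$. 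Second, applying $\cJ$ converts these into first-component vectors proportional to $\vet{\cos x}{0}$ or $\vet{\sin x}{0}$. Third, the decomposition formula \eqref{inoutkernel} rewrites each such vector as a combination of $f_{\pm1}^\pm$, giving e.g. $\cJ\cB_{1,0}f_1^+=\im\mu_\tth\,(f_1^++f_{-1}^+)$ and $\cJ\cB_{1,0}f_1^-=\im\mu_\tth\,(f_1^--f_{-1}^-)$. Fourth, I invert $\sL_{0,0}-\lambda$ termwise, using \eqref{primainversione1} on the kernel part $f_1^\pm\in\cV_{0,0}$ and \eqref{primainversione3} on the part $f_{-1}^\pm\in\cU$; collecting the terms produces the six identities in \eqref{murisolventeinterno}.

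Since no step ever leaves $\cW_1$, there is no genuine analytic obstacle here: the main difficulty is purely bookkeeping — keeping track of the factors of $\im$ generated by $\sgn(D)$, of the signs in the decomposition \eqref{inoutkernel}, and of the conversion $\ch^2+\tth(1-\ch^4)=-2\ch\,\mu_\tth$. A useful internal check is the observation that $\cB_{1,0}f_{-1}^-$ yields the same second-component vector as $\cB_{1,0}f_1^-$, while $\cB_{1,0}f_{-1}^+$ mirrors $\cB_{1,0}f_1^+$ up to a sign; this explains why, in the end, the $f_{-1}^-$ row of \eqref{murisolventeinterno} coincides with the $f_1^-$ row and the $f_{-1}^+$ row is the opposite of the $f_1^+$ row. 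The relation $\cJ\,\cW_k^\pm=\cW_k^\mp$ invoked in the second step is immediate from the definition of $\cJ$ in \eqref{PoissonTensor}.
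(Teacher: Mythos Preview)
Your proposal is correct and follows exactly the route sketched in the paper's one-line proof: apply $\cB_{1,0}=\ell_{1,0}(|D|)\Pi_{\mathtt s}$ to the basis vectors, decompose via \eqref{inoutkernel}, and invert using \eqref{primainversione1}--\eqref{primainversione3}. You have simply made explicit the intermediate bookkeeping (the action of $\sgn(D)$, the identity $\ell_{1,0}(1)=\ch^2+\tth(1-\ch^4)=-2\ch\,\mu_\tth$, and the symmetry between the $f_1^\pm$ and $f_{-1}^\pm$ rows) that the paper leaves to the reader.
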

\begin{proof} We apply the operator
 ${\cal B}_{1,0} = \ell_{1,0}(|D|) \Pi_{\mathtt{s}}$ in  \eqref{B1sano}
 to the vectors  in $ \cV_{0,0} $  and use 
\eqref{inoutkernel} and \eqref{primainversione1}-\eqref{primainversione3}. 
\end{proof}

\begin{lem}[Action of $ (\sL_{0,0}  -\lambda)^{-1} \cJ\cB_{0,2}$ on $\cV_{0,0} $]
\label{lem:action02}
One has  
\begin{align}\notag
&  (\sL_{0,0}  -\lambda)^{-1} \cJ {\cal B}_{0,2} f_1^+ = 
\frac{\tau_1^+}{\lambda} f_1^- + \frac{\ell_{1}^+}{\lambda^2+4\ch^2} f_{-1}^+  + \mathrm L_3^+ 
 + \frac{\lambda m_1^+}{\lambda^2+4\ch^2} f_{-1}^- +  O_{\cW_3}(\lambda)  \, , \\ \label{1904_1424} 
&  (\sL_{0,0}  -\lambda)^{-1} \cJ{\cal B}_{0,2} f_1^- =  
\frac{\tau_1^-}{\lambda} f_1^+ + \frac{\ell_{1}^-}{\lambda^2+4\ch^2} f_{-1}^-  + f_{\cW_3^-}   + \cO_{\cU\oplus \cW_3}(\lambda)   \, , 
 \\  \notag
&   (\sL_{0,0}  -\lambda)^{-1} \cJ\cB_{0,2} f_{0}^+ = 
\frac{\tau_0^+}{\lambda} f_0^- + f_{\cW_2^+}+ \cO_{\cW_2}(\lambda)\, , \quad    (\sL_{0,0}  -\lambda)^{-1} \cJ\cB_{0,2} f_{0}^- = 0  \, ,
 \end{align}
 where $m_1^+$, $\tau_1^-$, $\tau_0^+$, are real numbers,
%   $ \mathrm L_1^- \in \cW_3$, $\mathrm L_0^+  \in \cW_2$
    and 
 \begin{align}  \notag
& \tau_1^+ :=\frac1{4 \ch} \big(2 a_2^{[0]} \ch^2+a_2^{[2]} \ch^2+2 \ttf_2(1-\ch^4)-4 \ch p_2^{[0]}-2 \ch p_2^{[2]} \big) \, , \\ \notag
  &\ell_{1}^+ :=\frac12 \big(\ch^2 (2 a_2^{[0]}+a_2^{[2]})-2 \ttf_2 (1-\ch^4) \big)\, , \quad \ell_{1}^- := \frac12 \big( \ch^2 (-2a_2^{[0]}+a_2^{[2]})+2 \ttf_2 (1-\ch^4) \big) \, , \\ \label{1904_1446} 
&  \mathrm L_3^+(x) := 
 \sepvet{-\dfrac{ \big(a_2^{[2]}\ch (\ch^4+3)-2 (5 \ch^4+3) p_2^{[2]})}{16 \ch^{9/2}} \cos (3 x)}{-\dfrac{(3 \ch^4+1) (a_2^{[2]} \ch-2 p_2^{[2]})}{16 \ch^{11/2}}\sin (3 x)} \,  . 
 \end{align}
\end{lem}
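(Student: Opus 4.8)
The proof follows the template already used for Lemmata \ref{lem:action1} and \ref{lem:action10}: one applies the operator $\cB_{0,2}$ in \eqref{B2sano} to each generalized kernel vector, applies $\cJ$, decomposes the result along the $\sL_{0,0}$-invariant splitting $H^1(\bT)=\cV_{0,0}\oplus\cU\oplus\cW_{H^1}$ of Lemma \ref{lem:VUW}, and finally inverts $\sL_{0,0}-\lambda$ block by block via \eqref{primainversione1}--\eqref{primainversione4}. First I would record the action of $\cB_{0,2}$ on the kernel. On $f_1^+$ this is the function displayed in \eqref{cBacts}, supported on the first and third harmonics; the entirely analogous computation from \eqref{B2sano} gives $\cB_{0,2}f_1^-$ (first and third harmonics, in the $\sigma=-$ pattern), while $\cB_{0,2}f_0^+$ has a harmonic-$0$ part equal to $a_2^{[0]}f_0^+$ and a harmonic-$2$ part, and $\cB_{0,2}f_0^-=0$ because both $\pa_x$ and $\ell_{0,2}(|D|)$ annihilate the constant $f_0^-=\vet{0}{1}$. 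The last identity yields at once the fourth formula.

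For $f_1^+$, after applying $\cJ$ (which sends $\cW_k^+\to\cW_k^-$) the first-harmonic piece lands in $\cW_1^-=\mathrm{span}\{f_1^-,f_{-1}^-\}$ and the third-harmonic piece in $\cW_3^-$. Decomposing the first-harmonic piece via \eqref{inoutkernel}, its $f_1^-$-component lies in $\cV_{0,0}$ and, by \eqref{primainversione1}, produces the simple pole $\tau_1^+\lambda^{-1}f_1^-$; its $f_{-1}^-$-component lies in $\cU$, and \eqref{primainversione3} converts it into $\frac{\ell_1^+}{\lambda^2+4\ch^2}f_{-1}^+ +\frac{\lambda m_1^+}{\lambda^2+4\ch^2}f_{-1}^-$, the shift $f_{-1}^-\mapsto -2\ch f_{-1}^+ -\lambda f_{-1}^-$ in \eqref{primainversione3} being exactly what turns the leading $\cU$-contribution into the index $-1,+$. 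The third-harmonic piece sits in $\cW_3$, where $\sL_{0,0}-\lambda$ is invertible near $\lambda=0$, and \eqref{primainversione4} gives $\sL_{0,0}^{\,\inv}(\cdot)+\cO_{\cW_3}(\lambda)$ whose leading term is the explicit vector $\mathrm L_3^+$; here one evaluates $\sL_{0,0}^{\,\inv}$ on the third harmonic through $\tanh(3\tth)$ in \eqref{Tartagliate}, i.e.\ the identity $\tfrac{1}{3\tanh(3\tth)-9\ch^2}=-\tfrac{1+3\ch^4}{24\ch^6}$ already used in Lemma \ref{lem:action1}. The case $f_1^-$ is identical with the roles of the $+$ and $-$ indices interchanged (so the leading $\cU$-term now sits along $f_{-1}^-$, and the order-$\lambda$ $\cU$-term is absorbed into $\cO_{\cU\oplus\cW_3}(\lambda)$), and for $f_0^+$ the harmonic-$0$ part $a_2^{[0]}f_0^+$ is sent by $\cJ$ to $-a_2^{[0]}f_0^-\in\cV_{0,0}$, producing the pole $\tau_0^+\lambda^{-1}f_0^-$, while the harmonic-$2$ part yields an analytic $\cW_2$-contribution $f_{\cW_2^+}+\cO_{\cW_2}(\lambda)$ via $\sL_{0,0}^{\,\inv}\cW_2^-=\cW_2^+$ in \eqref{L0inv}.

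The computation is essentially bookkeeping once this framework is fixed; the only genuinely delicate points are (i) keeping the pole at $\lambda=0$ (from the $\cV_{0,0}$-directions) and the poles at $\lambda=\pm 2\im\ch$ (from $\cU$, encoded in the denominators $\lambda^2+4\ch^2$) cleanly separated from the $\cW_k$-contributions, which are analytic at $\lambda=0$, and (ii) the explicit evaluation of $\sL_{0,0}^{\,\inv}$ on the second and third harmonics via the tangent identities \eqref{Tartagliate}. The reality of the scalar coefficients $\tau_1^+,\ell_1^\pm,m_1^+,\tau_1^-,\tau_0^+$ is automatic from the reversibility-preserving structure (equivalently from $\overline{P_{0,\e}}=P_{0,\e}$ in Lemma \ref{lem:Kato1}), and the closed forms in \eqref{1904_1446} follow by substituting the Stokes coefficients $a_2^{[0]},a_2^{[2]},p_2^{[0]},p_2^{[2]}$ from \eqref{apexp} and $\tf_2$ from \eqref{expfe}. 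I expect step (ii) --- correctly propagating the third-harmonic data through $\sL_{0,0}^{\,\inv}$ and matching it against $\mathrm L_3^+$ --- to be the main source of computational error, although it presents no conceptual difficulty.
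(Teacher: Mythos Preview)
Your proposal is correct and follows exactly the same approach as the paper: apply $\cB_{0,2}$ and $\cJ$ to each $f_j^\sigma\in\cV_{0,0}$, decompose via \eqref{inoutkernel} along the splitting of Lemma \ref{lem:VUW}, and invert $\sL_{0,0}-\lambda$ blockwise using \eqref{primainversione1}--\eqref{primainversione4} together with the tangent identities \eqref{Tartagliate}. One minor remark: the explicit formulas in \eqref{1904_1446} are already written in terms of $a_2^{[0]},a_2^{[2]},p_2^{[0]},p_2^{[2]},\tf_2$, so no substitution of the Stokes coefficients from \eqref{apexp} or \eqref{expfe} is needed at this stage --- they drop out directly from the harmonic-by-harmonic decomposition.
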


\begin{proof}
We apply the operators $\cB_{0,2}$ in \eqref{B2sano} and $\cJ$ in \eqref{PoissonTensor}   to the vectors  in $ \cV_{0,0} $. Then we use \eqref{inoutkernel} and Lemma \ref{lem:VUW} to obtain \eqref{1904_1424}-\eqref{1904_1446}.
\end{proof}

\begin{lem}[Action of $(\sL_{0,0}- \lambda)^{-1}\cJ \cB_{1,1}$ on $\cV_{0,0} $ and $\cU$]
\label{lem:action11}
One has 
\begin{equation}
\label{P11acts}
\begin{aligned}
& (\sL_{0,0}- \lambda)^{-1}\cJ \cB_{1,1}  f_1^+  
 =  
- \frac{\im \ch^{-\frac12}}{\lambda^2} f_0^-
+\frac{\im \ch^{-\frac12}}{\lambda} f_0^+
  + \im  f_{\cW_2^-} + \cO_\cW(\lambda)  \, ,
\\
&  (\sL_{0,0}- \lambda)^{-1}\cJ \cB_{1,1} f_1^-   = 
 \frac{\im \ch^{-\frac32}}{\lambda} f_0^- 
 +\im \mathrm Q_{2}^++ \lambda  \im f_{\cW_2^-} + 
  \cO_{\cW_2}(\lambda^2) \, ,
\\
&  (\sL_{0,0}- \lambda)^{-1}\cJ \cB_{1,1} f_0^+ =  \frac{ \im \ch^{-\frac32} }{\lambda} f_1^+ -\frac{2\im \ch^{-\frac12}}{\lambda^2+4\ch^2} f_{-1}^- + \lambda \frac{\im \ch^{-\frac32}}{\lambda^2+4\ch^2} f_{-1}^+ \, , \\
&  (\sL_{0,0}- \lambda)^{-1}\cJ \cB_{1,1} f_0^- = 
\frac{ \im \ch^{-\frac12}}{\lambda} f_1^- +   \frac{2\im \ch^{\frac12}}{\lambda^2+4\ch^2}f_{-1}^+
+ \lambda\frac{\im \ch^{-\frac12}}{\lambda^2+4\ch^2} f_{-1}^- \, ,
\end{aligned} 
\end{equation}
where 
\begin{align}
\label{1804_1855}
&
\mathrm Q_{2}^+(x)
:=   \sepvet{\dfrac{ (\ch^4+3) p_1^{[1]} }{4 \ch^{9/2}} \cos(2x)}{ \dfrac{3(\ch^4+1) p_1^{[1]} }{8 \ch^{11/2}} \sin(2x) } \, .
\end{align}
\end{lem}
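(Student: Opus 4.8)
The plan is to follow verbatim the template already used for Lemmata~\ref{lem:action1}, \ref{lem:action10} and \ref{lem:action02}: first compute $\cJ\cB_{1,1}$ applied to each of the vectors $f_1^+,f_1^-,f_0^+,f_0^-$, then expand the result along the $\sL_{0,0}$-invariant subspaces $\cV_{0,0}$, $\cU$ and $\cW_2$ of Lemma~\ref{lem:VUW}, and finally apply the resolvent $(\sL_{0,0}-\lambda)^{-1}$ block by block via the formulas \eqref{primainversione1}--\eqref{primainversione4}. The one simplification I would isolate at the start, and which makes this lemma much cheaper than the preceding ones, is the following. By \eqref{B2sano} the jet $\cB_{1,1}$ equals $-\im\,p_1(x)\,\cJ$; since the scalar operator $p_1(x)\uno$ commutes with the constant matrix $\cJ$ and $\cJ^2=-\uno$, one obtains
\[
\cJ\,\cB_{1,1}=-\im\,p_1(x)\,\cJ^2=\im\,p_1(x)\,\uno\,,
\]
so that $\cJ\cB_{1,1}$ is nothing but multiplication by the scalar function $\im\,p_1(x)=\im\,p_1^{[1]}\cos(x)$, with $p_1^{[1]}=-2\ch^{-1}$ from \eqref{pino1fd}.

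With this reduction the entire computation becomes elementary trigonometry. Multiplying each kernel vector by $\im\,p_1^{[1]}\cos(x)$ and using $\cos^2 x=\tfrac12(1+\cos 2x)$ and $\cos x\sin x=\tfrac12\sin 2x$, I would read off, recalling \eqref{fksigma},
\[
\cJ\cB_{1,1}f_1^+=\tfrac{\im p_1^{[1]}}{2}\big(\ch^{\frac12}f_0^++f_2^+\big)\,,\qquad
\cJ\cB_{1,1}f_1^-=\tfrac{\im p_1^{[1]}}{2}\big(\ch^{-\frac12}f_0^-+f_2^-\big)\,,
\]
while for the constant vectors the decomposition formula \eqref{inoutkernel} gives
\[
\cJ\cB_{1,1}f_0^+=\tfrac{\im p_1^{[1]}\ch^{-\frac12}}{2}\big(f_1^++f_{-1}^+\big)\,,\qquad
\cJ\cB_{1,1}f_0^-=\tfrac{\im p_1^{[1]}\ch^{\frac12}}{2}\big(f_1^-+f_{-1}^-\big)\,.
\]
Thus the images of $f_1^\pm$ land in $\cV_{0,0}\oplus\cW_2$, whereas the images of $f_0^\pm$ land in $\cV_{0,0}\oplus\cU$, which already explains the qualitative shape of the four right-hand sides in \eqref{P11acts}.

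It then remains to invert $\sL_{0,0}-\lambda$ on each summand. On $\cV_{0,0}$ the formulas \eqref{primainversione1}--\eqref{primainversione2} produce the simple poles $\lambda^{-1}f_0^+$, $\lambda^{-1}f_0^-$, $\lambda^{-1}f_1^\pm$; the double pole $-\im\ch^{-\frac12}\lambda^{-2}f_0^-$ in the first line of \eqref{P11acts} arises precisely because $f_0^+$ is a genuine generalized (and not an honest) eigenvector, so that \eqref{primainversione2} contributes the extra term $\lambda^{-2}f_0^-$. On $\cU$ the formula \eqref{primainversione3} yields the pieces with denominator $\lambda^2+4\ch^2$, and on $\cW_2$ the expansion \eqref{primainversione4} gives the regular part, with leading term $\sL_{0,0}^{\,\inv}f_2^\pm$ and remainder of order $\cO_{\cW_2}(\lambda)$. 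The only genuinely computational point, and hence the main obstacle, is the explicit evaluation of the $\cW_2$-contribution for $f_1^-$: using \eqref{L0inv} together with $\tanh(2\tth)=\tfrac{2\ch^2}{1+\ch^4}$ from \eqref{Tartagliate}, one computes $\sL_{0,0}^{\,\inv}f_2^-\in\cW_2^+$ and checks that $-\im\ch^{-1}\sL_{0,0}^{\,\inv}f_2^-=\im\,\mathrm Q_2^+$ with $\mathrm Q_2^+$ as in \eqref{1804_1855}; the analogous term for $f_1^+$ lies in $\cW_2^-$ and is absorbed into $\im f_{\cW_2^-}$. Matching the powers of $\lambda$ and collecting coefficients then yields \eqref{P11acts}, the $\lambda$-order bookkeeping in the $\cW$-remainders being identical to that in Lemma~\ref{lem:action1}.
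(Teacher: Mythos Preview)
Your proposal is correct and follows exactly the approach of the paper's (one-line) proof: observe from \eqref{B2sano} that $\cB_{1,1}=-\im p_1(x)\cJ$, whence $\cJ\cB_{1,1}=\im p_1(x)\,\uno$ is scalar multiplication by $\im p_1^{[1]}\cos(x)$, and then apply Lemma~\ref{lem:VUW}. You have simply carried out in detail the trigonometric splitting and the blockwise resolvent inversion that the paper leaves implicit, including the verification that $-\im\ch^{-1}\sL_{0,0}^{\,\inv}f_2^-$ equals $\im\mathrm Q_2^+$.
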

\begin{proof}
We have  $\cB_{1,1} = - \im p_1(x) \cJ$  by \eqref{B2sano}, with $p_1(x) = p_1^{[1]} \cos(x)$ in \eqref{pino1fd} and $p_1^{[1]} = -2\ch^{-1}$. Use also Lemma \ref{lem:VUW}.
\end{proof}

\begin{lem}[Action of $ (\sL_{0,0}- \lambda)^{-1} \cJ\cB_{2,0}$ on $f_0^-$]
\label{lem:action20}
 One has
\begin{equation}\label{1804_1819}
 (\sL_{0,0}- \lambda)^{-1} \cJ\cB_{2,0} f_0^-  = 
 \frac{\tth}{\lambda^2} f_0^- -\frac{\tth}{\lambda} f_0^+ \, .
\end{equation}
\end{lem}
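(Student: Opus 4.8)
The plan is to reduce the whole computation to the action of a single Fourier multiplier on the constant vector $f_0^-$ and then to invoke the explicit resolvent formula \eqref{primainversione2}. First I would read off from \eqref{B2sano} that the $(2,0)$-jet of $\cB_{\mu,\e}$, in the notation \eqref{notazione}, is the Fourier multiplier operator
\begin{equation*}
\cB_{2,0} = \ell_{2,0}(|D|)\,\Pi_{\mathtt{ev}}\,,
\end{equation*}
since the only summand of $\cB_2$ carrying a pure $\mu^2$ factor is $\mu^2\ell_{2,0}(|D|)\Pi_{\mathtt{ev}}$; the $\e^2$-block and the $-\im\mu\e\,p_1(x)\cJ$ term do not contribute to the coefficient of $\mu^2\e^0$.

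Next I would evaluate $\cB_{2,0}$ on $f_0^- = \vet{0}{1}$ from \eqref{funperturbed}. Because $\Pi_{\mathtt{ev}}\vet{0}{1} = \vet{0}{1} = f_0^-$ and the surviving second component is the constant function $1$, which sits in the kernel of $|D|$, only the value of the symbol $\ell_{2,0}$ at frequency $0$ enters. From \eqref{ell20}, $\tanh(\tth\cdot 0) = 0$, so $\ell_{2,0}(0) = \tth\,(1-0)\,(1-0) = \tth$, giving $\cB_{2,0}f_0^- = \tth f_0^-$. I would then apply $\cJ$ from \eqref{PoissonTensor}: since $\cJ f_0^- = \cJ\vet{0}{1} = \vet{1}{0} = f_0^+$, this produces $\cJ\cB_{2,0}f_0^- = \tth f_0^+$.

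Finally, feeding this into the resolvent identity \eqref{primainversione2}, namely $(\sL_{0,0}-\lambda)^{-1}f_0^+ = -\tfrac1\lambda f_0^+ + \tfrac1{\lambda^2}f_0^-$, I obtain
\begin{equation*}
(\sL_{0,0}-\lambda)^{-1}\cJ\cB_{2,0}f_0^- = \tth\Big(-\tfrac1\lambda f_0^+ + \tfrac1{\lambda^2}f_0^-\Big) = \frac{\tth}{\lambda^2}f_0^- - \frac{\tth}{\lambda}f_0^+\,,
\end{equation*}
which is exactly \eqref{1804_1819}.

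As for difficulties, there is no obstacle of substance here: the statement follows by a direct substitution into formulas already established in the excerpt. The single point that deserves care is the observation that $f_0^-$ is constant in $x$, so that the Fourier multiplier $\ell_{2,0}(|D|)$ acts on it simply as multiplication by its zero-frequency value $\ell_{2,0}(0)=\tth$; once this is noted, the application of $\cJ$ and of the resolvent on $\cV_{0,0}$ are routine.
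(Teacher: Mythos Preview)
Your proof is correct and follows exactly the same approach as the paper's: identify $\cB_{2,0}=\ell_{2,0}(|D|)\Pi_{\mathtt{ev}}$ from \eqref{B2sano}, evaluate it on the constant vector $f_0^-$ to get $\tth f_0^-$, apply $\cJ$, and then invoke the resolvent formula \eqref{primainversione2}. You have simply spelled out in full the details that the paper leaves implicit in its one-line proof.
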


\begin{proof} We apply ${\cal B}_{2,0} = \ell_{2,0}(|D|) \Pi_{\mathtt {ev}}$ in \eqref{B2sano}
and 
\eqref{primainversione2}. 
\end{proof}

\begin{lem}[Action of $ ((\sL_{0,0}  - \lambda)^{-1}  \cJ {\cal B}_{0,1})^2$ on $\cV_{0,0}$] 
\label{lem:B012}
One has
\begin{align}  
&\begin{aligned}\label{2104_1749}
&   [(\sL_{0,0}  - \lambda)^{-1}  \cJ \cB_{0,1} ]^2 f_1^+  
 = 
\frac{\zeta_2^+ }{\lambda}  f_1^-   + \frac{\alpha_2^+}{\lambda^2+4\ch^2} f_{-1}^+ 
+\zeta_3^+ f_1^+ + \mathrm A_2^+ \\
& \qquad \qquad \qquad \qquad \qquad 
\qquad        +\lambda \big( f_{\cW_1^-}
 +f_{\cW_3^-}
  \big) +
\lambda^2  f_{\cW_1^+} 
  + \cO_{\cW_3}(\lambda^2)   + \cO(\lambda^3) \, ,
 \\   
&  [(\sL_{0,0}  - \lambda)^{-1}  \cJ \cB_{0,1} ]^2 f_1^-   = 
\frac{\zeta_2^-}{\lambda} f_1^+ 
 + \frac{\alpha_2^-}{\lambda^2+4\ch^2} f_{-1}^- 
+\zeta_3^- f_1^- + f_{\cW_3^-}   +\lambda 
 \big(  f_{\cW_1^+ }
 + f_{\cW_3^+}  \big) \\
 & \qquad \qquad \qquad \qquad 
 \qquad \qquad + \lambda^2  f_{\cW_1^-} + \cO_{\cW_3} (\lambda^2)  + \cO(\lambda^3)\, ,
\end{aligned} 
 \\     \notag
&  [(\sL_{0,0}- \lambda)^{-1} \cJ \cB_{0,1}]^2  f_0^+  = 
\zeta_0^+ \mathrm B_2^+ + \frac{\alpha_0^+}{4 \ch^2} \mathrm{A}_{-2}^+
 + \cO(\lambda^{-1}:\lambda) \, ,\ \  [(\sL_{0,0}- \lambda)^{-1} \cJ \cB_{0,1}]^2  f_0^- = 0\, .
\end{align}
\end{lem}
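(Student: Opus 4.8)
The plan is to obtain each of the four identities by \emph{composing} the single–application formulas of Lemma~\ref{lem:action1} with themselves: writing $[(\sL_{0,0}-\lambda)^{-1}\cJ\cB_{0,1}]^2 f = (\sL_{0,0}-\lambda)^{-1}\cJ\cB_{0,1}\big[(\sL_{0,0}-\lambda)^{-1}\cJ\cB_{0,1} f\big]$ and feeding the already–computed right–hand sides back into the operator. Set $T:=(\sL_{0,0}-\lambda)^{-1}\cJ\cB_{0,1}$. For $f=f_1^+$ the first line of \eqref{risolventeintermedio} gives $T f_1^+ = \tfrac{\zeta_1^+}{\lambda} f_0^- + \mathrm A_2^+ + \lambda\,\mathrm B_2^- + \lambda^2 f_{\cW_2^+} + \lambda^3 f_{\cW_2^-} + \cO_{\cW_2}(\lambda^4)$, and it remains to apply $T$ to each summand. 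The term $\tfrac{\zeta_1^+}{\lambda} f_0^-$ is annihilated because $T f_0^-=0$ (fourth line of \eqref{risolventeintermedio}); the vectors $\mathrm A_2^+$ and $\mathrm B_2^-$ are handled by the second block of Lemma~\ref{lem:action1}, which supplies $T\mathrm A_2^+$ and $T\mathrm B_2^-$ together with the explicit constants $\zeta_2^+,\alpha_2^+,\zeta_3^+$; and the remaining $\cW_2$–valued pieces are treated through the analytic expansion \eqref{primainversione4} of the resolvent on $\cW_k$ together with the mapping property \eqref{Bjkrev}, which sends $\cW_2^\sigma$ into $\cW_1^{-\sigma}\oplus\cW_3^{-\sigma}$.

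The bookkeeping is then purely a matter of collecting powers of $\lambda$. Multiplying $T\mathrm B_2^- = \tfrac{\zeta_3^+}{\lambda} f_1^+ + \dots$ by the prefactor $\lambda$ turns its simple pole into the constant $\zeta_3^+ f_1^+$, while $T\mathrm A_2^+$ contributes its own pole $\tfrac{\zeta_2^+}{\lambda} f_1^-$, its $\cU$–term $\tfrac{\alpha_2^+}{\lambda^2+4\ch^2} f_{-1}^+$ and the $\cW_3$–valued constant $\mathrm A_3^+$. The $\lambda^2 f_{\cW_2^+}$ and $\lambda^3 f_{\cW_2^-}$ terms, once hit by $T$, only produce contributions of order $\lambda$ and higher (a simple pole of the outer resolvent lowers the order by at most one), which, together with the order–$\ge\lambda$ parts of $T\mathrm A_2^+$ and $\lambda\,T\mathrm B_2^-$, assemble into the displayed remainders $\lambda\big(f_{\cW_1^-}+f_{\cW_3^-}\big)$, $\lambda^2 f_{\cW_1^+}$ and $\cO_{\cW_3}(\lambda^2)+\cO(\lambda^3)$. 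The identity for $f_1^-$ is obtained in exactly the same way, starting from the second line of \eqref{risolventeintermedio} and using $T\mathrm A_2^-$, $T\mathrm B_2^+$. For $f_0^-$ the claim is immediate, since $T f_0^-=0$ forces $T^2 f_0^-=0$.

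The case $f=f_0^+$ is slightly different because $T f_0^+ = \tfrac{\zeta_0^+}{\lambda} f_1^- + \tfrac{\alpha_0^+}{\lambda^2+4\ch^2} f_{-1}^+ + \lambda\tfrac{\beta_0^+}{\lambda^2+4\ch^2} f_{-1}^-$ already carries a pole at $\lambda=0$. Applying $T$ and using $T f_1^- = \mathrm A_2^- + \lambda\,\mathrm B_2^+ + \dots$ and $T f_{-1}^+ = \tfrac{\zeta_{-1}^+}{\lambda} f_0^- + \mathrm A_{-2}^+ + \dots$ from \eqref{risolventeintermedio}, the only summands surviving at order $\lambda^0$ are $\zeta_0^+\mathrm B_2^+$ (the constant part of $\tfrac{\zeta_0^+}{\lambda} f_1^-$ seen through $\lambda\,\mathrm B_2^+$) and $\tfrac{\alpha_0^+}{4\ch^2}\mathrm A_{-2}^+$ (the value at $\lambda=0$ of $\tfrac{\alpha_0^+}{\lambda^2+4\ch^2}\,T f_{-1}^+$); the prefactor $\lambda$ in the $f_{-1}^-$ term together with the regularity of $T f_{-1}^-$ precludes any further $\lambda^0$ contribution, and all remaining pieces are Laurent terms without constant part, hence absorbed into $\cO(\lambda^{-1}:\lambda)$.

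The main obstacle is the order–tracking across the composition, and in particular the interplay of the two kinds of singularities of $(\sL_{0,0}-\lambda)^{-1}$: the simple pole on the three eigenvectors $f_1^\pm,f_0^-$ and the \emph{double} pole along $f_0^+$ coming from \eqref{primainversione2}. At each step one must decide whether a pole of the outer resolvent meets a vanishing, a constant, or a linear coefficient of the inner expansion, so that the genuinely singular ($1/\lambda$), the constant ($\lambda^0$) and the higher–order terms are correctly separated and the remainders placed in the right subspaces $\cW_1^\sigma,\cW_3^\sigma,\cU$. Since Lemma~\ref{lem:action1} has already produced the second–generation vectors $\mathrm A_3^+$, the constants $\zeta_2^\pm,\zeta_3^\pm,\alpha_2^\pm$ and the relevant $\cO$–classes, no new scalar products are required: the proof reduces to this careful reorganization by powers of $\lambda$.
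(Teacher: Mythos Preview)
Your proposal is correct and follows essentially the same approach as the paper: apply Lemma~\ref{lem:action1} twice and track powers of $\lambda$, using \eqref{Bjkrev} and \eqref{primainversione4} to control the $\cW_2$-valued remainders under a further application of $T$. The paper's proof is even more terse, recording only the three identities $\lambda^2\,T f_{\cW_2^\pm}=\lambda\alpha_5^\pm f_1^\mp+\lambda^2 f_{\cW_1^\pm}+\cO_{\cW_3}(\lambda^2)+\cO(\lambda^3)$, $\lambda^3\,T f_{\cW_2^\mp}=\lambda^2\alpha_6^\pm f_1^\pm+\cO(\lambda^3)$, and $T\,\cO_{\cW_2}(\lambda^4)=\cO(\lambda^3)$, which is exactly the bookkeeping you describe.
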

\begin{proof}
Apply twice Lemma \ref{lem:action1} and use that
\begin{align*}
& \lambda^2 \, (\sL_{0,0}  - \lambda)^{-1}  \cJ \cB_{0,1} \, f_{\cW_2^\pm} 
= \lambda \alpha_5^\pm  f_1^\mp +  
\lambda^2 f_{\cW_1^\pm}  
+ \cO_{\cW_3}(\lambda^2) + \cO(\lambda^3) \ , \\
& \lambda^3 \, (\sL_{0,0}  - \lambda)^{-1}  \cJ \cB_{0,1} \ f_{\cW_2^\mp}  = \lambda^2  \alpha_6^\pm f_1^\pm  + \cO(\lambda^3)  \ , \\
&  (\sL_{0,0}  - \lambda)^{-1}  \cJ \cB_{0,1} \cO_{\cW_2}(\lambda^4) = \cO(\lambda^3)
\end{align*}
where $\alpha_5^\pm$, $\alpha_6^\pm$ are real numbers.
 \end{proof}

We further list a series of identities to exploit later.

By applying first Lemma \ref{lem:action10} and then Lemma \ref{lem:action1} we get 
\begin{align} \notag
&(\sL_{0,0}-\lambda)^{-1} \cJ \cB_{0,1} (\sL_{0,0}-\lambda)^{-1} \cJ \cB_{1,0}f_1^+  = 
- \im \mu_\tth \Big[
\frac{ \zeta_1^+}{\lambda^2} f_0^-  
+ \frac{1}{\lambda}\mathrm A_2^+ + f_{\cW_2^-} + \alpha_7 f_0^- + \cO_{\cW_2}(\lambda) + \cO_{\cW_0}(\lambda^2)  \Big]  \, , 
 \\ \notag
& {(\sL_{0,0}-\lambda)^{-1}\cJ\cB_{0,1} (\sL_{0,0}-\lambda)^{-1} \cJ\cB_{1,0}f_1^- 
 = -\im \mu_\tth 
 \Big[ 
 \frac{1}{\lambda} \Big( \mathrm{A}_2^- - \frac{2\ch \zeta_{-1}^+}{\lambda^2+ 4 \ch^2} f_0^- \Big) + \mathrm{J}_2^+ + \lambda f_{\cW_2^-}+\cO_{\cW_2}(\lambda^2)}\Big]  \, ,\\
  \label{1904_1129}
&(\sL_{0,0}-\lambda)^{-1}\cJ\cB_{0,1}(\sL_{0,0}-\lambda)^{-1} \cJ\cB_{1,0}f_0^\pm  = 0\, ,
\end{align}
where $\alpha_7$ is a real number and 
\begin{equation}\label{JK}
\mathrm J_2^+(x):= \mathrm B_2^+ (x) - \frac{1}{2\ch} \mathrm{A}_{-2}^+ (x)\stackrel{\eqref{risolventeintermedio.1}}{=} \sepvet{\dfrac{(5 \ch^4+4) p_1^{[1]}-a_1^{[1]} \ch (\ch^4+2)}{4 \ch^{19/2}}\cos(2x)}{\dfrac{(\ch^4+1) \big((3 \ch^4+4) p_1^{[1]}-2 a_1^{[1]}\ch\big) }{8 \ch^{21/2}}\sin (2 x)} \, .
\end{equation}
By applying first Lemma \ref{lem:action1} and then Lemma \ref{lem:action10}, and since $\cJ \cB_{1,0} \cW_k \subseteq \cW_k$, we get 
\begin{equation}\label{conti2}
\begin{aligned}
&(\sL_{0,0}-\lambda)^{-1}\cJ\cB_{1,0} (\sL_{0,0}-\lambda)^{-1} \cJ\cB_{0,1}f_1^+
=   \im f_{\cW_2^-}+\cO_{\cW_2}(\lambda) \, ,\\ 
&(\sL_{0,0}-\lambda)^{-1}\cJ\cB_{1,0} (\sL_{0,0}-\lambda)^{-1} \cJ\cB_{0,1}f_1^-
= \im \mathrm S_2^+  +\im 
\lambda f_{\cW_2^-} + \cO_{\cW_2}(\lambda^2) \, , \\ 
 &(\sL_{0,0}-\lambda)^{-1} \cJ\cB_{1,0} (\sL_{0,0}- \lambda)^{-1} \cJ  \cB_{0,1} f_0^+ = \im f_{\cW_1^-}
% {-\im \frac{\mu_\tth}{4 \ch^{2}}
%\Big(\big( \frac{\alpha_0^+}{2\ch} - \zeta_0^+ \big)f_{-1}^- + \beta_0^+ f_1^- \Big) }
  +  \cO(\lambda^{-1}:\lambda)  \, ,\\
 &(\sL_{0,0}-\lambda)^{-1} \cJ\cB_{1,0} (\sL_{0,0}- \lambda)^{-1} \cJ  \cB_{0,1} f_0^-  = 0\, , 
 \end{aligned}
 \end{equation}
 where, using also \eqref{L0inv}, \eqref{B1sano}--\eqref{ell10}, \eqref{risolventeintermedio.1}
 \begin{equation}\label{S1-}
\im \mathrm S_2^+ (x) := (\sL_{0,0})^{\, \inv} \cJ \cB_{1,0} 
\mathrm{A}_2^-  = \im  \sepvet{
 \dfrac{ (\ch^8 \tth+\ch^6-2 \ch^4 \tth+\ch^2+\tth) (a_1^{[1]} \ch-2 p_1^{[1]})}{4 \ch^{21/2}} \cos (2 x) }{
 \dfrac{ (\ch^8 \tth+\ch^6-2 \ch^4 \tth+\ch^2+\tth)  (a_1^{[1]} \ch-2 p_1^{[1]})}{8 \ch^{23/2}} \sin (2 x) } 
 \, .
 \end{equation}
 
 We are now in  position to prove  Lemmata \ref{lem:secondjetsP} and \ref{lem:P03acts}.
\\[1mm]
\noindent{\it Proof of Lemma \ref{lem:secondjetsP}.} The proof is divided in three parts, one for each group of formulas in \eqref{secondjetsP}.

\noindent {\bf Computation of $P_{0,2} f_j^\sigma$}. 
Since $P_{0,\e}f_0^-=f_0^-$ (cfr. \cite[Lemma A.4]{BMV3}) we have 
\begin{equation}\label{P02f0m}
P_{0,2} f_0^- = 0\, . 
\end{equation}
On the other hand, for $f_j^\sigma\in\{f_1^+, f_1^-,f_0^+ \}$,
in view of \eqref{Psani} we have, by \eqref{primainversione1}-\eqref{primainversione2} and since $\cB_{0,1} f_0^-=\cB_{0,2} f_0^- =0$,
 \begin{align} \label{IeII}
P_{0,2} f_j^\sigma  &= 
 -\frac{1}{2\pi \im} \oint_{\Gamma} \frac{(\sL_{0,0}- \lambda)^{-1}}{\lambda} \cJ \cB_{0,2}  f_j^\sigma \de \lambda \\ \notag
& +\frac{1}{2\pi \im} 
 \oint_{\Gamma} \frac{(\sL_{0,0}- \lambda)^{-1}}{\lambda} \cJ \cB_{0,1} (\sL_{0,0}- \lambda)^{-1} \cJ  \cB_{0,1} f_j^\sigma \de \lambda 
 =: \mathrm{I}_j^\sigma + \mathrm{I\!I}_j^\sigma \, .
\end{align}
In case $f_j^\sigma=f_0^+$ one readily sees, in view of \eqref{1904_1424} for $\mathrm{I}_0^+$ and \eqref{2104_1749} for $\mathrm{I\!I}_0^+$, that $P_{0,2}f_0^+\in \cW_2^+ $ which implies the second statement in \eqref{secondjetsPbis}.
We now compute the remaining four terms. 		

First   by Lemma \ref{lem:action02} and the residue theorem 
\begin{equation}
\mathrm{I}_1^+ =  -\frac{\ell_1^+}{{4\ch^2}} f_{-1}^+ - \mathrm L_3^+  \, , 
\qquad 
 \mathrm{I}_1^- =  -\frac{\ell_1^-}{{4\ch^2}} f_{-1}^-  + f_{\cW_3^-}  \, .
\end{equation}
Then 
by \eqref{2104_1749} and the residue theorem
\begin{equation}
\mathrm{I\!I}_1^+  = \frac{\alpha_2^+}{4\ch^2} f_{-1}^+ + \zeta_3^+ f_1^+ + 
\mathrm{A}_3^+ \,  , 
\qquad
\mathrm{I\!I}_1^-  = \frac{\alpha_2^-}{4\ch^2} f_{-1}^- + \zeta_3^- f_1^- + f_{\cW_{3}^-} \ . 
\end{equation}
In conclusion we have formulae \eqref{secondjetsPbis}, \eqref{secondjetsPeps} with 
\begin{equation*}
 P_{0,2} f_1^+ =  \underbrace{\frac{\alpha_2^+ - \ell_1^+  }{4\ch^2} }_{\ku_{0,2}^+} f_{-1}^+ + \underbrace{\zeta_3^+}_{\kn_{0,2}} f_1^+ 
 \underbrace{ - \mathrm{L}_3^+ + \mathrm{A}_3^+}_{\scriptsize \vet{\ka_{0,2} \cos(3x)}{\kb_{0,2} \sin(3x)}} , 
 \quad
  P_{0,2} f_1^- = \underbrace{ \frac{\alpha_2^- - \ell_1^- }{4\ch^2} }_{\ku_{0,2}^-} f_{-1}^- + \underbrace{\zeta_3^-}_{\kn_{0,2}} f_1^-    + f_{\cW_{3}^-}\, , 
\end{equation*}
and we obtain the explicit expression \eqref{u02-}  of the coefficients given by
\begin{align*}
&\kn_{0,2}:= \zeta_3^+\, ,\ \ \ku_{0,2}^+ := \frac{\alpha_2^+-\ell_1^+}{4\ch^2}\, ,\ \ \ku_{0,2}^- := \frac{\alpha_2^--\ell_1^-}{4\ch^2}\, ,\ \
{\footnotesize \vet{\ka_{0,2}\cos(3x)}{\kb_{0,2}\sin(3x)}} := \mathrm A_3^+(x)  -\mathrm L_3^+(x) \, ,
\end{align*}
with $\zeta_3^+$, $\alpha_2^\pm $, $\mathrm A_3^+$ in  \eqref{2104_1446} and $\ell_1^\pm$, $\mathrm L_3^+$ in  \eqref{1904_1446}. %and \eqref{pino1fd}-\eqref{aino2fd}.

\noindent  {\bf Computation of $P_{2,0} f_0^-$}. Since $P_{\mu,0}f_0^-=f_0^-$ (cfr. \cite[Lemma A.5]{BMV3}) we have 
\begin{equation}
P_{2,0} f_0^- = 0\, . 
\end{equation}

\noindent {\bf Computation of $P_{1,1} f_j^\sigma$}. 
In case $f_j^\sigma\in\{f_1^+,f_1^-,f_0^- \}$  by \eqref{Psani}, \eqref{hP} and \eqref{primainversione1}, we have
\begin{subequations}
\begin{align} \notag
P_{1,1} f_j^\sigma &= 
 - \frac{1}{2\pi \im} \oint_{\Gamma} \frac{(\sL_{0,0}- \lambda)^{-1}}{\lambda} \cJ \cB_{1,1}  f_j^\sigma \de \lambda + \frac{1}{2\pi \im} 
 \oint_{\Gamma} \frac{(\sL_{0,0}- \lambda)^{-1}}{\lambda} \cJ \cB_{0,1} (\sL_{0,0}- \lambda)^{-1} \cJ  \cB_{1,0}  f_j^\sigma \de \lambda 
 \\ 
&  + \frac{1}{2\pi \im} 
 \oint_{\Gamma} \frac{(\sL_{0,0}- \lambda)^{-1}}{\lambda} \cJ \cB_{1,0} (\sL_{0,0}- \lambda)^{-1} \cJ  \cB_{0,1}  f_j^\sigma \de \lambda =: \mathrm{I\!I\!I}_j^\sigma +\mathrm{I\!V}_j^\sigma +\mathrm{V}_j^\sigma  \, , \label{P11.fj}
 \end{align} 
 whereas, by \eqref{primainversione2}, \eqref{murisolventeinterno}, \eqref{risolventeintermedio}
\begin{align} \notag
P_{1,1} f_0^+ 
 & =
-  \frac{1}{2\pi \im} \oint_{\Gamma} \frac{(\sL_{0,0}- \lambda)^{-1}}{\lambda} \cJ \cB_{1,1}  f_0^+ \de \lambda
 + \frac{1}{2\pi \im} \oint_{\Gamma} \frac{(\sL_{0,0}- \lambda)^{-1}}{\lambda^2} \cJ \cB_{1,1}  f_0^- \de \lambda\\
&  +\frac{1}{2\pi \im} 
 \oint_{\Gamma} \frac{(\sL_{0,0}- \lambda)^{-1}}{\lambda} \cJ \cB_{1,0} (\sL_{0,0}- \lambda)^{-1} \cJ  \cB_{0,1}  f_0^+ \de \lambda  =: \mathrm{I\!I\!I}_0^+ + \mathrm{I\!V}_0^+  +\mathrm{V}_0^+\, .
 \end{align}  
 \end{subequations}
When $f_j^\sigma=f_1^+$ one readily sees, in view of \eqref{P11acts} for $\mathrm{I\!I\!I}_1^+$, 
\eqref{1904_1129} for $\mathrm{I\!V}_1^+$ and \eqref{conti2} for $\mathrm{V}_1^+$, that $P_{1,1} f_1^+ \in \im \cW_0^- \oplus_\bR \im  \cW_2^- $ as stated in \eqref{secondjetsPmix}. Similarly, when $f_j^\sigma=f_0^+$ one has, in view of \eqref{P11acts} for $\mathrm{I\!I\!I}_0^+$ and  $\mathrm{I\!V}_0^+$ and \eqref{conti2} for $\mathrm{V}_0^+$, that $P_{1,1} f_0^+ \in  \im  \cW_1^- $ as stated in \eqref{secondjetsPmix}. 
 
 We now compute the remaining terms. 
 By Lemma \ref{lem:action11} and the residue theorem
 \begin{equation}
 \mathrm{I\!I\!I}_1^- = - {\im \mathrm{Q}_2^+} \, ,  \quad \mathrm{I\!I\!I}_0^-  = -\frac{\im }{2 \ch^{3/2}} f_{-1}^+  \, .
 \end{equation}
   By \eqref{1904_1129} we have
\begin{equation}
\mathrm{I\!V}_1^- = -\im \mu_\tth  {\mathrm J_2^+} , \quad
\mathrm{I\!V}_0^- = 0 \, .
\end{equation}
 By \eqref{conti2} we have 
\begin{equation}
\mathrm{V}_1^- = {\im \mathrm S_2^+} %\equiv - (\sL_{0,0})^{\, \inv} \cJ \cB_{1,0} \mathrm{A}_1^\pm 
\, ,  
\quad
\mathrm{V}_0^- = 0\, .
\end{equation}
In conclusion we have formulae \eqref{secondjetsPmix} with 
\begin{equation}
 P_{1,1} f_1^- =  \underbrace{ - {\im \mathrm Q_2^+}
+ {\im \mathrm S_2^+} 
- \im \mu_\tth { \mathrm J_2^+} }_{\im {\scriptsize \vet{\ka_{1,1} \cos(2x)}{\kb_{1,1} \sin(2x)}}}
 \, ,
\quad  
{P_{1,1} f_0^- = -\frac{\im }{2 \ch^{3/2}} f_{-1}^+ }\, ,
\end{equation}
and we obtain the explicit expression \eqref{n11a11} of the coefficients given by
$$ \vet{\ka_{1,1} \cos(2x)}{\kb_{1,1} \sin(2x)} :=  -  \mathrm Q_2^+
+\mathrm S_2^+
-  \mu_\tth \mathrm J_2^+ $$
 with $\mathrm S_2^+$ in \eqref{S1-}, $\mu_\tth$ in \eqref{muh},  $\mathrm Q_2^+ $ in \eqref{1804_1855} and $\mathrm J_2^+$ in \eqref{JK}.\\
This concludes the proof of Lemma \ref{lem:secondjetsP}. \qed \medskip

\noindent {\it Proof of Lemma \ref{lem:P03acts}}. 

\noindent{\bf Computation of $P_{0,3} f_j^\sigma $}.   Similarly to \eqref{P02f0m} we have $P_{0,3}f_0^- = 0$ (as stated in  \eqref{P03acts}). Let us now compute
$ P_{0,3} f_1^+ $.
By \eqref{Psani}, \eqref{hP} and \eqref{primainversione1} 
\begin{align}  \notag
P_{0,3} f_1^+ &= 
- \frac{1}{2\pi \im} \oint_{\Gamma} \frac{(\sL_{0,0}- \lambda)^{-1}}{\lambda} \cJ \cB_{0,3}  f_1^+ \de \lambda \\ \notag
&  +\frac{1}{2\pi \im} 
 \oint_{\Gamma} \frac{(\sL_{0,0}- \lambda)^{-1}}{\lambda} \cJ \cB_{0,2} (\sL_{0,0}- \lambda)^{-1} \cJ  \cB_{0,1} f_1^+ \de \lambda \\  \notag
 & +\frac{1}{2\pi \im} 
 \oint_{\Gamma} \frac{(\sL_{0,0}- \lambda)^{-1}}{\lambda} \cJ \cB_{0,1} (\sL_{0,0}- \lambda)^{-1} \cJ  \cB_{0,2} f_1^+ \de \lambda  \\ \notag
 & - \frac{1}{2\pi \im} 
 \oint_{\Gamma} \frac{(\sL_{0,0}- \lambda)^{-1}}{\lambda} \cJ \cB_{0,1} (\sL_{0,0}- \lambda)^{-1} \cJ  \cB_{0,1} (\sL_{0,0}- \lambda)^{-1} \cJ  \cB_{0,1}  f_1^+ \de \lambda  \\
 & =: \mathrm{V\!I} + \mathrm{V\!I\!I} + \mathrm{V\!I\!I\!I} + \mathrm{I\!X} \, . \label{P03formal}
 \end{align}
  We now compute these four terms. 
  
 \noindent $\mathrm{V\!I} \big)$ 
By \eqref{B3sano} we have 
${\cal B}_{0,3} = \begin{bmatrix} a_3(x) & -p_3(x)\pa_x \\ \pa_x\circ p_3(x) & 0 \end{bmatrix}$ with $p_3(x), a_3(x)$ in \eqref{pino3}-\eqref{aino3}. Then 
$   {\cal B}_{0,3} f_0^-  = 0 $ whereas   $\cJ {\cal B}_{0,3} f_1^+  = \alpha_{10}  f_0^- + {\mathrm{W}_{2}^-}    +  f_{\cW_4^-}$ where  $\alpha_{10} \in \bR$ and 
\begin{equation}
 {\mathrm{W}_{2}^-} (x) := 
\sepvet{
- \ch^{\frac12} (p_3^{[1]} +  p_3^{[3]}) \, \sin(2x)}{
\dfrac{- \ch (a_3^{[1]} + a_3^{[3]}) + (p_3^{[1]} + p_3^{[3]})}{2 \ch^{\frac12}} \cos(2x) } \ . 
\end{equation}
Hence by  \eqref{primainversione4} we get 
\begin{equation*}
(\sL_{0,0}- \lambda)^{-1}\cJ {\cal B}_{0,3} f_1^+  = 
 \sL_{0,0}^{\, \inv} ({\mathrm{W}_{2}^-} +  f_{\cW_4^-}) + \cO(\lambda^{-1}:\lambda) 
 \, , 
\end{equation*}
and, by \eqref{residuethm}, the term $\mathrm{V\!I}$ in \eqref{P03formal} is
\begin{equation}\label{VIfinal}
 \mathrm{V\!I} = - \sL_{0,0}^{\, \inv} {\mathrm{W}_{2}^-} 
 + \widetilde f_{\cW_4^+}   \, ,
\end{equation}
 with
\begin{equation}\label{L0b3}
 \sL_{0,0}^{\, \inv} {\mathrm{W}_{2}^-} (x) = - \sepvet{\kappa_1 \cos(2x)}{\varsigma_1  \sin(2x) }\, ,\quad \begin{aligned} \kappa_1 :&=\dfrac{\ch(a_3^{[1]} +a_3^{[3]} ) - (\ch^4+2) (p_3^{[1]}+p_3^{[3]})}{2 \ch^{9/2}} \, ,\\ \varsigma_1:&=\dfrac{(\ch^4+1) (\ch (a_3^{[1]}+a_3^{[3]})-2 (p_3^{[1]}+p_3^{[3]}))}{4 \ch^{11/2}} \, . \end{aligned} 
 \end{equation}
  \noindent $\mathrm{V\!I\!I} \big)$  
%By \eqref{B2sano}, ${\cal B}_{0,2} = \begin{bmatrix} a_2(x) & -p_2(x)\pa_x \\ \pa_x\circ p_2(x) & 0 \end{bmatrix}$ with $p_2(x), a_2(x)$ in \eqref{pino1fd}-\eqref{aino2fd}. 
By Lemma \ref{lem:action1}  and since $\cB_{0,2} f_0^- = 0$, we get
\begin{align}\label{VII}
&(\sL_{0,0}- \lambda)^{-1} \cJ \cB_{0,2} (\sL_{0,0}- \lambda)^{-1} \cJ  \cB_{0,1} f_1^+ = 
 (\sL_{0,0}- \lambda)^{-1} \cJ \cB_{0,2}\mathrm{A}_2^+\\
 \notag
& \qquad  +\lambda (\sL_{0,0}- \lambda)^{-1} \cJ \cB_{0,2} f_{\cW_2^-} +\lambda^2 (\sL_{0,0}- \lambda)^{-1} \cJ \cB_{0,2} f_{\cW_2^+} +\cO(\lambda) \, .
\end{align}
Applying the operators $\cB_{0,2} $ in \eqref{B2sano} and $\cJ$ in \eqref{PoissonTensor} to the vector $\mathrm A_2^+$ in \eqref{risolventeintermedio.1} one obtains 
\begin{subequations}\label{VIIaux}
\begin{equation}
\cJ \cB_{0,2} \mathrm A_2^+  =  {\mathrm{X}_2^-}+f_{\cW_0^-}  + f_{\cW_4^-} \, , 
\end{equation}
where
\begin{equation}\label{A02p}
{\mathrm{X}_2^-}(x) := 
\sepvet{-\dfrac{(\ch^4-1)^2 \ttf_2 (a_1^{[1]} \ch-2 p_1^{[1]})+\ch(\ch^4+1)p_2^{[0]} \big( (\ch^4+2 ) p_1^{[1]}-a_1^{[1]} \ch \big)}{\ch^{11/2} (\ch^4+1)}\sin (2 x)}{-\dfrac{a_2^{[0]} \ch \big((\ch^4+2) p_1^{[1]}-a_1^{[1]} \ch\big)+(\ch^4+1) p_2^{[0]} (a_1^{[1]} \ch-2 p_1^{[1]})}{2 \ch^{11/2}} \cos (2 x)}  \ .
\end{equation}
On the other hand, in view of \eqref{Bjkrev} and Lemma \ref{lem:VUW}, one has
\begin{equation}
\lambda (\sL_{0,0}- \lambda)^{-1} \cJ \cB_{0,2} f_{\cW_2^-} = \cO(\lambda) \, ,\quad \lambda^2 (\sL_{0,0}- \lambda)^{-1} \cJ \cB_{0,2} f_{\cW_2^+}  = \cO(\lambda)\, .
\end{equation}
\end{subequations}
Then  Lemmata \ref{lem:action02},  \ref{lem:VUW}, \eqref{VII} and \eqref{VIIaux} give 
  $$
  \begin{aligned}
  &(\sL_{0,0}-\lambda)^{-1} \cJ \cB_{0,2} (\sL_{0,0}- \lambda)^{-1} \cJ  \cB_{0,1} f_1^+ =
%  \\
%  &\qquad (\sL_{0,0}-\lambda)^{-1} \cJ \cB_{0,2} {\mathrm A_2^+}   + \cO(\lambda)  =
    \sL_{0,0}^{\, \inv} ({\mathrm{X}_2^-} + f_{\cW_4^-})  + \cO(\lambda^{-1}:\lambda)\, ,
  \end{aligned}
   $$
and, by \eqref{residuethm}, the term $\mathrm{V\!I\!I}$ in \eqref{P03formal} is
  \begin{align}\label{VIIfinal}
   \mathrm{V\!I\!I} &= \sL_{0,0}^{\, \inv}{\mathrm{X}_2^-}  + \widetilde f_{\cW_4^+}\, ,
  \end{align}
  with, by \eqref{L0inv} and \eqref{A02p}, 
  \begin{equation}\label{LbotA02}
  \sL_{0,0}^{\,\inv} {\mathrm{X}_2^-}(x) = \vet{\kappa_2 \cos(2x)}{\varsigma_2 \sin(2x) }
  \end{equation} 
  with 
  $$ 
  \begin{aligned}
&{\footnotesize \kappa_2 :=}  {\scriptsize \begin{matrix} \dfrac{a_1^{[1]} \ch \big(a_2^{[0]} \ch^2+(\ch^4-1)^2 \ttf_2-2 (\ch^4+1) \ch p_2^{[0]}\big)+p_1^{[1]} \big(-a_2^{[0]} (\ch^4+2) \ch^2-2 (\ch^4-1)^2 \ttf_2+(\ch^8+5 \ch^4+4) \ch p_2^{[0]}\big)}{2 \ch^{21/2}} \end{matrix}} \, , \\ 
&{\footnotesize \varsigma_2} :=  {\scriptsize \begin{matrix}\dfrac{a_1^{[1]} \ch \big(a_2^{[0]} \ch^2(\ch^4+1)+(\ch^4-1)^2 \ttf_{2}-\ch (\ch^8+3 \ch^4+2) p_2^{[0]}\big)+p_1^{[1]} \big(-a_2^{[0]} (\ch^8+3 \ch^4+2) \ch^2-2 (\ch^4-1)^2 \ttf_{2}+ (3 \ch^8+7 \ch^4+4) \ch p_2^{[0]}\big)}{4 \ch^{23/2}} \end{matrix}.}
\end{aligned}
  $$
 \noindent $\mathrm{V\!I\!I\!I}\big)$ 
By Lemma \ref{lem:action02}, \eqref{1904_1446} and Lemma \ref{lem:VUW}
we get
\begin{align}\label{VIIIstart}
(\sL_{0,0}&-\lambda)^{-1} \cJ \cB_{0,1} (\sL_{0,0}-\lambda)^{-1} \cJ \cB_{0,2}  f_1^+ =\\ \notag
&\frac{\tau_1^+}{\lambda} (\sL_{0,0}-\lambda)^{-1} \cJ \cB_{0,1} f_1^- + \frac{\ell_{1}^+}{\lambda^2+4\text{$\ch^2$}}  (\sL_{0,0}-\lambda)^{-1} \cJ \text{$\cB_{0,1}$}  f_{-1}^+ + \sL_{0,0}^{\, \inv}\cJ  \cB_{0,1} {\mathrm L_3^+} \\  \notag  
 + &\frac{\lambda m_1^+}{\lambda^2+4\ch^2} (\sL_{0,0}-\lambda)^{-1} \cJ \cB_{0,1} f_{-1}^-  +  (\sL_{0,0}-\lambda)^{-1} \cJ \cB_{0,1} \cO_{\cW_3}(\lambda) + \cO(\lambda)  \, .
\end{align}
Applying the operators $\cB_{0,1} $ in \eqref{B1sano} and $\cJ$ in \eqref{PoissonTensor} to the vector {$\mathrm L_3^+$}  in \eqref{1904_1446} one obtains 
\begin{subequations}\label{VIIIaux}
\begin{equation}
\cJ \cB_{0,1} {\mathrm L_3^+}  = {\mathrm{Y}_2^-} + f_{\cW_4^-}\, ,  
\end{equation}
where 
\begin{equation}\label{A01p}
{\mathrm{Y}_2^-}(x) := \sepvet{\dfrac{p_1^{[1]} \big(a_2^{[2]} \ch (3 + \ch^4) - 2 (3 + 5 \ch^4) p_2^{[2]}\big)}{16 \ch^{9/2}}\sin(2x)}{-\dfrac{3 (1 + 3 \ch^4) p_1^{[1]} (a_2^{[2]} \ch - 2 p_2^{[2]}) + 
 a_1^{[1]} \ch \big(-a_2^{[2]} \ch (3 + \ch^4) + 2 (3 + 5 \ch^4) p_2^{[2]}\big)}{32 \ch^{11/2}}\cos(2x)}\, .
\end{equation}
 In view of \eqref{Bjkrev} and Lemma \ref{lem:VUW}, one obtains by inspection 
 \begin{equation}
 \frac{\lambda m_1^+}{\lambda^2+4\ch^2} (\sL_{0,0}-\lambda)^{-1} \cJ \cB_{0,1} f_{-1}^- = \cO(\lambda)\, ,\quad  (\sL_{0,0}-\lambda)^{-1} \cJ \cB_{0,1} \cO_{\cW_3}(\lambda)  =\cO(\lambda)\, .
 \end{equation}
 \end{subequations}
Then \eqref{VIIIstart},  Lemmata \ref{lem:action1}, \ref{lem:VUW} and \eqref{VIIIaux} give
\begin{equation*}
{\footnotesize \begin{matrix} (\sL_{0,0}-\lambda)^{-1} \cJ \cB_{0,1} (\sL_{0,0}-\lambda)^{-1} \cJ \cB_{0,2}  f_1^+ \end{matrix} }= \tau_1^+ {\mathrm B_2^+} + \frac{\ell_{1}^+}{4\text{$\ch^2$}}  {\mathrm A_{-2}^+} +\sL_{0,0}^{\, \inv}( {\mathrm{Y}_2^-} +f_{\cW_4^-} )   + \cO(\lambda^{-1}:\lambda)  \, ,
\end{equation*}
and, by \eqref{residuethm}, the term $\mathrm{V\!I\!I\!I}$ in \eqref{P03formal} is
 \begin{equation}\label{VIIIfinal}
\mathrm{V\!I\!I\!I} =  \tau_1^+ { \mathrm{B}_2^+ }
+ \frac{\ell_1^+}{4\ch^2} {\mathrm{A}_{-2}^+} +
\sL_{0,0}^{\, \inv} {\mathrm{Y}_2^-} + \widetilde f_{\cW_4^+}  
\, ,
\end{equation}
where
\begin{equation}\label{LbotA03n}
\sL_{0,0}^{\, \inv} {\mathrm{Y}_2^-}(x) = \sepvet{\kappa_3 \cos(2x)}{\varsigma_3 \sin(2x)} \, .
\end{equation}
 with
 \begin{align*} \kappa_3 :&= \dfrac{a_1^{[1]} \ch \big(a_2^{[2]} \ch (\ch^4+3)-2 (5 \ch^4+3)p_2^{[2]}\big)-a_2^{[2]} \ch (\ch^8+13 \ch^4+6) p_1^{[1]}+2 (\ch^4+3) (5 \ch^4+2) p_1^{[1]} p_2^{[2]}}{32 \ch^{19/2}} \, ,\\
 \varsigma_3 :&=  \dfrac{(\ch^4+1)\big[ a_1^{[1]} \ch\big( a_2^{[2]} \ch (\ch^4+3)-2(5 \ch^4+3) p_2^{[2]} \big)-2 a_2^{[2]} \ch (5 \ch^4+3) p_1^{[1]}+4 (7 \ch^4+3) p_1^{[1]} p_2^{[2]}\big]}{64 \ch^{21/2}}\, .
 \end{align*}
 
\noindent $\mathrm{I\!X} \big)$ By \eqref{2104_1749} we get 
\begin{subequations}\label{IXstart}
\begin{align}
&[(\sL_{0,0}-\lambda)^{-1} \cJ \cB_{0,1}]^3  f_1^+    
  = \frac{\zeta_2^+ }{\lambda} (\sL_{0,0}-\lambda)^{-1} \cJ \cB_{0,1} f_1^-    
\\ \notag 
&+ \frac{\alpha_2^+}{\lambda^2+4\ch^2} (\sL_{0,0}-\lambda)^{-1} \cJ \cB_{0,1} f_{-1}^+ +\zeta_3^+ (\sL_{0,0}-\lambda)^{-1} \cJ \cB_{0,1} f_1^+ 
+(\sL_{0,0}-\lambda)^{-1} \cJ \cB_{0,1} {\mathrm A_3^+} \\ 
&\begin{aligned}
\label{2605_1113}
&+\lambda (\sL_{0,0}-\lambda)^{-1} \cJ \cB_{0,1} \left(  f_{\cW_1^-}  + f_{\cW_3^-} \right)
+
\lambda^2  (\sL_{0,0}-\lambda)^{-1} \cJ \cB_{0,1}  f_{\cW_1^+} \\
& 
+ (\sL_{0,0}-\lambda)^{-1} \cJ \cB_{0,1}  \cO_{\cW_3}(\lambda^2) 
+  (\sL_{0,0}-\lambda)^{-1} \cJ \cB_{0,1}  \cO(\lambda^3)
 \, .\end{aligned}
\end{align}
\end{subequations}
In view of \eqref{Bjkrev} and Lemma \ref{lem:VUW} the terms in the two lines in \eqref{2605_1113} are
 \begin{align*}
& \lambda (\sL_{0,0}-\lambda)^{-1} \cJ \cB_{0,1}\left(   f_{\cW_1^-}
+ f_{\cW_3^-} \right) 
 = \cO(\lambda)\, ,\ \  \lambda^2 (\sL_{0,0}-\lambda)^{-1} \cJ \cB_{0,1}  f_{\cW_1^+} =\cO(\lambda)\, ,\  \\
&  \ (\sL_{0,0}-\lambda)^{-1} \cJ \cB_{0,1}  \cO_{\cW_3}(\lambda^2) =\cO(\lambda^2)\,  , 
\quad
 (\sL_{0,0}-\lambda)^{-1} \cJ \cB_{0,1}  \cO(\lambda^3) = \cO(\lambda)\, .
  \end{align*}
The remaining terms in \eqref{IXstart}, again by Lemma \ref{lem:action1}, are
\begin{align}\label{2605_1127}
&[(\sL_{0,0}-\lambda)^{-1} \cJ \cB_{0,1}]^3  f_1^+    
  =  \zeta_2^+ { \mathrm B_2^+} + \frac{\alpha_2^+}{4\text{$\ch^2$}} 
  {\mathrm A_{-2}^+  } 
+\zeta_3^+\mathrm A_2^+
 + \sL_{0,0}^{\, \inv} \cJ \cB_{0,1} { \mathrm A_3^+ }
  + \cO(\lambda^{-1}:\lambda)\, .
\end{align}
By
applying the operators $\cB_{0,1} $ in \eqref{B1sano} and $\cJ$ in \eqref{PoissonTensor} to the vector $ {\mathrm A_3^+}$  in \eqref{2104_1446} one gets
\begin{equation}\label{2605_1128}
\cJ \cB_{0,1} { \mathrm A_3^+}  =  {\mathrm{Z}_2^-}+ f_{\cW_4^-}  
\end{equation}
where 
\begin{equation}\label{A04p}
{\mathrm{Z}_2^-}(x):= 
- \sepvet{
\frac{p_1^{[1]} \left((a_1^{[1]})^2 \left(\ch^4+3\right) \ch^2
-2 a_1^{[1]}p_1^{[1]}
\left(\ch^8+9 \ch^4+6\right) \ch +
(p_1^{[1]})^2
\left(11 \ch^8+29 \ch^4+12\right) 
\right)}{32 \ch^{19/2}}\sin (2 x)}{\frac{ (a_1^{[1]}
\ch-p_1^{[1]})
 \left( ( a_1^{[1]})^2 \left(\ch^4+3\right) \ch^2
 -2 a_1^{[1]}  p_1^{[1]} \left(\ch^8+13 \ch^4+6\right) \ch
  +3(p_1^{[1]})^2 \left(9 \ch^8+15 \ch^4+4\right) \right)}{64 \ch^{21/2}} \cos (2 x)}\, . 
\end{equation}
Finally, by \eqref{2605_1127} and \eqref{2605_1128}, we  get that  the term $\mathrm{I\!X}$ in \eqref{P03formal} is
\begin{equation}\label{IXfinal}
 \mathrm{I\!X} = - \zeta_2^+ {\mathrm B_2^+} - \frac{\alpha_2^+}{4\text{$\ch^2$}} {\mathrm A_{-2}^+}   
-\zeta_3^+\mathrm A_2^+
 - \sL_{0,0}^{\, \inv} \left( {\mathrm{Z}_2^-}  + f_{\cW_4^-} \right) \, ,
 \end{equation}
 where
 \begin{equation}\label{LbotA04n}
 \sL_{0,0}^{\, \inv}  {\mathrm{Z}_2^-}
  = 
 \sepvet{ \kappa_4 \cos (2 x)}{ \varsigma_4 \sin (2 x)}
 \end{equation}
 where
 \begin{equation*}
 \begin{aligned}
&  \kappa_4 := 
 \frac1{64 \ch^{29/2}} \big( -(a_1^{[1]})^3 (\ch^4+3) \ch^3
 +(a_1^{[1]})^2 p_1^{[1]}
 (3 \ch^8+31 \ch^4+18)
  \ch^2   \\ 
  &\qquad  -a_1^{[1]}(p_1^{[1]})^2
   (2 \ch^{12}
  +49 \ch^8+101 \ch^4+36) 
  \ch +  (p_1^{[1]})^3
  (11 \ch^{12}+67 \ch^8+86 \ch^4+24)
   \big)  \\
 &   \varsigma_4 :=
 \frac{(\ch^4+1)}{{128 \ch^{31/2}} }   \big(-(a_1^{[1]})^3 (\ch^4+3) \ch^3+2 (a_1^{[1]})^2 p_1^{[1]}
 (\ch^8+14 \ch^4+9) \ch^2\\
 &\qquad-
 a_1^{[1]}(p_1^{[1]})^2 (31 \ch^8+89 \ch^4+36) \ch +2
 (p_1^{[1]})^3 (19 \ch^8+37 \ch^4+12) \big)
 \end{aligned}
 \end{equation*}
 
In conclusion, by \eqref{P03formal}, \eqref{VIfinal}, \eqref{VIIfinal}, \eqref{VIIIfinal} and \eqref{IXfinal} we deduce that
$$
P_{0,3} f_1^+ = 
 \sL_{0,0}^{\, \inv}\left( - {\mathrm W_2^-}
+ {\mathrm X_2^-} + {\mathrm Y_2^-}
  -  {\mathrm Z_2^-}\right)
+(\tau_1^+- \zeta_2^+) { \mathrm{B}_2^+} 
+ \frac{\ell_1^+- \alpha_2^+}{4\ch^2} { \mathrm{A}_{-2}^+} 
-\zeta_3^+\mathrm A_2^+   +  \widetilde f_{\cW_4^+} 
$$
which
proves the expansion of 
$ P_{0,3} f_1^+$  in \eqref{P03acts} with
\begin{align*}
&\vet{\ka_{0,3}\cos(2x)}{\kb_{0,3}\sin(2x)} := \vet{(\kappa_1 +\kappa_2 + \kappa_3 - \kappa_4 )\cos(2x) }{(\varsigma_1 +\varsigma_2 + \varsigma_3 - \varsigma_4 )\sin(2x)}\\
&\qquad + (\tau_1^+ -\zeta_2^+) \mathrm B_2^+(x) + \frac{\ell_1^+- \alpha_2^+}{4\ch^2} \mathrm{A}_{-2}^+(x) 
-\zeta_3^+\mathrm A_2^+(x)  \, ,
\end{align*}  
with $\kappa_i$, $\varsigma_i$, $i=1,\dots,4$ in \eqref{L0b3}, 
\eqref{LbotA02}, \eqref{LbotA03n},
\eqref{LbotA04n}, $\mathrm B_2^+ $, $\mathrm{A}_{-2}^+$ $\mathrm A_2^+$ in  \eqref{risolventeintermedio.1}, $\zeta_2^+$, $\alpha_2^+$, $\zeta_3^+$  in \eqref{2104_1446} and $\ell_1^+$, $\tau_1^+$ in
\eqref{1904_1446},
resulting in the coefficients $\ka_{0,3}$ and $\kb_{0,3}$ in \eqref{n03}.
  
\noindent{\bf Computation of $P_{1,2} f_0^- $.}  By \eqref{Psani}, \eqref{hP} and the fact 
that 
 $\cB_{1,0} f_0^- =  \cB_{0,1}f_0^- =   \cB_{0,2}f_0^- =  0$, the  term  $ P_{1,2} f_0^- $ reduces to
 \begin{align}\notag
 P_{1,2}f_0^- &= 
-\frac{1}{2\pi \im} \oint_{\Gamma} \frac{(\sL_{0,0}- \lambda)^{-1}}{\lambda} \cJ \cB_{1,2} f_0^- \de \lambda \\ 
 & \ +\frac{1}{2\pi \im} 
 \oint_{\Gamma} \frac{(\sL_{0,0}- \lambda)^{-1}}{\lambda} \cJ \cB_{0,1} (\sL_{0,0}- \lambda)^{-1} \cJ  \cB_{1,1}f_0^- \de \lambda  \label{P12formal} =: \mathrm{X} + \mathrm{X\!I}\, .
 \end{align}
 We now compute the two terms. 
 
\noindent $\mathrm{X}\big)$ By \eqref{cBacts}  we have
$\cJ \cB_{1,2} f_0^- =\im  a_3 f_0^- + {\im\mathrm{W}_{-2}^-} $
with 
$ 
{\mathrm{W}_{-2}^-} 
(x) := p_2^{[2]} {\footnotesize \vet{0}{ \cos (2 x)}} \, 
$
and, by \eqref{primainversione},
\begin{equation} \label{Xfinal}
\mathrm{X} \stackrel{\eqref{residuethm} }{=}
-\im \sL_{0,0}^{\, \inv}
 {\mathrm{W}_{-2}^-}\, ,\quad \sL_{0,0}^{\, \inv}
{\mathrm{W}_{-2}^-}(x) \stackrel{\eqref{L0inv}}{=}
 p_2^{[2]} \vet{\ch^{-4}\cos (2 x)}{\dfrac{1+\ch^4}{2\ch^5}\sin (2 x)}\, .
\end{equation}

\noindent $\mathrm{X\!I}\big)$ By Lemmata \ref{lem:action11}, \ref{lem:action1} one has
$$(\sL_{0,0}-\lambda)^{-1} \cJ \cB_{0,1} (\sL_{0,0}- \lambda)^{-1} \cJ  \cB_{1,1} f_0^-  = \im \ch^{-\frac12} {\mathrm{B}_2^+ }+ \im \frac{1}{2 \ch^{3/2}} { \mathrm{A}_{-2}^+}  +\cO(\lambda^{-1}: \lambda)\, ,$$
and therefore 
\begin{equation}  \label{XIfinal}
\mathrm{X\!I} = \im \ch^{-\frac12} {\mathrm{B}_2^+ }+ \im \frac{1}{2 \ch^{3/2}} {\mathrm{A}_{-2}^+} \, .
\end{equation}
 In conclusion, by \eqref{P12formal}, \eqref{Xfinal} and \eqref{XIfinal}
 \begin{equation}
 P_{1,2} f_0^- = -\im (\sL_{0,0}^{\, \inv}) {\mathrm{W}_{-2}^-} + \im\ch^{-\frac12} 
 {\mathrm{B}_2^+} +\im \frac{1}{2 \ch^{3/2}}{\mathrm{A}_{-2}^+} \, ,
  \end{equation}
  which, in view of \eqref{Xfinal}, 
proves the expansion of 
$  P_{1,2} f_0^-  $  in \eqref{P03acts} with
\begin{align*}
\vet{\ka_{1,2}\cos(2x)}{\kb_{1,2} \sin(2x) }:= \vet{-p_2^{[2]} \ch^{-4} \cos(2x)}{- p_2^{[2]}\frac{1+\ch^4}{2\ch^5}\sin(2x) }+ \ch^{-\frac12} 
 \mathrm{B}_2^+(x) + \frac{1}{2 \ch^{3/2}}\mathrm{A}_{-2}^+(x)\, ,
\end{align*}
with $\mathrm{B}_2^+$ and $\mathrm{A}_{-2}^+$ in \eqref{risolventeintermedio.1},
resulting in the coefficients $\ka_{1,2}$ and $\kb_{1,2}$ given in \eqref{n03}.

\noindent{\bf Computation of $P_{2,1} f_0^- $.}  By \eqref{Psani} and the fact 
that 
 $\cB_{1,0} f_0^- =  \cB_{0,1}f_0^- $ and $ \cB_{2,1} =  0$ the term  $ P_{2,1} f_0^- $ reduces to
 \begin{align}\notag
 P_{2,1} f_0^- &= 
 \frac{1}{2\pi \im} 
 \oint_{\Gamma} \frac{(\sL_{0,0}- \lambda)^{-1}}{\lambda} \cJ \cB_{0,1} (\sL_{0,0}- \lambda)^{-1} \cJ  \cB_{2,0} f_0^- \de \lambda  \\ \notag
 &+\frac{1}{2\pi \im} 
 \oint_{\Gamma} \frac{(\sL_{0,0}- \lambda)^{-1}}{\lambda} \cJ \cB_{1,0} (\sL_{0,0}- \lambda)^{-1} \cJ  \cB_{1,1} f_0^- \de \lambda   \, .
 \end{align}
By repeated use of \eqref{Bjkrev} and Lemma \ref{lem:VUW} one  finds that $P_{2,1} f_0^- \in \cW_1^- $ as stated in \eqref{P03acts}.

 \begin{footnotesize}

 \end{footnotesize}
\end{document}